\newtheorem{theorem}{Theorem}[section]
\newtheorem{corollary}[theorem]{Corollary}
\newtheorem{definition}[theorem]{Definition}
\newtheorem{lemma}[theorem]{Lemma}
\newtheorem{proposition}[theorem]{Proposition}
\theoremstyle{remark}
\newtheorem{remark}[theorem]{Remark}
\numberwithin{equation}{section}
\begin{document}
\title{Longtime behavior of nonlocal Cahn-Hilliard equations}
\author{Ciprian G. Gal}
\address{Department of Mathematics, Florida International University, Miami,
FL 33199, USA}
\email{cgal@fiu.edu}
\author{Maurizio Grasselli}
\address{Dipartimento di Matematica ``F.~Brioschi'', Politecnico di Milano,
20133 Milano, Italy}
\email{maurizio.grasselli@polimi.it}
\maketitle

\begin{abstract}
Here we consider the nonlocal Cahn-Hilliard equation with constant mobility
in a bounded domain. We prove that the associated dynamical system has an
exponential attractor, provided that the potential is regular. In order to
do that a crucial step is showing the eventual boundedness of the order
parameter uniformly with respect to the initial datum. This is obtained
through an Alikakos-Moser type argument. We establish a similar result for
the viscous nonlocal Cahn-Hilliard equation with singular (e.g.,
logarithmic) potential. In this case the validity of the so-called
separation property is crucial. We also discuss the convergence of a
solution to a single stationary state. The separation property in the
nonviscous case is known to hold when the mobility degenerates at the pure
phases in a proper way and the potential is of logarithmic type. Thus, the
existence of an exponential attractor can be proven in this case as well.
\end{abstract}

\tableofcontents

\section{Introduction}

\noindent The Cahn-Hilliard equation was proposed in \cite{CH} as a model
for (isothermal) phase separation phenomena in binary alloys. Since then it
was analyzed by many authors and used in several different contexts (see,
e.g., \cite{CMZ,N-C2} and references therein). The basic form of such an
equation is the following
\begin{equation}
\partial _{t}\varphi =\nabla \cdot \lbrack \kappa (\varphi )\nabla \mu ],
\label{ch}
\end{equation}%
where $\varphi $ is the relative difference of the two phases (or the
concentration of one phase), and $\mu $ is the so-called chemical potential
given by
\begin{equation}
\mu =-\epsilon \Delta \varphi +\frac{1}{\epsilon }F^{\prime }\left( \varphi
\right) .  \label{chemloc}
\end{equation}%
in $\Omega \times \left( 0,\infty \right) ,$ where $\Omega \subset \mathbb{R}%
^{d}$, $d=2,3$, is a bounded domain with \emph{Lipschitz} boundary $\Gamma
=\partial \Omega $. Here $\kappa $ is the mobility coefficient, $\epsilon >0$
is a given (small) parameter related to the thickness of the interface
separating the two phases, and $F$ is the (density) of potential energy. A
physically relevant choice for $F$ is the following
\begin{equation}
F(r)=(1+r)\log (1+r)+(1-r)\log (1-r)-\lambda r^{2},\quad \lambda \geq 1,
\label{logpot}
\end{equation}%
which is often approximated by a polynomial double well-potential, typically
\begin{equation}
F(r)=(r^{2}-1)^{2}.  \label{dwpot}
\end{equation}%
In the literature, it is common to distinguish between singular potentials,
which are defined on finite intervals like \eqref{logpot}, and regular ones
as \eqref{dwpot}, defined on $\mathbb{R}$ .

We recall that equations \eqref{ch}-\eqref{chemloc} have been deduced
phenomenologically, i.e., as the (conserved) gradient flow associated with
the Fr\'{e}chet derivative of the free energy functional
\begin{equation*}
\mathcal{L}(\varphi) = \int_\Omega \left(\frac{\epsilon}{2}\vert \nabla
\varphi\vert^2 + \frac{1}{\epsilon}F(\varphi)\right)dx.
\end{equation*}

In \cite{GL1,GL2}, starting from a microscopic model, the authors rigorously
derived a macroscopic equation for phase segregation phenomena. This is a
nonlocal version of the Cahn-Hilliard equation, namely, the chemical
potential is given by
\begin{equation}
\mu =a_{\epsilon }(x)\varphi -\epsilon J_{\epsilon }\ast \varphi +\frac{1}{%
\epsilon }F^{\prime }\left( \varphi \right) ,  \label{chemnl}
\end{equation}%
where $J$ is a (sufficiently smooth) interaction kernel such that $%
J(x)=J(-x) $ and
\begin{equation*}
(J_{\epsilon }\ast \varphi )(x):=\int_{\Omega }J_{\epsilon }(x-y)\varphi
(y)dy,\quad a_{\epsilon }(x):=\epsilon \int_{\Omega }J_{\epsilon }(x-y)dy,
\end{equation*}%
where $J_{\epsilon }(x)=\epsilon ^{-d}J(\epsilon ^{-1}x)$. By using formal
asymptotic analysis, the authors also showed that the interface evolution
problems associated with such equation as $\epsilon $ goes to $0$ are
exactly the ones associated with the standard Cahn-Hilliard equation (i.e.,
Stefan-like and Mullins-Sekerka problems). In addition, also the nonlocal
version can be viewed as the conserved gradient flow associated with the
first variation of the free energy functional
\begin{equation*}
\mathcal{N}(\varphi )=\int_{\Omega \times \Omega }\frac{\epsilon }{4}%
J_{\epsilon }(x-y)|\varphi (x)-\varphi (y)|^{2}dxdy+\int_{\Omega }\frac{1}{%
\epsilon }F(\varphi )dx.
\end{equation*}%
As a consequence, we can observe (formally) that the nonlocal interaction
term can be locally approximated by the square gradient, provided that $J$
is sufficiently concentrated around $0$. That is, the functional $\mathcal{L}
$ can be viewed as a local approximation of $\mathcal{N}$. This was already
noted by Van der Waals (see \cite{Ro}). Thus the nonlocal Cahn-Hilliard
equation seems well justified and more general than the classical one,
though the related literature is far less abundant. In particular, most of
the theoretical results are devoted to well-posedness, but very few are
concerned with the longtime behavior of solutions. The main reason is
related to the eventual boundedness and regularization of the order
parameter which are needed to prove the precompactness of trajectories in
some convenient topology. Well-posedness and regularity issues were firstly
analyzed in \cite{GL2} on a three-dimensional torus with degenerate mobility
and logarithmic potential. A similar equation endowed with no flux boundary
condition was studied in \cite{GZ} (cf. also \cite{CKRS,Ga,GG} and, for
viscous versions, \cite{HFS1,HFS2}). For this case, the convergence to a
single stationary state of a given trajectory was proven in \cite{LP}
through a suitable {\L }ojasiewicz-Simon inequality. This fact required to
show preliminarily that a solution stays eventually strictly away from the
pure phases: the so-called separation property.

For the constant mobility case and regular potentials, some existence,
uniqueness and regularity results were obtained in \cite{BH} (see also \cite%
{BH2, Ha}). In that paper the existence of bounded absorbing sets was also
established. Nevertheless, no results were known about the existence of more
interesting invariant objects like, e.g., global attractors (cf. \cite{MZrev}
and its references). Only recently, the existence of a (connected) global
attractor has been proven in \cite{FG2} for constant mobility and regular
potentials (see \cite{FG1} for singular ones). This has been done by
exploiting the energy identity as a by-product of a result related to a more
complicated model for phase separation in binary fluids. A natural question
now arises: does the global attractor have \emph{finite} (fractal)
dimension? Here we give a positive answer and we actually prove more,
namely, the existence of an exponential attractor (see again \cite{MZrev}
for details). More precisely, taking for simplicity $\kappa =\epsilon =1$,
we consider the following nonlocal Cahn-Hilliard equation
\begin{align}
& \partial _{t}\varphi =\Delta \mu ,\quad\text{ in }\Omega \times \left(
0,\infty \right) ,  \label{nlch} \\
& \mu =a\varphi -J\ast \varphi +F^{\prime }\left( \varphi \right) +\alpha
\partial _{t}\varphi,\quad \text{ in }\Omega \times \left( 0,\infty \right) ,
\label{chem}
\end{align}%
subject to the no-flux boundary condition
\begin{equation}
\partial _{\mathbf{n}}\mu =0,\quad \text{ on }\Gamma \times \left( 0,\infty
\right)  \label{bc}
\end{equation}%
and to the initial condition
\begin{equation}
\varphi (0)=\varphi _{0},\quad \text{ in }\Omega .  \label{ic}
\end{equation}%
Here the coefficient $\alpha \geq 0$ characterizes the possible influences
of internal microforces (see, e.g., \cite{N-C1}). The presence of this term
is not necessary in the case of regular potentials, while it is crucial in
the case of singular ones. In fact, in the former case, in order to prove
our main result we need to first establish the eventual boundedness of $%
\varphi $. This boundedness is, say, built-in in the latter case, but we
need to show that $\varphi $ has the separation property uniformly with
respect to the initial data. This feature is an open problem even for the
classical \emph{local} Cahn-Hilliard equation with constant mobility in
dimension three (see \cite{MZ}).

The paper is organized as follows. Section 2 is devoted to the nonviscous
case with a regular potential, while Section 3 is concerned with the viscous
equation with a singular potential. Provided suitable global bounds are
obtained (this is the most technical part), the existence of an exponential
attractor is proven through a short trajectory type technique devised in
\cite{EZ}. We also show that, in both cases, each solution converges to a
single equilibrium by using a suitable version of the {\L }ojasiewicz-Simon
inequality, provided that $F$ is real analytic. In the final Section 4, we
consider the (nonviscous) equation with degenerate mobility and logarithmic
potential. On account of the validity of the separation property, we can
still prove the existence of an exponential attractor.

\section{The nonviscous case with regular potential}

\subsection{Some preliminary results}

We begin with some basic notation and assumptions. Let us first set $%
H:=L^{2}\left( \Omega \right) $ and $V:=H^{1}\left( \Omega \right) .$ For
every $\psi \in V^{\prime },$ $V^{\prime }$ the dual space of $V$, we denote
by $\left\langle \psi \right\rangle $ the average of $\psi $ over $\Omega $,
that is,%
\begin{equation*}
\left\langle \psi \right\rangle =\frac{1}{\left\vert \Omega \right\vert }%
\left\langle \psi ,1\right\rangle
\end{equation*}%
where $\left\vert \Omega \right\vert $ stands for the Lebesgue measure of $%
\Omega $ and $\left\langle \cdot ,\cdot \right\rangle $ is the duality
product. Then we introduce the spaces $V_{0}:=\left\{ \psi \in
V:\left\langle \psi \right\rangle =0\right\} $, $V_{0}^{\prime }:=\left\{
\psi \in V^{\prime }:\left\langle \psi ,1\right\rangle =0\right\} $, and the
operator $A_{N}:V\rightarrow V^{\prime }$, $A_{N}\in \mathcal{L}\left(
V,V^{\prime }\right) $, defined by,
\begin{equation*}
\left\langle A_{N}u,v\right\rangle =\int_{\Omega }\nabla u\cdot \nabla
v\,dx,\quad \forall u,v\in V.
\end{equation*}%
With these definitions, it is well known that $A_{N\mid V_{0}}$ maps $V_{0}$
into $V_{0}^{\prime }$ isomorphically, and that the inverse map $\mathcal{N}%
=A_{N}^{-1}:V_{0}^{\prime }\rightarrow V_{0}$, is defined by%
\begin{equation*}
A_{N}\mathcal{N}\psi =\psi ,\quad \forall \psi \in V_{0}^{\prime },\qquad
\mathcal{N}A_{N}f=f,\quad \forall f\in V_{0}.
\end{equation*}%
These maps also satisfy the following well-known relations:%
\begin{align}
\left\langle A_{N}u,\mathcal{N}v\right\rangle & =\left\langle
u,v\right\rangle ,\quad \forall u\in V,v\in V_{0}^{\prime },  \label{rel} \\
\left\langle u,\mathcal{N}v\right\rangle & =\left\langle v,\mathcal{N}%
u\right\rangle ,\quad \forall u,v\in V_{0}^{\prime }.  \notag
\end{align}%
The assumptions listed below are the same as in \cite{BH} (see also \cite%
{CFG}).

\begin{enumerate}
\item[(H1)] $J\in W^{1,1}\left( \mathbb{R}^{d}\right) $, $J\left( -x\right)
=J\left( x\right) $, $a\left( x\right) :=\int_{\Omega }J\left( x-y\right)
dy\geq 0$ a.e. in $\Omega .$

\item[(H2)] $F\in C_{\text{loc}}^{2,1}\left( \mathbb{R}\right) $ and there
exists $c_{0}>0$ such that%
\begin{equation*}
F^{\prime \prime}\left( s\right) +\inf_{x\in \Omega }a\left( x\right) \geq
c_{0}, \quad\forall s\in \mathbb{R}\text{.}
\end{equation*}

\item[(H3)] There exist $c_{1}>\frac{1}{2}\left\Vert J\right\Vert
_{L^{1}\left( \mathbb{R}^{d}\right) }$ and $c_{2}\in \mathbb{R}$ such that%
\begin{equation*}
F\left( s\right) \geq c_{1}s^{2}-c_{2},\quad\forall s\in \mathbb{R}\text{.}
\end{equation*}

\item[(H4)] There exist $c_{3}>0$, $c_{4}\geq 0$ and $p\in (1,2]$ such that%
\begin{equation*}
|F^\prime(s)|^{p}\leq c_{3}|F(s)|+c_{4},\quad \forall s\in \mathbb{R}.
\end{equation*}

\item[(H5)] $F\in C^{2}\left( \mathbb{R}\right) $ and there exist $%
c_{5},c_{6}>0$ and $q>0$ such that%
\begin{equation*}
F^{\prime \prime}(s)+\inf_{x\in \Omega }a(x)\geq
c_{5}s^{2q}-c_{6},\quad\forall s\in \mathbb{R}.
\end{equation*}
\end{enumerate}

\begin{remark}
\label{compact} Note that the operator $\psi \mapsto J*\psi$ is self-adjoint
and compact from $H$ to itself, provided that (H1) is satisfied. Also, it is
easy to realize that it is compact from $L^{\infty }(\Omega )$ to $C^{0}(%
\overline{\Omega })$ and that $a\in L^\infty(\Omega)$.
\end{remark}

We report the following result (see \cite[Corollary 1 and Proposition 5]{FG2}%
, cf. also \cite{CFG}).

\begin{theorem}
\label{well}Let $\varphi _{0}\in H$ with $F\left( \varphi _{0}\right) \in
L^{1}\left( \Omega \right) $, and assume (H1)-(H4), and (H5) with $q\geq
\frac{1}{2}$ and $p\in \left( \frac{6}{5},2\right] $ when $d=3$. Then, there
exists a unique weak solution of (\ref{nlch})-(\ref{ic}) on $\left[ 0,T%
\right] $, for any $T>0$, such that%
\begin{equation}
\begin{array}{l}
\varphi \in C\left( \left[ 0,T\right] ;H\right) \cap L^{2}\left( \left[ 0,T%
\right] ;V\right) \cap L^{\infty }\left( \left[ 0,T\right] ;L^{2+2q}\left(
\Omega \right) \right) , \\
\partial _{t}\varphi \in L^{2}\left( \left[ 0,T\right] ;V^{\prime }\right)
\text{, }\mu \in L^{2}\left( \left[ 0,T\right] ;V\right) , \\
F\left( \varphi \right) \in L^\infty\left(\left[ 0,T\right]
;L^{1}\left(\Omega \right)\right)%
\end{array}
\label{class}
\end{equation}%
and $\varphi \left( 0\right) =\varphi _{0}$, $\left\langle \varphi \left(
t\right) \right\rangle =\left\langle \varphi _{0}\right\rangle ,$ for all $%
t\in \left[ 0,T\right] $. Furthermore, setting%
\begin{equation}
\mathcal{E}\left( \varphi \left( t\right) \right) :=\frac{1}{4}\int_{\Omega
}\int_{\Omega }J\left( x-y\right) \left( \varphi \left( x,t\right) -\varphi
\left( y,t\right) \right) ^{2}dxdy+\int_{\Omega }F\left( \varphi \left(
x,t\right) \right) dx,  \label{ener}
\end{equation}%
the following equality holds for all $t\geq 0$,%
\begin{equation}
\mathcal{E}\left( \varphi \left( t\right) \right) +\int_{0}^{t}\left\Vert
\nabla \mu \left( s\right) \right\Vert _{H}^{2}ds=\mathcal{E}\left( \varphi
_{0}\right) .  \label{eneeq}
\end{equation}
\end{theorem}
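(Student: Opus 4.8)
The plan is to establish existence, uniqueness, the claimed regularity class, and the energy identity by a Galerkin approximation together with a Gronwall-type a priori estimate based on the dissipative structure of the system. The statement is essentially quoted from \cite{FG2}, so the outline below mirrors the standard argument.

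\medskip

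\textbf{Proof outline.} First I would reformulate the problem in the usual weak sense: test \eqref{nlch}--\eqref{chem} (with $\alpha=0$) against $V$-functions and, exploiting the no-flux condition \eqref{bc}, rewrite the equation as $\partial_t\varphi + A_N\mathcal N(\text{something})$; more precisely, since $\langle\partial_t\varphi\rangle=0$ one has $\partial_t\varphi = -A_N\big(\mathcal N^{-1}$-image of $\mu-\langle\mu\rangle\big)$ in $V_0'$, and the conservation $\langle\varphi(t)\rangle=\langle\varphi_0\rangle$ follows immediately by testing with the constant $1$. Then I would set up a Faedo--Galerkin scheme using the eigenfunctions of $A_N$ (equivalently of the Neumann Laplacian), projecting the order parameter, and obtain approximate solutions $\varphi_n$ on $[0,T]$ via the Cauchy--Lipschitz theorem for ODE systems; the polynomial growth control in (H4)--(H5) ensures the nonlinearity $F'$ is well behaved on finite-dimensional subspaces and that no blow-up occurs in finite time.

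\medskip

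The heart of the matter is the a priori estimates, uniform in $n$. The basic one comes from testing the $\mu$-equation with $\partial_t\varphi$ and the $\varphi$-equation with $\mu$, which after integration by parts and use of \eqref{bc} yields the energy identity \eqref{eneeq} at the approximate level, with $\mathcal E$ as in \eqref{ener}; assumption (H3) makes $\mathcal E(\varphi)\geq c_1\|\varphi\|_H^2 - C$ coercive (the kernel term being controlled by $\tfrac12\|J\|_{L^1}\|\varphi\|_H^2$), which gives the $L^\infty(0,T;H)$ bound and, via $\int_0^t\|\nabla\mu\|_H^2\,ds\le\mathcal E(\varphi_0)$, the dissipation bound on $\mu$. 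To upgrade to $\varphi\in L^2(0,T;V)$ one uses $\nabla\mu = \nabla a\,\varphi + a\nabla\varphi - \nabla J*\varphi + F''(\varphi)\nabla\varphi$, so that $(F''(\varphi)+a)|\nabla\varphi|^2 = \nabla\mu\cdot\nabla\varphi - (\nabla a\,\varphi - \nabla J*\varphi)\cdot\nabla\varphi$; by (H2), $F''+a\ge c_0>0$, which controls $\|\nabla\varphi\|_H^2$ in terms of $\|\nabla\mu\|_H$, $\|\varphi\|_H$ and $\|J\|_{W^{1,1}}$ (here $J\in W^{1,1}$ from (H1) and $a\in L^\infty$ from Remark \ref{compact} are used). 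The $L^\infty(0,T;L^{2+2q})$ bound and $F(\varphi)\in L^\infty(0,T;L^1)$ come from (H5): testing the equation with a suitable power $|\varphi|^{2q}\varphi$ (or rather $F'(\varphi)$-type multipliers combined with the growth condition (H5)), one closes an estimate using $F''(s)+a\ge c_5 s^{2q}-c_6$ to extract $\|\varphi\|_{L^{2+2q}}$, and (H4) then converts the $\|\varphi\|^{2+2q}$ control into an $L^1$ bound on $F(\varphi)$ and an estimate for $F'(\varphi)$ in dual norms — this is also what gives $\partial_t\varphi\in L^2(0,T;V')$, since $\|\partial_t\varphi\|_{V'}\le\|\nabla\mu\|_H$ once $\langle\mu\rangle$ is controlled (and $\langle\mu\rangle$ is controlled because $\langle\mu\rangle=\langle a\varphi - J*\varphi + F'(\varphi)\rangle$ and all three terms are bounded in $L^1$).

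\medskip

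With these bounds in hand, I would pass to the limit $n\to\infty$ by the usual compactness machinery: Banach--Alaoglu gives weak/weak-$*$ limits in all the spaces of \eqref{class}, and the Aubin--Lions--Simon lemma (using $\varphi_n$ bounded in $L^2(0,T;V)$ and $\partial_t\varphi_n$ bounded in $L^2(0,T;V')$) gives strong convergence in $L^2(0,T;H)$ and hence a.e., which suffices to identify the nonlinear term $F'(\varphi_n)\to F'(\varphi)$ (using its polynomial growth plus the $L^{2+2q}$ bound and Vitali/De la Vallée-Poussin). The convolution term passes to the limit by the compactness noted in Remark \ref{compact}. Continuity in time $\varphi\in C([0,T];H)$ follows from $\varphi\in L^2(0,T;V)$, $\partial_t\varphi\in L^2(0,T;V')$ by the Lions--Magenes interpolation theorem. \emph{Uniqueness} is obtained by taking the difference of two solutions $\varphi^1,\varphi^2$ with the same datum, applying $\mathcal N$, testing with $\varphi^1-\varphi^2$, and using the monotonicity built into (H2) (the term $(F'(\varphi^1)-F'(\varphi^2)+a(\varphi^1-\varphi^2),\varphi^1-\varphi^2)\ge c_0\|\varphi^1-\varphi^2\|_H^2$ has the right sign) together with Gronwall on $\|\mathcal N^{1/2}(\varphi^1-\varphi^2)\|$. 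Finally, the \emph{energy identity} \eqref{eneeq} holds with equality (not merely $\le$) because the regularity in \eqref{class} is exactly enough to justify the chain rule $\frac{d}{dt}\mathcal E(\varphi)=\langle\mu-\langle\mu\rangle,\partial_t\varphi\rangle=-\|\nabla\mu\|_H^2$ — one checks that $t\mapsto\int_\Omega F(\varphi(t))$ is absolutely continuous using $F(\varphi)\in L^\infty(0,T;L^1)$, $F'(\varphi)\in L^2(0,T;V)$-type bounds and $\partial_t\varphi\in L^2(0,T;V')$.

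\medskip

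The step I expect to be the main obstacle is closing the higher-integrability estimate that delivers $\varphi\in L^\infty(0,T;L^{2+2q})$ and simultaneously $F'(\varphi)$ in a space good enough to make sense of $\mu\in L^2(0,T;V)$: the multiplier $F'(\varphi)$ (or $|\varphi|^{2q}\varphi$) is only formally admissible, so one must either argue at the Galerkin level where everything is smooth, or regularize $F$ and pass to a double limit. Controlling the cross terms involving $\nabla a$ and $\nabla J*\varphi$ against the (possibly degenerate, only $c_0$-coercive) quadratic form $(F''+a)|\nabla\varphi|^2$ in dimension $d=3$ is where the restrictions $q\ge\tfrac12$ and $p\in(\tfrac65,2]$ enter, via Sobolev embeddings and interpolation; this bookkeeping is the technical core, and it is exactly the content imported from \cite[Corollary~1 and Proposition~5]{FG2}.
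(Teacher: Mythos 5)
The paper does not prove Theorem \ref{well} at all: it is imported verbatim from \cite[Corollary 1 and Proposition 5]{FG2} (cf.\ also \cite{CFG}), so there is no internal proof to compare against. Your Galerkin outline is the standard argument used in those references and its overall structure (approximation, energy estimate, coercivity from (H3), the $V$-bound for $\varphi$ via $(F''+a)\ge c_0$, Aubin--Lions compactness, identification of $F'(\varphi_n)$ by a.e.\ convergence plus growth, continuity in $H$ from $\varphi\in L^2(V)$, $\partial_t\varphi\in L^2(V')$, and uniqueness by testing the difference with $\mathcal N(\varphi_1-\varphi_2)$ and using the (H2)-monotonicity, exactly as in Proposition \ref{uniq}) is correct.

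Two points of bookkeeping are off, and they matter for the step you yourself single out as delicate. First, the restrictions $q\ge\frac12$ and $p\in(\frac65,2]$ for $d=3$ are not needed to control the cross terms $\varphi\nabla a$ and $\nabla J\ast\varphi$ against $(F''+a)|\nabla\varphi|^2$ --- those are handled with (H1)--(H2) alone; as the paper's own remark after Theorem \ref{well} states, they are needed precisely to upgrade the energy \emph{inequality} to the energy \emph{identity} \eqref{eneeq}. Second, the regularity you invoke for the chain rule, ``$F'(\varphi)\in L^2(0,T;V)$-type bounds,'' is not available and is not what the hypotheses give: (H4) together with $F(\varphi)\in L^\infty(0,T;L^1(\Omega))$ yields $F'(\varphi)\in L^\infty(0,T;L^p(\Omega))$ with $p>\frac65$, and since $L^{6/5}(\Omega)\hookrightarrow V'$ for $d=3$ (with $\varphi\in L^2(0,T;V)$, $\partial_t\varphi\in L^2(0,T;V')$, $q\ge\frac12$ giving $\varphi\in L^\infty(0,T;L^3(\Omega))$) this is exactly the integrability that lets one justify $\frac{d}{dt}\int_\Omega F(\varphi)=\langle\partial_t\varphi,F'(\varphi)\rangle$ by mollification. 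Relatedly, the $L^\infty(0,T;L^{2+2q})$ bound does not require testing with $|\varphi|^{2q}\varphi$: integrating (H5) twice gives $F(s)\ge c|s|^{2+2q}-C(1+s^2)$, so it follows directly from the energy bound and the $H$-estimate. With these corrections your outline matches the cited proof.
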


\begin{remark}
\label{wf}We say that $\varphi $ is a weak solution of (\ref{nlch})-(\ref{ic}%
) on $(0,T)$ if $\varphi $ belongs to the class of functions (\ref{class}),
and for every $\psi \in V,$%
\begin{equation}
\langle \partial _{t}\varphi ,\psi \rangle +\left( \nabla \mu ,\nabla \psi
\right) =0,\quad \text{ a.e. in }\left( 0,T\right) ,  \label{weakf}
\end{equation}%
see \cite[Definition 1]{FG2}. Equivalently, we can write (\ref{weakf}) in
the following form:%
\begin{equation}
\langle \partial _{t}\varphi ,\psi \rangle +\left( \nabla \Phi ,\nabla \psi
\right) =\left( \nabla J\ast \varphi ,\nabla \psi \right),\quad \text{ a.e.
in }\left( 0,T\right) ,  \label{weakf2}
\end{equation}%
where $\Phi \left( x,\varphi \right) :=a\left( x\right) \varphi +
F^\prime\left( \varphi \right) $ (cf. \cite[Section 2]{BH}).
\end{remark}

\begin{remark}
It is not difficult to check that a Gaussian $J\left( x\right)
=c_{J}e^{-\xi\left\vert x\right\vert ^{2}} $, $\xi>0$, or a Newtonian
interaction kernel $J\left( x\right) =c_{J}\left\vert x\right\vert ^{-1},$
if $d=3$, $J\left( x\right) =-c_{J}\log \left( \left\vert x\right\vert
\right) $ if $d=2,$ fulfills (H1). Note that (H5) is slightly stronger than
(H2) and is necessary to establish the energy identity (\ref{eneeq}). The
further restriction on $p\in \left( \frac{6}{5},2\right] $ when $d=3$ is
also required for (\ref{eneeq}) to hold. Otherwise, we would only have an
energy inequality. On the other hand, (H1)-(H4) are enough to establish the
existence of at least one global weak solution (cf. \cite{CFG}). Observe
that assumption (H4) is fulfilled by a potential of arbitrary polynomial
growth. Besides, (H2)-(H5) are certainly satisfied, for instance, by %
\eqref{dwpot}.
\end{remark}

The next result can also be found in \cite[Theorem 1]{FG2}.

\begin{proposition}
Let $m\geq 0$ be given. Then every weak solution to (\ref{nlch})-(\ref{ic})
satisfies the dissipative estimate:%
\begin{equation}
\mathcal{E}\left( \varphi \left( t\right) \right) \leq \mathcal{E}\left(
\varphi _{0}\right) e^{-kt}+C\left( m\right) ,\text{ }\forall t\geq 0,
\label{dissi}
\end{equation}%
provided that $\left\vert \left\langle \varphi _{0}\right\rangle \right\vert
\leq m$, where $k$ and $C$ are positive constants independent of time and
initial data, but which depend on the other structural parameters of the
problem.
\end{proposition}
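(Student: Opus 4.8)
The plan is to combine the energy identity (\ref{eneeq}) with a suitable coercivity estimate and then close with Gronwall's lemma. First, by Theorem \ref{well} every weak solution satisfies (\ref{eneeq}), hence $t\mapsto\mathcal{E}(\varphi(t))$ is locally absolutely continuous on $[0,\infty)$ and
\[
\frac{d}{dt}\mathcal{E}(\varphi(t))=-\|\nabla\mu(t)\|_H^2\qquad\text{for a.e. }t\ge 0 .
\]
Thus it suffices to prove a pointwise-in-time bound $\mathcal{E}(\varphi(t))\le C_1\|\nabla\mu(t)\|_H^2+C_2(m)$ for a.e. $t\ge 0$, with $C_1>0$ independent of the data and $C_2(m)\ge 0$ depending only on $m$ and the structural constants: this turns the energy identity into $\frac{d}{dt}\mathcal{E}(\varphi)+C_1^{-1}\mathcal{E}(\varphi)\le C_1^{-1}C_2(m)$, and Gronwall's lemma then yields (\ref{dissi}) with $k=C_1^{-1}$ and $C(m)=C_2(m)$ (note that $\mathcal{E}(\varphi_0)<\infty$ for admissible data, by (H1), (H3) and $F(\varphi_0)\in L^1(\Omega)$).

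To obtain the key bound I would test (\ref{chem}) (here $\alpha=0$) by $\varphi(t)-\langle\varphi_0\rangle$ in $H$. Using $\langle\varphi(t)\rangle=\langle\varphi_0\rangle$ and that, by the symmetry of $J$, $\int_\Omega(a\varphi-J\ast\varphi)\,dx=0$, this gives
\[
\big(\mu-\langle\mu\rangle,\varphi-\langle\varphi_0\rangle\big)=\frac12\iint_{\Omega\times\Omega}J(x-y)\big(\varphi(x)-\varphi(y)\big)^2\,dxdy+\big(F'(\varphi),\varphi-\langle\varphi_0\rangle\big).
\]
The left-hand side is $\le C_P\|\nabla\mu\|_H\|\varphi-\langle\varphi_0\rangle\|_H\le C_P\|\nabla\mu\|_H\|\varphi\|_H$ by Cauchy--Schwarz and Poincar\'e--Wirtinger. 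For the right-hand side one uses (H2): since $s\mapsto a(x)s+F'(s)$ has derivative $\ge c_0>0$, i.e. $G(x,s):=\tfrac{a(x)}{2}s^2+F(s)$ is $c_0$-strongly convex in $s$, one has the pointwise inequality $\big(a(x)\varphi+F'(\varphi)\big)(\varphi-\langle\varphi_0\rangle)\ge G(x,\varphi)-G(x,\langle\varphi_0\rangle)+\tfrac{c_0}{2}(\varphi-\langle\varphi_0\rangle)^2$. Integrating this, re-expressing it through $\mathcal{E}(\varphi)$ and the nonlocal bilinear form $(\psi,\chi)\mapsto(J\ast\psi,\chi)$, and using (H3) --- which provides both the coercivity $\mathcal{E}(\varphi)\ge\tilde{c}\,\|\varphi\|_H^2-c_2|\Omega|$ with $\tilde{c}:=c_1-\tfrac12\|J\|_{L^1}>0$ and the slack to absorb the leftover quadratic terms --- should yield
\[
\big(\mu-\langle\mu\rangle,\varphi-\langle\varphi_0\rangle\big)\ \ge\ \nu\,\mathcal{E}(\varphi)-C(m)
\]
for some $\nu\in(0,1]$. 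Combining the two estimates, bounding $\|\varphi\|_H\le\tilde{c}^{-1/2}(\mathcal{E}(\varphi)+c_2|\Omega|)^{1/2}$, and absorbing the resulting $\mathcal{E}(\varphi)^{1/2}$ via Young's inequality gives the desired $\mathcal{E}(\varphi(t))\le C_1\|\nabla\mu(t)\|_H^2+C_2(m)$.

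The hard part is exactly this coercivity estimate $(\mu-\langle\mu\rangle,\varphi-\langle\varphi_0\rangle)\ge\nu\mathcal{E}(\varphi)-C(m)$ (a Polyak--\L ojasiewicz-type inequality for the gradient flow $\partial_t\varphi=\Delta\mu$, where $\mu-\langle\mu\rangle$ is the mean-zero $L^2$-gradient of $\mathcal{E}$). Its proof requires a careful balancing of the nonlocal interaction term --- which contributes a quantity of size $\|J\|_{L^1}\|\varphi\|_H^2$ of a priori indefinite sign --- against the convexity furnished by (H2) and the quadratic growth of $F$ furnished by (H3); the hypothesis $c_1>\tfrac12\|J\|_{L^1}$ in (H3) is precisely what keeps these quadratic contributions subcritical. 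Everything else --- the mean-value bookkeeping, the Cauchy--Schwarz/Young absorptions, the verification that all constants depend only on $m$ and the structural parameters, and the concluding Gronwall step --- is routine.
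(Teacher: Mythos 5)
Your overall skeleton is sound and close to the standard argument: the paper itself gives no proof of this Proposition (it cites \cite[Theorem 1]{FG2}), and its nearest in-paper analogue is the dissipative estimate \eqref{dis9} in Step 2 of Lemma \ref{sep}, which likewise tests the chemical potential against the order parameter, extracts a lower bound in terms of $\mathcal{E}$, and closes with Gronwall. Your reduction via \eqref{eneeq} to a pointwise bound $\mathcal{E}(\varphi(t))\le C_1\|\nabla\mu(t)\|_H^2+C_2(m)$ is legitimate under the hypotheses of Theorem \ref{well} (though, as the Remark after the Proposition points out, the cited proof does not use the energy identity and is therefore more general), and your algebraic identity from testing with $\varphi-\langle\varphi_0\rangle$ is correct: note in particular that the nonlocal term appears there as $\tfrac12\iint J(x-y)(\varphi(x)-\varphi(y))^2\,dxdy$, i.e.\ with \emph{twice} the weight it carries in $\mathcal{E}$.

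The genuine gap is the coercivity inequality $(\mu-\langle\mu\rangle,\varphi-\langle\varphi_0\rangle)\ge\nu\,\mathcal{E}(\varphi)-C(m)$, which you only assert, and the mechanism you propose for it --- (H2)-strong convexity of $G(x,s)=\tfrac{a(x)}{2}s^2+F(s)$ plus the margin $c_1>\tfrac12\|J\|_{L^1}$ from (H3) --- does not close. Indeed, the convexity inequality converts $(F'(\varphi),\varphi-\langle\varphi_0\rangle)$ into $\int_\Omega F(\varphi)\,dx$ with weight \emph{one} (plus $-\tfrac12\int a\varphi^2+\langle\varphi_0\rangle\int a\varphi+\tfrac{c_0}{2}\|\varphi-\langle\varphi_0\rangle\|_H^2-C(m)$), while the tested identity contributes $\int a\varphi^2-(J\ast\varphi,\varphi)$, i.e.\ the convolution term with weight one instead of the weight $\tfrac12$ it has in $\mathcal{E}$. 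After bounding $-(J\ast\varphi,\varphi)\ge-\|J\|_{L^1}\|\varphi\|_H^2$ (all that (H1) allows, since $J$ need not be nonnegative), the coefficient of $\|\varphi\|_H^2$ is essentially $c_1+\tfrac{c_0}{4}+\tfrac12\inf a-\|J\|_{L^1}$, which (H2)--(H3) do not make positive: take $c_1$ barely above $\tfrac12\|J\|_{L^1}$, $c_0$ small and $\inf a=0$. So the (H3) threshold is a factor $2$ short of what your route needs, and the ``careful balancing'' you defer to would fail as described. The estimate is nevertheless true under the standing assumptions, but the hypotheses doing the work are (H4)--(H5), not (H3): (H5) with $q\ge\tfrac12$ gives, after two integrations, $F(s)\ge\kappa|s|^{2+2q}-C(1+s^2)$, hence $F(s)\ge A s^2-C_A$ for \emph{every} $A>0$, and choosing $A$ large closes the quadratic bookkeeping at once; alternatively, testing with $\varphi$ (as the paper does in the singular case) requires (H4) to control $\langle\mu\rangle=\langle F'(\varphi)\rangle$ together with a substitute for $F'(s)s\ge F(s)-C$. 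You should rewrite the key step along one of these lines rather than attributing it to the (H3) margin.
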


\begin{remark}
The proof of (\ref{dissi}) does not require the validity of the energy
identity (\ref{eneeq}), and so it holds also outside the range $p\in \left(
\frac{6}{5},2\right] $ when $d=3$, see \cite{FG1}.
\end{remark}

Let us now set%
\begin{equation*}
\mathcal{Y}_{m}:=\left\{ \psi \in H\,:\,F\left( \psi\right) \in L^{1}\left(
\Omega \right) ,\; \left\vert \left\langle \varphi \right\rangle \right\vert
\leq m\right\} ,
\end{equation*}%
and endow $\mathcal{Y}_{m}$ with the following metric%
\begin{equation}
d\left( \psi _{1},\psi _{2}\right) :=\left\Vert \psi
_{1}-\psi_{2}\right\Vert _{H} +\left\vert \int_{\Omega }F\left( \psi
_{1}\right) dx-\int_{\Omega }F\left( \psi _{2}\right) dx\right\vert ^{1/2},
\label{metric}
\end{equation}%
for any $\psi_{1},\psi_{2}\in \mathcal{Y}_{m}$. Thanks to (\ref{dissi}) and
Theorem \ref{well}, we can associate with problem (\ref{nlch})-(\ref{ic})
the solution semiflow%
\begin{equation}
S\left( t\right) :\mathcal{Y}_{m}\rightarrow \mathcal{Y}_{m},\quad \varphi
_{0}\mapsto S\left( t\right) \varphi _{0}=\varphi \left( t\right) ,
\label{sem}
\end{equation}%
where $\varphi \left( t\right) $ is the unique weak solution of (\ref{nlch}%
)-(\ref{ic}).

Concerning the long-term behavior, there holds (see \cite[Theorem 4]{FG2})

\begin{theorem}
\label{gl_at}Let the assumptions of Theorem \ref{well} be satisfied. The
dynamical system $\left( S\left( t\right) ,\mathcal{Y}_{m}\right) $
possesses a connected global attractor $\mathcal{A}$.
\end{theorem}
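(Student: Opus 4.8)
The plan is to obtain a connected global attractor for $(S(t),\mathcal{Y}_m)$ via the standard abstract machinery: a semigroup that is asymptotically compact and possesses a bounded absorbing set automatically has a global attractor, and connectedness follows once we know the phase space is connected (or at least that the union of the semigroup orbits emanating from a connected absorbing set is connected). Thus the proof splits into three tasks: (i) produce a bounded absorbing set $\mathcal{B}_0$ in $\mathcal{Y}_m$; (ii) verify that $S(t)$ is asymptotically compact on $\mathcal{Y}_m$ (equivalently that it admits a compact absorbing/attracting set, or that it is asymptotically smooth); (iii) check the connectedness. Step (i) is essentially free from the dissipative estimate \eqref{dissi}: since $\mathcal{E}(\varphi(t))\le \mathcal{E}(\varphi_0)e^{-kt}+C(m)$ and, by (H3), $\mathcal{E}(\psi)\ge (c_1-\tfrac12\|J\|_{L^1})\|\psi\|_H^2 - c_2|\Omega|$ controls $\|\psi\|_H$, while $\mathcal{E}$ also bounds $\int_\Omega F(\psi)\,dx$ from above, the set $\mathcal{B}_0 := \{\psi\in\mathcal{Y}_m : \mathcal{E}(\psi)\le 2C(m)\}$ is absorbing and bounded in the metric $d$ of \eqref{metric}.

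For step (ii), which is the heart of the matter, I would use the energy identity \eqref{eneeq}. The natural route is to show that $S(t)$ is asymptotically smooth by the energy-method of Ball: one decomposes $S(t)=\Sigma(t)+\Theta(t)$ where $\Theta$ captures a contractive part and $\Sigma$ is (asymptotically) compact, but in practice for Cahn-Hilliard-type equations the slickest argument is the following. Take a sequence $\varphi_{0,n}$ in the absorbing set and times $t_n\to\infty$; set $\varphi_n(t):=S(t+t_n)\varphi_{0,n}$. By \eqref{eneeq} and \eqref{dissi}, $\mathcal{E}(\varphi_n(t))$ is uniformly bounded and $\int_0^\infty\|\nabla\mu_n(s)\|_H^2\,ds$ is uniformly bounded, hence (using \eqref{weakf}) $\partial_t\varphi_n$ is bounded in $L^2(0,T;V')$ and $\varphi_n$ is bounded in $L^\infty(0,T;L^{2+2q})\cap L^2(0,T;V)$ — the $V$-bound coming from testing \eqref{chem} with $\varphi-\langle\varphi\rangle$ and using (H2) to absorb the $F'(\varphi)\varphi$ term from below, noting $\mu=a\varphi-J*\varphi+F'(\varphi)$ and $\|\nabla\mu\|_H$ is controlled. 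Then Aubin–Lions gives $\varphi_n\to\varphi$ strongly in $C([0,T];H)$ along a subsequence, so $\|\varphi_n(0)-\varphi(0)\|_H\to 0$. The remaining, and genuinely delicate, point is upgrading to convergence in the metric $d$, i.e. $\int_\Omega F(\varphi_n(0))\,dx\to\int_\Omega F(\varphi(0))\,dx$: here one uses the energy identity to pass to the limit in $\mathcal{E}(\varphi_n(t))$ — weak lower semicontinuity gives one inequality, and the energy \emph{identity} combined with the strong convergence and Fatou/uniform integrability coming from (H3)–(H4) gives the reverse, exactly as in \cite{FG2}. This recovers convergence of $\int_\Omega F(\varphi_n)\,dx$ and hence of the full metric $d$, yielding asymptotic compactness.

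For step (iii), connectedness of $\mathcal{A}$: since $\mathcal{Y}_m$ is convex (the constraint $|\langle\psi\rangle|\le m$ is convex and $F$ convex up to a quadratic by (H2), so $\{F(\psi)\in L^1\}$ is also convex in a suitable sense, or one argues on a large ball which is connected) and $S(t)$ is continuous, the attractor of a dissipative, asymptotically compact semigroup on a connected metric space is connected by the classical result of Babin–Vishik. Concretely, $\mathcal{A}=\omega(\mathcal{B}_0)$ and $\mathcal{B}_0$ is connected, so each $\overline{\bigcup_{t\ge\tau}S(t)\mathcal{B}_0}$ is connected, and the attractor, being their intersection (a nested family of compact connected sets), is connected.

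I expect the main obstacle to be exactly the second half of step (ii): promoting $H$-convergence to convergence in the stronger metric $d$, which forces one to genuinely exploit the energy identity \eqref{eneeq} (and therefore the restriction $p\in(6/5,2]$ when $d=3$) rather than merely an energy inequality — this is precisely why the hypotheses of Theorem \ref{well} are assumed, and it is the reason the result is quoted from \cite{FG2} rather than being elementary. Everything else (the absorbing set, Aubin–Lions compactness, connectedness) is routine.
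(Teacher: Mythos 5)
The paper itself contains no proof of this theorem: it is quoted from \cite[Theorem 4]{FG2}, and your overall architecture (absorbing set from the dissipative estimate \eqref{dissi}, asymptotic compactness in the metric \eqref{metric} obtained by exploiting the energy identity \eqref{eneeq}, connectedness by the standard Babin--Vishik argument) is exactly the route of that reference, as the introduction indicates; you also correctly locate why the energy \emph{identity}, and hence the restriction $p\in(6/5,2]$ for $d=3$, is needed.

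One intermediate step would fail as written, though: the claim that Aubin--Lions gives $\varphi_n\to\varphi$ strongly in $C([0,T];H)$, hence $\|\varphi_n(0)-\varphi(0)\|_H\to0$. From the bounds you list ($\varphi_n$ bounded in $L^2(0,T;V)\cap L^\infty(0,T;L^{2+2q}(\Omega))$, $\partial_t\varphi_n$ bounded in $L^2(0,T;V')$), the Aubin--Lions--Simon lemma yields strong convergence only in $L^2(0,T;H)$ and in $C([0,T];V')$: the embedding $L^{2+2q}(\Omega)\subset L^2(\Omega)$ is never compact and there is no $L^\infty(0,T;V)$ bound, so uniform-in-time strong $H$-convergence, in particular at the single evaluation time, does not follow. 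The repair lies inside the tool you already invoke: at the evaluation time one only has weak $H$-convergence, and the Ball-type argument (run the energy identity on intervals $[t_n-T,t_n]$ ending at the evaluation time, use monotonicity of $\mathcal{E}$ and let $T\to\infty$) gives convergence of the energies; since $\varphi\mapsto\int_\Omega\bigl(\tfrac{a}{2}\varphi^2+F(\varphi)\bigr)dx$ is uniformly convex by (H2) (modulus proportional to $c_0\|\cdot\|_H^2$) and weakly lower semicontinuous, while $\varphi\mapsto-\tfrac12\int_\Omega\varphi\,(J\ast\varphi)\,dx$ is weakly continuous by compactness of $J\ast$ (Remark \ref{compact}), energy convergence plus weak convergence yields \emph{both} the strong $H$-convergence and the convergence of $\int_\Omega F$, i.e. convergence in $d$. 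So the delicate point is not only the $F$-integral, as you suggest: the strong $H$-convergence at the fixed time is part of the same energy-identity step. A minor further fix: the uniform $L^2(0,T;V)$ bound for $\varphi$ comes from writing $\nabla\mu=(a+F''(\varphi))\nabla\varphi+\varphi\nabla a-\nabla J\ast\varphi$ and testing with $\nabla\varphi$, using $a+F''\ge c_0$, rather than from testing \eqref{chem} with $\varphi-\langle\varphi\rangle$. With these corrections your proof coincides with the cited one.
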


\subsection{Exponential attractors}

The main results of this section are contained in the following

\begin{theorem}
\label{expo} For every fixed $m\geq 0$, there exists an exponential
attractor $\mathcal{M}=\mathcal{M}\left( m\right) $ bounded in $V\cap
C^{\alpha }\left( \overline{\Omega }\right) $ for the dynamical system $%
\left( \mathcal{Y}_{m},S\left( t\right) \right) $ which satisfies the
following properties:

(i) Semi-invariance: $S\left( t\right) \mathcal{M}\subset \mathcal{M}$, for
every $t\geq 0.$

(ii) Exponential attraction:%
\begin{equation*}
dist_{C^{\alpha }\left( \overline{\Omega }\right) \cap H^{1-\nu }\left(
\Omega \right) }\left( S\left( t\right) \mathcal{Y}_{m},\mathcal{M}\right)
\leq Ce^{-\kappa t},\quad \forall t\geq 0,
\end{equation*}%
for some positive constants $C_{m}$ and $\kappa $, for any $\nu \in (0,1)$
and some $\alpha \in \left( 0,1\right) .$

(iii) Finite dimensionality:%
\begin{equation*}
\dim _{F}\left( \mathcal{M},C^{\alpha }\left( \overline{\Omega }\right)
\right) \leq C_{m,\alpha }<\infty .
\end{equation*}
\end{theorem}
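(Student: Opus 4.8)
\textbf{Proof strategy for Theorem \ref{expo}.} The plan is to apply the abstract construction of exponential attractors via the ``short trajectory'' (or $\ell$-trajectory) method of Efendiev--Zelik \cite{EZ}. The scheme requires three ingredients: a compact absorbing set $\mathcal{B}$ in a suitable Banach space, a time $T>0$ such that $S(T)$ restricted to $\mathcal{B}$ satisfies a smoothing (or discrete squeezing) estimate of the form $\|S(T)\varphi_0^1-S(T)\varphi_0^2\|_{\mathcal{W}}\le L\|\varphi_0^1-\varphi_0^2\|_{H}$ for some auxiliary space $\mathcal{W}$ compactly embedded in the phase space, and H\"older (or Lipschitz) continuity in time of the map $t\mapsto S(t)\varphi_0$ on $\mathcal{B}$. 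Once these are in place, the abstract theorem produces a discrete exponential attractor $\mathcal{M}_d$ for the map $S(T)$, and one then sets $\mathcal{M}:=\bigcup_{t\in[0,T]}S(t)\mathcal{M}_d$ to obtain the exponential attractor for the continuous semiflow; properties (i)--(iii) follow from the general theory, with the asserted topologies inherited from the regularity of $\mathcal{B}$.

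The crucial and most technical step --- the one explicitly flagged in the introduction --- is establishing the \emph{eventual uniform boundedness} of the order parameter, i.e.\ the existence of an absorbing set bounded in $L^\infty(\Omega)$, and then bootstrapping it to boundedness in $V\cap C^\alpha(\overline{\Omega})$. For the $L^\infty$-bound I would run an Alikakos--Moser iteration: test the equation (\ref{weakf2}) (or rather the time-differentiated/parabolic form for $\varphi-\langle\varphi\rangle$, using $\mathcal{N}$) with powers $|\varphi|^{2^k-2}\varphi$, exploit assumption (H2) (which gives coercivity $F''+a\ge c_0>0$, hence monotonicity of $\varphi\mapsto\Phi(x,\varphi)$ up to a linear term) to control the nonlinearity, and handle the convolution term $\nabla J\ast\varphi$ by (H1) (so $\|\nabla J\ast\varphi\|_{L^q}\le\|\nabla J\|_{L^1}\|\varphi\|_{L^q}$) together with the already-known dissipative estimate (\ref{dissi}), which furnishes a uniform $L^2$-in-space bound to seed the iteration. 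The iteration yields $\limsup_{t\to\infty}\|\varphi(t)\|_{L^\infty}\le R_0$ with $R_0$ independent of $\varphi_0\in\mathcal{Y}_m$. With $\varphi$ now bounded, $\Phi(\cdot,\varphi)\in L^\infty$ and parabolic regularity (or a direct energy estimate on $\partial_t\varphi$, using the energy identity (\ref{eneeq}) to get $\nabla\mu\in L^2(0,\infty;H)$ on average) gives an absorbing set bounded in $V$; then, since $\mu=a\varphi-J\ast\varphi+F'(\varphi)$ is bounded in $V$ and $\varphi=\mathcal{N}(\partial_t\varphi)+\langle\varphi\rangle+\text{(lower order)}$ can be read off the elliptic-type relation $a\varphi+F'(\varphi)=\mu+J\ast\varphi$ with a strictly monotone left-hand side, one upgrades to $\varphi\in C^\alpha(\overline{\Omega})$ via the De Giorgi--Nash--Moser / Hölder theory for the scalar equation $\Phi(x,\varphi)=g$ with $g\in V\hookrightarrow L^6$, $d\le3$ (this is where Remark \ref{compact} on the compactness of $\psi\mapsto J\ast\psi$ from $L^\infty$ to $C^0(\overline{\Omega})$ enters). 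This produces the compact absorbing set $\mathcal{B}$.

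For the smoothing/Lipschitz step, I would take two solutions $\varphi^1,\varphi^2$ with data in $\mathcal{B}$, write the equation for the difference $\bar\varphi=\varphi^1-\varphi^2$ and $\bar\mu=\mu^1-\mu^2$, noting that $\langle\bar\varphi(t)\rangle$ need not vanish but is controlled, and perform the standard $\mathcal{N}$-based estimate: test with $\mathcal{N}(\bar\varphi-\langle\bar\varphi\rangle)$, use $(F'(\varphi^1)-F'(\varphi^2))\bar\varphi\ge -c|\bar\varphi|^2$ from (H2) and the Lipschitz bound on $F'$ on the bounded range, and absorb the convolution term by (H1), to get a Gronwall inequality yielding continuous dependence in the $V_0'$-type norm. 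Differentiating in time and testing with $\partial_t\bar\varphi$ (possible because solutions starting in $\mathcal{B}$ are more regular) then gives, after integrating in $t$ and using a uniform Gronwall argument, a bound $\|\bar\varphi(T)\|_{V}\le L(T)\,\|\bar\varphi(0)\|_{H}$, which is the desired smoothing estimate into a space compactly embedded in $H$. The time-Hölder continuity on $\mathcal{B}$ is then a short computation from $\partial_t\varphi=\Delta\mu\in L^\infty(0,\infty;V')$ plus $\varphi\in L^\infty(0,\infty;V)$, giving $\varphi\in C^{1/2}([0,\infty);H)$ and, by interpolation, Hölder continuity in the weaker target spaces appearing in (ii). I expect the Alikakos--Moser $L^\infty$-bound, and in particular keeping all constants uniform in $\varphi_0\in\mathcal{Y}_m$ (hence the role of (\ref{dissi})), to be the genuine obstacle; the abstract Efendiev--Zelik machinery and the difference estimates are then routine, and the statements about the various Hölder/Sobolev topologies in (ii)--(iii) follow by interpolation between $C^\alpha(\overline{\Omega})$-boundedness of $\mathcal{M}$ and the $H$-contraction.
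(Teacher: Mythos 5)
Your overall architecture (Alikakos--Moser $L^\infty$ bound uniform in $\varphi_0\in\mathcal{Y}_m$, then a compact absorbing set, then an Efendiev--Zelik squeezing estimate plus time-H\"older continuity, and finally $\mathcal{M}=\bigcup_{t\in[0,T]}S(t)\mathcal{M}_d$) coincides with the paper's, but two of your technical steps do not hold as stated. First, your route to the $C^{\alpha}(\overline{\Omega})$ bound is flawed: the relation $\Phi(x,\varphi)=a(x)\varphi+F'(\varphi)=\mu+J\ast\varphi$ is a pointwise algebraic identity, not a PDE, so ``De Giorgi--Nash--Moser for $\Phi(x,\varphi)=g$'' has no content; inverting the strictly monotone $\Phi$ only transfers the regularity of $g=\mu+J\ast\varphi$ to $\varphi$, and in the nonviscous case ($\alpha=0$) you only control $\mu$ in $L^2_{loc}(V)$ (at best $L^2([t,t+1];H^2)$ after the $V$-estimate), never uniformly in time in $L^\infty$ or $C^{\alpha}$, so no H\"older bound on $\varphi$ follows from $g\in V\hookrightarrow L^6$. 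The paper instead rewrites (\ref{nlch})--(\ref{bc}) as the quasilinear divergence-form parabolic problem $\partial_t\varphi=\mathrm{div}\bigl((a(x)+F''(\varphi))\nabla\varphi+\varphi\nabla a-\nabla J\ast\varphi\bigr)$, whose structure conditions follow from the $L^\infty$ bound, (H1) and (H2), and invokes the parabolic H\"older regularity result of Dung (Lemma \ref{holderb}); that is the missing idea you need.

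Second, your smoothing step claims an $H\to V$ estimate for the difference of two solutions, $\|\bar\varphi(T)\|_{V}\le L\|\bar\varphi(0)\|_{H}$, obtained by differentiating in time and testing with $\partial_t\bar\varphi$. With only $\varphi_i\in V\cap L^\infty$ on the absorbing set, the commutator-type term $(F''(\varphi_1)-F''(\varphi_2))\nabla\varphi_2$ cannot be absorbed (you would need $\|\bar\varphi\|_{L^\infty}$ or $\nabla\varphi_2$ in a space better than $L^2$ controlled by $\|\bar\varphi(0)\|_H$), so this estimate is not available and the paper never proves it. The paper's actual squeezing is weaker and works in the $V'_m$ metric: a contraction estimate up to the lower-order term $\|\bar\varphi\|_{L^2([3\tau,t];V')}$ and $|M_1-M_2|$ (Lemma \ref{dec}), combined with Lipschitz continuity of the map $\varphi_0\mapsto\varphi$ from $V'_m$ into the trajectory space $L^2([0,T];H)\cap H^1([0,T];D(A_N)')$ (Lemma \ref{lipdif}), which is compactly embedded in $L^2([0,T];V'_m)$; these are exactly the hypotheses (\ref{gl1})--(\ref{gl2}) of Proposition \ref{abstract}. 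The stronger attraction topology in (ii)--(iii) is then recovered, as you guessed, by the $L^2$--$C^{\alpha}$ smoothing of differences (Lemma \ref{holder}) and interpolation, but the core squeezing must be formulated in $V'$ over trajectories, not as an $H\to V$ smoothing of the difference at a single time.
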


Thus we can immediately deduce the

\begin{corollary}
The global attractor $\mathcal{A}$ is bounded in $V\cap C^{\alpha }\left(
\overline{\Omega }\right) $ and has finite fractal dimension:%
\begin{equation*}
\dim _{F}\left( \mathcal{A},C^{\alpha }\left( \overline{\Omega }\right)
\right) <\infty .
\end{equation*}
\end{corollary}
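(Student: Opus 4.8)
The plan is to obtain the corollary as an immediate consequence of Theorem \ref{expo}, combined with two standard facts: that the global attractor is the minimal closed attracting set, and that the fractal dimension is monotone under set inclusion. No new analysis is needed, since all the hard estimates are already contained in the proof of Theorem \ref{expo}.

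First I would recall that the exponential attractor $\mathcal{M}=\mathcal{M}(m)$ produced by Theorem \ref{expo} is a closed subset of $\mathcal{Y}_{m}$, bounded in $V\cap C^{\alpha}(\overline{\Omega})$, which attracts $\mathcal{Y}_{m}$ exponentially fast in the $C^{\alpha}(\overline{\Omega})\cap H^{1-\nu}(\Omega)$ metric. On the other hand, the global attractor $\mathcal{A}$ of Theorem \ref{gl_at} is fully invariant: $\mathcal{A}=S(t)\mathcal{A}$ for every $t\ge 0$. Hence, using property (ii) of Theorem \ref{expo},
\[
\dist_{C^{\alpha}(\overline{\Omega})\cap H^{1-\nu}(\Omega)}(\mathcal{A},\mathcal{M})=\dist_{C^{\alpha}(\overline{\Omega})\cap H^{1-\nu}(\Omega)}(S(t)\mathcal{A},\mathcal{M})\le Ce^{-\kappa t}
\]
for all $t\ge 0$; letting $t\to\infty$ and using that $\mathcal{M}$ is closed gives $\mathcal{A}\subseteq\mathcal{M}$. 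In particular $\mathcal{A}$ inherits from $\mathcal{M}$ the boundedness in $V\cap C^{\alpha}(\overline{\Omega})$, which is the first assertion.

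For the dimension bound I would invoke the monotonicity of the fractal dimension: if $X\subseteq Y$ inside a metric space, any finite $\epsilon$-cover of $Y$ restricts to one of $X$, whence $\dim_{F}(X)\le\dim_{F}(Y)$. Applying this with $X=\mathcal{A}$, $Y=\mathcal{M}$ in the $C^{\alpha}(\overline{\Omega})$ topology and invoking property (iii) of Theorem \ref{expo} yields
\[
\dim_{F}(\mathcal{A},C^{\alpha}(\overline{\Omega}))\le\dim_{F}(\mathcal{M},C^{\alpha}(\overline{\Omega}))\le C_{m,\alpha}<\infty .
\]

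The only point deserving a word of care --- and the nearest thing to an obstacle --- is to make sure that $\mathcal{A}\subseteq\mathcal{M}$ is meaningful in the $C^{\alpha}(\overline{\Omega})$ topology, i.e. that $\mathcal{A}$ genuinely lies in $C^{\alpha}(\overline{\Omega})$ rather than merely in the weaker space $\mathcal{Y}_{m}$. This, however, is already built into the proof of Theorem \ref{expo}, which produces an absorbing set bounded in $V\cap C^{\alpha}(\overline{\Omega})$; since $\mathcal{A}$ is the $\omega$-limit set of such an absorbing set it is automatically contained in $C^{\alpha}(\overline{\Omega})$, so that the displayed estimates are legitimate.
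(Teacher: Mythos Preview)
Your argument is correct and is precisely the standard deduction the paper has in mind when it writes ``Thus we can immediately deduce'' the corollary from Theorem~\ref{expo}: the inclusion $\mathcal{A}\subseteq\mathcal{M}$ follows from the invariance of $\mathcal{A}$ and the attraction and closedness of $\mathcal{M}$, and then boundedness in $V\cap C^{\alpha}(\overline{\Omega})$ and finite fractal dimension are inherited by monotonicity. Your additional remark about $\mathcal{A}$ genuinely lying in $C^{\alpha}(\overline{\Omega})$ is a sensible clarification, though not strictly needed once $\mathcal{A}\subseteq\mathcal{M}$ is established.
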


To prove Theorem \ref{expo} we first need to derive a number of properties
of the semigroup solution. The first result gives a dissipative estimate in
the space $L^{\infty }\left( \Omega \right) .$

\begin{lemma}
\label{linf} Let the assumptions of Theorem \ref{well} be satisfied. For
every $\tau >0$, there exists a constant $C_{m,\tau }>0$ such that%
\begin{equation}
\sup_{t\geq 2\tau }\left\Vert \varphi \left( t\right) \right\Vert
_{L^{\infty }\left( \Omega \right) }\leq C_{m,\tau }.  \label{linf1}
\end{equation}%
Moreover, there exists $R_{0}>0$ (independent of time, $\tau $ and initial
data) such that $S\left( t\right) $ possesses an absorbing ball $\mathcal{B}%
_{L^{\infty }\left( \Omega \right) }\left( R_{0}\right) $, bounded in $%
L^{\infty }\left( \Omega \right) $.
\end{lemma}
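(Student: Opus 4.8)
The plan is to run an Alikakos--Moser iteration on the equation written in the form \eqref{weakf2}, testing against powers of $\varphi$ (after subtracting the conserved mean) and bootstrapping the $L^{2^{k}}$ bounds up to $L^{\infty}$. First I would reduce to the case of zero mean: set $\bar\varphi := \varphi - \langle\varphi_{0}\rangle$, which satisfies $\langle\bar\varphi(t)\rangle = 0$ and still solves an equation of the type \eqref{weakf2} with $\Phi$ replaced by a shifted nonlinearity whose structural properties (in particular the coercivity coming from (H2) and the growth control (H4)) are unchanged up to constants depending on $m$. From the dissipative estimate \eqref{dissi} together with (H3) we already have, for $t \geq t_{0}(m)$ or even uniformly after a fixed time, a bound $\|\varphi(t)\|_{H} \leq C(m)$ and $\int_{\Omega}F(\varphi(t))\,dx \leq C(m)$; this is the base case of the iteration and also what guarantees the absorbing ball at the $L^{2}$ level.

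The core step: for $p \geq 2$ (I will really take $p = 2^{k}$) I multiply \eqref{weakf2} by $|\bar\varphi|^{p-2}\bar\varphi$ and integrate over $\Omega$. The time-derivative term produces $\frac{1}{p}\frac{d}{dt}\|\bar\varphi\|_{L^{p}}^{p}$. On the left-hand side, $(\nabla\Phi,\nabla(|\bar\varphi|^{p-2}\bar\varphi))$ contributes the good term: by the chain rule $\nabla\Phi = (a + F''(\varphi))\nabla\varphi + (\nabla a)\varphi$, and pairing with $\nabla(|\bar\varphi|^{p-2}\bar\varphi) = (p-1)|\bar\varphi|^{p-2}\nabla\varphi$ gives, thanks to (H2) (i.e.\ $F'' + a \geq c_{0}$), a term bounded below by $c_{0}(p-1)\int_{\Omega}|\bar\varphi|^{p-2}|\nabla\varphi|^{2}\,dx = \frac{4c_{0}(p-1)}{p^{2}}\|\nabla|\bar\varphi|^{p/2}\|_{H}^{2}$, up to a lower-order term from $\nabla a$ controlled by Remark \ref{compact} ($a \in L^{\infty}$, $\nabla a \in L^{\infty}$). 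The right-hand side $(\nabla J * \varphi, \nabla(|\bar\varphi|^{p-2}\bar\varphi))$ is handled by integrating by parts to move the gradient onto $\nabla J * \varphi$, using $\|\nabla J * \varphi\|_{\cdot} \lesssim \|\nabla J\|_{L^{1}}\|\varphi\|_{\cdot}$ from (H1), and absorbing via Young's inequality into the good gradient term plus a term of the form $C p \,\|\varphi\|_{L^{p}}^{p}$ (tracking the $p$-dependence carefully). This yields a differential inequality of the schematic form
\begin{equation*}
\frac{d}{dt}\|\bar\varphi\|_{L^{p}}^{p} + \frac{c}{p}\,\big\||\bar\varphi|^{p/2}\big\|_{V}^{2} \leq C p \,\big\||\bar\varphi|^{p/2}\big\|_{L^{2}}^{2} + C p\,|\Omega|.
\end{equation*}
Then I interpolate the $L^{2}$ norm of $|\bar\varphi|^{p/2}$ between $L^{1}$ and $V$ (Gagliardo--Nirenberg in dimension $d \leq 3$), absorb the $V$-part, and obtain $\frac{d}{dt}y_{p} + \delta_{p} y_{p} \leq C_{p}(y_{p/2}^{\theta} + 1)$ with $y_{p} = \|\bar\varphi\|_{L^{p}}^{p}$; solving this ODE and iterating $p = 2, 4, 8, \dots$ produces, by the standard Moser bookkeeping, a bound $\sup_{t \geq 2\tau}\|\bar\varphi(t)\|_{L^{\infty}} \leq C_{m,\tau}$ that is uniform in the initial datum once we are past any positive time $\tau$ — the splitting of the time interval and the smoothing from $t = \tau$ to $t = 2\tau$ is exactly what produces the $\tau$-dependence. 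For the absorbing ball one runs the same iteration using only the time-uniform (datum-independent) part $C(m)$ of \eqref{dissi}, which gives the radius $R_{0}$ depending only on structural constants and $m$.

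The main obstacle I anticipate is controlling the constants' dependence on $p$ through the iteration: the coefficient $C p$ on the right-hand side and the degeneration of $\delta_{p}$ (the effective dissipation rate after interpolation) must be tracked so that the resulting recursion $\log M_{k+1} \leq \frac{1}{2^{k}}(\log M_{k} + A k + B)$ (or similar) has a convergent sum, giving a finite $L^{\infty}$ bound rather than one blowing up with $k$. A secondary technical point is justifying the test function $|\bar\varphi|^{p-2}\bar\varphi \in V$ rigorously — this needs an approximation/truncation argument since a priori $\bar\varphi(t) \in V$ only for a.e.\ $t$ and $|\bar\varphi|^{p-2}\bar\varphi$ need not lie in $V$ without an extra integrability input, so one works with truncated powers $\beta_{N}(\bar\varphi)$ and passes to the limit, using the already-established lower $L^{q}$ bounds as a starting point for each stage. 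Once $L^{\infty}$ is in hand, uniform continuity of $t \mapsto \varphi(t)$ and the semiflow property upgrade the pointwise-in-time bound to the stated $\sup_{t \geq 2\tau}$ form.
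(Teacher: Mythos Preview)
Your proposal is correct and follows essentially the same Alikakos--Moser strategy as the paper: test against $|\varphi|^{p-1}\varphi$, extract coercivity from (H2), control the convolution term via (H1), and iterate $p=2^k$ to pass from the $L^2$ bound supplied by \eqref{dissi} to $L^\infty$. The one substantive technical difference is how the test function is made admissible: instead of your truncation-and-bootstrap, the paper approximates the initial datum by $\varphi_{0\varepsilon}\in L^\infty(\Omega)$ and invokes the a~priori bound of \cite[Theorem~2.1]{BH} to guarantee $\varphi_\varepsilon\in L^\infty(\mathbb{R}_+\times\Omega)$ outright, so $|\varphi_\varepsilon|^{p-1}\varphi_\varepsilon$ is admissible for every $p$ simultaneously; the iteration is then run with $\varepsilon$-independent constants and one passes to the limit at the end. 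The paper also organizes the recursion via the time-shifted inequality $x_k(t)\leq C_\xi(2^k)^\sigma\bigl(\sup_{s\geq t-\xi/2^k}x_{k-1}(s)\bigr)^2$ from \cite{G0}, which absorbs the smoothing-in-time and the $\tau$-dependence in one step, rather than your ODE formulation---but this is equivalent bookkeeping. Your mean-subtraction is harmless but unnecessary; the paper works directly with $\varphi$.
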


\begin{proof}
Our proof of (\ref{linf1}) relies on an iterative argument as in \cite{G0}.
The estimates will be derived assuming sufficiently smooth solutions to (\ref%
{nlch})-(\ref{ic}) so that the function $\left\vert \varphi \right\vert
^{p-1}\varphi $ is also $L^{2}$-summable for each $p>1.$ The scheme we
employ is as follows: let $\varphi _{0\varepsilon }\in L^{\infty }\left(
\Omega \right) $ such that $\varphi _{0\varepsilon }\rightarrow \varphi _{0}$
in $H,$ and such that $F\left( \varphi _{0\varepsilon }\right) \rightarrow
F\left( \varphi _{0}\right) $ in $L^{1}\left( \Omega \right) $ as $%
\varepsilon \rightarrow 0$. In this case, we can exploit the existence proof
of Theorem \ref{well} (see \cite{CFG}) one more time and an a priori $%
L^{\infty }$-estimate from \cite[Theorem 2.1]{BH} to deduce the existence of
a weak solution $\varphi _{\varepsilon }$ satisfying (\ref{class}) with the
additional \emph{essential} property%
\begin{equation}
\varphi _{\varepsilon }\in L^{\infty }\left( \mathbb{R}_{+}\times \Omega
\right), \quad\forall \varepsilon >0.  \label{suf}
\end{equation}%
Also for practical purposes, $C$ denotes from now on a positive constant
that is independent of $t$, $\varepsilon $, $\varphi$ and initial data, but
which only depends on the other structural parameters. Such a constant may
vary even from line to line. Further dependencies of this constant on other
parameters will be pointed out as needed.

For $p>1$, omitting the subscript $\varepsilon $, we multiply equation (\ref%
{nlch}) by $\left\vert \varphi \right\vert ^{p-1}\varphi $ and integrate
over $\Omega $, to obtain%
\begin{equation}
\frac{d}{dt}\int_{\Omega }\left\vert \varphi \right\vert ^{p+1}dx+\frac{2pC}{%
p+1}\int_{\Omega }\left\vert \nabla \left\vert \varphi \right\vert ^{\frac{%
p+1}{2}}\right\vert ^{2}dx\leq C\left( p+1\right) ^{2}\int_{\Omega
}\left\vert \varphi \right\vert ^{p+1}dx,  \label{lp}
\end{equation}%
where $C>0$ is independent of $p$ and $\varepsilon >0$, owing to the
assumptions (H1)-(H2) (cf. \cite[Theorem 2.1, (2.8)-(2.16)]{BH}). Note that
the regularity (\ref{suf}) is key in proving this estimate. Setting now $%
p=2^{k}-1$, $k\geq 0,$ then%
\begin{equation*}
x_{k}\left( t\right) :=\int_{\Omega }\left\vert \varphi \left( t\right)
\right\vert ^{2^{k}}dx,\quad k\geq 0,
\end{equation*}%
and having established (\ref{lp}), we can now exploit the scheme in \cite[%
Theorem 3.2, (3.8)-(3.10)]{G0} (see also \cite[Theorem 2.3]{G00}) to derive
the following inequality:%
\begin{equation}
x_{k}\left( t\right) \leq C_{\xi }\left( 2^{k}\right) ^{\sigma }\left(
\sup_{s\geq t-\xi /2^{k}}x_{k-1}\left( s\right) \right) ^{2},\quad \forall
k\geq 1,  \label{iti}
\end{equation}%
where $t,\xi $ are two positive constants such that $t-\xi /2^{k}>0$, and $%
C_{\xi },$ $\sigma $ are positive constants independent of $k$; the constant
$C_{\xi }$ is bounded if $\xi $ is bounded away from zero. We can iterate in
(\ref{iti}) reasoning exactly as in, e.g., \cite[Theorem 3.2]{G0} (cf.,
also, \cite[Theorem 2.3]{G00}). For the sake of completeness, we report a
sketch of the argument. Choose any numbers $\tau ^{\prime }>\tau >0$ such
that $\xi =(\tau ^{\prime }-\tau ),$ $t_{0}=\tau ^{\prime }$ and $%
t_{k}=t_{k-1}-\xi /2^{k},$ $k\geq 1$. Thus, in view of (\ref{iti}) we have%
\begin{equation}
\sup_{t\geq t_{k-1}}x_{k}\left( t\right) \leq C_{\xi }\left( 2^{k}\right)
^{\sigma }(\sup_{s\geq t_{k}}x_{k-1}\left( s\right) )^{2},\quad k\geq 1.
\label{e13}
\end{equation}%
Next, define%
\begin{equation}
C_{H}:=\sup_{s\geq t_{1}}x_{0}\left( s\right) =\sup_{s\geq t_{1}}\left\Vert
\varphi \left( s\right) \right\Vert _{H}^{2}.  \label{e13bis}
\end{equation}%
Thus, we can iterate in (\ref{e13}) with respect to $k\geq 1$ and obtain that%
\begin{equation}
\sup_{t\geq t_{0}}x_{k}\left( t\right) \leq \sup_{t\geq t_{k-1}}x_{k}\left(
t\right) \leq C_{\xi }^{A_{k}}2^{\sigma B_{k}}\left( C_{H}\right) ^{2^{k}},
\label{e14}
\end{equation}%
where%
\begin{equation}
A_{k}:=1+2+2^{2}+...+2^{k}\leq 2^{k}\sum\limits_{i\geq 1}\frac{1}{2^{i}}
\label{ak2}
\end{equation}%
\begin{equation}
B_{k}:=k+2\left( k-1\right) +2^{2}\left( k-2\right) +...+2^{k}\leq
2^{k}\sum\limits_{i\geq 1}\frac{i}{2^{i}}.  \label{bk2}
\end{equation}%
Therefore, taking the $2^{k}$-root on both sides of (\ref{e14}) and then
letting $k\rightarrow +\infty $ (note that the series in (\ref{ak2})-(\ref%
{bk2}) are convergent), we deduce%
\begin{equation}
\sup_{t\geq t_{0}=\tau ^{\prime }}\left\Vert \varphi \left( t\right)
\right\Vert _{L^{\infty }\left( \Omega \right) }\leq \lim_{k\rightarrow
+\infty }\sup_{t\geq t_{0}}\left( x_{k}\left( t\right) \right)
^{1/2^{k}}\leq C_{\xi }\left( C_{H}\right) ,  \label{l2linf}
\end{equation}%
for some positive constant $C_{0}$ independent of $t,$ $k$, $\varphi ,$ $%
\varepsilon ,$ $\xi $ and initial data.

In order to prove the first assertion of lemma, we observe that a simple
argument \cite[Proposition 4, (3.21)-(3.22)]{FG2} yields, on account of (\ref%
{dissi}), that%
\begin{equation}
C_{H}=\sup_{t\geq \frac{3\tau }{2}}\left\Vert \varphi \left( t\right)
\right\Vert _{H}^{2}\leq C_{m}\left( 1+\mathcal{E}\left( \varphi _{0}\right)
\right) .  \label{dis}
\end{equation}%
Thus, setting $\tau ^{\prime }=2\tau $ so that $\xi =\tau $, we readily
obtain the first claim (\ref{linf1})\ of lemma from (\ref{dis}). On the
other hand, the same argument as in \cite[Proposition 4, (3.21)-(3.22)]{FG2}
yields a bounded absorbing ball in $H$. Indeed, in light of (\ref{dissi}),
it is not difficult to see that, for any bounded set $\mathcal{B}\subset
\mathcal{Y}_{m}$, there exists a time $t_{\ast }=t_{\ast }\left( \mathcal{B}%
\right) >0$ such that $S\left( t\right) \mathcal{B}\subset H,$ for all $%
t\geq t_{\ast }$. Next, we can choose $\tau ^{\prime }=\tau +2\xi $ with $%
\tau =t_{\ast }$ and $\xi =1,$ so that $C_{H}$ and $C_{\xi }$ are bounded
uniformly with respect to initial data as $t\geq t_{\ast }$. Hence, the $%
L^{2}$-$L^{\infty }$ smoothing property (\ref{l2linf}) immediately entails
the second assertion of lemma.
\end{proof}

We also have

\begin{lemma}
\label{holderb} Let the assumptions of Theorem \ref{well} be satisfied.
Then, for every $\tau >0$, there exists a constant $C_{m,\tau ,\alpha }>0$
such that%
\begin{equation}
\sup_{t\geq 2\tau }\left\Vert \varphi \right\Vert _{C^{\alpha /2,\alpha
}\left( \left[ t,t+1\right] \times \overline{\Omega }\right) }\leq C_{m,\tau
,\alpha },  \label{holdest}
\end{equation}%
for some $\alpha \in \left( 0,1\right) .$ Thus, there exists $R_{1}>0$
(independent of time, $\tau $ and initial data) such that $S\left( t\right) $
possesses an absorbing ball $\mathcal{B}_{C^{\alpha }\left( \overline{\Omega
}\right) }\left( R_{1}\right) $, bounded in $C^{\alpha }\left( \overline{%
\Omega }\right) $.
\end{lemma}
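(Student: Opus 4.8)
The plan is to upgrade the $L^\infty$ bound from Lemma \ref{linf} to a uniform H\"older bound by applying parabolic regularity theory to equation \eqref{nlch} rewritten in the divergence form \eqref{weakf2}, namely
\begin{equation*}
\partial_t \varphi - \dv\!\left( F''(\varphi)\nabla\varphi \right) = \dv\!\left( \nabla J\ast\varphi - \nabla a\,\varphi \right),
\end{equation*}
where I have expanded $\nabla\Phi(x,\varphi) = a(x)\nabla\varphi + \nabla a(x)\,\varphi + F''(\varphi)\nabla\varphi$ and moved the $a(x)\nabla\varphi$ piece to the left, so that the principal part is $-\dv\big((F''(\varphi)+a(x))\nabla\varphi\big)$. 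By (H2) this operator is uniformly elliptic with ellipticity constant $c_0>0$, and by Lemma \ref{linf} together with $F\in C^{2,1}_{\mathrm{loc}}$ the coefficient $F''(\varphi(x,t))+a(x)$ is bounded above by a constant $C_{m,\tau}$ for all $t\ge 2\tau$. The right-hand side is the divergence of $g := \nabla J\ast\varphi - \nabla a\,\varphi$; since $J\in W^{1,1}$ and $\varphi$ is bounded on $[2\tau,\infty)$, Young's inequality gives $\|\nabla J\ast\varphi\|_{L^\infty}\le \|\nabla J\|_{L^1}\|\varphi\|_{L^\infty}$, and $\nabla a = \nabla J\ast 1 \in L^\infty(\Omega)$ by (H1), so $g\in L^\infty([2\tau,\infty)\times\Omega)$ with a bound depending only on $m$, $\tau$ and the structural constants.

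With these ingredients in place, the result follows from the classical interior-and-boundary De Giorgi--Nash--Moser H\"older estimate for weak solutions of linear parabolic equations in divergence form with bounded measurable principal coefficients and bounded source data (see Ladyzhenskaya--Solonnikov--Ural'tseva, or the version quoted in \cite{BH}): any weak solution on a space-time cylinder is H\"older continuous in the parabolic metric on any compactly contained subcylinder, with exponent $\alpha\in(0,1)$ depending only on $d$ and the ellipticity ratio, and with H\"older seminorm bounded in terms of $\sup|\varphi|$, $\|g\|_{L^\infty}$ and the distance to the parabolic boundary. Applying this on cylinders $[t-\tau,t+1+\tau]\times\overline\Omega$ for $t\ge 2\tau$ — using the no-flux boundary condition \eqref{bc} to reach the boundary $\Gamma$, which is Lipschitz, so one should really invoke the boundary H\"older estimate for the conormal (Neumann) problem — yields a bound on $\|\varphi\|_{C^{\alpha/2,\alpha}([t,t+1]\times\overline\Omega)}$ depending only on $m$, $\tau$, $\alpha$ and the structural parameters. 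Taking the supremum over $t\ge 2\tau$ gives \eqref{holdest}.

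The existence of the absorbing ball $\mathcal B_{C^\alpha(\overline\Omega)}(R_1)$ then follows exactly as in the proof of Lemma \ref{linf}: the second assertion of that lemma already provides an absorbing ball in $L^\infty(\Omega)$ of radius $R_0$ independent of initial data, and for initial data in a bounded set $\mathcal B\subset\mathcal Y_m$ one has $S(t)\mathcal B\subset \mathcal B_{L^\infty}(R_0)$ for $t\ge t_*(\mathcal B)$; re-running the parabolic H\"older estimate on cylinders based at times $t\ge t_*+2$ with the time-lag $\tau$ now fixed (say $\tau=1$) and $\|\varphi\|_{L^\infty}\le R_0$ uniformly makes the resulting H\"older bound $R_1$ independent of the initial data, and since $C^{\alpha/2,\alpha}([t,t+1]\times\overline\Omega)$ controls $C^\alpha(\overline\Omega)$ at each time slice, one obtains the claimed absorbing ball in $C^\alpha(\overline\Omega)$.

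I expect the main obstacle to be a technical, not conceptual, one: since $\Gamma$ is merely Lipschitz, invoking a clean boundary H\"older estimate for the Neumann problem requires a little care — one cannot directly quote the smooth-boundary results, and must either appeal to the De Giorgi--Nash--Moser theory in the form valid for Lipschitz domains with conormal boundary conditions, or localize and flatten the boundary while tracking that the transformed coefficients remain bounded and measurable and the transformed domain still satisfies a measure-density (cone) condition. A secondary point requiring attention is the regularization step: strictly speaking the estimate should first be derived for the smooth approximations $\varphi_\varepsilon$ from the proof of Lemma \ref{linf} — for which the manipulations leading to the divergence form are fully justified — and then passed to the limit $\varepsilon\to 0$, the uniformity of the H\"older bounds in $\varepsilon$ being exactly what is needed to conclude for $\varphi$ itself.
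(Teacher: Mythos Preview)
Your approach is essentially identical to the paper's: both rewrite the equation in divergence form with principal part $(a(x)+F''(\varphi))\nabla\varphi$, use (H2) and Lemma \ref{linf} to verify uniform ellipticity and bounded lower-order data, and then invoke parabolic H\"older regularity. The only difference is that the paper cites \cite[Corollary 4.2]{Du}, which is a version of the De Giorgi--Nash--Moser estimate stated directly for conormal (Neumann) problems on Lipschitz domains, so the technical concern you flag about the boundary is already absorbed into that reference.
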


\begin{proof}
We can rewrite the system (\ref{nlch})-(\ref{bc}) in the following form
\begin{equation}
\partial _{t}\varphi =\text{div}\left( a\left( x,\varphi ,\nabla \varphi
\right) \right) ,\text{ }\left( a\left( x,\varphi ,\nabla \varphi \right)
\cdot \mathbf{n}\right) _{\mid \Gamma }=0,  \label{sysnew1}
\end{equation}%
where%
\begin{equation}
a\left( x,\varphi ,\nabla \varphi \right) :=\left( a\left( x\right)
+F^{\prime \prime }\left( \varphi \right) \right) \nabla \varphi +\left(
\nabla a\right) \varphi -\nabla J\ast \varphi .
\end{equation}%
Since $J\in W^{1,1}(\mathbb{R}^{d})$ and $\varphi $ is bounded by Lemma \ref%
{linf}, using the fact that $a\left( x\right) +F^{\prime \prime }\left(
\varphi \right) \geq c_{0},$ by (H2), it is easy to check that%
\begin{equation*}
a\left( x,\varphi ,\nabla \varphi \right) \nabla \varphi \geq \frac{c_{0}}{2}%
\left\vert \nabla \varphi \right\vert ^{2}-C_{1},\text{ }\left\vert a\left(
x,\varphi ,\nabla \varphi \right) \right\vert \leq C_{2}\left\vert \nabla
\varphi \right\vert +C_{3},
\end{equation*}%
for some positive constants $C_{i}$ which depend only on $\left\Vert
J\right\Vert _{W^{1,1}}$ and (\ref{linf1}). Thus, the desired estimate in (%
\ref{holdest}) follows from the application of \cite[Corollary 4.2]{Du}. The
proof is finished.
\end{proof}

In view of Lemma \ref{linf} and the proof of \cite[Theorem 4.3, (4.19)-(4.39)%
]{BH}, the following result is now straightforward.

\begin{lemma}
\label{h1lemma}Let the assumptions of Theorem \ref{well} be satisfied. Then,
for every $\tau >0$, there exists a constant $C_{m,\tau }>0$ such that%
\begin{equation}
\sup_{t\geq 3\tau }\left[ \left\Vert \varphi \left( t\right) \right\Vert
_{V}+\left\Vert \partial _{t}\varphi \right\Vert _{L^{2}\left( \left[ t,t+1%
\right] \times \Omega \right) }\right] \leq C_{m,\tau }.  \label{h1est}
\end{equation}%
Moreover, for any bounded set $\mathcal{B}\subset \mathcal{Y}_{m}$, there
exists a time $t_{\#}=t_{\#}\left( \mathcal{B}\right) >0$ such that $S\left(
t\right) \mathcal{B}\subset V,$ for all $t\geq t_{\#}.$
\end{lemma}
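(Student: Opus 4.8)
The plan is to bootstrap from the uniform $L^\infty$ bound of Lemma \ref{linf} to the $H^1$ bound in \eqref{h1est} by a classical energy estimate on the time-differentiated equation, combined with a uniform Gronwall argument; this is why the statement only requires $t \geq 3\tau$ rather than $t \geq 2\tau$ — we lose one more $\tau$-layer to the integration window. First I would record that, by Lemma \ref{linf}, for $t \geq 2\tau$ the solution satisfies $\|\varphi(t)\|_{L^\infty(\Omega)} \leq C_{m,\tau}$, so that $F'(\varphi)$, $F''(\varphi)$ and $F'''(\varphi)$ are all bounded along the trajectory by constants depending only on $m$, $\tau$ (via (H2) and $F \in C^{2,1}_{\mathrm{loc}}$), and $a, \nabla a \in L^\infty$, $J \in W^{1,1}$. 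Testing the weak formulation \eqref{weakf2} with $\psi = \varphi(t)$ and using $a(x)+F''(\varphi) \geq c_0$ gives control of $\int_t^{t+1}\|\nabla\varphi\|_H^2\,ds$ on the time scale $t \geq 2\tau$: the coercive term $c_0\|\nabla\varphi\|_H^2$ dominates, while the right-hand side $(\nabla J \ast \varphi, \nabla\varphi)$ and the lower-order piece $(\nabla a)\varphi \cdot \nabla\varphi$ are absorbed by Young's inequality using the $L^\infty$ (hence $L^2$) bound on $\varphi$. So $\varphi \in L^2_{\mathrm{loc}}(V)$ uniformly for $t \geq 2\tau$, with a bound independent of initial data.

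Next I would differentiate \eqref{nlch}--\eqref{chem} formally in time (rigorously on the approximating sequence $\varphi_\varepsilon$, as in Lemma \ref{linf}), set $v = \partial_t\varphi$, and test the resulting equation $\langle \partial_t v, A_N^{-1}v\rangle + \ldots$ — or, more directly, test $\partial_t v = \Delta \partial_t\mu$ with $\mathcal{N}v$ (recall $\langle v \rangle = 0$ since the average of $\varphi$ is conserved). Using the relations \eqref{rel} this produces
\begin{equation*}
\frac{1}{2}\frac{d}{dt}\|v\|_{V_0'}^2 + \int_\Omega (a + F''(\varphi))|\nabla (A_N^{-1}v)|^2\,\ldots
\end{equation*}
— actually the cleaner route, matching \cite[Theorem 4.3]{BH}, is to test the time-differentiated equation with $v$ itself and obtain
\begin{equation*}
\frac{1}{2}\frac{d}{dt}\|v\|_H^2 + c_0\|\nabla v\|_H^2 \leq C\big(\|\nabla\varphi\|_H^2\|v\|_{L^\infty}^2 + \|v\|_H^2\big) + \text{kernel terms},
\end{equation*}
where the awkward $F'''(\varphi)(\partial_t\varphi)\nabla\varphi$ term is handled by the $L^\infty$ bound on $F'''(\varphi)$ and an interpolation/Young step; the terms involving $\nabla J \ast v$ are controlled by $\|J\|_{W^{1,1}}\|v\|_H$. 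This yields a differential inequality of the form $y' \leq g\,y + h$ with $y = \|v\|_H^2$, where $\int_t^{t+1} g\,ds$ and $\int_t^{t+1} h\,ds$ are bounded for $t \geq 2\tau$ (the former because $\int \|\nabla\varphi\|^2 < \infty$ from the first step, the latter because $v = \partial_t\varphi \in L^2_{\mathrm{loc}}(V')$ upgrades to $L^2_{\mathrm{loc}}(H)$ via the equation and the $L^\infty$ bound). Also $\int_t^{t+1} y\,ds = \int_t^{t+1}\|\partial_t\varphi\|_H^2\,ds$ is finite uniformly, again from the equation $\partial_t\varphi = \Delta\mu$ tested suitably. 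The uniform (integral) Gronwall lemma then gives $\sup_{t \geq 3\tau}\|\partial_t\varphi(t)\|_H \leq C_{m,\tau}$, and reinserting this into \eqref{chem} read as an elliptic identity $-\Delta\varphi$-type equation — more precisely using $(a + F''(\varphi))\nabla\varphi = \nabla\mu + \nabla J\ast\varphi - (\nabla a)\varphi$ with $\|\nabla\mu\|_H$ now controlled (from $\mu = A_N^{-1}\partial_t\varphi$ plus the average of $\mu$) — yields the uniform $\|\varphi(t)\|_V$ bound. Finally, $\int_t^{t+1}\|\partial_t\varphi\|_H^2$ is bounded by the same token, completing \eqref{h1est}.

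For the last assertion — that any bounded $\mathcal{B} \subset \mathcal{Y}_m$ is absorbed into $V$ after a finite time $t_\#(\mathcal{B})$ — I would repeat the argument tracking initial-data dependence exactly as in the absorbing-ball part of Lemma \ref{linf}: by \eqref{dissi} and Lemma \ref{linf}, $S(t)\mathcal{B}$ enters the $L^\infty$-absorbing ball $\mathcal{B}_{L^\infty}(R_0)$ after some $t_\ast(\mathcal{B})$; from that time on all the constants above become independent of initial data, so the Gronwall estimate run on $[t_\ast, t_\ast + 3]$ (say) deposits $S(t)\mathcal{B}$ into a bounded subset of $V$ for $t \geq t_\# := t_\ast + 3$.

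I expect the main obstacle to be the rigorous justification of differentiating the equation in time and testing with $\partial_t\varphi$ — this is legitimate only on the regularized solutions $\varphi_\varepsilon$ of \eqref{suf}, and one must check the estimates are $\varepsilon$-uniform and pass to the limit, exactly the subtlety flagged in the proof of Lemma \ref{linf}; this is precisely why the statement says "In view of Lemma \ref{linf} and the proof of \cite[Theorem 4.3, (4.19)-(4.39)]{BH}" and calls the result "straightforward" modulo that reference. The only genuinely new input beyond \cite{BH} is that all constants are now \emph{uniform in the initial datum} for $t$ large, which is supplied by the dissipative estimate \eqref{dissi} together with the uniform $L^\infty$ bound \eqref{linf1}, so no delicate nonlinear estimate needs to be redone — one simply recasts the \cite{BH} regularity computation with the uniform bounds in hand.
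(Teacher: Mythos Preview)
Your proposal is correct and matches the paper's approach exactly: the paper gives no self-contained proof but simply invokes Lemma~\ref{linf} for the uniform $L^\infty$ bound and then defers to \cite[Theorem 4.3, (4.19)--(4.39)]{BH} for the remaining regularity bootstrap, which is precisely the time-differentiated energy estimate plus uniform Gronwall argument you outline. You have correctly identified that the only content beyond \cite{BH} is the uniformity of the constants with respect to the initial datum, supplied by \eqref{dissi} and \eqref{linf1}, and your treatment of the absorbing-set assertion via the $L^\infty$ absorbing ball of Lemma~\ref{linf} is the intended one.
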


The following result shows that the semigroup is strongly continuous with
respect to the $V^{\prime }$-metric.

\begin{proposition}
\label{uniq} Let $\varphi _{i},$ $i=1,2$, be a pair of weak solutions
according to the assumptions of Theorem \ref{well}. Then the following
estimate holds:
\begin{align}
& \left\Vert \varphi _{1}\left( t\right) -\varphi _{2}\left( t\right)
\right\Vert _{V^{\prime }}^{2}+c_{0}\int_{0}^{t}\left\Vert \varphi
_{1}\left( s\right) -\varphi _{2}\left( s\right) \right\Vert _{H}^{2}ds
\label{Lipschitz} \\
& \leq \left\Vert \varphi _{1}\left( 0\right) -\varphi _{2}\left( 0\right)
\right\Vert _{V^{\prime }}^{2}e^{\kappa t}+Ce^{\kappa t}\left\vert
M_{1}-M_{2}\right\vert ,  \notag
\end{align}%
for all $t\geq 0$, where $M_{i}:=\left\langle \varphi _{i}\left( 0\right)
\right\rangle $, for some positive constants $\kappa ,C$ which depend on $%
c_{0}$ and $J$ but are independent of $\varphi _{i}\left( 0\right) .$
\end{proposition}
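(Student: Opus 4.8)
The plan is to take the difference of the two weak formulations and test against a suitable projection, exploiting the nonlocal structure.

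Let me denote $\varphi = \varphi_1 - \varphi_2$ and $M = M_1 - M_2$. The averages are conserved, so $\langle \varphi(t)\rangle = M$ for all $t$. Write $\varphi = \bar\varphi + \tilde\varphi$ where $\bar\varphi := \langle \varphi \rangle = M|\Omega|^{-1}$... actually $\langle\varphi\rangle = M$ with my normalization, so $\tilde\varphi := \varphi - M$ has zero average and lies in $V_0'$ (indeed in $H$). The natural test function is $\mathcal{N}\tilde\varphi \in V_0$. Subtracting the two copies of the weak formulation (\ref{weakf2}),
\begin{equation*}
\langle \partial_t \varphi, \psi\rangle + (\nabla(\Phi_1 - \Phi_2), \nabla\psi) = (\nabla(J\ast\varphi),\nabla\psi), \quad \forall \psi \in V,
\end{equation*}
where $\Phi_i = a\varphi_i + F'(\varphi_i)$. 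Since $\partial_t\varphi = \partial_t\tilde\varphi \in V_0'$, choosing $\psi = \mathcal{N}\tilde\varphi$ and using (\ref{rel}) gives $\langle\partial_t\tilde\varphi, \mathcal{N}\tilde\varphi\rangle = \tfrac12 \tfrac{d}{dt}\|\tilde\varphi\|_{V'}^2$ (interpreting $\|\cdot\|_{V'}$ via the $\mathcal N$-inner product on $V_0'$). On the other side, $(\nabla(\Phi_1-\Phi_2),\nabla\mathcal{N}\tilde\varphi) = \langle A_N\mathcal N\tilde\varphi, \Phi_1-\Phi_2\rangle$... more precisely $(\nabla v,\nabla\mathcal N\tilde\varphi) = \langle \tilde\varphi, v\rangle$ for $v\in V$. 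So the equation becomes
\begin{equation*}
\tfrac12\tfrac{d}{dt}\|\tilde\varphi\|_{V'}^2 + (\Phi_1 - \Phi_2, \tilde\varphi) = (J\ast\varphi, \tilde\varphi).
\end{equation*}

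Now I expand $\Phi_1 - \Phi_2 = a\varphi + (F'(\varphi_1) - F'(\varphi_2))$ and split $\varphi = \tilde\varphi + M$ wherever convenient. The term $(a\varphi + F'(\varphi_1)-F'(\varphi_2), \tilde\varphi)$: write $(F'(\varphi_1)-F'(\varphi_2))(\varphi_1-\varphi_2) \geq (c_0 - \inf a)(\varphi_1-\varphi_2)^2$ pointwise by (H2) — actually (H2) says $F''(s) + \inf a \geq c_0$, so $(F'(\varphi_1)-F'(\varphi_2))(\varphi_1-\varphi_2) + \inf_\Omega a\,(\varphi_1-\varphi_2)^2 \geq c_0 (\varphi_1-\varphi_2)^2$ via the mean value theorem. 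Hence
\begin{equation*}
(a\varphi + F'(\varphi_1) - F'(\varphi_2), \varphi) \geq c_0 \|\varphi\|_H^2.
\end{equation*}
But my test function is $\tilde\varphi$, not $\varphi$, so I rewrite $(\cdot,\tilde\varphi) = (\cdot,\varphi) - M(\cdot, 1)$, i.e.
\begin{equation*}
(\Phi_1-\Phi_2,\tilde\varphi) = (\Phi_1-\Phi_2,\varphi) - M\int_\Omega(\Phi_1-\Phi_2)\,dx \geq c_0\|\varphi\|_H^2 - |M|\,\Big|\int_\Omega(a\varphi + F'(\varphi_1)-F'(\varphi_2))\,dx\Big|.
\end{equation*}
Similarly $(J\ast\varphi,\tilde\varphi) = (J\ast\varphi,\varphi) - M(J\ast\varphi,1) \leq \tfrac{c_0}{4}\|\varphi\|_H^2 + C\|\varphi\|_H^2 + (\text{term with }M)$ — wait, $|(J\ast\varphi,\varphi)| \leq \|J\|_{L^1}\|\varphi\|_H^2$, which is not small, so this is where the Gronwall exponent $\kappa$ comes from: absorb nothing, just bound $|(J\ast\varphi,\varphi)| \leq \|J\|_{L^1}\|\varphi\|_H^2$ and keep it on the right. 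Then
\begin{equation*}
\tfrac12\tfrac{d}{dt}\|\tilde\varphi\|_{V'}^2 + c_0\|\varphi\|_H^2 \leq \|J\|_{L^1}\|\varphi\|_H^2 + (\text{$M$-terms}),
\end{equation*}
which still has $\|\varphi\|_H^2$ on both sides with a bad sign on the right. The fix: interpolate $\|\tilde\varphi\|_H^2 \leq C\|\tilde\varphi\|_{V'}\|\tilde\varphi\|_V$ won't directly help since we have no $V$-control on the difference a priori. Instead use $\|\tilde\varphi\|_H \leq \|\varphi\|_H$ and also, crucially, $\|\varphi\|_H^2 \leq 2\|\tilde\varphi\|_H^2 + 2M^2|\Omega|$. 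The genuinely useful inequality is the compactness/interpolation bound $\|\tilde\varphi\|_H^2 \le \eta \|\nabla\tilde\varphi\|_H^2 + C_\eta\|\tilde\varphi\|_{V'}^2$ — but again no gradient control. So the honest route: the $\|\varphi\|_H^2$ on the left with coefficient $c_0$ must dominate. Move it: $\tfrac12\tfrac{d}{dt}\|\tilde\varphi\|_{V'}^2 + c_0\|\varphi\|_H^2 \le \|J\|_{L^1}\|\varphi\|_H^2 + \cdots$. This is useless if $\|J\|_{L^1} > c_0$. The resolution must be that we do NOT try to absorb; rather we observe $\|\tilde\varphi\|_{V'}^2 \leq C\|\tilde\varphi\|_H^2 \le C\|\varphi\|_H^2$, so
\begin{equation*}
\tfrac12\tfrac{d}{dt}\|\tilde\varphi\|_{V'}^2 \leq \|J\|_{L^1}\|\varphi\|_H^2 - c_0\|\varphi\|_H^2 + |M|(\cdots) \le C'\|\tilde\varphi\|_{V'}^2 + |M|(\cdots),
\end{equation*}
provided the $M$-terms are controlled — and they are, by (H4) together with the energy bound (\ref{dissi}) giving $\int_\Omega |F'(\varphi_i)|\,dx \le C$ uniformly (using $|F'|^p \le c_3|F| + c_4$ and $F(\varphi_i)\in L^\infty(0,T;L^1)$ from (\ref{class})), and $\|a\|_{L^\infty}$, $\|J\|_{L^1}$ plus the $H$-bounds. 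So the $M$-terms are $\le C|M|$. Then Gronwall: $\|\tilde\varphi(t)\|_{V'}^2 \le e^{\kappa t}\|\tilde\varphi(0)\|_{V'}^2 + C e^{\kappa t}|M|$. Finally convert $\|\varphi_1(t)-\varphi_2(t)\|_{V'}^2 \le 2\|\tilde\varphi(t)\|_{V'}^2 + 2M^2\|1\|_{V'}^2$ and bound $M^2 \le C|M|$ on bounded sets (or note $|M|\le$ const), and likewise initially $\|\tilde\varphi(0)\|_{V'}^2 \le 2\|\varphi_1(0)-\varphi_2(0)\|_{V'}^2 + C|M|$; the $c_0\int_0^t\|\varphi\|_H^2$ term is retained from the left side before the final absorption — one should be slightly careful to keep a fraction of it.

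The main obstacle, and the delicate bookkeeping point, is exactly this handling of the $M$-terms (the average mismatch): one must show $\big|\int_\Omega(F'(\varphi_1) - F'(\varphi_2))\,dx\big|$ and $\int_\Omega |J\ast\varphi|\,dx$ are bounded uniformly in $t$ and in the two solutions, which requires invoking (H4) and the uniform-in-time $L^1$ bound on $F(\varphi_i)$ furnished by (\ref{dissi})/(\ref{class}), plus Remark \ref{compact} for $a\in L^\infty$; and then to arrange the Gronwall argument so that the coefficient $\kappa$ depends only on $c_0$ and $J$ (via $\|J\|_{L^1}$ and the constant relating $\|\cdot\|_{V'}$ to $\|\cdot\|_H$), as claimed. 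If one instead wanted $\kappa$ independent of everything but $c_0, J$ one keeps the term $c_0\|\varphi\|_H^2$ and only throws away the excess $(\|J\|_{L^1}-c_0)_+\|\varphi\|_H^2 \le (\|J\|_{L^1}-c_0)_+ C\|\tilde\varphi\|_{V'}^2 + (\text{const})|M|$, which is precisely the structure needed. A secondary (routine) point is justifying $\psi = \mathcal{N}\tilde\varphi$ as an admissible test function and the identity $\langle\partial_t\tilde\varphi,\mathcal N\tilde\varphi\rangle = \tfrac12\tfrac{d}{dt}\langle\tilde\varphi,\mathcal N\tilde\varphi\rangle$, which follows from $\partial_t\varphi \in L^2(0,T;V')$ with zero average and the standard properties (\ref{rel}) of $\mathcal N$.
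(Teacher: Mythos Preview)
Your argument has a genuine gap in the treatment of the convolution term. After arriving at
\[
\tfrac12\tfrac{d}{dt}\|\tilde\varphi\|_{V'}^2 + c_0\|\varphi\|_H^2 \le (J\ast\varphi,\varphi) + (\text{$M$-terms}),
\]
you bound $|(J\ast\varphi,\varphi)|\le \|J\|_{L^1}\|\varphi\|_H^2$ and are left with $(\|J\|_{L^1}-c_0)\|\varphi\|_H^2$ on the right. You then claim this can be absorbed via $\|\varphi\|_H^2 \le C'\|\tilde\varphi\|_{V'}^2 + C|M|$, but that inequality is \emph{false}: the embedding $H\hookrightarrow V'$ gives $\|\tilde\varphi\|_{V'}\le C\|\tilde\varphi\|_H$, not the reverse. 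A zero-mean function can have arbitrarily large ratio $\|\cdot\|_H/\|\cdot\|_{V'}$ (think highly oscillatory), so no $|M|$-correction rescues this. Nothing in the hypotheses forces $\|J\|_{L^1}\le c_0$, so the differential inequality does not close.

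The missing idea, which the paper uses, is to exploit the \emph{smoothing} of the convolution rather than its $L^1$--$L^2$ boundedness. Since $J\in W^{1,1}(\mathbb{R}^d)$ by (H1), one has $J\ast\varphi\in V$ with $\|\nabla(J\ast\varphi)\|_H\le \|\nabla J\|_{L^1}\|\varphi\|_H$. Pairing $V$ against $V'$ (after subtracting the mean, with the resulting correction going into the $|M|$-terms) gives
\[
|(J\ast\varphi,\varphi)| \;\le\; C\|A_N^{1/2}(J\ast\varphi)\|_H\,\|A_N^{-1/2}\varphi\|_H \;\le\; C\|\nabla J\|_{L^1}\|\varphi\|_H\,\|\varphi\|_{V'} \;\le\; \tfrac{c_0}{2}\|\varphi\|_H^2 + C\|\varphi\|_{V'}^2.
\]
Now the $\|\varphi\|_H^2$ term is absorbed on the left, leaving $\tfrac{d}{dt}\|\varphi\|_{V'}^2 + c_0\|\varphi\|_H^2 \le C\|\varphi\|_{V'}^2 + C|\langle\bar\mu\rangle|\,|M_1-M_2|$, and Gronwall (together with the time-integrated bound $\int_0^t|\langle\bar\mu(s)\rangle|\,ds\le C_m(1+t)$ coming from (H3)--(H5) and the energy estimate) yields the claim. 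Your handling of the test function, the monotonicity from (H2), and the structure of the $M$-terms are all correct; only this one estimate needs to be replaced.
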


\begin{proof}
We have that $\varphi :=\varphi _{1}-\varphi _{2}$ (formally) satisfies the
problem:
\begin{equation}
\partial _{t}\varphi =\Delta \overline{\mu },\text{ }\overline{\mu }=a\left(
x\right) \varphi -J\ast \varphi + F^\prime\left( \varphi _{1}\right)
-F^\prime\left( \varphi _{2}\right) ,  \label{d1}
\end{equation}%
subject to the boundary and initial conditions%
\begin{equation}
\partial _{\mathbf{n}}\overline{\mu }_{\mid \Gamma }=0\text{, }\varphi
_{\mid t=0}=\varphi _{1}\left( 0\right) -\varphi _{2}\left( 0\right) \text{
in }\Omega \text{.}  \label{d2}
\end{equation}%
Also, observe that (\ref{d1})-(\ref{d2}) yields $\left\langle \varphi \left(
t\right) \right\rangle =M_{1}-M_{2}$, for all $t\geq 0$. Consider now the
operator $A_{N}=-\Delta _{N}$, with domain $D\left( A_{N}\right) =\left\{
\varphi \in H^{2}\left( \Omega \right) :\partial _{\mathbf{n}}\varphi _{\mid
\Gamma }=0\right\} $. Test the first equation of (\ref{d1}) with $%
A_{N}^{-1}\left( \varphi \left( t\right) -\left\langle \varphi
(t)\right\rangle \right) $, then integrate by parts exploiting the relations
(\ref{rel}). We obtain, thanks to the assumptions (H1)-(H2) and arguing as
in \cite[Proposition 5, (4.2)-(4.3)]{FG2}, the following estimate:%
\begin{align}
&\frac{d}{dt}\left\Vert \varphi \left( t\right) \right\Vert _{V^{\prime
}}^{2}+2c_{0}\left\Vert \varphi \left( t\right) \right\Vert _{H}^{2}
\label{decay1b} \\
& \leq 2\left\vert \left( J\ast \varphi \left( t\right) ,\varphi \left(
t\right) \right) \right\vert +2\left\vert \left\langle \overline{\mu }\left(
t\right) \right\rangle \right\vert \left\vert \Omega \right\vert \left\vert
\left\langle \varphi \left( t\right) \right\rangle \right\vert  \notag \\
& \leq 2\left\Vert A_{N}^{1/2}\left( J\ast \varphi \left( t\right) \right)
\right\Vert _{H}\left\Vert A_{N}^{-1/2}\varphi \left( t\right) \right\Vert
_{H}  \notag \\
& +2\left\vert \left\langle \overline{\mu }\left( t\right) \right\rangle
\right\vert \left\vert \Omega \right\vert \left\vert M_{1}-M_{2}\right\vert
\notag \\
& \leq c_{0}\left\Vert \varphi \left( t\right) \right\Vert
_{H}^{2}+C\left\Vert \varphi \left( t\right) \right\Vert _{V^{\prime
}}^{2}+C\left\vert \left\langle \overline{\mu }\left( t\right) \right\rangle
\right\vert \left\vert M_{1}-M_{2}\right\vert ,  \notag
\end{align}%
for all $t\geq 0$. Applying Gronwall's inequality to (\ref{decay1b}) and
using the estimate%
\begin{equation}
\int_{0}^{t}\left\vert \left\langle \overline{\mu }\left( s\right)
\right\rangle \right\vert ds\leq C_{m}\left( 1+t\right) ,\text{ }\forall
t\geq 0  \label{chem_l1}
\end{equation}%
(this follows easily due to estimate (\ref{dissi}) and assumptions
(H3)-(H5)), we obtain estimate (\ref{Lipschitz}).
\end{proof}

The crucial step in order to establish the existence of an exponential
attractor is the validity of so-called smoothing property for the difference
of two solutions (see \cite{MZ}). In the present case, such a property is a
consequence of the following two lemmas. The first result establishes that
the semigroup $S\left( t\right) $ is some kind of contraction map, up to the
term $\left\Vert \varphi _{1}-\varphi _{2}\right\Vert _{L^{2}(\left[ 3\tau ,t%
\right] ;V^{\prime })}$.

\begin{lemma}
\label{dec} Let the assumptions of Proposition \ref{uniq} hold. Then, for
every $\tau >0$, we have:%
\begin{align}
& \left\Vert \varphi _{1}\left( t\right) -\varphi _{2}\left( t\right)
\right\Vert _{V^{\prime }}^{2}+C\left\vert M_{1}-M_{2}\right\vert
\label{decay1} \\
& \leq e^{-\kappa t}\left( \left\Vert \varphi _{1}\left( 0\right) -\varphi
_{2}\left( 0\right) \right\Vert _{V^{\prime }}^{2}+C\left\vert
M_{1}-M_{2}\right\vert \right)  \notag \\
& +C_{m,\tau }\int_{0}^{t}\left( \left\Vert \varphi _{1}\left( s\right)
-\varphi _{2}\left( s\right) \right\Vert _{V^{\prime }}^{2}+\left\vert
M_{1}-M_{2}\right\vert \right) ds,  \notag
\end{align}%
for all $t\geq 3\tau $, for some positive constants $C,C_{m,\tau \text{, }%
}\kappa $ which depend on $c_{0},$ $\Omega $ and $J.$
\end{lemma}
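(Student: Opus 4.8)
The plan is to combine the "strong continuity" estimate of Proposition~\ref{uniq} with the dissipativity estimate~\eqref{dissi} by splitting the time interval $[0,t]$ into a fixed initial segment, where we use the crude bound from Proposition~\ref{uniq}, and a tail where the smoothing estimates of Lemmas~\ref{linf}--\ref{h1lemma} make all the structural constants uniform. Concretely, set $\psi:=\varphi_1-\varphi_2$ and recall from the proof of Proposition~\ref{uniq} that $\psi$ satisfies
\begin{equation*}
\frac{d}{dt}\left\Vert \psi\left( t\right) \right\Vert _{V^{\prime }}^{2}+2c_{0}\left\Vert \psi\left( t\right) \right\Vert _{H}^{2}\leq c_{0}\left\Vert \psi\left( t\right) \right\Vert _{H}^{2}+C\left\Vert \psi\left( t\right) \right\Vert _{V^{\prime }}^{2}+C\left\vert \left\langle \overline{\mu }\left( t\right) \right\rangle \right\vert \left\vert M_{1}-M_{2}\right\vert .
\end{equation*}
First I would observe that once $t\geq 3\tau$ both solutions are uniformly bounded in $V\cap C^{\alpha}(\overline{\Omega})$ by Lemmas~\ref{holderb} and~\ref{h1lemma}, so the mean value $\langle\overline\mu(t)\rangle=\langle a\varphi+F'(\varphi_1)-F'(\varphi_2)+(\text{nonlocal term})\rangle$ obeys a \emph{pointwise} (in $t$) bound $|\langle\overline\mu(t)\rangle|\leq C_{m,\tau}$ for $t\geq 3\tau$, since $F''$ is locally bounded by (H2) and $|F'(\varphi_1)-F'(\varphi_2)|\leq C_{m,\tau}|\psi|$. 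Hence on $[3\tau,\infty)$ the differential inequality improves to
\begin{equation*}
\frac{d}{dt}\left\Vert \psi\left( t\right) \right\Vert _{V^{\prime }}^{2}+c_{0}\left\Vert \psi\left( t\right) \right\Vert _{H}^{2}\leq C\left\Vert \psi\left( t\right) \right\Vert _{V^{\prime }}^{2}+C_{m,\tau}\left\vert M_{1}-M_{2}\right\vert .
\end{equation*}

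Next I would upgrade the first term on the right to a genuine dissipative term. Since $\psi(t)-\langle\psi(t)\rangle=\psi(t)-(M_1-M_2)$ has zero average, one has $\|\psi(t)-(M_1-M_2)\|_{V'}\leq C\|\psi(t)-(M_1-M_2)\|_H\leq C\|\psi(t)\|_H+C|M_1-M_2|$, so $\|\psi(t)\|_{V'}^2\leq C\|\psi(t)\|_H^2+C|M_1-M_2|$; feeding this into the inequality and absorbing we get, for a possibly larger $\kappa>0$ and all $t\geq 3\tau$,
\begin{equation*}
\frac{d}{dt}\left\Vert \psi\left( t\right) \right\Vert _{V^{\prime }}^{2}+\kappa\left\Vert \psi\left( t\right) \right\Vert _{V^{\prime }}^{2}\leq C_{m,\tau}\left\vert M_{1}-M_{2}\right\vert .
\end{equation*}
Adding the constant $C|M_1-M_2|$ to both sides (with $C$ chosen so that $\kappa C\geq$ the right-hand constant, which is harmless since $M_1-M_2$ is time-independent) makes the left side $\frac{d}{dt}\big(\|\psi\|_{V'}^2+C|M_1-M_2|\big)+\kappa\big(\|\psi\|_{V'}^2+C|M_1-M_2|\big)$ bounded above by the same quantity times a small factor, whence Gronwall on $[3\tau,t]$ yields exponential decay of $\|\psi(t)\|_{V'}^2+C|M_1-M_2|$ down to (a constant multiple of) $|M_1-M_2|$. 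Finally I would handle the "lost" interval $[0,3\tau]$ by invoking~\eqref{Lipschitz}: on $[0,3\tau]$, $\|\psi(3\tau)\|_{V'}^2\leq e^{3\kappa\tau}\|\psi(0)\|_{V'}^2+Ce^{3\kappa\tau}|M_1-M_2|$, which is exactly the form of the bracket on the right of~\eqref{decay1} (up to renaming constants), and the integral term on $[0,3\tau]$ is dominated by the same quantity via~\eqref{Lipschitz} again; together with the Gronwall output on $[3\tau,t]$, whose remaining $\int_{3\tau}^t(\cdots)\,ds$ term is kept on the right, this assembles into~\eqref{decay1}.

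The main obstacle I anticipate is the bookkeeping needed to convert the $V'$-in-the-source-term into a decaying $V'$-on-the-left, i.e.\ making sure the interpolation $\|\psi\|_{V'}^2\leq \eta\|\psi\|_H^2+C_\eta|M_1-M_2|$ is applied with a small enough $\eta$ so that the $c_0\|\psi\|_H^2$ term genuinely dominates $C\|\psi\|_{V'}^2$, and then tracking how the resulting $\kappa$ and $C_{m,\tau}$ depend only on $c_0,\Omega,J$ (the $C_{m,\tau}$ coefficient multiplying $|M_1-M_2|$ is allowed to depend on $\tau,m$, but the $\kappa$ and the $C$ in front of $|M_1-M_2|$ inside the bracket must not). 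A secondary, more technical point is justifying the pointwise-in-$t$ bound on $\langle\overline\mu(t)\rangle$ for $t\geq 3\tau$: here one uses that $\varphi_i(t)$ is uniformly bounded in $L^\infty(\Omega)$ by Lemma~\ref{linf} for $t\geq 3\tau$, so $F''$ is evaluated on a compact set and the mean-value theorem gives $\|F'(\varphi_1(t))-F'(\varphi_2(t))\|_{L^1}\leq C_{m,\tau}\|\psi(t)\|_{L^1}\leq C_{m,\tau}\|\psi(t)\|_H$, with the $L^1$-$H$ estimate absorbed into the Gronwall-small terms; alternatively one may simply keep $\int_0^t|\langle\overline\mu(s)\rangle|\,ds$ and use~\eqref{chem_l1}, but the pointwise version is cleaner for producing the clean exponential rate $e^{-\kappa t}$.
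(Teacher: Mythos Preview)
Your overall strategy matches the paper's: start from the differential inequality \eqref{decay1b} in the proof of Proposition~\ref{uniq}, use the smoothing Lemmas~\ref{linf} and~\ref{h1lemma} to get a \emph{pointwise} bound $\sup_{t\ge 3\tau}|\langle\overline\mu(t)\rangle|\le C_{m,\tau}$, apply Poincar\'e to turn $c_0\|\psi\|_H^2$ into a dissipative $\|\psi\|_{V'}^2$ term, and then Gronwall. That is exactly the paper's argument.

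However, your absorption step contains a real gap. The ``interpolation'' $\|\psi\|_{V'}^2\le \eta\|\psi\|_H^2+C_\eta|M_1-M_2|$ with \emph{arbitrarily small} $\eta$ is false: the embedding $H\hookrightarrow V'$ (equivalently, Poincar\'e for the zero-mean part plus the constant part) gives a \emph{fixed} constant in front of $\|\psi\|_H^2$, not one you can shrink. Since the constant $C$ multiplying $\|\psi\|_{V'}^2$ on the right of \eqref{decay1b} depends on $\|J\|_{W^{1,1}}$ and $c_0^{-1}$, there is no reason it should be dominated by the dissipative coefficient you extract from $c_0\|\psi\|_H^2$. So you cannot in general reach the clean inequality $\frac{d}{dt}\|\psi\|_{V'}^2+\kappa\|\psi\|_{V'}^2\le C_{m,\tau}|M_1-M_2|$.

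The paper sidesteps this completely: it does \emph{not} try to absorb $C\|\psi\|_{V'}^2$. From \eqref{decay1b} and the pointwise bound on $\langle\overline\mu\rangle$ it writes, for $t\ge 3\tau$,
\[
\frac{d}{dt}\bigl(\|\psi\|_{V'}^2+C|M_1-M_2|\bigr)+c_0\bigl(\|\psi\|_{V'}^2+C|M_1-M_2|\bigr)\le C\|\psi\|_{V'}^2+C_{m,\tau}|M_1-M_2|,
\]
keeping the offending term $C\|\psi\|_{V'}^2$ on the right. Gronwall with rate $\kappa=c_0$ then produces exactly the integral $\int_0^t(\|\psi(s)\|_{V'}^2+|M_1-M_2|)\,ds$ appearing in \eqref{decay1}. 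In other words, the integral term in the statement is not a leftover to be controlled later; it is precisely where the unabsorbed $C\|\psi\|_{V'}^2$ goes. Once you see this, your worries about ``bookkeeping'' and ``small $\eta$'' evaporate, and your treatment of the initial interval $[0,3\tau]$ via \eqref{Lipschitz} (or equivalently via \eqref{chem_l1}) becomes a routine adjustment of constants.
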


\begin{proof}
First, we observe that, due to the estimates (\ref{linf1}) and (\ref{h1est}%
), there holds:%
\begin{equation}
\sup_{t\geq 3\tau }\left\vert \left\langle \overline{\mu }\left( t\right)
\right\rangle \right\vert \leq C_{m,\tau },  \label{chem_sp}
\end{equation}%
for every (weak) solutions $\varphi _{1},\varphi _{2}$. Thus, combining (\ref%
{decay1b}) together with Poincar\'{e}'s inequality
\begin{equation*}
\left\Vert A_{N}^{-1/2}\left( \varphi -\left\langle \varphi \right\rangle
\right) \right\Vert _{H}^{2}+\left\langle \varphi \right\rangle ^{2}\leq
C_{\Omega }\left\Vert \varphi \right\Vert _{H}^{2},
\end{equation*}%
we deduce from (\ref{decay1b}) and (\ref{chem_sp}) the following inequality:%
\begin{align}
& \frac{d}{dt}\left( \left\Vert \varphi \left( t\right) \right\Vert
_{V^{\prime }}^{2}+C\left\vert M_{1}-M_{2}\right\vert \right) +c_{0}\left(
\left\Vert \varphi \left( t\right) \right\Vert _{V^{\prime
}}^{2}+C\left\vert M_{1}-M_{2}\right\vert \right)  \label{decay1bis} \\
& \leq C\left\Vert \varphi \left( t\right) \right\Vert _{V^{\prime
}}^{2}+C_{m,\tau }\left\vert M_{1}-M_{2}\right\vert ,  \notag
\end{align}%
for all $t\geq 3\tau $. Thus, Gronwall's inequality entails the desired
estimate (\ref{decay1}).
\end{proof}

We now need some compactness for the term $\left\Vert \varphi _{1}-\varphi
_{2}\right\Vert _{L^{2}\left( \left[ 3\tau ,t\right] ;V^{\prime }\right) }$
on the right-hand side of (\ref{decay1}). This is given by

\begin{lemma}
\label{lipdif} Let the assumptions of Proposition \ref{uniq} hold. Then, for
every $\tau >0$, the following estimate holds:%
\begin{align}
& \left\Vert \partial _{t}\varphi _{1}-\partial _{t}\varphi _{2}\right\Vert
_{L^{2}\left( \left[ 3\tau ,t\right] ;D\left( A_{N}\right) ^{^{\prime
}}\right) }^{2}+c_{0}\int_{0}^{t}\left\Vert \varphi _{1}\left( s\right)
-\varphi _{2}\left( s\right) \right\Vert _{H}^{2}ds  \label{comp1} \\
& \leq C_{m,\tau }e^{\kappa t}\left\Vert \varphi _{1}\left( 0\right)
-\varphi _{2}\left( 0\right) \right\Vert _{V^{\prime }}^{2}+Ce^{\kappa
t}\left\vert M_{1}-M_{2}\right\vert ,  \notag
\end{align}%
for all $t\geq 3\tau $, where $C_{m,\tau },C$ and $\kappa >0$ also depend on
$c_{0},$ $\Omega $ and $J.$
\end{lemma}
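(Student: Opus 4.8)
The plan is to estimate the time-derivative difference directly from the difference equation \eqref{d1}--\eqref{d2} and then combine with the $L^2([0,t];H)$-bound on $\varphi_1-\varphi_2$ already contained in Proposition \ref{uniq}. First I would set $\varphi:=\varphi_1-\varphi_2$ and recall that $\varphi$ solves $\partial_t\varphi=\Delta\overline\mu$ with $\overline\mu=a(x)\varphi-J\ast\varphi+F'(\varphi_1)-F'(\varphi_2)$ and $\partial_{\mathbf n}\overline\mu_{\mid\Gamma}=0$. Testing $\partial_t\varphi=\Delta\overline\mu$ with an arbitrary $v\in D(A_N)$ gives $\langle\partial_t\varphi,v\rangle=-(\nabla\overline\mu,\nabla v)=-(\overline\mu-\langle\overline\mu\rangle,A_N v)$, hence
\[
\|\partial_t\varphi(s)\|_{D(A_N)'}\le \|\overline\mu(s)-\langle\overline\mu(s)\rangle\|_{H}.
\]
So everything reduces to an $L^2$-in-time bound on $\overline\mu-\langle\overline\mu\rangle$, i.e. on $a(x)\varphi-J\ast\varphi+F'(\varphi_1)-F'(\varphi_2)$ up to its spatial average.

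Next I would bound each piece of $\overline\mu$ in $H$ pointwise in $s\ge 3\tau$. By Remark \ref{compact} we have $a\in L^\infty(\Omega)$ and $\|J\ast\varphi\|_H\le\|J\|_{L^1}\|\varphi\|_H$, so the nonlocal terms contribute $\lesssim\|\varphi(s)\|_H$. For the potential difference, I would use that $\varphi_1,\varphi_2$ are bounded in $L^\infty$ uniformly for $s\ge 2\tau$ by Lemma \ref{linf}; since $F\in C^{2,1}_{\mathrm{loc}}(\mathbb R)$ (H2), $F'$ is Lipschitz on the relevant compact interval, whence $\|F'(\varphi_1(s))-F'(\varphi_2(s))\|_H\le L\|\varphi(s)\|_H$ with $L=L(C_{m,\tau})$. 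Therefore $\|\overline\mu(s)-\langle\overline\mu(s)\rangle\|_H\le C_{m,\tau}\|\varphi(s)\|_H$ for a.e. $s\ge 3\tau$, and integrating the square over $[3\tau,t]$ yields
\[
\|\partial_t\varphi_1-\partial_t\varphi_2\|_{L^2([3\tau,t];D(A_N)')}^2\le C_{m,\tau}\int_{3\tau}^{t}\|\varphi_1(s)-\varphi_2(s)\|_H^2\,ds.
\]

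Finally I would absorb the right-hand side using Proposition \ref{uniq}. Estimate \eqref{Lipschitz} gives $c_0\int_0^t\|\varphi_1(s)-\varphi_2(s)\|_H^2\,ds\le\|\varphi_1(0)-\varphi_2(0)\|_{V'}^2 e^{\kappa t}+Ce^{\kappa t}|M_1-M_2|$, and since $\int_{3\tau}^t\le\int_0^t$, this bounds the integral term on the right of the previous display. Adding the same $L^2([0,t];H)$-term once more (it is already controlled and will be needed on the left of \eqref{comp1}), we arrive at
\[
\|\partial_t\varphi_1-\partial_t\varphi_2\|_{L^2([3\tau,t];D(A_N)')}^2+c_0\int_0^t\|\varphi_1(s)-\varphi_2(s)\|_H^2\,ds\le C_{m,\tau}e^{\kappa t}\|\varphi_1(0)-\varphi_2(0)\|_{V'}^2+Ce^{\kappa t}|M_1-M_2|,
\]
which is exactly \eqref{comp1}. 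The only mildly delicate point is the rigorous justification of testing with $v\in D(A_N)$ for weak solutions of regularity \eqref{class}: $\partial_t\varphi_i\in L^2([0,T];V')\subset L^2([0,T];D(A_N)')$, so the pairing is legitimate, and the $L^\infty$ bound needed for the Lipschitz estimate on $F'$ holds only for $s\ge 2\tau$, which is why the time interval in \eqref{comp1} starts at $3\tau$ (matching Lemma \ref{h1lemma}). Everything else is a routine chain of Cauchy--Schwarz and Young inequalities, so I do not anticipate a serious obstacle.
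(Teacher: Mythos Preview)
Your argument is correct and follows essentially the same route as the paper: test the difference equation against $\psi\in D(A_N)$ to get $\|\partial_t\varphi(s)\|_{D(A_N)'}\lesssim\|\overline\mu(s)\|_H\le C_{m,\tau}\|\varphi(s)\|_H$ via the uniform $L^\infty$-bound on $\varphi_i$ (Lemma~\ref{linf}) and the local Lipschitz continuity of $F'$, then close with the $L^2([0,t];H)$-estimate from Proposition~\ref{uniq}. The paper additionally cites the $V$-bound of Lemma~\ref{h1lemma} in \eqref{linfbis}, but as you observed only the $L^\infty$-bound is actually needed for the Lipschitz control of $F'(\varphi_1)-F'(\varphi_2)$, so your slightly leaner justification is fine.
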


\begin{proof}
The second term on the left-hand side of (\ref{comp1}) can be easily
controlled by (\ref{Lipschitz}). Thus we only need to estimate the time
derivative. Recall that $\varphi $ satisfies (\ref{d1}). Furthermore, in
light of Lemmas \ref{linf} and \ref{h1lemma}, recall that we have%
\begin{equation}
\sup_{t\geq 3\tau }\left\Vert \varphi _{i}\left( t\right) \right\Vert
_{V\cap L^{\infty }\left( \Omega \right) }\leq C_{m,\tau }\text{, }i=1,2.
\label{linfbis}
\end{equation}%
Thus, for any test function $\psi \in D(A_{N})$, using the weak formulation (%
\ref{weakf}), there holds
\begin{align}
\left\langle \partial _{t}\varphi \left( t\right) ,\psi \right\rangle &
=\left( \nabla \overline{\mu }\left( t\right) ,\nabla \psi \right)
=\left\langle \overline{\mu }\left( t\right) ,\Delta _{N}\psi \right\rangle
\label{asr} \\
& \leq \left\Vert \overline{\mu }\left( t\right) \right\Vert _{H}\left\Vert
\psi \right\Vert _{D\left( A_{N}\right) }\leq C_{m,\tau }\left\Vert \varphi
\right\Vert _{H}\left\Vert \psi \right\Vert _{D\left( A_{N}\right) }.  \notag
\end{align}%
This estimate together with (\ref{Lipschitz}) gives the desired estimate on
the time derivative in (\ref{comp1}).
\end{proof}

We now show that the semigroup $S\left( t\right) $ is actually uniformly H%
\"{o}lder continuous in the $C^{\alpha }$-norm with respect to the initial
data.

\begin{lemma}
\label{holder} Let $\varphi _{i}\left( t\right) =S\left( t\right) \varphi
_{i}\left( 0\right) $, with $\varphi _{i}(0)\in \mathcal{Y}_{m}$ such that $%
M_{i}=\left\langle \varphi _{i}\left( 0\right) \right\rangle $, $i=1,2$.
Then, for every $\tau >0$, the following estimate is valid:%
\begin{equation}
\left\Vert \varphi _{1}\left( t\right) -\varphi _{2}\left( t\right)
\right\Vert _{C^{\alpha /2}\left( \overline{\Omega }\right) }\leq C_{m,\tau
}e^{\kappa t}\left( \left\Vert \varphi _{1}\left( 0\right) -\varphi
_{2}\left( 0\right) \right\Vert _{V^{\prime }}^{\beta }+\left\vert
M_{1}-M_{2}\right\vert ^{\frac{\beta }{2}}\right) ,  \label{hclinf}
\end{equation}%
for all $t\geq 3\tau $, where the constants $C_{m,\tau },$ $\kappa $ and $%
\beta <1$ are independent of the initial data.
\end{lemma}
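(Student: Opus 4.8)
The plan is to interpolate between the $V'$-stability already available (Proposition \ref{uniq} and Lemma \ref{lipdif}) and the uniform higher regularity in $C^{\alpha}$ (Lemma \ref{holderb}), which holds for both trajectories $\varphi_1,\varphi_2$ once $t\geq 2\tau$. First I would fix $t\geq 3\tau$ and observe that, by Lemma \ref{holderb} applied to each solution and to the difference being a sum of two such functions, there is a uniform bound
\begin{equation*}
\|\varphi_1(s)-\varphi_2(s)\|_{C^{\alpha}(\overline{\Omega})}\leq C_{m,\tau},\qquad \forall\,s\geq 2\tau ,
\end{equation*}
with the constant independent of the initial data. The idea is then to "trade" a little of this Hölder regularity against the very weak $V'$ (or $H$) control of the difference coming from Lemma \ref{lipdif}. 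A convenient vehicle is a Gagliardo–Nirenberg / Sobolev interpolation of the form
\begin{equation*}
\|w\|_{C^{\alpha/2}(\overline{\Omega})}\leq C\,\|w\|_{C^{\alpha}(\overline{\Omega})}^{1-\theta}\,\|w\|_{H^{-1}(\Omega)}^{\theta}
\end{equation*}
for a suitable $\theta\in(0,1)$ depending only on $d$ and $\alpha$ (here $H^{-1}$ can be replaced by $V'$ up to the mean value, since the two differ only by a constant whose size is controlled by $|M_1-M_2|$). Applying this with $w=\varphi_1(s)-\varphi_2(s)$ and using the uniform $C^{\alpha}$ bound reduces the claim to estimating $\|\varphi_1(s)-\varphi_2(s)\|_{V'}^{\theta}$.

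Next I would feed in the quantitative $V'$-continuity. From Proposition \ref{uniq} we already have, pointwise in $s\geq 0$,
\begin{equation*}
\|\varphi_1(s)-\varphi_2(s)\|_{V'}^{2}\leq e^{\kappa s}\big(\|\varphi_1(0)-\varphi_2(0)\|_{V'}^{2}+C|M_1-M_2|\big).
\end{equation*}
Raising this to the power $\theta/2$ and combining with the interpolation inequality yields
\begin{equation*}
\|\varphi_1(s)-\varphi_2(s)\|_{C^{\alpha/2}(\overline{\Omega})}\leq C_{m,\tau}\,e^{\kappa s}\big(\|\varphi_1(0)-\varphi_2(0)\|_{V'}^{\theta}+|M_1-M_2|^{\theta/2}\big),
\end{equation*}
which is exactly \eqref{hclinf} with $\beta=\theta<1$ (after absorbing the extra $e^{\kappa s}$ factors and relabeling constants; note $s\geq 3\tau$ is needed only to guarantee that the uniform $C^{\alpha}$ bound and the bounds \eqref{linfbis} are in force). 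One has to be slightly careful passing from $H^{-1}$ to the paper's $V'$ norm: since $\langle\varphi_1(s)-\varphi_2(s)\rangle=M_1-M_2$ for all $s$, one splits $w=(w-\langle w\rangle)+\langle w\rangle$, estimates the mean-zero part by $\|A_N^{-1/2}(w-\langle w\rangle)\|_H$, which is comparable to $\|w\|_{V'}$ on $V_0'$, and controls the constant part directly by $|M_1-M_2|$ in any norm on a bounded domain.

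I expect the only genuinely delicate point to be the choice and justification of the interpolation inequality bridging $C^{\alpha}$, $C^{\alpha/2}$ and $V'=H^{-1}$ with a constant uniform over the class of functions in play; one wants an inequality valid on a Lipschitz domain, and it is cleanest to route it through fractional Sobolev spaces, using $C^{\alpha}(\overline{\Omega})\hookrightarrow H^{s}(\Omega)$ for $s$ slightly below $\alpha+d/2$ is wrong in general — rather one uses $C^{\alpha}\hookrightarrow W^{\alpha',\infty}$ with $\alpha'$ slightly less than $\alpha$, then $W^{\alpha',\infty}(\Omega)\hookrightarrow H^{\sigma}(\Omega)$ for any $\sigma<\alpha'$, and finally interpolates $[H^{-1},H^{\sigma}]_{\theta}\hookrightarrow C^{\alpha/2}$ by choosing $\sigma$ and $\theta$ so that $\theta\sigma-(1-\theta)>\alpha/2+d/2$ is again too strong — the correct route is $[H^{-1},H^{\sigma}]_{1-\theta}=H^{(1-\theta)\sigma-\theta}$ and then $H^{r}(\Omega)\hookrightarrow C^{\alpha/2}(\overline{\Omega})$ for $r>\alpha/2+d/2$, which forces $\sigma$ (hence $\alpha$ from Lemma \ref{holderb}) large enough and then picks $\theta\in(0,1)$ accordingly, shrinking $\alpha$ in \eqref{hclinf} if necessary. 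Once the exponents are pinned down this is routine; everything else is a direct concatenation of Proposition \ref{uniq}, Lemma \ref{lipdif} and Lemma \ref{holderb}.
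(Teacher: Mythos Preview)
Your overall strategy---interpolate directly between the uniform $C^{\alpha}$ bound on $\varphi_1-\varphi_2$ and the $V'$ stability estimate---is different from the paper's and would be more economical \emph{if} the interpolation inequality
\[
\|w\|_{C^{\alpha/2}(\overline{\Omega})}\leq C\,\|w\|_{C^{\alpha}(\overline{\Omega})}^{1-\theta}\,\|w\|_{V'}^{\theta}
\]
were secured. However, the Sobolev route you settle on at the end does not work in dimension $d\geq 2$: you pass from $C^{\alpha}$ to $H^{\sigma}$ with $\sigma<\alpha<1$, then want $[H^{-1},H^{\sigma}]_{1-\theta}=H^{(1-\theta)\sigma-\theta}\hookrightarrow C^{\alpha/2}$, which requires $(1-\theta)\sigma-\theta>\alpha/2+d/2$. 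Since $(1-\theta)\sigma-\theta<\sigma<1\leq d/2$ for $d\geq 2$, this is impossible for any $\theta\in(0,1)$, and ``shrinking $\alpha$'' does not help. The inequality you want is in fact true, but it needs a direct mollification/averaging argument rather than Sobolev bookkeeping: bound $\|w\|_{L^\infty}$ by writing $w(x)=(w-w\ast\psi_r)(x)+(w\ast\psi_r)(x)$, estimate the first piece by $[w]_{C^\alpha}r^{\alpha}$ and the second by $\|w\|_{V'}\|\psi_r\|_{V}\sim \|w\|_{V'}r^{-1-d/2}$, optimize in $r$, and then recover the $C^{\alpha/2}$ seminorm by combining this $L^\infty$ bound with the raw $C^\alpha$ seminorm.

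The paper avoids this interpolation issue altogether by taking a longer PDE-based route: first interpolate $[V,V']_{1/2,2}=H$ together with the uniform $V$-bound \eqref{linfbis} to upgrade the $V'$ estimate \eqref{Lipschitz} to an $H$ estimate on $\varphi_1(t)-\varphi_2(t)$; then observe that, since both $\varphi_i$ are uniformly bounded in $L^\infty\cap V$ for $t\geq 3\tau$, the Alikakos--Moser iteration of Lemma~\ref{linf} applies verbatim to the \emph{difference} equation \eqref{d1}, giving an $L^2$--$L^\infty$ smoothing property for $\varphi_1-\varphi_2$; finally interpolate between $L^\infty$ and the uniform $C^\alpha$ bound via \eqref{interpolation} to land in $C^{\alpha/2}$. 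Your approach, once the interpolation is properly justified, bypasses the second step entirely; the paper's approach trades that interpolation lemma for a repetition of the iteration argument on the difference.
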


\begin{proof}
Using the interpolation $[V,V^{\prime }]_{1/2,2}=H$, we deduce from
estimates (\ref{Lipschitz}) and (\ref{linfbis}) that
\begin{equation}
\left\Vert \varphi _{1}\left( t\right) -\varphi _{2}\left( t\right)
\right\Vert _{H}\leq C_{m,\tau }e^{\kappa t}\left( \left\Vert \varphi
_{1}\left( 0\right) -\varphi _{2}\left( 0\right) \right\Vert _{V^{\prime
}}^{1/2}+\left\vert M_{1}-M_{2}\right\vert ^{1/4}\right) ,  \label{hc2}
\end{equation}%
for all $t\geq 3\tau $. On the other hand, due to the boundedness of $%
\varphi _{i}\left( t\right) \in L^{\infty }\left( \Omega \right) \cap V$ for
$t\geq 3\tau $, $i=1,2$ (cf. \ref{linfbis}), the nonlinearity $f=F^{\prime }$
is controlled by the linear part of the equation (\ref{d1}) (no matter how
fast it grows) and obtaining the $L^{2}$-$L^{\infty }$ smoothing property
for our dynamical system is actually reduced to the same standard procedure
we used in the proof of Lemma \ref{linf}. Indeed, we already have an
estimate of the $L^{\infty }$-norm of the solution $\varphi \left( t\right) $
(due to (\ref{linf1})) and, consequently, we do not need to worry about the
growth of $f=F^{\prime }$. In particular, estimate (\ref{lp}) also holds for
the difference of solutions $\varphi =\varphi _{1}-\varphi _{2}$. This
observation combined with (\ref{hc2}) and a proper interpolation inequality
between $C^{\alpha }\left( \overline{\Omega }\right) \subset C^{\alpha
/2}\left( \overline{\Omega }\right) \subset L^{\infty }\left( \Omega \right)
$ implies the desired inequality (\ref{hclinf}).
\end{proof}

The last ingredient we need is the uniform H\"{o}lder continuity of $%
t\mapsto S(t)\varphi _{0}$ in the $C^{\alpha }$-norm, namely,

\begin{lemma}
\label{hctime} Let the assumptions of Theorem \ref{well} be satisfied.
Consider $\varphi \left( t\right) =S\left( t\right) \varphi _{0}$ with $%
\varphi _{0}\in \mathcal{Y}_{m}$. Then, for every $\tau >0$, there holds
\begin{equation}
\left\Vert \varphi \left( t\right) -\varphi \left( s\right) \right\Vert
_{C^{\alpha /2}\left( \overline{\Omega }\right) }\leq C_{m,\tau }\left\vert
t-s\right\vert ^{\beta },\quad \forall t,s\geq 3\tau ,  \label{hclinf2}
\end{equation}%
where $\beta <1$ and the positive constant $C_{m,\tau }$ is independent of
initial data, $\varphi $ and $t,s$.
\end{lemma}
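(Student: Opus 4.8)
The plan is to establish H\"older continuity in time by combining the $L^2$--$L^\infty$ smoothing machinery already developed in Lemma~\ref{linf} with the parabolic regularity estimate of Lemma~\ref{holderb}, and then to upgrade a bound on $\|\varphi(t)-\varphi(s)\|_H$ (or in a negative-order space) to a bound in $C^{\alpha/2}(\overline{\Omega})$ by interpolation. Concretely, for $t,s\geq 3\tau$ with (say) $s<t$, I would first control $\|\varphi(t)-\varphi(s)\|_{V'}$: from the weak formulation \eqref{weakf} we have $\partial_t\varphi=\Delta\mu$ with $\mu\in L^\infty([3\tau,\infty);H)$ by \eqref{chem_sp}, so $\partial_t\varphi$ is bounded in $L^\infty([3\tau,\infty);D(A_N)')$, and hence
\begin{equation*}
\|\varphi(t)-\varphi(s)\|_{D(A_N)'}\leq \int_s^t\|\partial_t\varphi(\sigma)\|_{D(A_N)'}\,d\sigma\leq C_{m,\tau}|t-s|.
\end{equation*}

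Next I would interpolate this Lipschitz-in-time estimate in the weak norm against the uniform bound $\|\varphi(t)\|_{C^{\alpha}(\overline\Omega)\cap V}\leq C_{m,\tau}$ for $t\geq 3\tau$ coming from Lemmas~\ref{holderb} and~\ref{h1lemma} (equivalently, one may use the parabolic H\"older estimate \eqref{holdest} directly, which already gives $C^{\alpha/2}$ regularity in $t$ on intervals $[t,t+1]$). Using a standard interpolation inequality of the form $\|u\|_{C^{\alpha/2}(\overline\Omega)}\leq C\|u\|_{C^{\alpha}(\overline\Omega)}^{\theta}\|u\|_{D(A_N)'}^{1-\theta}$ for a suitable $\theta\in(0,1)$ — which follows from $C^{\alpha}(\overline\Omega)\hookrightarrow C^{\alpha/2}(\overline\Omega)\hookrightarrow L^\infty(\Omega)\hookrightarrow D(A_N)'$ together with the reiteration theorem for the associated interpolation scale — we obtain
\begin{equation*}
\|\varphi(t)-\varphi(s)\|_{C^{\alpha/2}(\overline\Omega)}\leq C_{m,\tau}|t-s|^{1-\theta},
\end{equation*}
so that \eqref{hclinf2} holds with $\beta:=1-\theta<1$. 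The uniformity of $C_{m,\tau}$ in the initial data is inherited from Lemmas~\ref{linf}, \ref{holderb} and~\ref{h1lemma}, all of whose constants depend only on $m,\tau$ and the structural data.

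The one point requiring a little care, and the main (mild) obstacle, is the treatment of the case $|t-s|\geq 1$: the interpolation above gives decay as $|t-s|\to 0$ but for large separations one simply uses the triangle inequality together with the uniform bound $\|\varphi(t)\|_{C^{\alpha/2}(\overline\Omega)}\leq C_{m,\tau}$, so that $\|\varphi(t)-\varphi(s)\|_{C^{\alpha/2}(\overline\Omega)}\leq 2C_{m,\tau}\leq 2C_{m,\tau}|t-s|^{\beta}$ whenever $|t-s|\geq 1$. Thus the estimate holds for all $t,s\geq 3\tau$ after possibly enlarging $C_{m,\tau}$. The only genuinely technical ingredient is verifying the interpolation inequality between $C^\alpha$, $C^{\alpha/2}$ and the negative-order space $D(A_N)'$; this is routine once one fixes the fractional Sobolev/H\"older scale, and alternatively one can avoid it entirely by invoking \eqref{holdest}, which directly encodes $\alpha/2$-H\"older continuity in the time variable on unit intervals, and then patching unit intervals as above.
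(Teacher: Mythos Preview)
Your proposal is correct and follows essentially the same route as the paper: a Lipschitz-in-time estimate in a negative-order space (the paper uses $V'$ via $\mu\in L^\infty([3\tau,\infty);V)$, you use $D(A_N)'$ via $\mu\in L^\infty([3\tau,\infty);H)$), followed by interpolation against the uniform $C^\alpha$ bound from Lemma~\ref{holderb}. One small correction: your citation of \eqref{chem_sp} for the boundedness of $\mu$ in $H$ is off, since \eqref{chem_sp} concerns the mean of the \emph{difference} $\overline{\mu}$; the bound you need follows directly from \eqref{linfbis} and the expression \eqref{chem}, exactly as the paper does at the start of its proof.
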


\begin{proof}
According to (\ref{linfbis}), the following bound holds for $\mu $:
\begin{equation*}
\sup_{t\geq 3\tau }\left\Vert \mu \left( t\right) \right\Vert _{V}\leq
C_{m,\tau }.
\end{equation*}%
Consequently, by comparison in (\ref{nlch}), we have that
\begin{equation*}
\sup_{t\geq 3\tau }\left\Vert \partial _{t}\varphi \left( t\right)
\right\Vert _{V^{\prime }}\leq C_{m,\tau },
\end{equation*}%
which entails
\begin{equation}
\left\Vert \varphi \left( t\right) -\varphi \left( s\right) \right\Vert
_{V^{\prime }}\leq C_{m,\tau }\left\vert t-s\right\vert ,\quad \forall
t,s\geq 3\tau .  \label{hcb3}
\end{equation}%
Estimate (\ref{hclinf2}) now follows from (\ref{hcb3}), $\left[ V,V^{\prime }%
\right] _{1/2,2}=H$, the $L^{2}$-$L^{\infty }$ smoothing property of the
solutions in $\left[ 3\tau ,\infty \right) $ and the interpolation inequality%
\begin{equation}
\left\Vert \varphi \right\Vert _{C^{\alpha /2}\left( \overline{\Omega }%
\right) }\leq C\left\Vert \varphi \right\Vert _{C^{\alpha }\left( \overline{%
\Omega }\right) }^{1-\zeta }\left\Vert \varphi \right\Vert _{L^{\infty
}\left( \Omega \right) }^{\zeta },  \label{interpolation}
\end{equation}%
for some $\zeta =\zeta \left( \alpha \right) >0.$
\end{proof}

We report for the reader's convenience the following abstract result on the
existence of exponential attractors \cite[Proposition 4.1]{EZ} which will be
used in the following proof and in the other sections as well.

\begin{proposition}
\label{abstract}Let $\mathcal{H}$,$\mathcal{V}$,$\mathcal{V}_{1}$ be Banach
spaces such that the embedding $\mathcal{V}_{1}\subset \mathcal{V}$ is
compact. Let $B$ be a closed bounded subset of $\mathcal{H}$ and let $%
\mathbb{S}:B\rightarrow B$ be a map. Assume also that there exists a
uniformly Lipschitz continuous map $\mathbb{T}:B\rightarrow \mathcal{V}_{1}$%
, i.e.,%
\begin{equation}
\left\Vert \mathbb{T}b_{1}-\mathbb{T}b_{2}\right\Vert _{\mathcal{V}_{1}}\leq
L\left\Vert b_{1}-b_{2}\right\Vert _{\mathcal{H}},\quad \forall
b_{1},b_{2}\in B,  \label{gl1}
\end{equation}%
for some $L\geq 0$, such that%
\begin{equation}
\left\Vert \mathbb{S}b_{1}-\mathbb{S}b_{2}\right\Vert _{\mathcal{H}}\leq
\gamma \left\Vert b_{1}-b_{2}\right\Vert _{\mathcal{H}}+K\left\Vert \mathbb{T%
}b_{1}-\mathbb{T}b_{2}\right\Vert _{\mathcal{V}},\quad \forall
b_{1},b_{2}\in B,  \label{gl2}
\end{equation}%
for some $\gamma <\frac{1}{2}$ and $K\geq 0$. Then, there exists a
(discrete) exponential attractor $\mathcal{M}_{d}\subset B$ of the semigroup
$\{\mathbb{S}(n):=\mathbb{S}^{n},n\in Z+\}$ with discrete time in the phase
space $\mathcal{H}$.
\end{proposition}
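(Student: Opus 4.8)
The plan is to recognize this as the abstract exponential-attractor criterion of \cite[Proposition 4.1]{EZ} and to prove it by: (a) converting the hypotheses (\ref{gl1})--(\ref{gl2}) and the compactness of $\mathcal{V}_{1}\hookrightarrow\mathcal{V}$ into a \emph{covering} (smoothing) property for $\mathbb{S}$ on bounded subsets of $B$; (b) iterating this property to build, for each $k$, a finite $\kappa^{k}$-net of $\mathbb{S}^{k}(B)$ of at most exponential cardinality; and (c) assembling these nets into the set $\mathcal{M}_{d}$ and verifying the four defining properties of a discrete exponential attractor. Observe first that (\ref{gl1})--(\ref{gl2}) force $\mathbb{S}$ to be globally Lipschitz on $B$ (its Lipschitz constant being controlled by $\gamma$, $K$, $L$ and the norm of the embedding $\mathcal{V}_{1}\hookrightarrow\mathcal{V}$), hence continuous; this will be used at the end.

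\emph{Step (a): the covering property.} We claim there exist $N\in\mathbb{N}$ and $\kappa\in(0,1)$ such that for every $S\subset B$ with $\operatorname{diam}_{\mathcal{H}} S\le 2r$ the image $\mathbb{S}(S)$ is covered by $N$ closed $\mathcal{H}$-balls of radius $\kappa r$ with centres in $\mathbb{S}(S)$. Indeed, by (\ref{gl1}) the set $\mathbb{T}(S)$ has $\mathcal{V}_{1}$-diameter at most $2Lr$; since $\mathcal{V}_{1}\hookrightarrow\mathcal{V}$ is compact, a $\mathcal{V}_{1}$-ball of radius $2Lr$ is totally bounded in $\mathcal{V}$, and by homogeneity in $r$ it can, for every fixed $\epsilon>0$, be covered by $N=N(\epsilon,L)$ balls of $\mathcal{V}$-radius $\epsilon r$ with $N$ independent of $r$. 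Pulling this cover back gives a partition $S=\bigcup_{i=1}^{N}P_{i}$ with $\operatorname{diam}_{\mathcal{V}}\mathbb{T}(P_{i})\le 2\epsilon r$, whence by (\ref{gl2}), for $b_{1},b_{2}\in P_{i}$, $\|\mathbb{S}b_{1}-\mathbb{S}b_{2}\|_{\mathcal{H}}\le 2r(\gamma+K\epsilon)$, i.e. $\operatorname{diam}_{\mathcal{H}}\mathbb{S}(P_{i})\le 2r(\gamma+K\epsilon)$. As $\gamma<\tfrac12$, we may now fix $\epsilon$ so small that $\kappa:=2(\gamma+K\epsilon)<1$; then each $\mathbb{S}(P_{i})$ lies in an $\mathcal{H}$-ball of radius $\kappa r$ centred at any of its points, and the claim follows.

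\emph{Steps (b)--(c): construction and verification.} Let $R_{0}:=\operatorname{diam}_{\mathcal{H}} B<\infty$, fix $b_{0}\in B$ and set $\mathcal{U}_{0}:=\{b_{0}\}$, so that $\mathbb{S}^{0}(B)=B\subset\overline{B}_{\mathcal{H}}(b_{0},R_{0})$. Inductively, given a finite set $\mathcal{U}_{k}\subset\mathbb{S}^{k}(B)$ which is a $\kappa^{k}R_{0}$-net of $\mathbb{S}^{k}(B)$, apply Step (a) to each $S=\mathbb{S}^{k}(B)\cap\overline{B}_{\mathcal{H}}(u,\kappa^{k}R_{0})$, $u\in\mathcal{U}_{k}$, and let $\mathcal{U}_{k+1}$ be the union of all the ball-centres so obtained together with $\mathbb{S}(\mathcal{U}_{k})$; then $\mathcal{U}_{k+1}\subset\mathbb{S}^{k+1}(B)$ is a $\kappa^{k+1}R_{0}$-net of $\mathbb{S}^{k+1}(B)$ with $\mathbb{S}(\mathcal{U}_{k})\subset\mathcal{U}_{k+1}$ and $\#\mathcal{U}_{k+1}\le(N+1)\#\mathcal{U}_{k}$, so $\#\mathcal{U}_{k}\le(N+1)^{k}$. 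Define $\mathcal{M}_{d}:=\overline{\bigcup_{k\ge0}\mathcal{U}_{k}}$. Then $\mathcal{M}_{d}\subset B$ since $B$ is closed; $\mathcal{M}_{d}$ is compact because for each $j$ the tail $\bigcup_{k\ge j}\mathcal{U}_{k}\subset\mathbb{S}^{j}(B)$ is covered by the $\le(N+1)^{j}$ balls of radius $\kappa^{j}R_{0}$ about $\mathcal{U}_{j}$ while $\bigcup_{k<j}\mathcal{U}_{k}$ is finite, so $\bigcup_{k}\mathcal{U}_{k}$ is totally bounded in the complete space $\mathcal{H}$; $\mathbb{S}(\mathcal{M}_{d})\subset\mathcal{M}_{d}$ because $\mathbb{S}(\bigcup_{k}\mathcal{U}_{k})\subset\bigcup_{k}\mathcal{U}_{k}$ and $\mathbb{S}$ is continuous; exponential attraction holds because $\operatorname{dist}_{\mathcal{H}}(\mathbb{S}^{k}(B),\mathcal{M}_{d})\le\operatorname{dist}_{\mathcal{H}}(\mathbb{S}^{k}(B),\mathcal{U}_{k})\le\kappa^{k}R_{0}=R_{0}e^{-k\ln(1/\kappa)}$; and $\dim_{F}(\mathcal{M}_{d},\mathcal{H})\le\ln(N+1)/\ln(1/\kappa)<\infty$ follows from the $\kappa^{j}R_{0}$-nets of $\mathcal{M}_{d}$ of cardinality $\le C(N+1)^{j}$ by the standard interpolation between consecutive scales. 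This produces the discrete exponential attractor $\mathcal{M}_{d}$ for $\{\mathbb{S}^{n}\}$.

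\emph{Main obstacle.} Everything after Step (a) is routine bookkeeping; the real content is Step (a). Two points there need care: the cover of $\mathbb{T}(S)$ must have cardinality $N$ \emph{independent of} $r$, which is a rescaling argument relying only on the compactness of $\mathcal{V}_{1}\hookrightarrow\mathcal{V}$; and one must exploit the \emph{strict} inequality $\gamma<\tfrac12$ precisely to absorb the factor $2$ relating the diameter of a set to the radius of a ball containing it, so that the scale genuinely contracts, $\kappa<1$. A secondary point, in Step (b), is to enlarge $\mathcal{U}_{k+1}$ by $\mathbb{S}(\mathcal{U}_{k})$ --- needed for the exact semi-invariance $\mathbb{S}\mathcal{M}_{d}\subset\mathcal{M}_{d}$ --- without damaging the exponential bound on $\#\mathcal{U}_{k}$.
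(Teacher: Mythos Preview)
Your argument is correct and is precisely the standard covering-lemma construction of a discrete exponential attractor. Note, however, that the paper does not prove this proposition at all: it is quoted verbatim from \cite[Proposition~4.1]{EZ} ``for the reader's convenience'' and is used as a black box in the subsequent proofs of Theorems~\ref{expo}, \ref{expo_2} and \ref{expo_log}. So there is no ``paper's own proof'' to compare against; you have supplied one where the authors chose to cite. Your proof is essentially the argument of \cite{EZ} (and, more classically, of Eden--Foias--Nicolaenko--Temam in the squeezing-property form): the compact embedding $\mathcal{V}_{1}\hookrightarrow\mathcal{V}$ plus the Lipschitz bound (\ref{gl1}) yields, via homogeneity, a scale-invariant covering number $N$, and the strict inequality $\gamma<\tfrac12$ is exactly what allows the factor $2$ (diameter versus radius) to be absorbed so that $\kappa<1$. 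The bookkeeping in Step~(b)--(c), including the enlargement of $\mathcal{U}_{k+1}$ by $\mathbb{S}(\mathcal{U}_{k})$ to secure semi-invariance, is carried out cleanly.
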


\noindent \textit{Proof of Theorem \ref{expo}}. In order to apply
Proposition \ref{abstract}, it is sufficient to verify the existence of an
exponential attractor for the restriction of $S(t)$ on some properly chosen
semi-invariant absorbing set in $\mathcal{Y}_{m}$. Recall that, by Lemmas %
\ref{linf} and \ref{h1lemma}, the ball $B_{0}:=B_{C^{a}\left( \overline{%
\Omega }\right) \cap V}\left( R_{0}\right) $ will be absorbing for $S\left(
t\right) $, provided that $R_{0}>0$ is sufficiently large. Since we want
this ball to be semi-invariant with respect to the semigroup, we push it
forward by the semigroup, by defining first the set $B_{1}=\left[ \cup
_{t\geq 0}S\left( t\right) B_{0}\right] _{V_{m}^{\prime }}$, where $\left[
\cdot \right] _{V_{m}^{\prime }}$ denotes closure in the space
\begin{equation*}
V_{m}^{\prime }:=\left\{ \psi \in V^{\prime }\,:\,\left\vert \left\langle
\psi \right\rangle \right\vert \leq m\right\} ,
\end{equation*}%
and then the set $\mathbb{B}=S\left( 1\right) B_{1}$. Thus, $\mathbb{B}$ is
a semi-invariant compact (for the metric $d_{V_{m}^{^{\prime }}}\left(
\varphi _{1},\varphi _{2}\right) =\left\Vert \varphi _{1}-\varphi
_{2}\right\Vert _{V^{^{\prime }}}+\left\vert \left\langle \varphi
_{1}-\varphi _{2}\right\rangle \right\vert ^{1/2}$) subset of the phase
space $\mathcal{Y}_{m}$. On the other hand, due to the results proven in
this section, we have%
\begin{equation*}
\sup_{t\geq 0}\left( \left\Vert \varphi \left( t\right) \right\Vert
_{C^{a}\left( \overline{\Omega }\right) \cap V}+\left\Vert \mu \left(
t\right) \right\Vert _{V}+\left\Vert \partial _{t}\varphi \left( t\right)
\right\Vert _{V^{\prime }}\right) \leq C_{m},
\end{equation*}%
for every trajectory $\varphi $ originating from $\varphi _{0}\in \mathbb{B}$%
, for some positive constant $C_{m}$ which is independent of the choice of $%
\varphi _{0}\in \mathbb{B}$. We can now apply the abstract result above to
the map $\mathbb{S}=S\left( T\right) $ and $\mathcal{H}=V_{m}^{\prime }$,
for a fixed $T>0$ such that $e^{-\kappa T}<\frac{1}{2}$, where $\kappa >0$
is the same as in Lemma \ref{dec}. To this end, we introduce the functional
spaces%
\begin{equation}
\mathcal{V}_{1}:=L^{2}\left( \left[ 0,T\right] ;L^{2}\left( \Omega \right)
\right) \cap H^{1}\left( \left[ 0,T\right] ;D(A_{N})^{\prime }\right) ,\quad
\mathcal{V}:=L^{2}\left( \left[ 0,T\right] ;V_{m}^{\prime }\right) ,
\label{fsp}
\end{equation}%
and note that $\mathcal{V}_{1}$ is compactly embedded into $\mathcal{V}$.
Finally, we introduce the operator $\mathbb{T}:\mathbb{B}\rightarrow
\mathcal{V}_{1}$, by $\mathbb{T}\varphi _{0}:=\varphi \in \mathcal{V}_{1},$
where $\varphi $ solves (\ref{nlch})-(\ref{ic}) with $\varphi \left(
0\right) =\varphi _{0}\in \mathbb{B}$. We claim that the maps $\mathbb{S}$, $%
\mathbb{T}$, the spaces $\mathcal{H}$,$\mathcal{V}$,$\mathcal{V}_{1}$ thus
defined satisfy all the assumptions of Proposition \ref{abstract}. Indeed,
the global Lipschitz continuity (\ref{gl1}) of $\mathbb{T}$ is an immediate
corollary of Lemma \ref{lipdif}, and estimate (\ref{gl2}) follows from
estimate (\ref{decay1}). Therefore, due to Proposition \ref{abstract}, the
semigroup $\mathbb{S}(n)=S\left( nT\right) $ generated by the iterations of
the operator $\mathbb{S}:\mathbb{B\rightarrow B}$ possesses a (discrete)
exponential attractor $\mathcal{M}_{d}$ in $\mathbb{B}$ endowed by the
topology of $V_{m}^{\prime }$. In order to construct the exponential
attractor $\mathcal{M}$ for the semigroup $S(t)$ with continuous time, we
note that, due to Lemma \ref{uniq}, this semigroup is Lipschitz continuous
with respect to the initial data in the topology of $V_{m}^{\prime }$.
Moreover, by (\ref{hclinf}) and (\ref{hclinf2}) the map $\left( t,\varphi
_{0}\right) \mapsto S\left( t\right) \varphi _{0}$ is also uniformly H\"{o}%
lder continuous on $\left[ 0,T\right] \times \mathbb{B}$, where $\mathbb{B}$
is endowed with the metric topology of $V_{m}^{\prime }$. Hence, the desired
exponential attractor $\mathcal{M}$ for the continuous semigroup $S(t)$ can
be obtained by the standard formula%
\begin{equation}
\mathcal{M}=\bigcup_{t\in \left[ 0,T\right] }S\left( t\right) \mathcal{M}%
_{d}.  \label{st}
\end{equation}%
In order to finish the proof of the theorem, we only need to verify that $%
\mathcal{M}$ defined as above will be the exponential attractor for $S(t)$
restricted to $\mathbb{B}$ not only with respect to the $V_{m}^{\prime }$%
-metric, but also in with respect to a stronger metric. This is an immediate
corollary of the following facts: $\mathbb{B}$ is bounded in $V\cap
C^{\alpha }\left( \overline{\Omega }\right) $, the $L^{2}$-$C^{\alpha
}\left( \overline{\Omega }\right) $ smoothing property of the map $\varphi
_{0}\mapsto S\left( t\right) \varphi _{0}$, and the interpolation
inequalities given by (\ref{interpolation}) and
\begin{equation*}
\left\Vert u\right\Vert _{H^{1-\nu }\left( \Omega \right) }\leq
C_{s}\left\Vert u\right\Vert _{H}^{s}\left\Vert u\right\Vert
_{V}^{1-s},\quad \nu \in (0,1),
\end{equation*}%
for some $s=s\left( \nu \right) \in (0,1)$. Theorem \ref{expo} is now proved.

\begin{remark}
The methods used in this section can also be applied to other nonlocal
problems which have a variational structure similar to the nonlocal
Cahn-Hilliard equation. An interesting case (see \cite[Sec.~5]{BH} and its
references) is a model related to interacting particle systems with Kawasaki
dynamics, namely,
\begin{equation}
\left\{
\begin{array}{ll}
\partial _{t}\varphi =\Delta \left( \varphi -\tanh (\beta J\left( x\right)
\ast \varphi) \right) & \text{in }\Omega\times(0,\infty) , \\
\partial _{\mathbf{n}}\left( \varphi -\tanh (\beta J\left( x\right) \ast
\varphi) \right) =0 & \text{on }\Gamma\times(0,\infty),%
\end{array}%
\right.  \label{K}
\end{equation}%
for some constant $\beta $. In fact, in this case the $L^{2}$-$L^{\infty }$
smoothing property proven in Lemma \ref{linf} holds again regardless of the
value of $\beta $. The existence of an absorbing set in $V\cap L^{\infty
}\left( \Omega \right) $ for the solution map $\varphi _{0}\in H$ $\mapsto
\varphi \left( t\right) \in H$ of (\ref{K}) can be also established as in
\cite[Section 5]{BH}. Hence, the existence of an exponential attractor for
the dynamical system associated with (\ref{K}) can be proven arguing as in
Theorem \ref{expo}.
\end{remark}

\subsection{Convergence to a single equilibrium}

Let $\varphi $ be a weak solution to (\ref{nlch})-(\ref{ic}) according to
Theorem \ref{well}. In this section we aim to show that that the $\omega $%
-limit set,%
\begin{equation*}
\omega \left[ \varphi \right] =\left\{ \varphi _{\ast }:\exists
t_{n}\rightarrow \infty \text{ such that }\varphi \left( t_{n}\right)
\rightarrow \varphi _{\ast }\text{ in }H\right\}
\end{equation*}%
is a singleton, where $\varphi _{\ast }$ is a solution of the stationary
problem:%
\begin{equation}
\left\{
\begin{array}{ll}
a(x)\varphi _{\ast }-J\ast \varphi _{\ast }+F^{\prime }\left( \varphi _{\ast
}\right) =\mu _{\ast },\quad & \text{a.e. in }\Omega , \\
\mu _{\ast }=\text{constant}, \quad \left\langle \varphi _{\ast
}\right\rangle =\left\langle \varphi _{0}\right\rangle &
\end{array}%
\right.  \label{stat}
\end{equation}%
(see, e.g., \cite[Theorem 4.5]{BH}). We employ a generalized version of the {%
\L }ojasiewicz-Simon theorem proved in \cite[Theorem 6]{GG2} (cf. also \cite[%
Section 4]{LP}). The version that applies to our case is formulated in the
following.

\begin{lemma}
\label{LS_lemma} Let $J$ satisfy (H1) and $F\in C^{2}\left( \mathbb{R}%
\right) $ be a real analytic function satisfying (H2). Then, there exist
constants $\theta \in (0,\frac{1}{2}],$ $C>0,$ $\varepsilon >0$ such that
the following inequality holds:%
\begin{equation}
\left\vert \mathcal{E}\left( \varphi \right) -\mathcal{E}\left( \varphi
_{\ast }\right) \right\vert ^{1-\theta }\leq C\left\Vert \mu -\left\langle
\mu \right\rangle \right\Vert _{H},  \label{LSineq}
\end{equation}%
for all $\varphi \in L^{\infty }\left( \Omega \right) \cap \mathcal{Y}_{m}$
provided that $\left\Vert \varphi -\varphi _{\ast }\right\Vert _{H}\leq
\varepsilon .$
\end{lemma}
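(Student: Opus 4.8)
The plan is to deduce the {\L}ojasiewicz-Simon inequality \eqref{LSineq} from the abstract result of \cite[Theorem 6]{GG2} by identifying the nonlocal Cahn-Hilliard energy $\mathcal{E}$ with the functional appearing there and checking its hypotheses. Concretely, one writes $\mathcal{E}(\varphi) = \frac{1}{2}(\mathcal{B}\varphi,\varphi)_H + \int_\Omega F(\varphi)\,dx$, where $\mathcal{B}\varphi := a(x)\varphi - J\ast\varphi$ is a bounded, self-adjoint linear operator on $H$ (self-adjointness and boundedness of $\psi\mapsto J\ast\psi$ are recorded in Remark \ref{compact}, and $a\in L^\infty(\Omega)$). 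The Fr\'echet derivative of $\mathcal{E}$ at $\varphi$, restricted to the affine subspace of fixed mean $\langle\varphi\rangle = m$, is (up to the additive Lagrange multiplier) exactly $\mathcal{B}\varphi + F'(\varphi) - \langle \mathcal{B}\varphi + F'(\varphi)\rangle$, which by \eqref{chem} equals $\mu - \langle\mu\rangle$ along a stationary point. Thus the right-hand side of \eqref{LSineq} is precisely the norm of the projected gradient, and the inequality is the standard {\L}ojasiewicz-Simon gradient inequality at the critical point $\varphi_\ast$.

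The key steps, in order, are as follows. First, fix the mean $\langle\varphi\rangle = \langle\varphi_\ast\rangle = m$ and work on the Hilbert space $H_0^m$ of functions with this mean (or, equivalently, shift to mean zero), so that the constrained critical points of $\mathcal{E}$ are exactly the solutions of \eqref{stat}. Second, verify that $\mathcal{E}$ restricted to a neighborhood of $\varphi_\ast$ in $L^\infty(\Omega)\cap\mathcal{Y}_m$ satisfies the analyticity and Fredholm-type hypotheses of \cite[Theorem 6]{GG2}: the quadratic part is analytic since $\mathcal{B}$ is bounded linear; the term $\int_\Omega F(\varphi)\,dx$ is analytic on $L^\infty(\Omega)$ because $F$ is real analytic on $\mathbb{R}$; and the second derivative $\mathcal{E}''(\varphi_\ast) = \mathcal{B} + F''(\varphi_\ast)\cdot$ is a compact perturbation of the multiplication operator $s\mapsto (a(x)+F''(\varphi_\ast(x)))s$, which by (H2) is bounded below by $c_0 > 0$, hence $\mathcal{E}''(\varphi_\ast)$ is Fredholm of index zero on $H_0^m$. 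Third, quote \cite[Theorem 6]{GG2} to obtain constants $\theta\in(0,\tfrac12]$, $C>0$, $\varepsilon>0$ with $|\mathcal{E}(\varphi)-\mathcal{E}(\varphi_\ast)|^{1-\theta}\le C\,\|\nabla_{H_0^m}\mathcal{E}(\varphi)\|_{H}$ whenever $\|\varphi-\varphi_\ast\|_H\le\varepsilon$. Fourth, identify the projected gradient: $\nabla_{H_0^m}\mathcal{E}(\varphi) = \mathcal{B}\varphi + F'(\varphi) - \langle \mathcal{B}\varphi+F'(\varphi)\rangle$; comparing with \eqref{chem} (with $\alpha=0$ here, or for the appropriate notion of $\mu$) this is exactly $\mu - \langle\mu\rangle$, giving \eqref{LSineq}.

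I expect the main obstacle to be the bookkeeping in step two rather than any deep difficulty: one must confirm that the precise structural assumptions under which \cite[Theorem 6]{GG2} is stated are met by our $\mathcal{E}$, in particular the regularity class on which $F$ and its derivatives act (this is why the statement restricts to $\varphi\in L^\infty(\Omega)\cap\mathcal{Y}_m$ — so that $F'(\varphi)$, $F''(\varphi)$ make pointwise sense and $\int_\Omega F(\varphi)\,dx$ is an analytic functional of $\varphi$ in the $L^\infty$ topology), and the role of the lower bound $c_0$ from (H2) in securing the Fredholm property of the linearization. A secondary point of care is that the constants $\theta, C, \varepsilon$ depend on $\varphi_\ast$, hence a priori on the initial datum through $m = \langle\varphi_0\rangle$; this dependence is harmless for the convergence-to-equilibrium argument but should be acknowledged. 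Once the hypotheses are verified, the conclusion is immediate from the cited abstract theorem, so the proof is essentially a matter of correctly matching notation.
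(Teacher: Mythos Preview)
Your proposal is correct in spirit and cites the same abstract result \cite[Theorem~6]{GG2} as the paper, but your decomposition of $\mathcal{E}$ differs from the one actually used, and this matters because that theorem is stated for a specific structure. You split $\mathcal{E}(\varphi)=\tfrac12(\mathcal{B}\varphi,\varphi)_H+\int_\Omega F(\varphi)\,dx$ and then argue Fredholmness of $\mathcal{E}''(\varphi_\ast)$ directly; however, in this split the ``nonlinear'' part $\int_\Omega F$ is not convex (think of the double well) and the ``quadratic'' operator $\mathcal{B}=a(\cdot)\,\mathrm{Id}-J\ast$ is not compact, so neither piece fits the hypotheses of \cite[Theorem~6]{GG2} as stated. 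The paper instead moves the term $\tfrac12 a(x)\varphi^2$ across, writing $\mathcal{E}=\Phi+\Psi$ with
\[
\Phi(\varphi)=\int_\Omega\Bigl(F(\varphi)+\tfrac{a(x)}{2}\varphi^2\Bigr)dx,\qquad
\Psi(\varphi)=\int_\Omega\Bigl(-\tfrac12\varphi\,(J\ast\varphi)+a(x)\varphi\Bigr)dx,
\]
so that, by (H2), $\Phi$ is \emph{strongly convex} with $D^2\Phi(\varphi)$ an isomorphism, while $\Psi$ is a quadratic form whose bilinear part is the \emph{compact} self-adjoint operator $-J\ast$ (plus a harmless linear term). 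This is exactly the ``strongly convex plus compact quadratic'' splitting that \cite[Theorem~6]{GG2} requires, and after checking analyticity of $D\Phi$ on $L^\infty(\Omega)$ and the orthogonal decomposition $H=H_0\oplus H_1$ (to handle the mass constraint), the conclusion follows with the projected gradient identified as $\mu-\langle\mu\rangle$.

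Your Fredholm reasoning---that $(a+F''(\varphi_\ast))\,\mathrm{Id}$ is invertible by (H2) and $-J\ast$ is a compact perturbation---is of course equivalent at the level of the second variation, and would feed cleanly into a Chill-type abstract {\L}ojasiewicz--Simon theorem. But if you want to invoke \cite[Theorem~6]{GG2} specifically, you should reorganize the splitting as the paper does; otherwise, cite a version of the abstract inequality whose hypotheses are ``analytic functional with Fredholm second derivative'' rather than ``convex entropy plus compact quadratic interaction.''
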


\begin{proof}
We will now apply the abstract result \cite[Theorem 6]{GG2} to the energy
functional $\mathcal{E}\left( \varphi \right) $, which according to (\ref%
{ener}) is the sum of a double-well potential and an interface energy term.
In contrast to this feature, we shall split $\mathcal{E}\left( \varphi
\right) $ into the sum of a convex (entropy) functional $\Phi :H=L^{2}\left(
\Omega \right) \rightarrow \mathbb{R}\cup \left\{ \infty \right\} $, with a
suitable effective domain, and a non-local interaction functional $\Psi
:H\rightarrow \mathbb{R}$. To this end, we define the lower-semicontinuous
and strongly convex functional%
\begin{equation*}
\Phi \left( \varphi \right) :=\left\{
\begin{array}{ll}
\int_{\Omega }\left( F\left( \varphi \right) +\frac{a\left( x\right) }{2}%
\varphi ^{2}\right) dx, & \text{if }\varphi \in L^{\infty }\left( \Omega
\right) \\
+\infty , & \text{otherwise,}%
\end{array}%
\right.
\end{equation*}%
with closed effective domain dom$\left( \Phi \right) =L^{\infty }\left(
\Omega \right) \cap \mathcal{Y}_{m}$, and the quadratic functional $\Psi
:H\rightarrow \mathbb{R}$, given by%
\begin{align*}
\Psi \left( \varphi \right) & :=\frac{1}{4}\int_{\Omega }\int_{\Omega
}J\left( x-y\right) \left( \varphi \left( x,t\right) -\varphi \left(
y,t\right) \right) ^{2}dxdy+\int_{\Omega }\frac{a\left( x\right) }{2}\left(
2\varphi -\varphi ^{2}\right) dx \\
& =\int_{\Omega }\left( -\frac{1}{2}\varphi \left( J\ast \varphi \right)
+a\left( x\right) \varphi \right) dx
\end{align*}%
(the last equality is a direct computation). We have that $\Phi $ is Fr\'{e}%
chet differentiable on any open subset $\overline{U}$ of $U_{m}:=\left\{
\psi \in L^{\infty }(\Omega ):\left\vert \left\langle \psi \right\rangle
\right\vert \leq m\right\} ,$ with Fr\'{e}chet derivative $D\Phi :\overline{U%
}\rightarrow L^{\infty }\left( \Omega \right) $ having the form%
\begin{equation*}
\left\langle D\Phi \left( \varphi \right) ,\xi \right\rangle =\int_{\Omega
}\left( F^{\prime }\left( \varphi \right) +a\left( x\right) \varphi \right)
\cdot \xi dx,
\end{equation*}%
for all $\varphi \in \overline{U}$ and $\xi \in L^{\infty }\left( \Omega
\right) $. The analyticity of $D\Phi $ as a mapping on $L^{\infty }\left(
\Omega \right) $ is standard and can be proved exactly as in, e.g., \cite[%
Theorem 5.1]{FIP}. Moreover, due to assumption (H2), there holds%
\begin{equation*}
\left\langle D\Phi \left( \varphi _{1}\right) -D\Phi \left( \varphi
_{2}\right) ,\varphi _{1}-\varphi _{2}\right\rangle \geq c_{0}\left\Vert
\varphi _{1}-\varphi _{2}\right\Vert _{H}^{2},
\end{equation*}%
for all $\varphi _{1},\varphi _{2}\in \overline{U}$, and%
\begin{equation*}
\left\Vert D\Phi \left( \varphi _{1}\right) -D\Phi \left( \varphi
_{2}\right) \right\Vert _{H^{\ast }}\leq \gamma \left\Vert \varphi
_{1}-\varphi _{2}\right\Vert _{H},
\end{equation*}%
for some positive constant $\gamma .$ Moreover, computing the second Fr\'{e}%
chet derivative $D^{2}\Phi $\ of $\Phi $,%
\begin{equation*}
\left\langle D^{2}\Phi \left( \varphi \right) \xi _{1},\xi _{2}\right\rangle
=\int_{\Omega }\left( F^{\prime \prime }\left( \varphi \right) +a\left(
x\right) \right) \xi _{1}\cdot \xi _{2}dx
\end{equation*}%
yields that $D^{2}\Phi \in \mathcal{L}\left( L^{\infty }\left( \Omega
\right) ,L^{\infty }\left( \Omega \right) \right) $ is an isomorphism for
every $\varphi \in \overline{U}$. Concerning the (quadratic) function $\Psi $%
, we see that%
\begin{equation*}
\Psi \left( \varphi \right) =\frac{1}{2}\left\langle -J\ast \varphi ,\varphi
\right\rangle +\left\langle a\left( x\right) ,\varphi \right\rangle ,\text{ }%
a\in L^{\infty }\left( \Omega \right) \text{, }\forall \varphi \in H.
\end{equation*}%
We recall that the linear operator $\psi \mapsto J\ast \psi $ is
self-adjoint and compact from $H$ to itself and is also compact from $%
L^{\infty }(\Omega )$ to $C^{0}(\bar{\Omega})$ (cf. Remark \ref{compact}).
On the other hand, we also have the following (orthogonal) sum decomposition
of $H=H_{0}\oplus H_{1}$, where%
\begin{equation*}
H_{0}:=\left\{ \varphi \in H:\left\langle \varphi \right\rangle =0\right\}
\text{, }H_{1}:=\left\{ \varphi \in H:\varphi =\text{const}\right\}
\end{equation*}%
Thus the annihilator of $H_{0}$ is the one-dimensional subspace of constant
functions $H_{0}^{0}:=\left\{ ch\in H^{\ast }:c\in \mathbb{R}\right\}$,
where $h\in H^{\ast }\simeq H$ is given by $\left\langle h,u\right\rangle =$
$\frac{1}{\left\vert \Omega \right\vert }\int_{\Omega }udx$, $u\in H.$
Hence, the hypotheses of \cite[Theorem 6]{GG2} are satisfied and the sum%
\begin{equation*}
\mathcal{E}=\Phi +\Psi :H\rightarrow \mathbb{R}\cup \left\{ \infty \right\}
\end{equation*}%
is a well defined, bounded from below functional with nonempty, closed, and
convex effective domain dom$\left( F\right) =$dom$\left( \Phi \right) .$
Unravelling notation in \cite[Theorem 6]{GG2}, and observing that the Fr\'{e}%
chet derivative
\begin{equation*}
D\mathcal{E}\left( \varphi \right) =a(x)\varphi -J\ast \varphi +F^{\prime
}\left( \varphi \right) =\mu ,
\end{equation*}%
we have%
\begin{align*}
\left\vert \mathcal{E}\left( \varphi \right) -\mathcal{E}\left( \varphi
_{\ast }\right) \right\vert ^{1-\theta }& \leq C\inf_{\varphi \in H}\left\{
\left\Vert D\mathcal{E}\left( \varphi \right) -\mu _{\ast }\right\Vert
_{L^{2}\left( \Omega \right) }:\mu _{\ast }\in H_{0}^{0}\right\} \\
& =C\left\Vert \mu -\left\langle \mu \right\rangle \right\Vert _{L^{2}\left(
\Omega \right) },
\end{align*}%
from which (\ref{LSineq}) follows.
\end{proof}


We can now prove the following convergence result.

\begin{theorem}
\label{conv_thm}Let the assumptions of Theorem \ref{well} hold and, in
addition, assume that $F$ is real analytic. Then, any weak solution $\varphi
$ to problem (\ref{nlch})-(\ref{ic}) belonging to the class (\ref{class})
satisfies%
\begin{equation}
\lim_{t\rightarrow \infty }\left\Vert \varphi \left( t\right) -\varphi
_{\ast }\right\Vert _{L^{\infty }\left( \Omega \right) }=0,
\label{convlinftime}
\end{equation}%
where $\varphi _{\ast }$ is solution to (\ref{stat}).
\end{theorem}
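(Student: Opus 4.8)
The plan is to follow the by-now standard {\L}ojasiewicz--Simon scheme, using the global bounds established in Lemmas~\ref{linf}--\ref{h1lemma} to guarantee the necessary compactness and the uniform $L^{\infty}$-bound that makes Lemma~\ref{LS_lemma} applicable. First I would observe that, by Lemma~\ref{linf} and Lemma~\ref{h1lemma}, the trajectory $\varphi(t)$ is eventually bounded in $V\cap L^{\infty}(\Omega)$ and hence, by Lemma~\ref{holderb}, in $C^{\alpha}(\overline{\Omega})$; consequently its $\omega$-limit set $\omega[\varphi]$ is nonempty, compact and connected in $H$ (indeed in $C^{\alpha'}(\overline{\Omega})$ for $\alpha'<\alpha$), and every element of $\omega[\varphi]$ is a stationary solution, i.e. solves \eqref{stat}, by the energy identity \eqref{eneeq} which forces $\nabla\mu(t)\to 0$ in $L^{2}$ along suitable sequences. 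Moreover, from \eqref{eneeq} the function $t\mapsto\mathcal{E}(\varphi(t))$ is nonincreasing and bounded below (by (H3)), so it converges to some limit $\mathcal{E}_{\infty}$, and $\mathcal{E}\equiv\mathcal{E}_{\infty}$ on $\omega[\varphi]$.

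Next I would fix $\varphi_{\ast}\in\omega[\varphi]$ and apply Lemma~\ref{LS_lemma} near $\varphi_{\ast}$: there are $\theta\in(0,\tfrac12]$, $C>0$, $\varepsilon>0$ with $|\mathcal{E}(\varphi)-\mathcal{E}_{\infty}|^{1-\theta}\le C\|\mu-\langle\mu\rangle\|_{H}$ whenever $\|\varphi-\varphi_{\ast}\|_{H}\le\varepsilon$ and $\varphi\in L^{\infty}(\Omega)\cap\mathcal{Y}_{m}$. Using $\langle\partial_{t}\varphi,1\rangle=0$ and testing the weak formulation appropriately, the dissipation $\|\nabla\mu\|_{H}^{2}$ controls $-\tfrac{d}{dt}\mathcal{E}(\varphi(t))$, while Poincar\'e's inequality gives $\|\mu-\langle\mu\rangle\|_{H}\le C_{\Omega}\|\nabla\mu\|_{H}$; also, by comparison in \eqref{nlch} and the eventual $V$-bound on $\mu$, $\|\partial_{t}\varphi\|_{V'}\le C\|\nabla\mu\|_{H}$. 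The classical argument then runs as follows: set $\mathcal{H}(t):=(\mathcal{E}(\varphi(t))-\mathcal{E}_{\infty})^{\theta}$ on a time interval where $\varphi(t)$ stays $\varepsilon$-close to $\varphi_{\ast}$; compute $-\tfrac{d}{dt}\mathcal{H}(t)=\theta(\mathcal{E}(\varphi(t))-\mathcal{E}_{\infty})^{\theta-1}(-\tfrac{d}{dt}\mathcal{E}(\varphi(t)))\ge c\,\|\nabla\mu(t)\|_{H}\ge c'\|\partial_{t}\varphi(t)\|_{V'}$, so that $\partial_{t}\varphi\in L^{1}([t_{0},\infty);V')$ once the trajectory is trapped near $\varphi_{\ast}$. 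A standard continuation/open--closed argument — using that $\varphi(t_{n})\to\varphi_{\ast}$ in $H$ for some sequence and that $t\mapsto\varphi(t)$ is uniformly continuous in $H$ by Lemma~\ref{hctime} (hence the trajectory cannot escape the $\varepsilon/2$-ball once it enters a small enough one) — shows this trapping does occur, yielding $\|\partial_{t}\varphi\|_{L^{1}([t_{0},\infty);V')}<\infty$. Therefore $\varphi(t)$ converges in $V'$, and since the full $\omega$-limit set is characterized as the unique limit, $\omega[\varphi]=\{\varphi_{\ast}\}$ with convergence in $V'$, hence (the bounded orbit being precompact in $C^{\alpha'}(\overline{\Omega})$) in $H$ and in fact in $C^{\alpha'}(\overline{\Omega})$.

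To upgrade the convergence to the $L^{\infty}(\Omega)$-norm as stated, I would interpolate: the orbit is bounded in $C^{\alpha}(\overline{\Omega})$ by Lemma~\ref{holderb}, convergence holds in $H$ (or $V'$) from the previous step, and the interpolation inequality $\|\varphi-\varphi_{\ast}\|_{L^{\infty}(\Omega)}\le C\|\varphi-\varphi_{\ast}\|_{C^{\alpha}(\overline{\Omega})}^{1-\zeta}\|\varphi-\varphi_{\ast}\|_{H}^{\zeta}$ (the same type used in \eqref{interpolation}) then forces $\|\varphi(t)-\varphi_{\ast}\|_{L^{\infty}(\Omega)}\to 0$, which is \eqref{convlinftime}. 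The main obstacle is not the {\L}ojasiewicz machinery itself but verifying that the hypotheses of Lemma~\ref{LS_lemma} are met \emph{uniformly} along the tail of the trajectory: one must know the orbit lies in $L^{\infty}(\Omega)\cap\mathcal{Y}_{m}$ with a bound independent of $t$ (supplied by Lemma~\ref{linf}), that $\varphi_{\ast}$ is itself in this class, and — more subtly — that the energy identity \eqref{eneeq}, rather than merely an inequality, holds so that the dissipation genuinely equals the energy decay; this is exactly why the restriction $p\in(\tfrac65,2]$ for $d=3$ in Theorem~\ref{well} is invoked. The continuation argument trapping the orbit near a single $\varphi_{\ast}$ also requires the uniform-in-time Hölder continuity in time from Lemma~\ref{hctime}, which is available here.
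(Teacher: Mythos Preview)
Your proposal is correct and follows essentially the same {\L}ojasiewicz--Simon scheme as the paper; the only substantive difference is that the paper derives the $L^{1}$-in-time bound on $\|\nabla\mu\|_{H}$ from the integral inequality $\int_{t}^{\infty}\|\nabla\mu\|_{H}^{2}\,ds\le C\|\nabla\mu(t)\|_{H}^{1/(1-\theta)}$ via \cite[Lemma~7.1]{FS}, rather than by differentiating $(\mathcal{E}(\varphi(t))-\mathcal{E}_{\infty})^{\theta}$ as you do --- the two routes are equivalent.

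One small imprecision: in your trapping step, invoking Lemma~\ref{hctime} (time-H\"older continuity) is not the right tool, since uniform continuity in time alone cannot prevent escape over an \emph{unbounded} interval. What actually closes the loop is converting the $L^{1}(V')$ control of $\partial_{t}\varphi$ into smallness in $H$: either via the interpolation $\|\varphi(t)-\varphi(t_{n})\|_{H}\le C\|\varphi(t)-\varphi(t_{n})\|_{V'}^{1/2}\|\varphi(t)-\varphi(t_{n})\|_{V}^{1/2}$ together with the uniform $V$-bound from Lemma~\ref{h1lemma}, or --- as the paper does --- through the identity $\|\varphi(t_{0})-\varphi(t_{2})\|_{H}^{2}=2\int_{t_{0}}^{t_{2}}\langle\partial_{t}\varphi(s),\varphi(s)-\varphi(t_{0})\rangle\,ds$ combined with the same $V$-bound. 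You already have these ingredients in hand, so once this is adjusted your argument is complete.
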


\begin{proof}
Before we begin the proof, we note that by virtue of Lemma \ref{linf} and
Lemma \ref{h1lemma}, all $\varphi _{\ast }\in \omega \left[ \varphi \right] $
are bounded in $L^{\infty }\left( \Omega \right) \cap V$. Besides, recalling
also the energy identity (\ref{ener}), we have
\begin{equation*}
\mathcal{E}\left( \varphi \left( t\right) \right) \rightarrow \mathcal{E}%
_{\infty },\text{ as }t\rightarrow \infty ,
\end{equation*}%
and the limit energy $\mathcal{E}_{\infty }$ is the same for every
steady-state solution $\varphi _{\ast }\in $ $\omega \left[ \varphi \right] $%
. Moreover, we can integrate the energy equality (\ref{ener}),%
\begin{equation*}
\frac{d}{dt}\mathcal{E}\left( \varphi \left( t\right) \right) =-\left\Vert
\nabla \mu \left( t\right) \right\Vert _{H}^{2}
\end{equation*}%
over $\left( t,\infty \right) $ to get
\begin{equation}
\int_{t}^{\infty }\left\Vert \nabla \mu \left( s\right) \right\Vert
_{H}^{2}ds=\mathcal{E}\left( \varphi \left( t\right) \right) -\mathcal{E}%
_{\infty }=\mathcal{E}\left( \varphi \left( t\right) \right) -\mathcal{E}%
\left( \varphi _{\ast }\right) .  \label{convls0}
\end{equation}%
By virtue of Lemma \ref{LS_lemma} (cf. also Remark \ref{compact}), we have%
\begin{equation*}
\left\vert \mathcal{E}\left( \varphi \left( t\right) \right) -\mathcal{E}%
\left( \varphi _{\ast }\right) \right\vert ^{1-\theta }\leq C\left\Vert \mu
\left( t\right) -\left\langle \mu \left( t\right) \right\rangle \right\Vert
_{L^{2}\left( \Omega \right) }\leq C\left\Vert \nabla \mu \left( t\right)
\right\Vert _{H}^{2}
\end{equation*}%
provided that%
\begin{equation}
\left\Vert \varphi -\varphi _{\ast }\right\Vert _{H}\leq \varepsilon .
\label{small}
\end{equation}%
This, combined with the previous identity, yields%
\begin{equation}
\int_{t}^{\infty }\left\Vert \nabla \mu \left( s\right) \right\Vert
_{H}^{2}ds\leq C\left\Vert \nabla \mu \left( t\right) \right\Vert _{H}^{%
\frac{1}{\left( 1-\theta \right) }},  \label{convls1}
\end{equation}%
for all $t>0$, for as long as (\ref{small}) holds. Note that, in general,
the quantities $\theta ,$ $C$ and $\varepsilon $ above may depend on $%
\varphi _{\ast }$. Let us set%
\begin{equation*}
M=\cup \left\{ \mathcal{I}:\mathcal{I}\text{ is an open interval on which (%
\ref{small}) holds}\right\} .
\end{equation*}%
Clearly, $M$ is nonempty since $\varphi _{\ast }\in $ $\omega \left[ \varphi %
\right] $. We can now use (\ref{convls1}), the fact that $\left\Vert \nabla
\mu \left( t\right) \right\Vert _{H}\in L^{2}\left( 0,\infty \right) $ (cf. (%
\ref{ener})), and exploit \cite[Lemma 7.1]{FS} with $\alpha =2\left(
1-\theta \right) $ to deduce that $\left\Vert \nabla \mu \left( \cdot
\right) \right\Vert _{H}\in L^{1}\left( M\right) $ and%
\begin{equation}
\int_{M}\left\Vert \nabla \mu \left( s\right) \right\Vert _{H}ds\leq C\left(
\varphi _{\ast }\right) <\infty .  \label{convls2bis}
\end{equation}%
Consequently, using the bound (\ref{convls2bis}) and the main equation (\ref%
{nlch}), we also obtain%
\begin{equation}
\int_{M}\left\Vert \partial _{t}\varphi \left( s\right) \right\Vert
_{V^{^{\prime }}}ds<\infty .  \label{convls2tris}
\end{equation}%
In order to finish the proof of the convergence result in (\ref{convlinftime}%
) it suffices to show that it holds in $H$-norm. Indeed, in this case (\ref%
{convlinftime}) will become an immediate consequence of the $L^{2}$-$%
(L^{\infty }\cap V)$ smoothing property of the weak solutions and all $%
\varphi _{\ast }\in \omega \left[ \varphi \right] $ (see Lemmas \ref{linf}
and \ref{h1lemma}). We claim that we can find a sufficiently large time $%
\tau >0$ such that $\left( \tau ,\infty \right) \subset M$. To this end,
recalling (\ref{convls0}) and the above bounds, we also have that $\partial
_{t}\varphi \in L^{2}(0,\infty ;V^{^{\prime }})$, $\nabla \mu \in
L^{2}(0,\infty ;H^{d})$ and, furthermore, for any $\delta >0$ there exists a
time $t_{\ast }=t_{\ast }\left( \delta \right) >0$ such that
\begin{equation}
\left\Vert \partial _{t}\varphi \right\Vert _{L^{1}(M\cap \left( t_{\ast
},\infty \right) ;V^{^{\prime }})}\leq \delta ,\text{ }\left\Vert \partial
_{t}\varphi \right\Vert _{L^{2}(\left( t_{\ast },\infty \right) ;V^{^{\prime
}})}\leq \delta ,\text{ }\left\Vert \nabla \mu \right\Vert _{L^{2}(\left(
t_{\ast },\infty \right) ;H^{d})}\leq \delta .  \label{convls3}
\end{equation}%
Next, observe that by Lemma \ref{h1lemma} and Lemma \ref{linf}, there is a
time $t_{\#}>0$ such that%
\begin{equation}
\sup_{t\geq t_{\#}}\left\Vert \varphi \left( t\right) \right\Vert _{V\cap
L^{\infty }\left( \Omega \right) }\leq C.  \label{bls}
\end{equation}%
Now, let $\left( t_{0},t_{2}\right) \subset M$, for some $t_{2}>t_{0}\geq
t_{\ast }\left( \delta \right) ,$ $\left\vert t_{0}-t_{2}\right\vert \geq 1$
such that (\ref{bls}) holds (w.l.o.g., we shall assume that $t_{\ast }\geq
t_{\#}$). This claim is an immediate consequence of the aforementioned $%
L^{2} $-$(L^{\infty }\cap V)$ smoothing property and bounds (\ref{convls3}).
Using (\ref{convls3}) and (\ref{bls}), we obtain%
\begin{align}
\left\Vert \varphi \left( t_{0}\right) -\varphi \left( t_{2}\right)
\right\Vert _{H}^{2}& =2\int_{t_{0}}^{t_{2}}\left\langle \partial
_{t}\varphi \left( s\right) ,\varphi \left( s\right) -\varphi
\left(t_{0}\right) \right\rangle ds  \label{bls2b} \\
& \leq 2\int_{t_{0}}^{t_{2}}\left\Vert \partial _{t}\varphi \left( s\right)
\right\Vert _{V^{^{\prime }}}\left( \left\Vert \varphi \left( s\right)
\right\Vert _{V}+\left\Vert \varphi \left( t_{0}\right) \right\Vert
_{V}\right) ds  \notag \\
& \leq C\left\Vert \partial _{t}\varphi \right\Vert
_{L^{1}(t_{0},t_{2};V^{^{\prime }})}\left( \left\Vert \varphi \right\Vert
_{L^{\infty }(t_{\ast },\infty ;V)}+1\right) \leq C\delta .  \notag
\end{align}%
Therefore we can choose a time $t_{\ast }\left( \delta \right) =\tau
<t_{0}<t_{2}$, such that%
\begin{equation}
\left\Vert \varphi \left( t_{0}\right) -\varphi \left( t_{2}\right)
\right\Vert _{H}<\frac{\varepsilon }{3}  \label{bls3}
\end{equation}%
provided that (\ref{small}) holds for all $t\in \left( t_{0},t_{2}\right) $.
Since $\varphi _{\ast }\in \omega \left[ \varphi \right] $, a large
(redefined) $\tau $ can be chosen such that
\begin{equation}
\left\Vert \varphi \left( \tau \right) -\varphi _{\ast }\right\Vert
_{H}<\varepsilon /3;  \label{bls4}
\end{equation}%
hence, (\ref{bls3}) yields $\left( \tau ,\infty \right) \subset M$. Indeed,
taking%
\begin{equation*}
\overline{t}=\inf \left\{ t>\tau :\left\Vert \varphi \left( t\right)
-\varphi _{\ast }\right\Vert _{H}\geq \varepsilon \right\} ,
\end{equation*}%
we have $\overline{t}>\tau $ and $\left\Vert \varphi \left( \overline{t}%
\right) -\varphi _{\ast }\right\Vert _{H}\geq \varepsilon $ if $\overline{t}$
is finite. On the other hand, in view of (\ref{bls3}) and (\ref{bls4}), we
have%
\begin{equation*}
\left\Vert \varphi \left( t\right) -\varphi _{\ast }\right\Vert _{H}\leq
\left\Vert \varphi \left( t\right) -\varphi \left( \tau \right) \right\Vert
_{H}+\left\Vert \varphi \left( \tau \right) -\varphi _{\ast }\right\Vert
_{H}<\frac{2}{3}\varepsilon ,
\end{equation*}%
for all $\overline{t}>t\geq \tau $, and this leads to a contradiction.
Therefore, $\overline{t}=\infty $ and by (\ref{convls3}) the integrability
of $\partial _{t}\varphi $ in $L^{1}(\tau ,\infty ;V^{^{\prime }})$ follows.
Hence, $\omega \left[ \varphi \right] =\{\varphi _{\ast }\}$ and %
\eqref{convlinftime} holds on account of the $L^{2}$-$(L^{\infty }\cap V)$
smoothing property. The proof is finished.
\end{proof}

\begin{remark}
\label{rem_regp}Exploiting the $L^{2}$-$(C^{\alpha }\cap V)$ smoothing
property of the weak and stationary solutions again, and the inequality (\ref%
{LSineq}) it is be possible the show the convergence rate:%
\begin{equation*}
\left\Vert \varphi \left( t\right) -\varphi _{\ast }\right\Vert
_{C^{a/2}\left( \overline{\Omega }\right) }\sim \left( 1+t\right) ^{-\frac{1%
}{\rho }},\text{ as }t\rightarrow \infty ,
\end{equation*}%
for some positive constant $\rho =\rho \left( \alpha ,\theta ,\varphi _{\ast
}\right) \in \left( 0,1\right) .$
\end{remark}

\section{The viscous case with singular potential}

\subsection{Well-posedness result}

Here we consider the viscous case, but we suppose that the potential is
singular. More precisely, following \cite{FG1}, we assume that $F$ can be
written as $F=F_{1}+F_{2},$ where $F_{1}\in C^{(2+2q)}\left( -1,1\right)$,
with $q$ a fixed positive integer and $F_{2}\in C^{2}[-1,1]$. Such functions
are subject to the following hypotheses:

\begin{enumerate}
\item[(H7)] There exist $c_{1}>0$ and $\epsilon _{0}>0$ such that
\begin{equation*}
F_{1}^{(2+2q)}(s)\geq c_{1},\qquad \forall s\in (-1,-1+\epsilon _{0}]\cup
\lbrack 1-\epsilon _{0},1).
\end{equation*}

\item[(H8)] There exists $\epsilon _{0}>0$ such that, for each $k=0,1,\cdots
,2+2q$ and each $j=0,1,\cdots ,q$,
\begin{align*}
& F_{1}^{(k)}(s)\geq 0,\qquad \forall s\in \lbrack 1-\epsilon _{0},1), \\
& F_{1}^{(2j+2)}(s)\geq 0,\qquad F_{1}^{(2j+1)}(s)\leq 0,\qquad \forall s\in
(-1,-1+\epsilon _{0}].
\end{align*}

\item[(H9)] There exists $\epsilon _{0}>0$ such that $F_{1}^{(2+2q)}$ is
non-decreasing in $[1-\epsilon _{0},1)$ and non-increasing in $%
(-1,-1+\epsilon _{0}]$.

\item[(H10)] There exist $\alpha ,\beta \in \mathbb{R}$ with $\displaystyle %
\alpha +\beta >-\min_{[-1,1]}F_{2}^{\prime \prime }$ such that
\begin{equation*}
F_{1}^{\prime \prime }(s)\geq \alpha ,\qquad \forall s\in (-1,1),\quad
a(x)\geq \beta ,\text{ a.e. }x\in \Omega .
\end{equation*}

\item[(H11)] $\lim_{s\rightarrow \pm 1}F^{\prime }(s)=\pm \infty .$
\end{enumerate}

Assumptions (H7)-(H11) are satisfied, e.g., when $F$ is of the form %
\eqref{logpot}, for any fixed positive integer $q$. In particular, setting
\begin{equation*}
F_{1}(s)= (1+s)\log (1+s)+(1-s)\log (1-s),\qquad F_{2}(s)=- \lambda s^{2},
\end{equation*}%
assumption (H10) is satisfied if and only if $\beta >\gamma -1 $.

The notion of weak solution to problem (\ref{nlch})-(\ref{ic}) is given by

\begin{definition}
\label{weaksing}Let $0<T<+\infty $ be given. Suppose $\varphi _{0}\in
L^{\infty }\left( \Omega \right) ,$ $\left\vert \left\langle \varphi
_{0}\right\rangle \right\vert <1$ with $F(\varphi _{0})\in L^{1}(\Omega )$.
A function $\varphi $ is a weak solution of (\ref{nlch})-(\ref{ic}) on $%
[0,T] $, if%
\begin{align}
& \varphi \in L^{\infty }(0,T;H)\cap L^{\infty }(0,T;L^{2+2q}(\Omega ));
\label{re4} \\
& \partial _{t}\varphi \in L^{2}(0,T;V^{\prime }),\quad \sqrt{\alpha }%
\partial _{t}\varphi \in L^{2}(0,T;H);  \label{re5} \\
& \mu =a\varphi -J\ast \varphi +F^{\prime }(\varphi )+\alpha \partial
_{t}\varphi \in L^{2}(0,T;V);  \label{prmu} \\
& \varphi \in L^{\infty }(Q),\quad |\varphi (x,t)|<1\quad \text{a.e. }%
(x,t)\in Q:=\Omega \times (0,T);  \label{re6}
\end{align}%
and $\varphi $ satisfies the weak formulation (\ref{weakf}).
\end{definition}

The following existence result holds.

\begin{theorem}
\label{well_sing} Suppose that $J$ obeys (H1) and assume that (H7)-(H11) are
satisfied for some fixed positive integer $q$. Let $\varphi _{0}\in
L^{\infty }(\Omega )$ be such that $\left\vert \left\langle \varphi
_{0}\right\rangle \right\vert <1$ and $F(\varphi _{0})\in L^{1}(\Omega )$.
Then, for every $T>0$, there exists a weak solution $\varphi $ of (\ref{nlch}%
)-(\ref{ic}) in the sense of Definition \ref{weaksing}. In addition, for all
$t\geq 0$, we have $(\varphi (t),1)=(\varphi _{0},1)$ and the energy
identity holds:%
\begin{equation}
\mathcal{E}\left( \varphi \left( t\right) \right) +\int_{0}^{t}\left(
\left\Vert \nabla \mu \left( s\right) \right\Vert _{H}^{2}+\alpha \left\Vert
\partial _{t}\varphi \left( s\right) \right\Vert _{H}^{2}\right) ds=\mathcal{%
E}\left( \varphi _{0}\right) ,\text{ }\forall t\geq 0,  \label{idesing}
\end{equation}%
where $\mathcal{E}\left( \varphi \right) $ is given by (\ref{ener}).
\end{theorem}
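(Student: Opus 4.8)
The plan is to prove Theorem~\ref{well_sing} by a Yosida-type approximation of the singular potential combined with a Galerkin (or semi-Galerkin) scheme, following the strategy of \cite{FG1}, and then to pass to the limit using the energy estimate. First I would replace the singular part $F_1$ by a family of globally defined, $C^2$ approximations $F_{1,\lambda}$ obtained by truncating the singular potential near $\pm 1$ (for instance a Moreau--Yosida regularization of the convex part, or a $C^2$ extension of $F_1$ outside $(-1+\lambda, 1-\lambda)$ that preserves the lower bound in (H10)), so that $F_\lambda := F_{1,\lambda}+F_2$ satisfies the regular-potential hypotheses uniformly in $\lambda$; in particular $F_\lambda''(s) + a(x) \ge c_0 > 0$. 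For fixed $\lambda > 0$ the approximate viscous problem
\begin{equation*}
\partial_t \varphi_\lambda = \Delta \mu_\lambda, \qquad \mu_\lambda = a\varphi_\lambda - J*\varphi_\lambda + F_\lambda'(\varphi_\lambda) + \alpha \partial_t \varphi_\lambda,
\end{equation*}
with the no-flux boundary condition and $\varphi_\lambda(0) = \varphi_0$, can be solved by a standard Faedo--Galerkin argument in the eigenbasis of $A_N$: the viscous term $\alpha \partial_t \varphi$ makes the truncated system a regular ODE system locally in time, and the a priori estimates below give global existence.

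The key a priori estimates are all consequences of the energy identity. Testing the equation by $\mu_\lambda$ (equivalently, differentiating $\mathcal{E}_\lambda(\varphi_\lambda(t))$, where $\mathcal{E}_\lambda$ uses $F_\lambda$ in place of $F$) yields
\begin{equation*}
\mathcal{E}_\lambda(\varphi_\lambda(t)) + \int_0^t \left( \|\nabla \mu_\lambda(s)\|_H^2 + \alpha \|\partial_t \varphi_\lambda(s)\|_H^2 \right) ds = \mathcal{E}_\lambda(\varphi_0).
\end{equation*}
Using (H10)--(H11)-type coercivity (or hypothesis (H3) for the approximants, which holds uniformly) one bounds $\varphi_\lambda$ in $L^\infty(0,T;H)$ and $\int_\Omega F_\lambda(\varphi_\lambda(t))$ uniformly; testing by $1$ shows the mean is conserved, $\langle \varphi_\lambda(t)\rangle = \langle \varphi_0 \rangle$, with $|\langle \varphi_0\rangle| < 1$. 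From $\|\nabla \mu_\lambda\|_{L^2(0,T;H)}$ bounded plus a control of $\langle \mu_\lambda\rangle$ in $L^1$ (or $L^2$) in time one gets $\mu_\lambda$ bounded in $L^2(0,T;V)$, hence, by comparison in the equation, $\partial_t\varphi_\lambda$ bounded in $L^2(0,T;V')$, and from the viscous term $\sqrt{\alpha}\,\partial_t\varphi_\lambda$ bounded in $L^2(0,T;H)$. The crucial extra ingredient, exactly as in the singular-potential literature, is a uniform $L^1$ (or better) bound on $F_\lambda'(\varphi_\lambda)$: one multiplies the $\mu_\lambda$-equation by a suitable test function — $F_\lambda'(\varphi_\lambda) - \langle F_\lambda'(\varphi_\lambda)\rangle$, or uses that $\|F_\lambda'(\varphi_\lambda) - \langle F_\lambda'(\varphi_\lambda)\rangle\|_H \le C\|\nabla\mu_\lambda\|_H + (\text{lower order})$ — and then controls the average $\langle F_\lambda'(\varphi_\lambda)\rangle$ by testing with a function having nonzero mean, exploiting that $\varphi_\lambda$ stays away from $\pm1$ in mean. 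This gives $F_\lambda'(\varphi_\lambda)$ bounded in $L^2(0,T;H)$, hence $F_{1,\lambda}'(\varphi_\lambda)$ likewise, which is what forces $|\varphi|<1$ a.e.\ in the limit.

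Having the uniform bounds, I would pass to the limit $\lambda \to 0$: extract (Aubin--Lions, using $\partial_t\varphi_\lambda$ bounded in $L^2(0,T;V')$ and $\varphi_\lambda$ bounded in... well, in $L^\infty(0,T;H)$ with a bit more, or directly the $H$-bound on $F_\lambda'(\varphi_\lambda)$ giving equi-integrability) a subsequence with $\varphi_\lambda \to \varphi$ strongly in $C([0,T];V')$ and a.e.\ in $Q$, $\mu_\lambda \rightharpoonup \mu$ weakly in $L^2(0,T;V)$, $\partial_t\varphi_\lambda \rightharpoonup \partial_t\varphi$ weakly in $L^2(0,T;V')$. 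The a.e.\ convergence plus the uniform $L^2(0,T;H)$-bound on $F_\lambda'(\varphi_\lambda)$ and Fatou/Vitali identify the limit of $F_\lambda'(\varphi_\lambda)$ with $F'(\varphi)$ and, by the same bound together with (H11) (the $\int_\Omega F_\lambda(\varphi_\lambda)$ bound blows up unless $|\varphi|<1$ a.e.), give $|\varphi(x,t)|<1$ a.e. This yields a weak solution in the sense of Definition~\ref{weaksing}. Finally the energy identity \eqref{idesing} for the limit: the inequality "$\le$" follows from weak lower semicontinuity of $\mathcal{E}$ and of the dissipation terms; the reverse inequality, i.e.\ that one may actually test the limit equation by $\mu$ and integrate, requires justifying $\mu \in L^2(0,T;V)$ as an admissible test function against $\partial_t\varphi \in L^2(0,T;V')$ together with $\sqrt\alpha\,\partial_t\varphi\in L^2(0,T;H)$ and $\langle\partial_t\varphi,F'(\varphi)\rangle = \frac{d}{dt}\int_\Omega F(\varphi)$ — this chain-rule/integration-by-parts step is where the viscous term $\alpha\partial_t\varphi$ is essential, since it upgrades $\partial_t\varphi$ enough to pair with $F'(\varphi)$ even though $F'(\varphi)$ is only in $L^2(0,T;H)$ and not better. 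I expect this last point — rigorously establishing the energy \emph{identity} (not just the inequality), via a careful approximation/chain-rule argument in the spirit of \cite{FG1} — to be the main technical obstacle; the Galerkin construction and the passage to the limit for the PDE itself are essentially routine once the uniform bound on $F_\lambda'(\varphi_\lambda)$ is in hand.
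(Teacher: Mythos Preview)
Your overall strategy---regularize the singular potential, solve the approximate problem via Galerkin, derive uniform energy estimates and a bound on $F_\lambda'(\varphi_\lambda)$, then pass to the limit---is exactly the one the paper follows (crediting \cite{FG1}). The key estimates you list (energy identity, $\nabla\mu_\lambda\in L^2(0,T;H)$, $\sqrt\alpha\,\partial_t\varphi_\lambda\in L^2(0,T;H)$, control of $\langle\mu_\lambda\rangle$ via an $L^1$-bound on $F_\lambda'(\varphi_\lambda)$, hence $\mu_\lambda\in L^2(0,T;V)$) match the paper's \eqref{est1}--\eqref{est3}.

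The one genuine gap is the compactness step for $\lambda\to 0$. Your Aubin--Lions argument is incomplete: with only $\varphi_\lambda$ bounded in $L^\infty(0,T;H)$ (or $L^{2+2q}$) and $\partial_t\varphi_\lambda$ bounded in $L^2(0,T;H)$ you have no compact spatial embedding, so you do not get strong convergence in $L^2(Q)$ or a.e.\ convergence---and you seem aware of this (``well, in $L^\infty(0,T;H)$ with a bit more''). The paper closes this gap differently: it first takes smoother data $\varphi_0\in V$ and a smoother kernel $J\in W^{1,\infty}(\mathbb R^d)$, tests the equation with $\varphi_\epsilon$ itself, and obtains a uniform $L^\infty(0,T;V)$ bound \eqref{est4bis} (the viscous term produces $\frac{\alpha}{2}\frac{d}{dt}\|\nabla\varphi_\epsilon\|_H^2$, which is why $\varphi_0\in V$ is needed). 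This supplies the missing spatial compactness at the Galerkin level. To recover general initial data, general $J$, and finally to let $\epsilon\to 0$, the paper then relies not on Aubin--Lions but on the continuous-dependence estimate \eqref{Lipschitz_sing} (proved as Lemma~\ref{unising}), which---thanks to the $\alpha\|\cdot\|_H^2$ term on the left---gives a Cauchy sequence directly in $C([0,T];H)$, uniformly in the approximation parameters. That stability estimate is the workhorse you are missing; once you invoke it, your identification of the nonlinear limit and your proof that $|\varphi|<1$ a.e.\ (the paper cites \cite{DD} for this) go through as you describe. Your discussion of the energy identity is fine and in fact more explicit than the paper's, which simply refers to \cite{FG1}.
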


\begin{proof}
The argument for $\alpha =0$ was given in \cite[Theorem 1 and Corollary 1]%
{FG1}. In the case $\alpha >0$ the proof goes essentially as in \cite{FG1}
with some minor modifications (see \emph{Step 1} below). Indeed, the whole
idea is to approximate (\ref{nlch})-(\ref{ic}) by a problem $P_{\epsilon }$\
which is obtained from (\ref{nlch})-(\ref{ic}) by replacing the singular
potential $F$ with a smooth (regular)\ potential of polynomial growth $%
F_{\epsilon }=F_{1\epsilon }+\overline{F}_{2},$ where $F_{1\epsilon }$ is
defined in such a way (cf. \cite[Lemma 1 and Lemma 2]{FG1}) that $%
F_{\epsilon }^{\prime }\rightarrow F^{\prime }$ uniformly on every compact
interval included in $(-1,1)$, and such that the following properties hold:

\noindent (i) There exist $c_{q},d_{q}>0$, which depend on $q$ but are
independent of $\epsilon $, and $\epsilon _{0}>0$ such that
\begin{equation}
F_{\epsilon }(s)\geq c_{q}|s|^{2+2q}-d_{q},\qquad \forall s\in \mathbb{R}%
,\quad \forall \epsilon \in (0,\epsilon _{0}].  \label{lb5}
\end{equation}%
\noindent (ii) Setting $\displaystyle c_{0}:=\alpha +\beta
+\min_{[-1,1]}F_{2}^{\prime \prime }>0$, there exists $\epsilon _{1}>0$ such
that
\begin{equation}
F_{\epsilon }^{\prime \prime }(s)+a(x)\geq c_{0},\qquad \forall s\in \mathbb{%
R},\quad \text{a.e. }x\in \Omega ,\quad \forall \epsilon \in (0,\epsilon
_{1}].  \label{lb6}
\end{equation}%
The approximating problem $P_{\epsilon },$ $\epsilon >0$, then takes the
following form: find a weak solution $\varphi _{\epsilon }$ to
\begin{equation}
\left\{
\begin{array}{ll}
\partial _{t}\varphi _{\epsilon }=\Delta \widetilde{\mu }_{\epsilon }, \quad
\widetilde{\mu }_{\epsilon }= \mu _{\epsilon }+\alpha \partial _{t}\varphi
_{\epsilon },\quad \text{ in }\Omega \times \left( 0,T\right), &  \\
\mu _{\epsilon } =a\varphi _{\epsilon }-J\ast \varphi _{\epsilon
}+F_{\epsilon }^{\prime }(\varphi _{\epsilon }), \quad \text{ in }\Omega
\times \left( 0,T\right) &  \\
\partial _{\mathbf{n}}\widetilde{\mu }_{\epsilon }=0, \quad \text{ on }%
\Gamma \times \left( 0,T\right) , &  \\
\varphi _{\epsilon }(0)=\varphi _{0},\quad \text{ in }\Omega. &
\end{array}
\right.  \label{approx}
\end{equation}
\noindent \emph{Step 1.} We will now briefly explain how to deduce the
existence of at least one weak solution to problem $P_{\epsilon }$ for a
given $\epsilon >0$. We first take an initial datum $\varphi _{0}$ such that
$\left\vert \left\langle \varphi _{0}\right\rangle \right\vert <1$, $%
F_{\epsilon }\left( \varphi _{0}\right) \in L^{1}\left( \Omega \right) $ and
\begin{equation}
\varphi _{0}\in L^{\infty }\left( \Omega \right) \cap V.  \label{ap_id}
\end{equation}
We also approximate the interaction kernel $J$ with, say, $J\in W^{1,\infty}(%
\mathbb{R}^d)$. 
To prove the existence of a solution $\varphi _{\epsilon }\left( t\right) $
to problem (\ref{approx}) (cf. Remark \ref{wf}) corresponding to the initial
datum $\varphi _{0}$ we can perform the same estimates as in \cite[ proof of
Theorem \ref{well}]{CFG} (cf. also \cite[Section 3]{FG1}) using a
Galerkin-type argument. Note that, this argument is actually independent of
whether $\alpha >0$ or $\alpha =0,$ since $\left\langle \partial _{t}\varphi
\left( t\right) \right\rangle =0$, for all $t\geq 0$, so that $\left\langle
\widetilde{\mu }_{\epsilon }\left( t\right) \right\rangle =\left\langle \mu
_{\epsilon }\left( t\right) \right\rangle $, for all $t\geq 0$. Thus, using
the assumptions (H8)-(H9) on $F$ and exploiting (i)-(ii) above, one can
argue exactly word by word as in \cite[(3.20)-(3.38)]{FG1} to deduce from (%
\ref{idesing}), for every $T>0,$ the following estimates on the Galerkin
approximating solutions (indices are omitted)
\begin{align}
\Vert \varphi _{\epsilon }\Vert _{L^{\infty }(0,T;L^{2+2q}(\Omega ))}& \leq
C,  \label{est1} \\
\Vert \nabla \widetilde{\mu }_{\epsilon }\Vert _{L^{2}(0,T;H)}& \leq C,
\notag \\
\sqrt{\alpha }\Vert \partial _{t}\varphi _{\epsilon }\Vert _{L^{2}(0,T;H)}&
\leq C,  \notag \\
||F_{\epsilon }^{\prime }(\varphi _{\epsilon })\Vert _{L^{\infty }\left(
0,T;L^{1}(\Omega )\right) }& \leq C,  \notag
\end{align}%
%
%
%
%
%
%
%
%
%
%
%
for some positive constant $C$\ which depends on the initial data $\varphi
_{0}\in L^{\infty }\left( \Omega \right) $, but is independent of $t,$ $T$, $%
\epsilon ,$ and $\alpha \geq 0$. These estimates and the Poincar\'{e}%
-Wirtinger inequality entail:%
\begin{equation}
\Vert \widetilde{\mu }_{\epsilon }\Vert _{L^{2}(0,T;V)}\leq C.  \label{est2}
\end{equation}%
Hence, in view of (\ref{est2}), by comparison in (\ref{approx}) we also have
the bound:%
\begin{equation}
\Vert \partial _{t}\varphi _{\epsilon }\Vert _{L^{2}(0,T;V^{\prime })}\leq C.
\label{est3}
\end{equation}

Next, we need another estimate for $\varphi _{\epsilon }\left( t\right) $ in
the $V$-norm. For this we can choose $\psi =\varphi _{\epsilon }\left(
t\right) $\ as a test function in (\ref{weakf}). We obtain%
\begin{align*}
& \frac{d}{dt}\left( \left\Vert \varphi _{\epsilon }\left( t\right)
\right\Vert _{H}^{2}+\alpha \left\Vert \nabla \varphi _{\epsilon }\left(
t\right) \right\Vert _{H}^{2}\right) \\
& +2\left( \left( a\left( x\right) +F_{\epsilon }^{\prime \prime }\left(
\varphi _{\epsilon }\right) \right) \nabla \varphi _{\epsilon }\left(
t\right) ,\nabla \varphi _{\epsilon }\left( t\right) \right) \\
& =2\left( \nabla J\ast \varphi _{\epsilon }\left( t\right) -\varphi
_{\epsilon }\left( t\right) \nabla a,\nabla \varphi _{\epsilon }\left(
t\right) \right) .
\end{align*}%
Recalling (ii) and estimate (\ref{est1}), we get
\begin{equation}
\frac{d}{dt}\left( \left\Vert \varphi _{\epsilon }\left( t\right)
\right\Vert _{H}^{2}+\alpha \left\Vert \nabla \varphi _{\epsilon }\left(
t\right) \right\Vert _{H}^{2}\right) +c_{0}\left\Vert \nabla \varphi
_{\epsilon }\left( t\right) \right\Vert _{H}^{2}\leq C_J,  \label{cda_pre}
\end{equation}%
for some appropriate constant $C_J>0$ which is independent of $\epsilon $
but depends on $\Vert \nabla J \Vert_{L^\infty(\mathbb{R}^d)}$. Estimate (%
\ref{cda_pre}) yields on account of a suitable Gronwall's inequality,
\begin{equation}
\left\Vert \varphi _{\epsilon }\left( t\right) \right\Vert _{V}^{2}\leq
\left\Vert \varphi _{0}\right\Vert _{V}^{2}e^{-\gamma t}+C_{\alpha,J},\quad
\forall t\geq 0,  \label{est4bis}
\end{equation}%
for some positive constant $\gamma $ independent of $\epsilon $. This
further estimate ensures that we have strong convergence in $L^2(0,T;H)$ for
some subsequence so that we can identify the nonlinear term in the
continuous limit. The existence of a solution to $P_{\epsilon }$ is proven
in the case of a smooth initial datum.

We can now establish the existence of a solution with an initial datum $%
\varphi_0\in H$ such that $\left\vert \left\langle \varphi _{0}\right\rangle
\right\vert <1$ and $F_{\epsilon }\left( \varphi _{0}\right) \in L^{1}\left(
\Omega \right) $ with a sequence $\varphi _{0j }\subset V$ with the same
properties and such that
\begin{equation}
\varphi _{0j }\rightarrow \varphi _{0}\text{ in }H\text{-norm,}
\label{ap_id2}
\end{equation}%
as $j \rightarrow \infty$. Also we take a sequence $\{J_j\} \in W^{1,\infty}(%
\mathbb{R}^d)$ which satisfies (H1) and strongly converges to $J$ in $%
W^{1,1}(\mathbb{R}^d)$ as $j \rightarrow \infty$.

Let $\{\varphi _{j,\epsilon }\}$ be a sequence of solutions associated with $%
\{\varphi _{j,0}\}$. Arguing as in the proof of Lemma \ref{unising} below,
we can get the estimate
\begin{align}
& \left\Vert \varphi _{j,\epsilon }\left( t\right) -\varphi _{i,\epsilon
}\left( t\right) \right\Vert _{V^{\prime }}^{2}+\alpha \left\Vert \varphi
_{j,\epsilon }\left( t\right) -\varphi _{i,\epsilon }\left( t\right)
\right\Vert _{H}^{2}  \label{cda} \\
& \leq \left( \left\Vert \varphi _{j,\epsilon }\left( 0\right) -\varphi
_{i,\epsilon }\left( 0\right) \right\Vert _{V^{\prime }}^{2}+\alpha
\left\Vert \varphi _{j,\epsilon }\left( 0\right) -\varphi _{i,\epsilon
}\left( 0\right) \right\Vert _{H}^{2}\right) e^{\kappa t}  \notag \\
& +C\left\vert \left\langle \varphi _{j,\epsilon }\left( 0\right)
\right\rangle -\left\langle \varphi _{i,\epsilon }\left( 0\right)
\right\rangle \right\vert e^{\kappa t},  \notag
\end{align}%
for some positive constants $C,$ $\kappa $ which depend on $\Vert
J_j\Vert_{W^{1,1}(\mathbb{R}^d)}$ and $\Omega$, but are independent of $%
\epsilon $ and $i,j$. This yields, on account of \eqref{ap_id2}, that as $%
j\rightarrow \infty ,$ we have the strong convergence of the sequence of
solutions $\varphi _{j,\epsilon }$ for $\alpha>0$, to some function $\varphi
_{\epsilon }$, i.e.,
\begin{equation}
\varphi _{j,\epsilon }\rightarrow \varphi _{\epsilon }\text{ strongly in }%
C\left( [0,T];H\right),  \label{est4}
\end{equation}
for every $\epsilon >0$. Finally, from the preceding estimates (\ref{est1})-(%
\ref{est3}), we can infer that (up to subsequences)
\begin{equation}
\left\{
\begin{array}{ll}
\partial _{t}\varphi _{j,\epsilon } & \rightarrow \partial _{t}\varphi
_{\epsilon }\text{ weakly in }L^{2}(0,T;H), \\
\varphi _{j,\epsilon } & \rightarrow \varphi _{\epsilon }\text{ weakly-star
in }L^{\infty }(0,T;L^{2+2q}(\Omega )), \\
\widetilde{\mu }_{j,\epsilon } & \rightarrow \widetilde{\mu }_{\epsilon }%
\text{ weakly in }L^{2}(0,T;V),%
\end{array}
\right.  \label{est5}
\end{equation}
as $j\rightarrow \infty $. Therefore, arguing as in \cite{CFG}, the
convergence properties (\ref{est4})-(\ref{est5}) allow us to show that $%
\varphi _{\epsilon }$ is a weak solution to problem $P_{\epsilon },$ with $%
\varphi _{0}$ satisfying the assumptions of Theorem \ref{well_sing} and $J$
fulfilling (H1).

\noindent \emph{Step 2.} The passage to limit as $\epsilon \rightarrow 0$ is
actually easier. Indeed, we have already observed that estimates (\ref{est1}%
)-(\ref{est3}) and (\ref{cda}) hold with constants independent of $\epsilon
>0$. The strong convergence can still be deduced by using the continuous
dependence estimate proven here below (see \eqref{Lipschitz_sing}). Thus a
similar argument works when $\epsilon $ goes to zero. We will only mention
that in order to pass to the limit in the variational formulation for
problem $P_{\epsilon },$ we need to show that $\left\vert \varphi
\right\vert <1$ almost everywhere in $Q=\Omega \times \left( 0,T\right) $.
This can be done by adapting an argument from \cite[Section 4]{DD}. We refer
the reader to \cite[Section 3]{FG1} for further details.
\end{proof}

Uniqueness is an immediate consequence of the following result whose proof
goes essentially as in Lemma \ref{dec}.

\begin{lemma}
\label{unising} Let $\varphi _{i},$ $i=1,2$, be a pair of weak solutions
according to the assumptions of Theorem \ref{well_sing}. Then the following
estimate holds:
\begin{align}
& \left\Vert \varphi _{1}\left( t\right) -\varphi _{2}\left( t\right)
\right\Vert _{V^{\prime }}^{2}+\alpha \left\Vert \varphi _{1}\left( t\right)
-\varphi _{2}\left( t\right) \right\Vert _{H}^{2}  \label{Lipschitz_sing} \\
& \leq \left( \left\Vert \varphi _{1}\left( 0\right) -\varphi _{2}\left(
0\right) \right\Vert _{V^{\prime }}^{2}+\alpha \left\Vert \varphi _{1}\left(
0\right) -\varphi _{2}\left( 0\right) \right\Vert _{H}^{2}\right) e^{\kappa
t} + Ce^{\kappa t}\left\vert M_{1}-M_{2}\right\vert ,  \notag
\end{align}%
for all $t\geq 0$, where $M_{i}:=\left\langle \varphi _{i}\left( 0\right)
\right\rangle $, for some positive constants $C,\kappa $ which depend on $%
c_{0}$ and $J$, but are independent of $\alpha \geq 0.$
\end{lemma}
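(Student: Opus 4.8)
My plan is to follow the proof of Proposition~\ref{uniq} (see also Lemma~\ref{dec}), keeping track of the two features proper to the present setting: the viscous term $\alpha\partial_t\varphi$ sitting inside the chemical potential, and the fact that $F'$ is now singular, so that $\varphi_1,\varphi_2$ need not be uniformly separated from $\pm1$. Put $\varphi:=\varphi_1-\varphi_2$ and $\overline{\mu}:=\mu_1-\mu_2$. Subtracting the weak formulations~\eqref{weakf} written for $\varphi_1$ and $\varphi_2$ gives, for every $\psi\in V$,
\begin{equation*}
\langle\partial_t\varphi,\psi\rangle+(\nabla\overline{\mu},\nabla\psi)=0,\qquad\overline{\mu}=a\varphi-J\ast\varphi+F'(\varphi_1)-F'(\varphi_2)+\alpha\partial_t\varphi,
\end{equation*}
together with $\langle\varphi(t)\rangle=M_1-M_2$ (hence $\langle\partial_t\varphi(t)\rangle=0$) for all $t\ge0$. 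Testing with $\psi=\mathcal{N}(\varphi(t)-\langle\varphi(t)\rangle)$ and using the relations~\eqref{rel} exactly as in the proof of Proposition~\ref{uniq}, the term $\alpha(\partial_t\varphi,\varphi)=\tfrac{\alpha}{2}\tfrac{d}{dt}\|\varphi\|_H^2$ produced by the viscosity appears on the left and one arrives at
\begin{equation*}
\frac{1}{2}\frac{d}{dt}\Big(\|\varphi(t)\|_{V'}^{2}+\alpha\|\varphi(t)\|_{H}^{2}\Big)+\big(a\varphi+F'(\varphi_1)-F'(\varphi_2),\varphi\big)=(J\ast\varphi,\varphi)+(M_1-M_2)\langle\overline{\mu}\rangle|\Omega|.
\end{equation*}

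Next I would bound the three remaining terms. First, coercivity: since $\varphi_1(x,t),\varphi_2(x,t)\in(-1,1)$ a.e., for a.e.\ $(x,t)$ we may write $F'(\varphi_1)-F'(\varphi_2)=F''(\xi)(\varphi_1-\varphi_2)$ with $\xi\in(-1,1)$, and (H10) (together with $F_2''\ge\min_{[-1,1]}F_2''$ and $a\ge\beta$) yields $F''(\xi)+a(x)\ge c_0$; hence $\big(a\varphi+F'(\varphi_1)-F'(\varphi_2),\varphi\big)\ge c_0\|\varphi\|_H^2$. This monotonicity is what replaces (H2) in the singular case, and the constants involved are independent of $\alpha$. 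Second, the convolution term is absorbed as in~\eqref{decay1b}: writing $J\ast\varphi=J\ast\varphi^0+(M_1-M_2)\,a$ with $\varphi^0:=\varphi-\langle\varphi\rangle$, and using $J\in W^{1,1}(\mathbb{R}^d)$ so that $\|\nabla(J\ast\varphi^0)\|_H\le\|\nabla J\|_{L^1}\|\varphi^0\|_H$, a Cauchy--Schwarz/interpolation bound in $V'$ gives $|(J\ast\varphi,\varphi)|\le\tfrac{c_0}{2}\|\varphi\|_H^2+C\|\varphi\|_{V'}^2+C|M_1-M_2|$, where $C=C(\Omega,J)$ and I used $|M_1-M_2|<2$. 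Third, the mean term I would handle as for~\eqref{chem_l1}: from the energy identity~\eqref{idesing} and (H7)--(H11) one gets $\langle\overline{\mu}(\cdot)\rangle\in L^1(0,t)$ with $\int_0^t|\langle\overline{\mu}(s)\rangle|\,ds\le C(1+t)$ (note $\alpha\partial_t\varphi$ has zero average, so it does not enter $\langle\overline{\mu}\rangle$). Setting $y(t):=\|\varphi(t)\|_{V'}^2+\alpha\|\varphi(t)\|_H^2$ and collecting the above, I obtain $y'(t)\le Cy(t)+C|M_1-M_2|\big(1+|\langle\overline{\mu}(t)\rangle|\big)$ (with, if desired, a spare $c_0\int_0^t\|\varphi\|_H^2$ left on the left, as in Proposition~\ref{uniq}); Gronwall's lemma together with the time-integral bound on $\langle\overline{\mu}\rangle$ then yields $y(t)\le y(0)e^{\kappa t}+Ce^{\kappa t}|M_1-M_2|$, which is~\eqref{Lipschitz_sing} after adjusting the constants, with $C,\kappa$ depending only on $c_0,\Omega,J$ and independent of $\alpha\ge0$; for $\alpha=0$ one recovers Proposition~\ref{uniq}.

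It remains to make the computation rigorous. The test function $\mathcal{N}\varphi^0$ is not admissible for a genuinely singular $F$, so the argument should be carried out at the level of the regularized problems $P_\epsilon$ (with the polynomially growing potentials $F_\epsilon$, the smoothed kernels $J_j\in W^{1,\infty}(\mathbb{R}^d)$ and the smooth data $\varphi_{0j}\in V$ used in the proof of Theorem~\ref{well_sing}), whose approximate solutions satisfy $\varphi_\epsilon\in L^\infty(0,T;V)$ by~\eqref{est4bis} and are therefore regular enough for the above to be literally the computation of Proposition~\ref{uniq}; since every constant obtained is independent of $\epsilon$, $j$ and $\alpha$, the estimate survives the limits $j\to\infty$ and $\epsilon\to0$ via the convergences~\eqref{est4}--\eqref{est5} — this is precisely estimate~\eqref{cda} already invoked in Step~1 of the proof of Theorem~\ref{well_sing}. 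I expect the main obstacle to be the mean term: unlike in the regular case one cannot bound $F'(\varphi_1)-F'(\varphi_2)$ pointwise by $C\|\varphi\|$, so the sign-indefinite quantity $\langle F'(\varphi_1)-F'(\varphi_2)\rangle$ occurring in $\langle\overline{\mu}\rangle$ must be controlled through the $L^\infty(0,T;L^1(\Omega))$ bound on $F'(\varphi_i)$ coming from (the limit of)~\eqref{est1}, and one has to check that this control does not degenerate as $\alpha\downarrow0$.
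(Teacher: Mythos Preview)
Your proposal is correct and follows essentially the same route as the paper: test the difference equation with $\mathcal{N}(\varphi-\langle\varphi\rangle)$, use (H10) for the coercivity $(a\varphi+F'(\varphi_1)-F'(\varphi_2),\varphi)\ge c_0\|\varphi\|_H^2$, absorb the convolution term as in~\eqref{decay1b}, control $\langle\overline{\mu}\rangle$ via the $L^\infty(0,T;L^1(\Omega))$ bound on $F'(\varphi_i)$ to recover~\eqref{chem_l1}, and apply Gronwall. The paper's proof is in fact a terse sketch that refers back to Lemma~\ref{dec} and Proposition~\ref{uniq}; your write-up supplies the details (and the regularization remark) that the paper leaves implicit.
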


\begin{proof}
We see that $\varphi $ (formally)\ satisfies the problem:%
\begin{equation}
\partial _{t}\varphi =\Delta \overline{\mu },\quad \overline{\mu }=a\left(
x\right) \varphi -J\ast \varphi + F^\prime\left( \varphi _{1}\right) -
F^\prime\left( \varphi _{2}\right) +\alpha \partial _{t}\varphi ,
\label{d1sing}
\end{equation}%
subject to the boundary and initial conditions%
\begin{equation}
\partial _{\mathbf{n}}\overline{\mu }_{\mid \Gamma }=0\text{, }\varphi
_{\mid t=0}=\varphi _{1}\left( 0\right) -\varphi _{2}\left( 0\right) \text{
in }\Omega \text{.}  \label{d2sing}
\end{equation}%
Arguing as in Lemma \ref{dec}, we obtain, on account of (H10), the following
estimate:%
\begin{align}
& \frac{d}{dt}\left( \left\Vert \varphi \left( t\right) \right\Vert
_{V^{\prime }}^{2}+\alpha \left\Vert \varphi \left( t\right) \right\Vert
_{H}^{2}\right) +2c_{0}\left\Vert \varphi \left( t\right) \right\Vert
_{H}^{2}  \label{diff_sing2} \\
& \leq c_{0}\left\Vert \varphi \left( t\right) \right\Vert _{H}^{2}+\kappa
\left\Vert \varphi \left( t\right) \right\Vert _{V^{\prime
}}^{2}+C\left\vert \left\langle \overline{\mu }\left( t\right) \right\rangle
\right\vert \left\vert M_{1}-M_{2}\right\vert ,  \notag
\end{align}%
where $\kappa ,C$ depend on $c_{0},$ $\Omega $ and $J$, but are independent
of $\alpha \geq 0$. Observe now that we also have $F^\prime\left( \varphi
\right) \in L^{\infty }\left( 0,T;L^{1}\left( \Omega \right) \right).$
Therefore we can still deduce (\ref{chem_l1}) and the application of
Gronwall's inequality to (\ref{diff_sing2}) entails the desired estimate (%
\ref{Lipschitz_sing}) exactly as in Lemma \ref{dec}.
\end{proof}

On account of the previous results, we can define a dynamical system on the
metric space
\begin{equation*}
\mathcal{Y}_{m}:=\{\psi \in L^{\infty }(\Omega )\,:\, \vert \psi \vert <1,
\text{ a.e. in } \Omega,\; F(\psi )\in L^{1}(\Omega ),\;|\left\langle \psi
\right\rangle |\leq m\}
\end{equation*}%
where $m\in [0,1)$ is fixed and the metric is given by (\ref{metric}). Then,
for each $\alpha \geq 0$ we can also define a semigroup%
\begin{equation}
S\left( t\right) :\mathcal{Y}_{m}\rightarrow \mathcal{Y}_{m}, \quad
S\left(t\right) \varphi _{0}=\varphi \left( t\right) ,  \label{sem_sing}
\end{equation}%
where $\varphi \left( t\right) $ is the unique weak solution of (\ref{nlch}%
)-(\ref{ic}). In fact, arguing as in \cite[Section 4, Theorem 2]{FG1}, we
deduce the following

\begin{theorem}
\label{gl_sing} Let the assumptions of Theorem \ref{well_sing} hold and
assume that $F$ is bounded in $(-1,1)$. Then the dynamical system $(\mathcal{%
Y}_{m},S(t))$ possesses a connected global attractor $\mathcal{A}$.
\end{theorem}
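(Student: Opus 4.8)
The plan is to verify, for the semigroup $S(t)$ of \eqref{sem_sing}, the classical criterion for the existence of a global attractor on a metric space: $(S(t),\mathcal{Y}_m)$ must be a continuous dynamical system that is dissipative and asymptotically compact, after which connectedness of $\mathcal{A}$ follows from connectedness of the phase space. The hypothesis that $F$ be bounded on $(-1,1)$ is used right away: since every $\psi\in\mathcal{Y}_m$ satisfies $|\psi|<1$ a.e.\ and $F$ is bounded, the functional $\psi\mapsto\int_\Omega F(\psi)\,dx$ is bounded on $\mathcal{Y}_m$, so the metric $d$ of \eqref{metric} and the energy $\mathcal{E}$ of \eqref{ener} are both bounded on $\mathcal{Y}_m$. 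Consequently $\mathcal{Y}_m$ is itself a bounded absorbing set and dissipativity is automatic; moreover \eqref{idesing} then yields $\nabla\mu,\sqrt{\alpha}\,\partial_t\varphi\in L^2(0,\infty;H)$ uniformly over $\mathcal{Y}_m$. Continuity of $\varphi_0\mapsto S(t)\varphi_0$ in the $H$-component of $d$ is immediate from \eqref{Lipschitz_sing}; in the $\int_\Omega F$-component one uses that $H$-convergence gives a.e.\ convergence along a subsequence and then invokes boundedness of $F$ and dominated convergence. Joint continuity of $(t,\varphi_0)\mapsto S(t)\varphi_0$ follows from the regularity class \eqref{re4}--\eqref{re6} (so that $\varphi$ is weakly continuous into $H$) together with \eqref{idesing}, which makes $t\mapsto\mathcal{E}(\varphi(t))$ continuous; since the nonlocal interaction part of $\mathcal{E}$ is continuous with respect to $H$-convergence (recall $J\in L^1(\R^d)$ and $\varphi$ is uniformly bounded), $t\mapsto\int_\Omega F(\varphi(t))\,dx$ is continuous as well.

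The core of the argument is asymptotic compactness, which I would obtain by the energy (Ball) method, as in \cite{FG1}, rather than through a compact embedding, since in the viscous case there is no parabolic smoothing of $\varphi$. Given $t_n\to\infty$ and $\varphi_{0,n}\in\mathcal{Y}_m$, set $\varphi_n(s):=S(t_n+s)\varphi_{0,n}$, $s\in[-\tau,0]$. Using the a priori bounds behind Theorem \ref{well_sing} (the analogues of \eqref{est1}--\eqref{est3}, uniform over $\mathcal{Y}_m$) together with $\nabla\mu,\sqrt{\alpha}\,\partial_t\varphi\in L^2(0,\infty;H)$, a diagonal extraction over $\tau\to\infty$ produces a subsequence converging weakly-star in the natural spaces to a complete bounded trajectory $\varphi_\infty:(-\infty,0]\to H$, with $\varphi_n(0)\rightharpoonup\xi:=\varphi_\infty(0)$ in $H$ and $|\xi|\le1$ a.e. One then checks, by the usual limit-passage argument (adapting \cite{DD} as in \cite{FG1}), that $\varphi_\infty$ is itself a weak solution; in particular, by \eqref{prmu}, $a\varphi_\infty-J\ast\varphi_\infty+F'(\varphi_\infty)+\alpha\partial_t\varphi_\infty\in L^2_{\mathrm{loc}}(H)$, and (H11) then forces $|\varphi_\infty(s)|<1$ a.e.\ for a.e.\ $s$, so the limit point stays inside $\mathcal{Y}_m$.

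To upgrade weak convergence to strong convergence in $H$ I would exploit the convex structure of $\mathcal{E}$. Writing
\[
\mathcal{E}(\varphi)=G(\varphi)-\tfrac12(J\ast\varphi,\varphi),\qquad G(\varphi):=\tfrac12(a\varphi,\varphi)+\int_\Omega F(\varphi)\,dx,
\]
property (ii) in the proof of Theorem \ref{well_sing} (equivalently, (H10)) shows that $G$ is strongly convex with modulus $c_0$ on $\{|\psi|\le1\}$, while $\varphi\mapsto(J\ast\varphi,\varphi)$ is weakly continuous on $H$ (the convolution operator being compact, cf.\ Remark \ref{compact}); hence $\mathcal{E}$ is weakly lower semicontinuous and $\liminf_n\mathcal{E}(\varphi_n(0))\ge\mathcal{E}(\xi)$. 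The matching bound $\limsup_n\mathcal{E}(\varphi_n(0))\le\mathcal{E}(\xi)$ comes from \eqref{idesing} on $[-\tau,0]$, using weak lower semicontinuity of the dissipation integral, the energy identity for $\varphi_\infty$, and the fact that the dissipation over $(t_n-\tau,t_n)$ is negligible because $\int_0^\infty(\|\nabla\mu\|_H^2+\alpha\|\partial_t\varphi\|_H^2)\,ds<\infty$ (the standard Ball argument, letting $\tau\to\infty$). Thus $\mathcal{E}(\varphi_n(0))\to\mathcal{E}(\xi)$, and then $\varphi_n(0)\rightharpoonup\xi$ combined with the weak continuity of the convolution term and the strong convexity of $G$, in the form $G(\varphi_n(0))+G(\xi)-2G\big(\tfrac12(\varphi_n(0)+\xi)\big)\ge\tfrac{c_0}{4}\|\varphi_n(0)-\xi\|_H^2$ plus weak lower semicontinuity of $G$, gives $\varphi_n(0)\to\xi$ strongly in $H$. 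Finally $\int_\Omega F(\varphi_n(0))\,dx\to\int_\Omega F(\xi)\,dx$ by a.e.\ convergence and boundedness of $F$, so $\varphi_n(0)\to\xi$ in the metric $d$: asymptotic compactness holds.

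With these three ingredients, the abstract theory gives that $\mathcal{A}:=\omega(\mathcal{Y}_m)$ is the global attractor: nonempty, compact in $(\mathcal{Y}_m,d)$, fully invariant, and attracting $\mathcal{Y}_m$. For connectedness I would note that $\mathcal{Y}_m$ is convex, hence connected, and that $S(t)$ is continuous, so $\overline{\bigcup_{t\ge0}S(t)\mathcal{Y}_m}$ is connected; since $\mathcal{A}$ is invariant and attracts this set, it is connected too. The step I expect to be the main obstacle is the asymptotic compactness, and within it two delicate points: carrying out the limit passage that identifies $\varphi_\infty$ as a bona fide weak solution, so that the limit remains strictly below the pure phases a.e.\ and hence in $\mathcal{Y}_m$, and proving the $\limsup$ inequality for $\mathcal{E}$ along the limit trajectory in the presence of the singular potential. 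Everything else is a careful but routine adaptation of the arguments already developed in \cite{FG1,FG2} and in the previous section.
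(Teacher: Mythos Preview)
Your proposal is correct and follows the same route as the paper: the paper gives no proof of its own but simply refers to \cite[Section 4, Theorem 2]{FG1}, which establishes the global attractor precisely via the energy (Ball) method you outline---continuity of the semigroup, boundedness of $\mathcal{Y}_m$ as absorbing set (using that $F$ is bounded), and asymptotic compactness obtained by passing to the limit along complete bounded trajectories and upgrading weak to strong $H$-convergence through the energy identity \eqref{idesing}. Your identification of the $\limsup$ inequality as the delicate point is exactly right; note that the precise mechanism is not that the dissipation over $(t_n-\tau,t_n)$ becomes negligible, but rather the combination of the dissipative estimate \eqref{dis9} (which gives $\limsup_n\mathcal{E}(\varphi_n(-\tau))\le C_m$ uniformly in $\tau$) with weak lower semicontinuity of the dissipation integral and the energy identity for the limit trajectory.
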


\subsection{Exponential attractors}

Note that, according to Theorem \ref{gl_sing}, a global attractor $\mathcal{A%
}$\ exists for any $\alpha \geq 0$. However, we are able to show its finite
dimensionality only in the case $\alpha >0$. This assumption is intimately
connected with the aforementioned separation property which will allow to
handle $F^{\prime }$ on a closed interval of the form $[-1+\delta,1-\delta]$%
. We have the following.

\begin{theorem}
\label{expo_2} Let the assumptions of Theorem \ref{well_sing} be satisfied.
If $\alpha>0$ then, for every fixed $m\geq 0$, there exists an exponential
attractor $\mathcal{M}=\mathcal{M}\left( m\right)$, bounded in $L^{\infty
}\left( \Omega \right) $ and compact in $H$, for the dynamical system $\left(%
\mathcal{Y}_{m}, S\left( t\right)\right) $ which satisfies the following
properties:

(i) Semi-invariance: $S\left( t\right) \mathcal{M}\subset \mathcal{M}$, for
every $t\geq 0.$

(ii) Separation property: there exists $\delta _{0}=\delta _{0}\left(
m,\alpha \right) \in \left( 0,1\right) $ such that
\begin{equation*}
\left\Vert \mathcal{M} \right\Vert _{L^{\infty }\left( \Omega \right) }\leq
1-\delta _{0}.
\end{equation*}

(iii) Exponential attraction:%
\begin{equation*}
dist_{L^{s}\left( \Omega \right) }\left( S\left( t\right) \mathcal{Y}_{m},%
\mathcal{M}\right) \leq Ce^{-\kappa t},
\end{equation*}%
for some positive constants $C_{m}$ and $\kappa $, for any $s\in \lbrack
2,\infty ).$

(iv) Finite dimensionality:%
\begin{equation*}
\dim _{F}\left( \mathcal{M},H\right) \leq C_{m}<\infty .
\end{equation*}
\end{theorem}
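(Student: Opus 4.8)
The overall plan is to reduce the problem, on a suitable semi-invariant absorbing set, to the situation already treated in Section~2.2, the decisive new ingredient being the uniform \emph{separation property}: there exist $\delta_0=\delta_0(m,\alpha)\in(0,1)$ and $t_\ast=t_\ast(m,\alpha)>0$ such that every weak solution issued from a bounded subset of $\mathcal{Y}_m$ satisfies $\Vert\varphi(t)\Vert_{L^\infty(\Omega)}\le 1-\delta_0$ for all $t\ge t_\ast$. Once this is available, on the compact interval $[-1+\delta_0,1-\delta_0]$ the functions $F'$, $F''$ are bounded and $F'$ is Lipschitz, so the short--trajectory machinery based on Proposition~\ref{abstract} — exactly as in the proof of Theorem~\ref{expo} — applies essentially verbatim. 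Thus the work splits into four steps: (a) dissipativity plus the regularity estimates leading to the separation property; (b) construction of the set $\mathbb{B}$; (c) verification of the hypotheses of Proposition~\ref{abstract}; (d) passage to continuous time and upgrade of the attraction. Step~(a) is the heart of the matter and is where $\alpha>0$ is indispensable.

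For (a) I would argue as follows. As in \cite[Section~4]{FG1}, the energy identity \eqref{idesing} together with (H7)--(H11) gives a dissipative estimate $\mathcal{E}(\varphi(t))\le\mathcal{E}(\varphi_0)e^{-kt}+C_m$; since $F$ is bounded on $(-1,1)$, $\mathcal{E}$ is bounded on $\mathcal{Y}_m$, so after a transient $\mathcal{E}(\varphi(t))\le C_m$, whence $\int_t^{t+1}\Vert\partial_t\varphi(s)\Vert_H^2\,ds\le C_m$ for $t\ge t_\ast$ (here $\alpha>0$ is used). Working at the level of the approximating problems $P_\epsilon$ so that the computations are justified, differentiate \eqref{nlch}--\eqref{chem} in time and test with $A_N^{-1}\partial_t\varphi$, admissible since $\langle\partial_t\varphi\rangle=0$; using $a+F_\epsilon''(\varphi)\ge c_0>0$ and $\vert(J\ast\psi,\psi)\vert\le\Vert\nabla J\Vert_{L^1}\Vert\psi\Vert_H\Vert\psi\Vert_{V'}$ one gets, with $\psi=\partial_t\varphi$, $\frac{d}{dt}(\Vert\psi\Vert_{V'}^2+\alpha\Vert\psi\Vert_H^2)+c_0\Vert\psi\Vert_H^2\le C\Vert\psi\Vert_{V'}^2$, and the uniform Gronwall lemma yields $\Vert\partial_t\varphi(t)\Vert_H\le C_m$ for $t\ge t_\ast$, uniformly in $\epsilon$. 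Elliptic regularity for $-\Delta_N\mu(t)=\partial_t\varphi(t)$ then gives $\Vert\mu(t)\Vert_{H^2(\Omega)}\le C_m$, hence $\Vert\mu(t)\Vert_{L^\infty(\Omega)}\le C_m$ since $d\le 3$. Writing the chemical--potential equation pointwise as $\alpha\,\partial_t\varphi(t,x)+F'(\varphi(t,x))=h(t,x)$, $h:=\mu-a\varphi+J\ast\varphi$, with $\Vert h(t)\Vert_{L^\infty(\Omega)}\le C_m$ for $t\ge t_\ast$, one runs a pointwise--in--$x$ ODE barrier argument: by (H8) and (H11), near $\pm1$ the function $F_1'$ is nondecreasing with $F_1'(s)\to\pm\infty$ as $s\to\pm1$, while $F_2'$ is bounded, so one may fix $\delta_0\in(0,\epsilon_0)$ with $F'(s)>C_m+1$ on $[1-\delta_0,1)$ and $F'(s)<-(C_m+1)$ on $(-1,-1+\delta_0]$; then whenever $\varphi(t,x)\ge 1-\delta_0$ one has $\partial_t\varphi(t,x)=\alpha^{-1}(h-F'(\varphi))\le-\alpha^{-1}$, so $t\mapsto\varphi(t,x)$ (absolutely continuous for a.e.\ $x$, as $\partial_t\varphi\in L^2_{\mathrm{loc}}(L^2)$) cannot cross the level $1-\delta_0$ from below and, if it starts above, returns to it within time $\alpha\delta_0$; symmetrically near $-1$. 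Since the barrier levels are robust under $\epsilon\to0$ (because $F_\epsilon'\to F'$ uniformly on compact subsets of $(-1,1)$ and the monotonicity of $F_{1\epsilon}'$ near $\pm1$ is preserved in the construction), this passes to the singular limit and gives the separation property. I expect this to be the main obstacle: there is no maximum principle to exploit because of the nonlocality, and every estimate must be quantitative and uniform in the data — precisely what the viscous term $\alpha\partial_t\varphi\in L^2(0,\infty;H)$ provides.

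For (b)--(c): since $F'$ is Lipschitz and $F''$ bounded on the separated range, the arguments of Section~2.2 carry over. Set $B_0:=\{\psi\in\mathcal{Y}_m:\Vert\psi\Vert_{L^\infty(\Omega)}\le 1-\delta_0\}$, a bounded subset of $(\mathcal{Y}_m,d)$ which is absorbing by step~(a), and, exactly as in the proof of Theorem~\ref{expo}, push it forward by the semiflow and take an appropriate closure to obtain a semi-invariant set $\mathbb{B}$, bounded in $L^\infty(\Omega)$ (the separation being inherited along the flow through the same barrier argument) and closed and bounded in $H$. Fix $T>0$ large enough that the constant $\gamma$ below is $<\tfrac12$, put $\mathcal{H}:=H$ (the conserved average is bounded by $m$ on $\mathbb{B}$, and the lower-order $F$- and $\langle\cdot\rangle$-contributions to \eqref{metric} are handled as in Theorem~\ref{expo}), $\mathcal{V}:=L^2(0,T;V_m')$, $\mathcal{V}_1:=L^2(0,T;H)\cap H^1(0,T;D(A_N)')$ (compactly embedded in $\mathcal{V}$ by Aubin--Lions), $\mathbb{S}:=S(T)$ and $\mathbb{T}\varphi_0:=\varphi(\cdot)\in\mathcal{V}_1$. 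The global Lipschitz bound \eqref{gl1} is then Lemma~\ref{unising} — the term $\alpha\Vert\cdot\Vert_H^2$ on its left side, available exactly because $\alpha>0$, is what promotes $V'$-continuity to $\mathcal{V}_1$-continuity — and \eqref{gl2} holds with some $\gamma<\tfrac12$, from the separated version of Lemma~\ref{dec}, whose proof now goes through because $F'$ is Lipschitz on the range of the solutions and the chemical potentials are bounded along trajectories in $\mathbb{B}$ by step~(a). Proposition~\ref{abstract} thus yields a discrete exponential attractor $\mathcal{M}_d\subset\mathbb{B}$, compact and of finite fractal dimension in $H$.

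For (d): set $\mathcal{M}:=\bigcup_{t\in[0,T]}S(t)\mathcal{M}_d$; being the continuous image of the compact $[0,T]\times\mathcal{M}_d$ it is compact in $H$, and it is bounded in $L^\infty(\Omega)$ (hence $\Vert\mathcal{M}\Vert_{L^\infty(\Omega)}\le 1-\delta_0$) and semi-invariant, giving (i) and (ii). That $\mathcal{M}$ attracts $S(t)\mathcal{Y}_m$ exponentially follows because $\mathbb{B}$ is absorbing with uniform entrance time (as $\mathcal{E}$ is bounded on $\mathcal{Y}_m$), from the Lipschitz continuity of $S(\cdot)$ in the initial datum (Lemma~\ref{unising}) and its H\"older continuity in time on $[0,T]$, which comes from $\sup_{t\ge t_\ast}\Vert\partial_t\varphi(t)\Vert_{V'}\le C_m$ (comparison in \eqref{nlch} together with $\mu\in L^\infty(H^2)$), hence $\Vert\varphi(t)-\varphi(s)\Vert_{V'}\le C_m\vert t-s\vert$, interpolated with the uniform $L^\infty$-bound; this yields $\mathrm{dist}_H(S(t)\mathcal{Y}_m,\mathcal{M})\le Ce^{-\kappa t}$. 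Since $\mathcal{M}$, $\mathbb{B}$ and $S(t)\mathcal{Y}_m$ for $t\ge t_\ast$ are all bounded in $L^\infty(\Omega)$, the interpolation $\Vert u\Vert_{L^s(\Omega)}\le\Vert u\Vert_H^{2/s}\Vert u\Vert_{L^\infty(\Omega)}^{1-2/s}$ upgrades this to $\mathrm{dist}_{L^s(\Omega)}(S(t)\mathcal{Y}_m,\mathcal{M})\le Ce^{-\kappa' t}$ for every $s\in[2,\infty)$, proving (iii); and (iv) is the finite $H$-dimension of $\mathcal{M}_d$ transported to $\mathcal{M}$ in the standard way.
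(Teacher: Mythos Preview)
Your proposal is essentially correct and follows the same route as the paper: the chain Lemma~\ref{chem_lemma} $\Rightarrow$ Lemma~\ref{sep} $\Rightarrow$ Propositions~\ref{uniq2}--\ref{uniq3} and Lemma~\ref{hctimebis} $\Rightarrow$ Proposition~\ref{abstract}, with the $L^s$-upgrade via interpolation against the uniform $L^\infty$-bound. Your step~(a) matches the paper's argument almost line by line (differentiate in time, test with $A_N^{-1}\partial_t\varphi$, uniform Gronwall, then the pointwise ODE comparison in~\eqref{ODE}).

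There is, however, one genuine omission in your step~(a). From $\partial_t\varphi=\Delta\mu$ with Neumann condition you only obtain $\Vert\mu-\langle\mu\rangle\Vert_{H^2}\le C\Vert\partial_t\varphi\Vert_H$; to conclude $\Vert\mu(t)\Vert_{L^\infty}\le C_m$ you still need a uniform bound on $\vert\langle\mu(t)\rangle\vert$. For a singular potential this is not free: $\langle\mu\rangle$ involves $\langle F'(\varphi)\rangle$, and $F'$ blows up at $\pm1$. The paper closes this gap (see the proof of Lemma~\ref{chem_lemma}) by testing \eqref{weakf} with $\mathcal{N}\bigl(F'(\varphi)-\langle F'(\varphi)\rangle\bigr)$ to get $\Vert F'(\varphi)-\langle F'(\varphi)\rangle\Vert_H\le C(\Vert\mathcal{N}\partial_t\varphi\Vert_H+1)$, and then invokes the standard trick (as in \cite[Proposition~A.2]{MZ}) that controls $\vert\langle F'(\varphi)\rangle\vert$ in terms of $\Vert F'(\varphi)-\langle F'(\varphi)\rangle\Vert_{L^1}$ when $\vert\langle\varphi\rangle\vert\le m<1$. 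You should insert this step before invoking elliptic regularity.

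A minor point on (c): Lemma~\ref{unising} alone does not give \eqref{gl1}, since $\mathcal{V}_1$ also carries the $H^1(0,T;D(A_N)')$-norm of the difference; you need the analogue of Lemma~\ref{lipdif} (the paper's Proposition~\ref{uniq3}), which you allude to only implicitly. Also, the paper gets time--H\"older continuity in $H$ directly from $\sup_{t\ge\tau}\Vert\partial_t\varphi(t)\Vert_H\le C$ (Lemma~\ref{hctimebis}), which is slightly cleaner than your $V'$-bound plus interpolation.
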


\begin{remark}
Note that, thanks to the separation property, the assumption that $F$ is
bounded on $(-1,1)$ (see Theorem \ref{gl_sing}) is not needed.
\end{remark}

\begin{corollary}
Let the assumptions of Theorem \ref{expo_2} be satisfied. The global
attractor $\mathcal{A}$ has finite fractal dimension%
\begin{equation*}
\dim _{F}\left( \mathcal{A},H\right) <\infty
\end{equation*}%
and satisfies $\left\Vert \mathcal{A}\right\Vert _{L^{\infty }\left( \Omega
\right) }\leq 1-\delta _{0}$, for some $\delta _{0}=\delta _{0}\left(
m,\alpha \right) \in \left( 0,1\right)$.
\end{corollary}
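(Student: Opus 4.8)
The plan is to obtain this corollary directly from Theorem~\ref{expo_2}, using only the standard fact that an exponential attractor always contains the global attractor. First I would record what Theorem~\ref{expo_2} provides for the fixed parameter $m$ (and $\alpha>0$): a set $\mathcal{M}=\mathcal{M}(m)$ that is compact in $H$, semi-invariant, exponentially attracting in the $L^{s}(\Omega)$-metric for every $s\in[2,\infty)$, bounded in $L^{\infty}(\Omega)$ with $\|\mathcal{M}\|_{L^{\infty}(\Omega)}\leq 1-\delta_{0}$, and of finite fractal dimension in $H$, $\dim_{F}(\mathcal{M},H)\leq C_{m}<\infty$. In particular, $\mathcal{M}$ is a compact subset of $\mathcal{Y}_{m}$ that attracts every bounded subset of $\mathcal{Y}_{m}$, so the global attractor $\mathcal{A}$ exists and equals $\omega(\mathcal{M})$; this also re-establishes Theorem~\ref{gl_sing} for $\alpha>0$ without the extra requirement that $F$ be bounded on $(-1,1)$, in accordance with the Remark preceding the corollary.

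The core step is the inclusion $\mathcal{A}\subset\mathcal{M}$. Since $\mathcal{A}$ is bounded in $\mathcal{Y}_{m}$ and strictly invariant, $S(t)\mathcal{A}=\mathcal{A}$ for every $t\geq 0$, and taking $s=2$ in property (iii) of Theorem~\ref{expo_2} (so that the exponential attraction holds in the $H$-metric) yields $\mathrm{dist}_{H}(\mathcal{A},\mathcal{M})=\mathrm{dist}_{H}(S(t)\mathcal{A},\mathcal{M})\leq Ce^{-\kappa t}$ for all $t\geq 0$, whence $\mathrm{dist}_{H}(\mathcal{A},\mathcal{M})=0$. Because $\mathcal{M}$ is compact, hence closed, in $H$, this forces $\mathcal{A}\subset\mathcal{M}$. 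From here the two assertions are immediate: monotonicity of the fractal dimension under set inclusion gives $\dim_{F}(\mathcal{A},H)\leq\dim_{F}(\mathcal{M},H)\leq C_{m}<\infty$, while $\mathcal{A}\subset\mathcal{M}$ together with property (ii) of Theorem~\ref{expo_2} gives $\|\mathcal{A}\|_{L^{\infty}(\Omega)}\leq\|\mathcal{M}\|_{L^{\infty}(\Omega)}\leq 1-\delta_{0}$ for the same $\delta_{0}=\delta_{0}(m,\alpha)\in(0,1)$.

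There is essentially no obstacle here: the whole content of the corollary is packaged inside Theorem~\ref{expo_2}, and the only point that merits a line of justification is the inclusion $\mathcal{A}\subset\mathcal{M}$, which becomes transparent once one observes that the exponential attraction in Theorem~\ref{expo_2} is stated in a scale of metrics that includes the $H$-metric --- the metric in which $\mathcal{M}$ is closed and in which the global attractor attracts bounded sets.
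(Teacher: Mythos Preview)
Your argument is correct and is exactly the standard deduction the paper has in mind: the corollary is stated without proof, immediately after Theorem~\ref{expo_2}, as a direct consequence of the inclusion $\mathcal{A}\subset\mathcal{M}$. Your justification of that inclusion via the invariance of $\mathcal{A}$ and the exponential attraction in the $H$-metric is the intended one-line reasoning.
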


First, we derive some (uniform in time) a priori estimates for the weak
solutions. For the next result, we also assume that the boundary $\Gamma $
is of class $\mathcal{C}^{2}$ (we note that Lemma \ref{chem_lemma} is the
only place where this assumption is used; however, see Remark \ref{noreg}).

\begin{lemma}
\label{chem_lemma}Let the assumptions of Theorem \ref{expo_2} be satisfied.
For every $\tau >0$, there exists a positive constant $C_{m,\tau }\sim
1+\tau ^{-n_{0}}$ (for some $n_{0}>0$) such that the following estimate
holds:%
\begin{equation}
\sup_{t\geq \tau }\left( \left\Vert \mu \left( t\right) \right\Vert
_{H^{2}\left( \Omega \right) }+\alpha \left\Vert \partial _{t}\varphi \left(
t\right) \right\Vert _{H}\right) \leq C_{m,\tau }.  \label{chemp}
\end{equation}
\end{lemma}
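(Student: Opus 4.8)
The plan is to gain regularity by differentiating the equation in time, the point being that the viscous term ($\alpha>0$) makes the differentiated estimate close up in $H$ and not merely in $V^{\prime }$. Since $\partial _{t}\varphi =\Delta _{N}\mu $ with $\partial _{\mathbf{n}}\mu =0$, elliptic regularity on the $\mathcal{C}^{2}$ domain $\Omega $ gives $\left\Vert \mu -\left\langle \mu \right\rangle \right\Vert _{H^{2}\left( \Omega \right) }\leq C\left\Vert \partial _{t}\varphi \right\Vert _{H}$, so (\ref{chemp}) reduces to: (a) a bound $\left\Vert \partial _{t}\varphi \left( t\right) \right\Vert _{H}\leq C_{m,\tau }$ for $t\geq \tau $, and (b) a bound on $\left\vert \left\langle \mu \left( t\right) \right\rangle \right\vert $, the latter amounting to a control of $\left\Vert F^{\prime }\left( \varphi \left( t\right) \right) \right\Vert _{L^{1}\left( \Omega \right) }$. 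All estimates are first derived on the Galerkin approximations of the regularized problem $P_{\epsilon }$ introduced in the proof of Theorem \ref{well_sing} — where $F_{\epsilon }$ is smooth of polynomial growth, $\partial _{tt}\varphi $ is available, and $F_{\epsilon }^{\prime \prime }+a\geq c_{0}$ holds by (\ref{lb6}) — with constants independent of the Galerkin level, of $\epsilon $ and of the regularization of $J$, and then passed to the limit by weak/weak-$\ast$ lower semicontinuity; accordingly I argue formally.

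\emph{Step 1 (integrated bounds).} Since $\left\vert \varphi \right\vert <1$ we trivially have $\left\Vert \varphi \left( t\right) \right\Vert _{H}\leq \left\vert \Omega \right\vert ^{1/2}$ for all $t\geq 0$, while the energy identity (\ref{idesing}), together with the dissipative estimate, yields
\begin{equation*}
\int_{t}^{t+1}\left( \left\Vert \nabla \mu \left( s\right) \right\Vert _{H}^{2}+\alpha \left\Vert \partial _{t}\varphi \left( s\right) \right\Vert _{H}^{2}\right) ds\leq C_{m},\qquad \forall t\geq 0.
\end{equation*}
As $\left\langle \partial _{t}\varphi \right\rangle =0$ and $\partial _{t}\varphi =\Delta _{N}\mu $, we have $\left\Vert \partial _{t}\varphi \left( t\right) \right\Vert _{V^{\prime }}\leq C\left\Vert \nabla \mu \left( t\right) \right\Vert _{H}$, so the same bound holds with $\left\Vert \partial _{t}\varphi \right\Vert _{V^{\prime }}^{2}$ in the integrand. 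Finally recall, from (H10), that $a\left( x\right) +F^{\prime \prime }\left( s\right) \geq c_{0}>0$ for all $s\in \left( -1,1\right) $; this is the only use of convexity of the entropy part, and it does \emph{not} invoke the separation property.

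\emph{Step 2 (differentiated estimate and uniform Gronwall; here $\alpha >0$ enters).} Differentiating (\ref{weakf}) in time, testing with $\mathcal{N}\partial _{t}\varphi =A_{N}^{-1}\partial _{t}\varphi $ (licit since $\left\langle \partial _{t}\varphi \right\rangle =0$), and using (\ref{rel}) together with $\partial _{t}\mu =\left( a+F^{\prime \prime }\left( \varphi \right) \right) \partial _{t}\varphi -J\ast \partial _{t}\varphi +\alpha \partial _{tt}\varphi $, I obtain
\begin{equation*}
\frac{1}{2}\frac{d}{dt}\left( \left\Vert \partial _{t}\varphi \right\Vert _{V^{\prime }}^{2}+\alpha \left\Vert \partial _{t}\varphi \right\Vert _{H}^{2}\right) +c_{0}\left\Vert \partial _{t}\varphi \right\Vert _{H}^{2}\leq \left\vert \left( J\ast \partial _{t}\varphi ,\partial _{t}\varphi \right) \right\vert .
\end{equation*}
Writing $\left( J\ast \partial _{t}\varphi ,\partial _{t}\varphi \right) =\left( A_{N}^{1/2}\left( J\ast \partial _{t}\varphi -\left\langle J\ast \partial _{t}\varphi \right\rangle \right) ,A_{N}^{-1/2}\partial _{t}\varphi \right) $ and invoking $J\in W^{1,1}$ and Young's inequality, the right-hand side is $\leq \frac{c_{0}}{2}\left\Vert \partial _{t}\varphi \right\Vert _{H}^{2}+C\left\Vert \partial _{t}\varphi \right\Vert _{V^{\prime }}^{2}$. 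Hence, with $y\left( t\right) :=\left\Vert \partial _{t}\varphi \left( t\right) \right\Vert _{V^{\prime }}^{2}+\alpha \left\Vert \partial _{t}\varphi \left( t\right) \right\Vert _{H}^{2}$, we get $y^{\prime }\leq Cy$. By Step 1, $\int_{t}^{t+\tau }y\left( s\right) ds\leq C_{m}$ for all $t\geq 0$, so the uniform Gronwall lemma over intervals of length $\tau $ gives $y\left( t\right) \leq \left( C_{m}/\tau \right) e^{C\tau }$ for $t\geq \tau $; in particular $\alpha \left\Vert \partial _{t}\varphi \left( t\right) \right\Vert _{H}^{2}\leq C_{m,\tau }$ and, by Step 1 again, $\left\Vert \nabla \mu \left( t\right) \right\Vert _{H}\leq C\left\Vert \partial _{t}\varphi \left( t\right) \right\Vert _{H}\leq C_{m,\tau }$, with $C_{m,\tau }\sim 1+\tau ^{-n_{0}}$.

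\emph{Step 3 (recovering $\mu $ in $H^{2}$) and the main difficulty.} Testing the identity $\mu =a\varphi -J\ast \varphi +F^{\prime }\left( \varphi \right) +\alpha \partial _{t}\varphi $ with $\varphi \left( t\right) -\left\langle \varphi \left( t\right) \right\rangle $: the left side equals $\left( \mu -\left\langle \mu \right\rangle ,\varphi \right) \leq C\left\Vert \nabla \mu \right\Vert _{H}\left\Vert \varphi \right\Vert _{H}\leq C_{m,\tau }$, the term $\left( F^{\prime }\left( \varphi \right) ,\varphi -\left\langle \varphi \right\rangle \right) \geq c\left\Vert F^{\prime }\left( \varphi \right) \right\Vert _{L^{1}\left( \Omega \right) }-C$ by convexity of $F$ and $\left\vert \left\langle \varphi _{0}\right\rangle \right\vert \leq m<1$, while the remaining terms are $\leq C_{m,\tau }$ by Steps 1--2 (using $\left\langle \partial _{t}\varphi \right\rangle =0$); thus $\left\Vert F^{\prime }\left( \varphi \left( t\right) \right) \right\Vert _{L^{1}\left( \Omega \right) }\leq C_{m,\tau }$, whence $\left\vert \left\langle \mu \left( t\right) \right\rangle \right\vert \leq C_{m,\tau }$. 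Combining with $\left\Vert \mu -\left\langle \mu \right\rangle \right\Vert _{H^{2}\left( \Omega \right) }\leq C\left\Vert \partial _{t}\varphi \right\Vert _{H}$ (the only place $\Gamma \in \mathcal{C}^{2}$ is used) gives $\left\Vert \mu \left( t\right) \right\Vert _{H^{2}\left( \Omega \right) }\leq C_{m,\tau }$ for $t\geq \tau $; replacing $\tau $ by $\tau /2$ yields (\ref{chemp}). The main obstacle is the rigorous justification of Step 2, since a weak solution need not possess $\partial _{tt}\varphi $ and, for singular $F$, the quantity $\left( F^{\prime \prime }\left( \varphi \right) \partial _{t}\varphi ,\partial _{t}\varphi \right) $ need not even be finite a priori; this is handled by running the computation on the Galerkin scheme for $P_{\epsilon }$ (all smooth, $F_{\epsilon }^{\prime \prime }+a\geq c_{0}$ uniformly in $\epsilon $ by (\ref{lb6})), initializing the differential inequality at a time $t_{0}>0$ at which $y\left( t_{0}\right) $ is finite — such $t_{0}$ being furnished by the $L^{2}$-in-time bound of Step 1, which is precisely what forces the restriction $t\geq \tau $ and the $\tau ^{-n_{0}}$ blow-up of the constant — and passing to the limit. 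It is exactly here that $\alpha >0$ is indispensable: for $\alpha =0$ one controls only $\left\Vert \partial _{t}\varphi \right\Vert _{V^{\prime }}$, which is too weak to bootstrap $\mu $ to $H^{2}$.
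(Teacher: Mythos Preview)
Your proof is correct and follows essentially the same strategy as the paper: differentiate in time, test with $\mathcal{N}\partial_t\varphi$, use (H10) and (H1) to obtain the differential inequality for $y=\Vert\partial_t\varphi\Vert_{V'}^2+\alpha\Vert\partial_t\varphi\Vert_H^2$, feed the energy identity into the uniform Gronwall lemma, and then recover $\mu\in H^2$ by elliptic regularity together with a separate control of $\langle\mu\rangle$. Your handling of the justification (Galerkin level of $P_\epsilon$, constants uniform in $\epsilon$) and of where $\alpha>0$ and $\Gamma\in\mathcal{C}^2$ enter matches the paper's.

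The only visible difference is in Step~3. The paper tests the evolution equation with $\mathcal{N}\big(F'(\varphi)-\langle F'(\varphi)\rangle\big)$ to obtain first an $L^2$ bound on $F'(\varphi)-\langle F'(\varphi)\rangle$, and then invokes the argument of \cite[Proposition~A.2]{MZ} separately to bound $|\langle\mu\rangle|$. You instead test the identity for $\mu$ directly with $\varphi-\langle\varphi\rangle$ and use the MZ-type inequality $(F'(\varphi),\varphi-\langle\varphi\rangle)\geq c\Vert F'(\varphi)\Vert_{L^1}-C$ in one stroke. Your route is slightly more economical and delivers exactly what is needed for $|\langle\mu\rangle|$; the paper's detour yields the stronger piece of information $F'(\varphi)-\langle F'(\varphi)\rangle\in L^\infty_tL^2_x$, which is not actually exploited in (\ref{chemp}). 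One wording quibble: you justify that inequality ``by convexity of $F$'', but $F$ is double-well; the correct input is (H11) (sign of $F'$ near $\pm1$), $|\langle\varphi\rangle|\leq m<1$, and the boundedness of $F_2'$ on $[-1,1]$---precisely the content of \cite[Proposition~A.2]{MZ}.
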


\begin{proof}
To rigorously prove (\ref{chemp}), one has to employ the regularization
procedure introduced in Theorem \ref{well_sing} and to exploit the fact that
all the estimates below hold \emph{uniformly} in $\epsilon >0$ (we can also
employ a Faedo-Galerkin scheme for (\ref{approx}) to ensure that the
approximate solutions $\varphi _{\epsilon }$ are smooth enough).

To this end, set $\zeta :=\partial _{t}\varphi $ and note that $\left\langle
\zeta \left( t\right) \right\rangle =0$, for all $t\geq 0.$ According to (%
\ref{weakf}), the function $\zeta $ satisfies the following weak formulation:%
\begin{equation}
\left\langle \partial _{t}\zeta ,\psi \right\rangle +(\nabla \eta ,\nabla
\psi )=0,\text{ a.e. in }\left( 0,T\right) ,  \label{wd1}
\end{equation}%
for every $\psi \in V,$ where%
\begin{equation}
\eta =\left( a\left( x\right) +F^{\prime \prime }\left( \varphi \right)
\right) \zeta -J\ast \zeta +\alpha \partial _{t}\zeta ,\text{ a.e. in }%
\Omega \times \left( 0,T\right) .  \label{wd2}
\end{equation}%
As we mentioned above, we note that (\ref{wd1}) is actually intended to be
satisfied by a standard Galerkin approximation of $\varphi _{\varepsilon }$,
in which we should have at least $\partial _{t}\zeta \in L^{2}\left(
0,T;L^{2}\left( \Omega \right) \right) $. The required regularity in (\ref%
{chemp}) will be then obtained by passing to the limit in the subsequent
estimates. Thus, in what follows we shall proceed formally. Testing (\ref%
{wd1}) with $\psi =\mathcal{N}\zeta (=A_{N}^{-1}\zeta )$, then integrating
by parts, we obtain%
\begin{align*}
\frac{1}{2}\frac{d}{dt}\left\Vert \zeta \right\Vert _{V^{\prime }}^{2}&
=-\left( \eta -\left\langle \eta \right\rangle ,\zeta \right) _{H} \\
& =-\left( a\left( x\right) +F^{\prime \prime }\left( \varphi \right) ,\zeta
^{2}\right) _{H}+\left( J\ast \zeta ,\zeta \right) _{H}-\alpha \left(
\partial _{t}\zeta ,\zeta \right) _{H},
\end{align*}%
which yields, thanks to assumptions (H1) and (H10),
\begin{equation}
\frac{d}{dt}\left( \left\Vert \zeta \right\Vert _{V^{\prime }}^{2}+\alpha
\left\Vert \zeta \right\Vert _{H}^{2}\right) \leq C\left\Vert \zeta
\right\Vert _{H}^{2},  \label{ests1}
\end{equation}%
for some positive constant $C$ which depends only on $c_{0}$ and $J$. Thus,
using this inequality and exploiting the basic energy identity (\ref{idesing}%
), we have%
\begin{equation}
\sup_{t\geq 0}\int_{t}^{t+1}\left( \left\Vert \zeta \left( s\right)
\right\Vert _{H}^{2}+\left\Vert \mu \left( s\right) \right\Vert
_{V}^{2}\right) ds\leq C_{\alpha }\sim \alpha ^{-1}.  \label{ests2}
\end{equation}%
Thus, in view of the uniform Gronwall's lemma, we infer
\begin{equation}
\sup_{t\geq \tau }\left( \left\Vert \zeta \right\Vert _{V^{\prime
}}^{2}+\alpha \left\Vert \zeta \right\Vert _{H}^{2}\right) \leq \mathcal{C}=%
\mathcal{C}_{\alpha ,\tau }.  \label{wip}
\end{equation}%
From this point on, the constant $\mathcal{C}$ will always denote a
computable quantity whose expression is allowed to vary on occurrence,
depending on the initial data, on $\alpha ^{-1}>0,$ and on the other fixed
parameters of the system. We shall again point out its dependence on various
parameters whenever necessary. Therefore, by comparison in (\ref{nlch}) and
on account of (\ref{wip}), we deduce that
\begin{equation}
\sup_{t\geq \tau }\left\Vert \Delta \mu \left( t\right) \right\Vert
_{H}^{2}\leq \mathcal{C}_{\alpha ,\tau }.  \label{ests5}
\end{equation}%
Next, let us test (\ref{weakf}) by $\psi =\mathcal{N}\left( F^{\prime
}\left( \varphi \right) -\left\langle F^{\prime }\left( \varphi \right)
\right\rangle \right) $ to obtain
\begin{equation*}
\left\langle F^{\prime }\left( \varphi \right) -\left\langle F^{\prime
}\left( \varphi \right) \right\rangle ,\mathcal{N}\partial _{t}\varphi
\right\rangle =-\left\langle \mu ,F^{\prime }\left( \varphi \right)
-\left\langle F^{\prime }\left( \varphi \right) \right\rangle \right\rangle .
\end{equation*}%
Then note that%
\begin{align*}
\left\langle \mu ,F^{\prime }\left( \varphi \right) -\left\langle F^{\prime
}\left( \varphi \right) \right\rangle \right\rangle & =\left\langle a\varphi
-J\ast \varphi +F^{\prime }(\varphi )-\left\langle F^{\prime }\left( \varphi
\right) \right\rangle ,F^{\prime }(\varphi )-\left\langle F^{\prime }\left(
\varphi \right) \right\rangle \right\rangle \\
& \geq \frac{1}{2}\left\Vert F^{\prime }\left( \varphi \right) -\left\langle
F^{\prime }\left( \varphi \right) \right\rangle \right\Vert _{H}^{2}-\frac{1%
}{2}\Vert a\varphi -J\ast \varphi \Vert _{H}^{2} \\
& \geq \frac{1}{2}\left\Vert F^{\prime }\left( \varphi \right) -\left\langle
F^{\prime }\left( \varphi \right) \right\rangle \right\Vert
_{H}^{2}-C_{J}\Vert \varphi \Vert _{H}^{2}.
\end{align*}%
Therefore, on account of (\ref{idesing}) and (\ref{wip}), the above estimate
allows us to infer
\begin{equation*}
\left\Vert F^{\prime }\left( \varphi \right) -\left\langle F^{\prime }\left(
\varphi \right) \right\rangle \right\Vert _{H}\leq C(\Vert \mathcal{%
N\partial }_{t}\varphi \Vert _{H}+1)\leq \mathcal{C}_{\alpha ,\tau }.
\end{equation*}%
We can now easily argue as in the proof of Theorem \ref{well_sing}, see (\ref%
{est1}) (cf. \cite[Section 3, Theorem 1]{FG1} and \cite[Proposition A.2]{MZ}%
) to deduce%
\begin{equation*}
\sup_{t\geq \tau }\left\vert \left\langle \mu \left( t\right) \right\rangle
\right\vert ^{2}\leq \mathcal{C}_{\alpha ,\tau }.
\end{equation*}%
This estimate together with (\ref{wip}) and (\ref{ests5}) yield the desired
inequality (\ref{chemp}).
\end{proof}

\begin{remark}
\label{noreg}The assumption on $\Gamma \in \mathcal{C}^{2}$ can be dispensed
with so that the result below in Lemma \ref{sep} also holds for bounded
domains with \emph{Lipschitz} boundary $\Gamma $. Indeed, on account of
known elliptic regularity theory (cf., e.g., \cite{Da}) for problem (\ref%
{nlch}), (\ref{bc}), we can deduce that $\mu \left( t\right) \in L^{\infty
}\left( [\tau ,\infty );H^{1+\gamma }\left( \Omega \right) \right) ,$ for
any $\gamma \in \left( \frac{1}{2},1\right) $. Note that we cannot take $%
\gamma =1$ without further assumptions on $\Gamma $ (see \cite{Da}). Since $%
\Omega \subset \mathbb{R}^{d}$, $d\leq 3$, we have $H^{1+\gamma }\left(
\Omega \right) \subset L^{\infty }\left( \Omega \right) $ in the range
provided for $\gamma $ and the argument below in (3.36) still applies. Thus,
we can conclude the validity of Lemma \ref{sep} in the case of a bounded
domains with Lipschitz boundary as well.
\end{remark}

We now show the separation property. The restriction $\alpha >0$ allows us
to apply a comparison argument. Unfortunately, these bounds are not uniform
as $\alpha \rightarrow 0^{+}$.

\begin{lemma}
\label{sep}Let the assumptions of Theorem \ref{expo_2} be satisfied and let $%
\alpha >0$. Let $\left\Vert \varphi _{0}\right\Vert _{L^{\infty }\left(
\Omega \right) }\leq 1-\overline{\delta }$, for some $\overline{\delta }>0$.
Then, the solution $\varphi \left( t\right)=S(t)\varphi_0 $ is
instantaneously bounded, i.e., for every $\tau >0,$ we have
\begin{equation}
\sup_{t\geq \tau }\left\Vert \varphi \left( t\right) \right\Vert _{L^{\infty
}\left( \Omega \right) }\leq 1-\delta _{\alpha ,\tau ,\overline{\delta }%
}\left( \left\Vert \varphi _{0}\right\Vert _{\mathcal{Y}_{m}}\right) ,
\label{sing_h1}
\end{equation}%
where the constant $\delta _{\alpha ,\tau ,\overline{\delta }}>0$ depends on
$\alpha ^{-1},$ $\tau ,$ $\overline{\delta }$ and the initial data $\varphi
_{0}$ in $\mathcal{Y}_{m}.$ Moreover, there exists a time $t_{0}=t_{0}\left(
\left\Vert \varphi _{0}\right\Vert _{\mathcal{Y}_{m}}\right) >0$, depending
on the initial data, and there are constants $C_{\alpha }^{\prime},\delta
_{\alpha }>0,$ independent of the initial data, such that
\begin{equation}
\sup_{t\geq t_{0}}\left\Vert \varphi \left( t\right) \right\Vert _{L^{\infty
}\left( \Omega \right) }\leq 1-\delta _{\alpha }.  \label{sing_h2}
\end{equation}%
In particular, the separation property $\left\Vert F^\prime\left( \varphi
\left( t\right) \right) \right\Vert _{L^{\infty }\left( \Omega \right) }\leq
C_{\alpha }^{\prime}$ holds for all $t\geq t_{0}.$
\end{lemma}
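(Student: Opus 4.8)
The plan is to exploit the special structure of the \emph{nonlocal} chemical potential: since $\mu$ contains no spatial derivatives of $\varphi$, the relation \eqref{chem} can be read, for a.e.\ fixed $x\in\Omega$, as a scalar ODE in time for $y(t):=\varphi(x,t)$,
\begin{equation*}
\alpha\,y'(t)+F'\big(y(t)\big)=\mu(x,t)-a(x)y(t)+(J\ast\varphi)(x,t)=:g(x,t),
\end{equation*}
where $|y(t)|<1$ by \eqref{re6}. The crucial point is that, thanks to Lemma \ref{chem_lemma} and the embedding $H^{2}(\Omega)\hookrightarrow L^{\infty}(\Omega)$ (valid since $d\leq 3$; see also Remark \ref{noreg} for the Lipschitz case) together with (H1), $|\varphi|\leq 1$ and $a\in L^{\infty}(\Omega)$, the forcing $g$ is bounded: for every $\tau_{0}>0$ there is $M=M(m,\tau_{0},\alpha,\mathcal{E}(\varphi_{0}))$ with $\|g\|_{L^{\infty}([\tau_{0},\infty)\times\Omega)}\leq M$. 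As in Theorem \ref{well_sing} and Lemma \ref{chem_lemma}, all of this is to be understood at the level of the regularized problem $P_{\epsilon}$ (equivalently, of its Faedo--Galerkin approximations), where $\varphi_{\epsilon}$ is smooth enough for $t\mapsto\varphi_{\epsilon}(x,t)$ to solve the corresponding ODE pointwise; the estimates below being uniform in $\epsilon$, the conclusions pass to the limit.

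The second step is a one-sided comparison (barrier) argument. Fix $M$ and consider the autonomous scalar ODE $\alpha Y'=M-F'(Y)$ on $(-1,1)$, whose right-hand side is locally Lipschitz since $F\in C^{2}(-1,1)$. By (H11), $F'(r)\to+\infty$ as $r\to 1^{-}$, so $F'(r)-M\geq 1$ for $r$ near $1$ and hence $\int_{1-\eta}^{1}\frac{\alpha\,dr}{F'(r)-M}<\infty$ for $\eta$ small. Therefore there is a maximal solution $\overline{Y}(\cdot)$ of this ODE, defined for small $\sigma>0$ by $\int_{\overline{Y}(\sigma)}^{1}\frac{\alpha\,dr}{F'(r)-M}=\sigma$, which is non-increasing, satisfies $\overline{Y}(0^{+})=1$ and $\overline{Y}(\sigma)<1$ for every $\sigma>0$. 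Since $\alpha\partial_{t}\varphi=g-F'(\varphi)\leq M-F'(\varphi)$ and $\varphi(x,\tau_{0})<1$, the standard ODE comparison (sub-solution versus the maximal solution) gives
\begin{equation*}
\varphi(x,t)\leq \overline{Y}(t-\tau_{0}),\qquad \text{a.e. }x\in\Omega,\ t\geq\tau_{0}.
\end{equation*}
An entirely symmetric argument near $r=-1$ (using $F'(r)\to-\infty$) produces a lower barrier $\underline{Y}(t-\tau_{0})>-1$. This already yields a separation from $\pm 1$ that is uniform in $x$ and in the initial datum, \emph{once $M$ is fixed}.

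It remains to convert this into the two statements. For \eqref{sing_h1}, given $\tau>0$ apply Lemma \ref{chem_lemma} with initial time $\tau/2$ to get $M=M(m,\tau,\alpha,\mathcal{E}(\varphi_{0}))$; since $\|\varphi_{0}\|_{L^{\infty}}\leq 1-\overline{\delta}$ controls $\mathcal{E}(\varphi_{0})$, we may write $M=M(m,\tau,\alpha,\overline{\delta})$. Then for $t\geq\tau$ one has $t-\tau/2\geq\tau/2$ and, using monotonicity of the barriers,
\begin{equation*}
\sup_{t\geq\tau}\|\varphi(t)\|_{L^{\infty}(\Omega)}\leq\max\big\{\overline{Y}(\tau/2),\,-\underline{Y}(\tau/2)\big\}=:1-\delta_{\alpha,\tau,\overline{\delta}}<1,
\end{equation*}
with $\delta_{\alpha,\tau,\overline{\delta}}>0$ depending on $\alpha^{-1},\tau,\overline{\delta}$ and $\varphi_{0}$. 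For \eqref{sing_h2}, the dissipative estimate \eqref{dissi} provides $t_{*}=t_{*}(\|\varphi_{0}\|_{\mathcal{Y}_{m}})$ with $\mathcal{E}(\varphi(t))\leq C(m)$ for $t\geq t_{*}$; applying Lemma \ref{chem_lemma} and the barrier argument with initial time $t_{*}+1$ now yields $M$, $\overline{Y}$, $\underline{Y}$ \emph{independent of the initial datum}, so that for $t\geq t_{0}:=t_{*}+2$ we obtain $\|\varphi(t)\|_{L^{\infty}(\Omega)}\leq\max\{\overline{Y}(1),-\underline{Y}(1)\}=:1-\delta_{\alpha}<1$, with $\delta_{\alpha}>0$ depending only on $\alpha^{-1}$ and the structural data. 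Finally, since $F'$ is continuous, hence bounded, on $[-1+\delta_{\alpha},1-\delta_{\alpha}]$ and $\varphi(x,t)$ takes values there for $t\geq t_{0}$, the separation property $\|F'(\varphi(t))\|_{L^{\infty}(\Omega)}\leq C'_{\alpha}$ follows.

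I expect the main obstacle to be the rigorous justification of the pointwise comparison through the approximation scheme: one must produce, for the regularized potentials $F_{\epsilon}$ of polynomial growth, barriers $\overline{Y}_{\epsilon},\underline{Y}_{\epsilon}$ bounded away from $\pm 1$ \emph{uniformly in $\epsilon$}, which rests on $F_{\epsilon}'\to F'$ uniformly on compact subsets of $(-1,1)$ together with the monotonicity of $F_{1\epsilon}'$ near the endpoints built into its construction (H8)--(H9). The other point to keep in mind is that the whole construction degenerates as $\alpha\to 0^{+}$: $M$ scales like $\alpha^{-1}$ (Lemma \ref{chem_lemma}) and, through the relation $F'(1-\delta)\approx M$, the separation constant deteriorates accordingly — which is exactly why the statement is restricted to $\alpha>0$ and no bound uniform in $\alpha$ is claimed.
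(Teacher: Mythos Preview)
Your proposal is correct and follows essentially the same route as the paper: rewrite \eqref{chem} as a pointwise scalar ODE in $t$, bound the forcing term in $L^{\infty}$ via Lemma~\ref{chem_lemma} and the embedding $H^{2}\hookrightarrow L^{\infty}$, then apply an ODE comparison/barrier argument (the paper cites \cite[Proposition~A.3 and Corollary~A.1]{MZ} rather than writing the barriers $\overline{Y},\underline{Y}$ explicitly as you do, but the content is the same). One small correction: for the uniform part you invoke \eqref{dissi}, which is the dissipative estimate for the \emph{nonviscous regular} case of Section~2; in the present viscous singular setting the analogous estimate is not available off the shelf, and the paper derives it (see \eqref{dis9}) within the proof by testing $\mu$ against $\varphi$ and combining with the energy identity \eqref{idesing}---you should do the same rather than cite \eqref{dissi}.
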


\begin{proof}
\emph{Step 1.} To prove the instantaneous boundedness (\ref{sing_h1}), we
rewrite equation (\ref{chem}) as a first-order ordinary differential
equation:%
\begin{equation}
\alpha \partial _{t}\varphi +F^{\prime }\left( \varphi \right) +a\left(
x\right) \varphi =\mu +J\ast \varphi =:h_{\mu ,\varphi }.  \label{ODE}
\end{equation}%
Recall that (\ref{ODE}) is also subject to the initial condition%
\begin{equation}
\varphi \left( 0\right) =\varphi _{0},\;\text{ with }\left\vert \varphi
_{0}\right\vert <1,\;\text{ a.e. in }\Omega ,  \label{ODEic}
\end{equation}%
and that we have (cf. Theorem \ref{well_sing})
\begin{equation}
\left\vert \varphi \left( t\right) \right\vert <1, \quad \text{ a.e. in }
Q_{+}= \mathbb{R}_{+}\times \Omega .  \label{lp*}
\end{equation}

Next, according to estimate (\ref{chemp}) and using the embedding $%
H^{2}\left( \Omega \right) \subset L^{\infty }\left( \Omega \right) $, we
have%
\begin{equation}
\sup_{t\geq \tau }\left\Vert \mu \left( t\right) \right\Vert _{L^{\infty
}\left( \Omega \right) }\leq \mathcal{C=C}_{\tau ,\alpha },
\label{chem_bound}
\end{equation}%
with an appropriate positive constant $\mathcal{C}_{\tau ,\alpha }.$
Moreover using (\ref{lp*}) we readily obtain%
\begin{equation}
\sup_{t\geq \tau }\left\Vert \left( J\ast \varphi \right) \left( t\right)
\right\Vert _{L^{\infty }\left( \Omega \right) }\leq C_{\alpha ,\tau ,J},
\label{convlinf}
\end{equation}%
for every $\tau >0,$ which in light of (\ref{chem_bound}) and (\ref{convlinf}%
), yields%
\begin{equation*}
\sup_{t\geq \tau }\left\Vert h_{\mu ,\varphi }\left( t\right) \right\Vert
_{L^{\infty }\left( \Omega \right) }\leq C_{\alpha ,\tau }.
\end{equation*}%
Therefore, on account of assumptions (H10)-(H11), bound (\ref{sing_h1})
follows from the application of the comparison principle (see, e.g, \cite[%
Proposition A.3]{MZ}) to (\ref{ODE})-(\ref{ODEic}) on $\left[ \tau ,\infty
\right) $.

\emph{Step 2. }In order to deduce the uniform estimate (\ref{sing_h2}) we
shall first derive the following dissipative estimate:%
\begin{equation}  \label{dis9}
\mathcal{E}\left( \varphi \left( t\right) \right) +\int_{t}^{t+1}\left(
\left\Vert \nabla \mu \left( s\right) \right\Vert _{H}^{2}+\alpha \left\Vert
\partial _{t}\varphi \left( s\right) \right\Vert _{H}^{2}\right) ds \leq
\mathcal{E}\left( \varphi _{0}\right) e^{-\kappa t}+C_{m},
\end{equation}%
for all $t\geq 0$, for some positive constant $C_{m}$ independent of the
initial data, time and $\alpha \geq 0$, but which depends on $m\in \left(
0,1\right) $ such that $\left\vert \left\langle \varphi _{0}\right\rangle
\right\vert \leq m.$ The proof of (\ref{dis9}) follows the same lines of
\cite[Proposition 2]{FG1} and \cite[Corollary 2]{CFG}. We briefly mention
some details. Let us thus multiply equation $\mu =a\varphi -J\ast \varphi
+F^{\prime }(\varphi )+\alpha \partial _{t}\varphi $ by $\varphi $ in $%
L^{2}(\Omega )$ and integrate over $\Omega $. We obtain
\begin{equation}
\left\langle \mu , \varphi \right\rangle = \frac{1}{4} \int_{\Omega
}\int_{\Omega }J(x-y)(\varphi (x)-\varphi (y))^{2}dxdy+\left\langle
F^{\prime }(\varphi),\varphi \right\rangle +\frac{\alpha }{2}\frac{d}{dt}%
\left\Vert \varphi \right\Vert _{H}^{2}.  \label{muphi}
\end{equation}%
Observe now that, due to the singular character of $F^{\prime }$, we can
find $C_{F}>0$ such that
\begin{equation}
F^{\prime }(s) s\geq F( s) -C_{F},\quad \forall s\in (-1,1),  \label{lbbb}
\end{equation}%
Then, using (\ref{lbbb}), we obtain
\begin{eqnarray}
&&\langle\mu,\varphi\rangle\geq \frac{1}{4} \int_{\Omega }\int_{\Omega
}J(x-y)(\varphi (x)-\varphi (y))^{2}dxdy+\int_{\Omega }F(\varphi (t))dx
\label{muphi2} \\
&&-C_{F}|\Omega |+\frac{\alpha }{2}\frac{d}{dt}\left\Vert \varphi
\right\Vert _{H}^{2}.  \notag
\end{eqnarray}%
We also have (note that $\left\langle \partial _{t}\varphi \right\rangle =0$%
)
\begin{equation*}
\langle\mu ,\varphi \rangle=\left\langle \mu -\left\langle \mu \right\rangle
,\varphi \right\rangle +\left\langle \mu \right\rangle \left\vert \Omega
\right\vert \left\langle \varphi \right\rangle \leq c_{\Omega }\Vert \nabla
\mu \Vert _{H}\Vert \varphi \Vert _{H}+c_{m},
\end{equation*}%
and then, by means of (\ref{lbbb}), from \eqref{muphi2} we have
\begin{align*}
& \frac{1}{8}\int_{\Omega }\int_{\Omega }J(x-y)(\varphi (x)-\varphi
(y))^{2}dxdy+\frac{1}{2}\int_{\Omega }F(\varphi )dx \\
& +\frac{c}{2}\int_{\Omega }|\varphi |^{2+2q}dx-c+\frac{\alpha }{4}\frac{d}{%
dt}\left\Vert \varphi \right\Vert _{H}^{2} \\
& \leq \Vert \nabla \mu \Vert _{H}^{2}+\frac{c_{\Omega }^{2}}{2}\Vert
\varphi \Vert _{H}^{2}+c_{m},
\end{align*}%
for appropriate constants $c_{m},c>0$, independent of the initial data, time
and $\alpha $. Therefore, we deduce
\begin{align}
& \frac{1}{8}\int_{\Omega }\int_{\Omega }J(x-y)(\varphi (x)-\varphi
(y))^{2}dxdy+\frac{1}{2}\int_{\Omega }F(\varphi )dx+\frac{\alpha }{4}\frac{d%
}{dt}\left\Vert \varphi \right\Vert _{H}^{2}  \label{hr} \\
& \leq \Vert \nabla \mu \Vert _{H}^{2}+c_{m}  \notag
\end{align}%
and, hence, by virtue of (\ref{idesing}) and (\ref{hr}), we get
\begin{equation}
\frac{d}{dt}\left( \mathcal{E}(\varphi )+\alpha \left\Vert \varphi
\right\Vert _{H}^{2}\right) +c\mathcal{E}(\varphi )+\Vert \nabla \mu \Vert
_{H}^{2}+\alpha \left\Vert \partial _{t}\varphi \right\Vert _{H}^{2}\leq
c_{m},  \label{dis9bis}
\end{equation}%
for all $t\geq 0$. By means of Gronwall's lemma we thus easily infer (\ref%
{dis9}) from (\ref{dis9bis}). From (\ref{dis9}), we can now find a time $%
t_{\#}=t_{\#}\left( \mathcal{E}(\varphi _{0})\right) >0$ such that%
\begin{equation}
\sup_{t\geq t_{\#}}\left[ \mathcal{E}(\varphi \left( t\right)
)+\int_{t}^{t+1}\Vert \nabla \mu \left( s\right) \Vert _{H}^{2}+\alpha
\left\Vert \partial _{t}\varphi \left( s\right) \right\Vert _{H}^{2}ds\right]
\leq R_{m}^{\#},  \label{babs}
\end{equation}%
for some $R_{m}^{\#}>0$, independent of $t$, $\alpha $ and the initial data.
With estimate (\ref{babs}) at hand, we can now argue as in the proof of
Lemma \ref{chem_lemma} to get the bound:
\begin{equation}
\sup_{t\geq t_{\#}+1}\left( \left\Vert \mu \left( t\right) \right\Vert
_{H^{2}\left( \Omega \right) }+\alpha \left\Vert \partial _{t}\varphi \left(
t\right) \right\Vert _{H}\right) \leq R_{\alpha ,m},  \label{babs2}
\end{equation}%
for some positive constant $R_{\alpha ,m}$ which depends on $R_{m}^{\#}$ and
$\alpha ^{-1}>0$ only. Finally using (\ref{babs2}) and then arguing as in
\emph{Step 1} above, we can easily arrive at the following inequality:%
\begin{equation}
\sup_{t\geq t_{0}}\left\Vert h_{\mu ,\varphi }\left( t\right) \right\Vert
_{L^{\infty }\left( \Omega \right) }\leq \mathcal{R}_{\alpha .m},
\label{babs3}
\end{equation}%
for some positive constant $\mathcal{R}_{\alpha ,m}$ which only depends on $%
R_{\alpha ,m}$, $q$, and the other fixed parameters of the problem. Here $%
t_{0}$ depends on $t_{\#}$. Thus, on account of (\ref{babs3}), we can once
again apply the comparison principle (see, e.g., \cite[Corollary A.1]{MZ})
to (\ref{ODE})-(\ref{ODEic}), to deduce the existence of a positive constant
$\delta _{1}=\delta _{1}\left( \mathcal{R}_{\alpha ,m}\right) $ which is
independent of $\varphi _{0}$ and time, such that $\left\vert \varphi \left(
t\right) \right\vert \leq 1-\delta _{1}$, a.e. in $\Omega ,$ for all $t\geq
t_{0}$. Inequality (\ref{sing_h2}) is now proven.
\end{proof}

In what follows, we derive as in Section 2.2 some basic properties of $%
S\left( t\right) $ which will be useful to establish the existence of an
exponential attractor. The following proposition, whose proof goes as in
Lemma \ref{dec}, is immediate (see Lemma \ref{unising}).

\begin{proposition}
\label{uniq2} Let the assumptions of Lemma \ref{unising} hold. Then we have
\begin{align}
& \left( \left\Vert \varphi _{1}\left( t\right) -\varphi _{2}\left( t\right)
\right\Vert _{V^{\prime }}^{2}+\alpha \left\Vert \varphi _{1}\left( t\right)
-\varphi _{2}\left( t\right) \right\Vert _{H}^{2}+C\left\vert
M_{1}-M_{2}\right\vert \right) \\
& \leq \left( \left\Vert \varphi _{1}\left( 0\right) -\varphi _{2}\left(
0\right) \right\Vert _{V^{\prime }}^{2}+\alpha \left\Vert \varphi _{1}\left(
0\right) -\varphi _{2}\left( 0\right) \right\Vert _{H}^{2}+\left\vert
M_{1}-M_{2}\right\vert \right) e^{-\beta t}  \notag \\
& +C\int_{0}^{t}\left( \left\Vert \varphi _{1}\left( s\right) -\varphi
_{2}\left( s\right) \right\Vert _{V^{\prime }}^{2}+\left\vert
M_{1}-M_{2}\right\vert \right) ds,  \notag
\end{align}%
for all $t\geq 0$, where $M_{i}:=\left\langle \varphi _{i}\left( 0\right)
\right\rangle $, for some positive constants $\beta $, $C$ which depend on $%
c_{0}$ and $J$, but are independent of $\alpha .$
\end{proposition}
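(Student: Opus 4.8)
The plan is to transcribe the proof of Lemma~\ref{dec}, carrying the viscous term $\alpha\partial_{t}\varphi$ through exactly as was done in Lemma~\ref{unising}. Set $\varphi:=\varphi_{1}-\varphi_{2}$; then $\varphi$ solves \eqref{d1sing}--\eqref{d2sing}, and since $\left\langle \partial_{t}\varphi(t)\right\rangle\equiv 0$ one has $\left\langle \varphi(t)\right\rangle\equiv M_{1}-M_{2}$ and $\left\langle \overline{\mu}(t)\right\rangle=\left\langle \mu_{1}(t)\right\rangle-\left\langle \mu_{2}(t)\right\rangle$ for all $t\geq 0$. As usual, the formal computations below are to be carried out on the regularized/Galerkin approximations of Theorem~\ref{well_sing} and passed to the limit; since only the monotonicity constant $c_{0}$ of (H10) and the kernel $J$ enter them, all constants stay independent of $\alpha\geq 0$.

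First I would reproduce the differential inequality \eqref{diff_sing2}: testing $\partial_{t}\varphi=\Delta\overline{\mu}$ with $A_{N}^{-1}(\varphi-\left\langle \varphi\right\rangle)$ and integrating by parts via \eqref{rel}, the viscous term contributes $\alpha\frac{d}{dt}\|\varphi\|_{H}^{2}$, the monotonicity bound $a(x)+F''\geq c_{0}$ (applied to $F^{\prime}(\varphi_{1})-F^{\prime}(\varphi_{2})=\big(\int_{0}^{1}F''(\varphi_{2}+\theta(\varphi_{1}-\varphi_{2}))\,d\theta\big)(\varphi_{1}-\varphi_{2})$, which is precisely where (H10), resp.\ property (ii) of the approximating potential, is used) produces the coercive term $2c_{0}\|\varphi\|_{H}^{2}$, the convolution term is absorbed by $|(J\ast\varphi,\varphi)|\leq\frac{c_{0}}{2}\|\varphi\|_{H}^{2}+C\|\varphi\|_{V^{\prime}}^{2}$, and the mean term is bounded by $2|\left\langle \overline{\mu}(t)\right\rangle|\,|\Omega|\,|M_{1}-M_{2}|$, giving
\begin{equation*}
\frac{d}{dt}\big(\|\varphi(t)\|_{V^{\prime}}^{2}+\alpha\|\varphi(t)\|_{H}^{2}\big)+2c_{0}\|\varphi(t)\|_{H}^{2}\leq c_{0}\|\varphi(t)\|_{H}^{2}+\kappa\|\varphi(t)\|_{V^{\prime}}^{2}+C|\left\langle \overline{\mu}(t)\right\rangle|\,|M_{1}-M_{2}|.
\end{equation*}
Setting $Y(t):=\|\varphi(t)\|_{V^{\prime}}^{2}+\alpha\|\varphi(t)\|_{H}^{2}+C|M_{1}-M_{2}|$ and using the Poincar\'{e}--Wirtinger inequality $\|A_{N}^{-1/2}(\varphi-\left\langle \varphi\right\rangle)\|_{H}^{2}+\left\langle \varphi\right\rangle^{2}\leq C_{\Omega}\|\varphi\|_{H}^{2}$ to convert the surviving term $c_{0}\|\varphi\|_{H}^{2}$ into $\beta Y(t)$ (for a suitably small $\beta>0$ depending only on $c_{0}$ and $C_{\Omega}$, at the cost of an extra multiple of $|M_{1}-M_{2}|$ on the right), one reaches $\frac{d}{dt}Y(t)+\beta Y(t)\leq C\|\varphi(t)\|_{V^{\prime}}^{2}+C(|\left\langle \overline{\mu}(t)\right\rangle|+1)|M_{1}-M_{2}|$ for all $t\geq 0$.

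The remaining ingredient is a bound on $\left\langle \overline{\mu}\right\rangle$, obtained exactly as in Lemma~\ref{unising}: since $\left\langle \mu_{i}(t)\right\rangle=\left\langle a\varphi_{i}-J\ast\varphi_{i}+F^{\prime}(\varphi_{i})\right\rangle$ and $\|a\varphi_{i}\|_{L^{1}}$, $\|J\ast\varphi_{i}\|_{L^{1}}$ are controlled by $|\varphi_{i}|<1$, it reduces to controlling $\|F^{\prime}(\varphi_{i}(t))\|_{L^{1}(\Omega)}$; the energy identity \eqref{idesing} yields $F^{\prime}(\varphi_{i})\in L^{\infty}(0,T;L^{1}(\Omega))$ with norm independent of $\alpha$ (same argument as in Theorem~\ref{well_sing}, cf.\ \cite[Proposition~A.2]{MZ}), hence the time-integrated estimate $\int_{0}^{t}|\left\langle \overline{\mu}(s)\right\rangle|\,ds\leq C_{m}(1+t)$ as in \eqref{chem_l1}. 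Applying Gronwall's inequality to the last differential inequality and estimating $|M_{1}-M_{2}|\int_{0}^{t}e^{-\beta(t-s)}(|\left\langle \overline{\mu}(s)\right\rangle|+1)\,ds$ by $C_{m}(1+t)|M_{1}-M_{2}|\leq C_{m}|M_{1}-M_{2}|+C_{m}\int_{0}^{t}|M_{1}-M_{2}|\,ds$ then gives the asserted estimate after re-grouping terms. Since Lemma~\ref{dec} already supplies the dissipative structure and Lemma~\ref{unising} the computation, the proof is essentially routine; the only point requiring care is that keeping every constant free of $\alpha^{-1}$ forces one to use the integrated bound \eqref{chem_l1} rather than the $\alpha^{-1}$-dependent $H^{2}$-estimate of Lemma~\ref{chem_lemma}.
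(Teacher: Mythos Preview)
Your proof is correct and follows exactly the route the paper indicates: it merely says the argument ``goes as in Lemma~\ref{dec}'' using the differential inequality \eqref{diff_sing2} from Lemma~\ref{unising}, and you have written out those details faithfully.

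One small remark: your closing comment that avoiding $\alpha^{-1}$ \emph{forces} the use of the integrated bound \eqref{chem_l1} is not quite accurate. You yourself note that $F'(\varphi_i)\in L^\infty(0,T;L^1(\Omega))$ with an $\alpha$-independent bound; together with $|\varphi_i|<1$ this already gives the \emph{pointwise} estimate $\sup_{t\geq 0}|\langle\overline{\mu}(t)\rangle|\leq C_m$ with no reference to Lemma~\ref{chem_lemma}. This is in fact simpler than the regular-potential case (where one had to wait until $t\geq 3\tau$ for \eqref{chem_sp}), and it lets you skip the detour through $\int_0^t|\langle\overline{\mu}(s)\rangle|\,ds\leq C_m(1+t)$ and the subsequent regrouping. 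Either way the conclusion is the same.
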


The following one is also straightforward.

\begin{proposition}
\label{uniq3} Let the assumptions of Lemma \ref{unising} be satisfied. Then,
for every $\tau >0$, the following estimate holds:%
\begin{align}
& \left\Vert \partial _{t}\varphi _{1}-\partial _{t}\varphi _{2}\right\Vert
_{L^{2}(\left[ \tau ,t\right] ;D\left( A_{N}\right) ^{^{\prime
}})}^{2}+c_{0}\int_{0}^{t}\left\Vert \varphi _{1}\left( s\right) -\varphi
_{2}\left( s\right) \right\Vert _{H}^{2}ds  \label{comp1bis} \\
& \leq C_{m,\tau }e^{\kappa t}\left( \left\Vert \varphi _{1}\left( 0\right)
-\varphi _{2}\left( 0\right) \right\Vert _{V^{\prime }}^{2}+\alpha
\left\Vert \varphi _{1}\left( 0\right) -\varphi _{2}\left( 0\right)
\right\Vert _{H}^{2}+\left\vert M_{1}-M_{2}\right\vert \right) ,  \notag
\end{align}%
for all $t\geq \tau $, where $C_{m,\tau }$ and $\kappa >0$ also depend on $%
c_{0},$ $\Omega ,$ $\alpha >0$ and $J.$
\end{proposition}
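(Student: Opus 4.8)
The plan is to mimic the proof of Lemma~\ref{lipdif}, the only genuinely new feature being the viscous term $\alpha\partial_t\varphi$ that enters the chemical potential of the difference equation. Set $\varphi:=\varphi_1-\varphi_2$, so that $\varphi$ solves \eqref{d1sing}--\eqref{d2sing}, $\langle\varphi(t)\rangle\equiv M_1-M_2$ and $\langle\partial_t\varphi(t)\rangle\equiv 0$. I first note that the term $c_0\int_0^t\|\varphi(s)\|_H^2\,ds$ on the left of \eqref{comp1bis} is not really extra information: it is produced, together with \eqref{Lipschitz_sing}, by the differential inequality \eqref{diff_sing2} in the proof of Lemma~\ref{unising}, which upon integration in time, the use of \eqref{chem_l1} and Gronwall's lemma already yields $c_0\int_0^t\|\varphi(s)\|_H^2\,ds\leq C(\|\varphi(0)\|_{V'}^2+\alpha\|\varphi(0)\|_H^2+|M_1-M_2|)e^{\kappa t}$. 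So the real task is the bound on $\|\partial_t\varphi\|_{L^2([\tau,t];D(A_N)')}$.

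For that I would fix $\tau>0$ and use, for $t\geq\tau$, the uniform information already at hand: by Lemma~\ref{chem_lemma}, $\|\mu_i(t)\|_{H^2(\Omega)}+\alpha\|\partial_t\varphi_i(t)\|_H\leq C_{m,\tau}$, and by the separation property of Lemma~\ref{sep} (valid uniformly on the semi-invariant absorbing set on which the exponential attractor will be built, or for $t\geq t_0$) we have $\|F'(\varphi_i(t))\|_{L^\infty(\Omega)}+\|F''(\varphi_i(t))\|_{L^\infty(\Omega)}\leq C_{m,\tau}$; since the segment joining $\varphi_1(x,t)$ to $\varphi_2(x,t)$ stays inside the separation interval, this gives $|F'(\varphi_1)-F'(\varphi_2)|\leq C_{m,\tau}|\varphi|$ a.e., i.e.\ $F'$ behaves as a globally Lipschitz nonlinearity along the two trajectories. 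Then, working on smooth (Faedo--Galerkin/regularized) approximations and passing to the limit exactly as in the proof of Lemma~\ref{chem_lemma}, I would test the weak form of \eqref{d1sing} with $\psi\in D(A_N)$: writing $\bar\mu=g+\alpha\partial_t\varphi$ with $g:=a\varphi-J*\varphi+F'(\varphi_1)-F'(\varphi_2)$, integrating by parts and using $\partial_{\mathbf{n}}\psi|_{\Gamma}=0$, $\int_\Omega\Delta_N\psi=0$ and $\langle\partial_t\varphi\rangle=0$, one reaches the identity $\langle(I+\alpha A_N)\partial_t\varphi(t),\psi\rangle=(g(t)-\langle g(t)\rangle,\Delta_N\psi)$. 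Since $(I+\alpha A_N)^{-1}$ is a self-adjoint contraction on the zero-average subspace of $D(A_N)$ (its eigenvalues lie in $(0,1]$), uniformly in $\alpha\geq 0$ — for $\alpha=0$ this reduces to the computation in Lemma~\ref{lipdif} — replacing $\psi$ by $(I+\alpha A_N)^{-1}(\psi-\langle\psi\rangle)$ yields $\|\partial_t\varphi(t)\|_{D(A_N)'}\leq C\|g(t)-\langle g(t)\rangle\|_H\leq C_{m,\tau}\|\varphi(t)\|_H$ for $t\geq\tau$, using $a,\,J*(\cdot)\in\mathcal{L}(H)$ and the Lipschitz bound above.

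To finish I would square, integrate over $[\tau,t]$, bound $\int_\tau^t\|\varphi(s)\|_H^2\,ds\leq\int_0^t\|\varphi(s)\|_H^2\,ds$, and invoke the estimate of the first paragraph, so that both terms on the left of \eqref{comp1bis} are controlled by its right-hand side; this is also where the dependence of $C_{m,\tau}$ and $\kappa$ on $\alpha^{-1}$ enters, inherited from Lemmas~\ref{chem_lemma} and~\ref{sep}.

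The step I expect to be the main obstacle is exactly the handling of $\alpha\partial_t\varphi$ inside $\bar\mu$: the naive route, bounding $\|\bar\mu\|_H\leq C_{m,\tau}\|\varphi\|_H+\alpha\|\partial_t\varphi\|_H$, leaves the quantity $\alpha\|\partial_t\varphi\|_H$, which by Lemma~\ref{chem_lemma} is only $O(1)$, not $O(\|\varphi\|_H)$, and after integration in time would produce a spurious term growing linearly in $t$ and destroy the ``exponential-in-the-initial-data'' form of \eqref{comp1bis}. The way around it is not to expand $\bar\mu$, but to keep the operator $I+\alpha A_N$ assembled and invert it, using that it is a contraction on $D(A_N)'$ uniformly in $\alpha$. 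The only accompanying technicality — that the identity above presupposes $\partial_t\varphi,\,g\in V$ — is dealt with, as in Lemma~\ref{chem_lemma}, by performing the computation on the Galerkin approximations, where everything is smooth, and then passing to the limit.
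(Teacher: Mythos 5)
Your proposal is correct and follows exactly the route the paper indicates for this proposition: the separation property (which makes $F^{\prime}$ Lipschitz along the two trajectories, so that $\Vert F^{\prime}(\varphi_{1})-F^{\prime}(\varphi_{2})\Vert_{H}\leq C_{m,\tau}\Vert\varphi_{1}-\varphi_{2}\Vert_{H}$) combined with the duality argument of Lemma \ref{lipdif}, plus the integral estimate that is already produced by the Gronwall computation in the proof of Lemma \ref{unising}. The only point where you go beyond the paper, whose proof is a one-line reference to Lemma \ref{lipdif}, is the treatment of the viscous term, and your device is sound: keeping $I+\alpha A_{N}$ assembled and using that $A_{N}(I+\alpha A_{N})^{-1}$ commutes with $A_{N}$ and is an $H$-contraction uniformly in $\alpha\geq 0$ does give $\Vert\partial_{t}(\varphi_{1}-\varphi_{2})\Vert_{D(A_{N})^{\prime}}\leq C_{m,\tau}\Vert\varphi_{1}-\varphi_{2}\Vert_{H}$; an equally short alternative, probably the one implicit in the paper, is to test the difference equation with $\mathcal{N}\partial_{t}(\varphi_{1}-\varphi_{2})$, which yields $\Vert\partial_{t}(\varphi_{1}-\varphi_{2})\Vert_{V^{\prime}}^{2}+\alpha\Vert\partial_{t}(\varphi_{1}-\varphi_{2})\Vert_{H}^{2}\leq\Vert\partial_{t}(\varphi_{1}-\varphi_{2})\Vert_{H}\Vert g\Vert_{H}$ and hence $\alpha\Vert\partial_{t}(\varphi_{1}-\varphi_{2})\Vert_{H}\leq\Vert g\Vert_{H}$, after which the computation of Lemma \ref{lipdif} applies verbatim to $\overline{\mu}$. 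Your caveat about the provenance of the separation constants (uniform only on the absorbing set, or after the entry time $t_{0}$) is consistent with how the proposition is actually used in the proof of Theorem \ref{expo_2}.
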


\begin{proof}
In light of the separation property, the proof goes essentially as the one
of Lemma \ref{lipdif}.
\end{proof}

The next lemma gives the uniform H\"{o}lder continuity of $t \mapsto
S(t)\varphi_0$ with respect to the $H$-norm.

\begin{lemma}
\label{hctimebis}Let the assumptions of Theorem \ref{well_sing} be
satisfied. Consider $\varphi \left( t\right) =S\left( t\right) \varphi _{0}$
with $\varphi _{0}\in \mathcal{Y}_{m}$. Then, for every $\tau >0$, there
holds:
\begin{equation}
\left\Vert \varphi \left( t\right) -\varphi \left( s\right) \right\Vert
_{H}\leq C_{m,\alpha ,\tau }\left\vert t-s\right\vert ,\text{ }\forall
t,s\geq \tau ,  \label{hclinf2bis}
\end{equation}%
where the positive constant $C_{m,\alpha ,\tau }$ is independent of initial
data, $\varphi $ and $t,s$.
\end{lemma}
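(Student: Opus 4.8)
The plan is to obtain \eqref{hclinf2bis} as an immediate corollary of Lemma~\ref{chem_lemma}. Since $\alpha>0$ is fixed here, the estimate \eqref{chemp} proven there contains in particular the bound $\alpha\,\|\partial _{t}\varphi(t)\|_{H}\leq C_{m,\tau}$ for all $t\geq\tau$, whence
\begin{equation*}
\sup_{t\geq\tau}\|\partial _{t}\varphi(t)\|_{H}\leq C_{m,\alpha,\tau},
\end{equation*}
with a constant independent of the initial datum (because the one in \eqref{chemp} is). Equivalently, one may avoid dividing by $\alpha$ and argue by comparison in \eqref{nlch}: since $\partial _{t}\varphi(t)=\Delta\mu(t)$, one has $\|\partial _{t}\varphi(t)\|_{H}\leq C\,\|\mu(t)\|_{H^{2}(\Omega)}\leq C_{m,\tau}$ for every $t\geq\tau$.

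Next I would record that, since $\varphi\in L^{\infty}(0,T;H)$ while $\partial _{t}\varphi\in L^{\infty}(\tau,\infty;H)$ by the previous step (and in any case $\partial _{t}\varphi\in L^{2}(0,T;V^{\prime})$, so $\varphi\in C([0,T];V^{\prime})$), the map $t\mapsto\varphi(t)$ agrees a.e.\ on $[\tau,\infty)$ with a locally Lipschitz $H$-valued function, and the vector-valued fundamental theorem of calculus gives $\varphi(t)-\varphi(s)=\int_{s}^{t}\partial _{t}\varphi(r)\,dr$ in $H$, for all $t,s\geq\tau$. Taking norms (assuming without loss of generality $s\leq t$),
\begin{equation*}
\|\varphi(t)-\varphi(s)\|_{H}\leq\int_{s}^{t}\|\partial _{t}\varphi(r)\|_{H}\,dr\leq C_{m,\alpha,\tau}\,(t-s),
\end{equation*}
which is precisely \eqref{hclinf2bis}.

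There is no genuine obstacle in this lemma: all the analytic work — the regularization scheme of Theorem~\ref{well_sing}, the uniform Gronwall argument, and the use of the separation property — has already been carried out in the proof of Lemma~\ref{chem_lemma}. The only point needing a moment's care is the bookkeeping of constants, namely verifying that the resulting $C_{m,\alpha,\tau}$ is independent of $\varphi_{0}$, which is automatic once the analogous property of the constant appearing in \eqref{chemp} is granted.
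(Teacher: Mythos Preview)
Your proposal is correct and follows essentially the same route as the paper: invoke the bound $\sup_{t\geq\tau}\|\partial_t\varphi(t)\|_H\leq C_{m,\alpha,\tau}$ from Lemma~\ref{chem_lemma}, then write $\varphi(t)-\varphi(s)=\int_s^t\partial_t\varphi(z)\,dz$ and take norms. Your extra remarks on the alternative via $\partial_t\varphi=\Delta\mu$ and on the justification of the fundamental theorem of calculus are harmless embellishments.
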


\begin{proof}
According to (\ref{chemp}), we have the bound
\begin{equation}
\sup_{t\geq \tau }\left\Vert \partial _{t}\varphi \left( t\right)
\right\Vert _{H}\leq C_{m,\alpha ,\tau }.  \label{prec}
\end{equation}%
Observing that%
\begin{equation*}
\varphi \left( t\right) -\varphi \left( s\right) =\int_{s}^{t}\partial
_{t}\varphi \left( z\right) dz,
\end{equation*}%
we readily deduce (\ref{hclinf2bis}), thanks to (\ref{prec}).
\end{proof}

\noindent \textit{Proof of Theorem \ref{expo_2}}. As in Section 2.2, we
apply the abstract result of Proposition \ref{abstract}. In light of the
separation property in Lemma \ref{sep}, it is not difficult to realize that
there exists an absorbing set of the following form%
\begin{equation*}
\mathbb{B}\left( \delta _{\alpha },m\right) :=\left\{ \varphi \in \mathcal{Y}%
_{m}\,:\,-1+\delta _{\alpha }\leq \varphi \leq 1-\delta _{\alpha },\;\text{
a.e. in }\Omega \right\} ,
\end{equation*}%
for a suitable constant $\delta _{\alpha }$. We endow $\mathbb{B}\left(
\delta _{\alpha },m\right) $ with the metric of $\mathcal{H}=H$, and
reasoning as above (see Section 2.2), we can suppose that $\mathbb{B}\left(
\delta _{\alpha },m\right) $ is semi-invariant for $S\left( t\right) $ for $%
t\geq 0$. On the other hand, due to the results proven in this section, we
have%
\begin{equation*}
\sup_{t\geq 0}\left( \left\Vert F^{\prime }(\varphi \left( t\right)
)\right\Vert _{L^{\infty }\left( \Omega \right) }+\left\Vert \mu \left(
t\right) \right\Vert _{H^{2}\left( \Omega \right) }+\left\Vert \partial
_{t}\varphi \left( t\right) \right\Vert _{H}\right) \leq C_{m,\alpha },
\end{equation*}%
for every trajectory $\varphi \left( t\right) $ originating from $\varphi
_{0}\in \mathbb{B}\left( \delta _{\alpha },m\right) $, for some positive
constant $C_{m,\alpha }$ which is independent of $\varphi _{0}\in \mathbb{B}%
\left( \delta _{\alpha },m\right) $. We can now apply Proposition \ref%
{abstract} to the map $\mathbb{S}=S\left( T\right) $ and $\mathcal{H}=H$,
with the same choice of the functional spaces $\mathcal{V}_{1}$, $\mathcal{V}
$ as in (\ref{fsp}), owing to Propositions \ref{uniq2}, \ref{uniq3} and
Lemma \ref{hctimebis}. Consequently, we obtain the (finite-dimensional)
exponential attractor $\mathcal{M}$ for $S\left( t\right) $ restricted to $%
\mathbb{B}\left( \delta _{\alpha },m\right) $ in the $H$-metric. The
attraction property (iii) of Theorem \ref{expo_2} is again a consequence of
the separation property and the basic interpolation inequality%
\begin{equation*}
\left\Vert u\right\Vert _{L^{s}\left( \Omega \right) }\leq C_{s}\left\Vert
u\right\Vert _{H}^{\nu _{s}}\left\Vert u\right\Vert _{L^{\infty }\left(
\Omega \right) }^{1-\nu _{s}},\quad \nu _{s}\in \left( 0,1\right) .
\end{equation*}%
Theorem \ref{expo_2} is thus proved.

\begin{remark}
\label{ACsimilar} In contrast to the results proved in the case of regular
potentials, we cannot show that $\varphi (t)$ is ultimately bounded in $V$%
-norm like in the nonviscous case $\alpha =0$ (cf. Lemma \ref{h1lemma} and %
\eqref{cda_pre}). This can also be understood by formally rewriting the
original equation in the following form
\begin{equation*}
\varphi _{t}=(I-\alpha \Delta )^{-1}\Delta \left( a\varphi -J\ast \varphi
+F^{\prime }(\varphi )\right)
\end{equation*}%
which shows that this equation is much closer to the nonlocal Allen-Cahn
equation (see, for instance, \cite{BC2,BCh,BFRW,Chen,GR} and references
therein). Moreover, there is a close connection between the viscous nonlocal
Cahn-Hilliard equation and the phase-field system investigated in \cite{GS},
namely,
\begin{align}
& \partial _{t}(\varsigma \vartheta +\varphi )=\Delta \vartheta ,
\label{pf2} \\
& \alpha \partial _{t}\varphi -J\ast \varphi +a\varphi +F^{\prime }(\varphi
)=\vartheta ,  \label{pf3}
\end{align}%
in $\Omega \times (0,\infty )$, where $\vartheta $ denotes a rescaled
relative temperature and $\varsigma >0$. Indeed, if we let $\varsigma $ go
to $0$ formally, then we get
\begin{equation*}
\partial _{t}\varphi =\Delta \vartheta .
\end{equation*}%
Thus we obtain
\begin{equation}
\partial _{t}\varphi =\Delta \left( \alpha \partial _{t}\varphi -J\ast
\varphi +a\varphi +F^{\prime }(\varphi )\right) ,  \label{vnlCH}
\end{equation}%
that is, the viscous nonlocal Cahn-Hilliard equation. It would be
interesting to investigate the connections between the phase-field system (%
\ref{pf2})-(\ref{pf3}) and equation \eqref{vnlCH} along the lines of what
was done for the local equations (see, e.g., \cite{GGM} and its references).
\end{remark}

\subsection{Convergence to a single equilibrium}

Also in this case we have all the ingredients to show that each trajectory
does converge to a single equilibrium. We can state the following version of
the {\L }ojasiewicz-Simon theorem whose proof goes exactly, with some minor
modifications, as in Lemma \ref{LS_lemma} (cf. \cite{GG2} also).

\begin{lemma}
\label{LS_lemma2}Let $J$ satisfy (H1) and let $F$ satisfy (H7)-(H11) and be
real analytic on $\left[ -1+\delta ,1-\delta \right] $ . Then, there exist
constants $\theta \in (0,\frac{1}{2}],$ $C>0,$ $\varepsilon >0$ such that
the following inequality holds:%
\begin{equation}
\left\vert \mathcal{E}\left( \varphi \right) -\mathcal{E}\left( \varphi
_{\ast }\right) \right\vert ^{1-\theta }\leq C\left\Vert \mu -\left\langle
\mu \right\rangle \right\Vert _{H},  \label{LSineq2}
\end{equation}%
for all%
\begin{equation*}
\varphi \in \left\{ \psi \in L^{\infty }\left( \Omega \right) \cap \mathcal{Y%
}_{m}:-1+\delta \leq \psi \leq 1-\delta ,\text{ a.e. in }\Omega \right\} ,
\end{equation*}%
provided that $\left\Vert \varphi -\varphi _{\ast }\right\Vert _{H}\leq
\varepsilon .$
\end{lemma}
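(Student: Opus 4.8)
The plan is to mirror the proof of Lemma \ref{LS_lemma} almost verbatim, the only structural change being that the effective domain of the convex part is now forced to lie in the \emph{separated} set
\begin{equation*}
\mathcal{D}_{\delta}:=\left\{\psi\in L^{\infty}(\Omega)\cap\mathcal{Y}_{m}:-1+\delta\le\psi\le 1-\delta\ \text{a.e. in }\Omega\right\},
\end{equation*}
which is closed and convex in $H$; by Lemma \ref{sep} this is exactly where the trajectories and, after possibly shrinking $\delta$, the equilibria $\varphi_{\ast}$ eventually live. First I would split $\mathcal{E}=\Phi+\Psi$ as before, with
\begin{equation*}
\Phi(\varphi):=\int_{\Omega}\left(F(\varphi)+\frac{a(x)}{2}\varphi^{2}\right)dx\quad\text{if }\varphi\in\mathcal{D}_{\delta},\qquad \Phi(\varphi):=+\infty\quad\text{otherwise},
\end{equation*}
and $\Psi(\varphi)=\int_{\Omega}\left(-\tfrac{1}{2}\varphi(J\ast\varphi)+a(x)\varphi\right)dx$ the nonlocal quadratic functional. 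Since $F$ is real analytic on the compact interval $[-1+\delta,1-\delta]$ it extends analytically to a real neighbourhood of it, hence the Nemytskii operator $\psi\mapsto F^{\prime}(\psi)$ is well defined and analytic from any open $L^{\infty}$-ball $\overline{U}$ contained in $U_{m,\delta}:=\{\psi\in L^{\infty}(\Omega):\|\psi\|_{L^{\infty}(\Omega)}<1-\delta/2,\ |\langle\psi\rangle|\le m\}$ into $L^{\infty}(\Omega)$, arguing as in \cite[Theorem 5.1]{FIP}; this gives the analyticity of $D\Phi(\varphi)=F^{\prime}(\varphi)+a(x)\varphi$ on $\overline{U}$.

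Next I would verify the three structural hypotheses of \cite[Theorem 6]{GG2} for $\Phi$ restricted to $\overline{U}$. Strong convexity and the Lipschitz bound for $D\Phi$ follow from (H10): indeed $F^{\prime\prime}(s)+a(x)\ge\alpha+\beta+\min_{[-1,1]}F_{2}^{\prime\prime}=c_{0}>0$ on $(-1,1)$, and $F^{\prime\prime}+a$ is bounded on $[-1+\delta,1-\delta]$, so that
\begin{equation*}
\langle D\Phi(\varphi_{1})-D\Phi(\varphi_{2}),\varphi_{1}-\varphi_{2}\rangle\ge c_{0}\|\varphi_{1}-\varphi_{2}\|_{H}^{2},\qquad\|D\Phi(\varphi_{1})-D\Phi(\varphi_{2})\|_{H^{\ast}}\le\gamma\|\varphi_{1}-\varphi_{2}\|_{H}
\end{equation*}
for $\varphi_{1},\varphi_{2}\in\overline{U}$. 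Likewise $\langle D^{2}\Phi(\varphi)\xi_{1},\xi_{2}\rangle=\int_{\Omega}(F^{\prime\prime}(\varphi)+a(x))\xi_{1}\xi_{2}\,dx$ with $c_{0}\le F^{\prime\prime}(\varphi)+a(x)\le C_{\delta}$ for $\varphi\in\mathcal{D}_{\delta}$, whence $D^{2}\Phi(\varphi)\in\mathcal{L}(L^{\infty}(\Omega),L^{\infty}(\Omega))$ is an isomorphism there. The quadratic part $\Psi$ is handled exactly as in Lemma \ref{LS_lemma}: by Remark \ref{compact} the operator $\psi\mapsto J\ast\psi$ is self-adjoint and compact on $H$ and compact from $L^{\infty}(\Omega)$ into $C^{0}(\overline{\Omega})$, and with the orthogonal splitting $H=H_{0}\oplus H_{1}$ the annihilator $H_{0}^{0}$ of $H_{0}$ is the one-dimensional subspace spanned by the averaging functional $h$.

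With these ingredients in place \cite[Theorem 6]{GG2} applies to $\mathcal{E}=\Phi+\Psi$ near $\varphi_{\ast}$, and unravelling the notation together with the identity $D\mathcal{E}(\varphi)=a(x)\varphi-J\ast\varphi+F^{\prime}(\varphi)=\mu$ yields
\begin{equation*}
|\mathcal{E}(\varphi)-\mathcal{E}(\varphi_{\ast})|^{1-\theta}\le C\inf\left\{\|D\mathcal{E}(\varphi)-\mu_{\ast}\|_{L^{2}(\Omega)}:\mu_{\ast}\in H_{0}^{0}\right\}=C\|\mu-\langle\mu\rangle\|_{H}
\end{equation*}
whenever $\varphi$ belongs to the separated set and $\|\varphi-\varphi_{\ast}\|_{H}\le\varepsilon$, which is \eqref{LSineq2}. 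The one genuinely new point relative to Lemma \ref{LS_lemma} — and the step I would be most careful about — is the analyticity of the composition operator built from the \emph{singular} $F$: this is precisely where the separation property of Lemma \ref{sep} is indispensable, since it keeps $\varphi$ uniformly inside the interval on which $F^{\prime}$ is analytic and bounded away from the singularities at $\pm1$, and it also places $\varphi_{\ast}$ in the interior of $\mathcal{D}_{\delta}$ for the $L^{\infty}$-topology. Once this is secured the remainder is a line-by-line transcription of the regular-potential argument.
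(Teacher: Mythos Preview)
Your proposal is correct and follows essentially the same approach as the paper, which simply states that the proof ``goes exactly, with some minor modifications, as in Lemma \ref{LS_lemma}'' and gives no further details. You have correctly identified and spelled out those modifications: restricting the effective domain of $\Phi$ to the separated set $\mathcal{D}_{\delta}$, using (H10) in place of (H2) for the strong convexity constant $c_{0}$, and invoking the separation property to ensure analyticity of the Nemytskii operator built from the singular $F$.
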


The analog of Theorem \ref{conv_thm} in the case $\alpha >0$ and singular
potentials $f$ is

\begin{theorem}
\label{conv_thm2} Let the assumptions of Theorem \ref{expo_2} hold. Suppose
in addition that $F$ is real analytic on $\left[ -1+\delta ,1-\delta \right]$%
. Then, any weak solution $\varphi $ to (\ref{nlch})-(\ref{ic}) belonging to
the class (\ref{re4})-(\ref{re6}) satisfies%
\begin{equation*}
\lim_{t\rightarrow \infty }\left\Vert \varphi \left( t\right) -\varphi
_{\ast }\right\Vert _{H}=0,
\end{equation*}%
where $\varphi _{\ast }$ is solution to (\ref{stat}).
\end{theorem}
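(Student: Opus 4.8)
The plan is to follow the proof of Theorem~\ref{conv_thm} almost verbatim, using the viscous energy identity~\eqref{idesing} in place of~\eqref{eneeq} and Lemma~\ref{LS_lemma2} in place of Lemma~\ref{LS_lemma}. The extra viscous term $\alpha\|\partial_t\varphi\|_H^2$ in~\eqref{idesing} is an asset rather than an obstacle: it gives directly $\sqrt{\alpha}\,\partial_t\varphi\in L^2(0,\infty;H)$ together with $\nabla\mu\in L^2(0,\infty;H^d)$, so that $\mathcal{E}(\varphi(t))$ is nonincreasing and converges to some $\mathcal{E}_\infty$ as $t\to\infty$, and integrating $\frac{d}{dt}\mathcal{E}(\varphi(t))=-\|\nabla\mu(t)\|_H^2-\alpha\|\partial_t\varphi(t)\|_H^2$ over $(t,\infty)$ yields the key identity
\begin{equation*}
\int_t^\infty\bigl(\|\nabla\mu(s)\|_H^2+\alpha\|\partial_t\varphi(s)\|_H^2\bigr)ds=\mathcal{E}(\varphi(t))-\mathcal{E}(\varphi_\ast),
\end{equation*}
where $\varphi_\ast\in\omega[\varphi]$ is arbitrary (all elements of $\omega[\varphi]$ have the same energy $\mathcal{E}_\infty$). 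By Lemma~\ref{sep}, there is $t_0>0$ such that $\|\varphi(t)\|_{L^\infty(\Omega)}\le 1-\delta_\alpha$ for $t\ge t_0$, so every $\varphi_\ast\in\omega[\varphi]$ satisfies $-1+\delta_\alpha\le\varphi_\ast\le 1-\delta_\alpha$ a.e., which puts both $\varphi(t)$ (for $t\ge t_0$) and $\varphi_\ast$ in the set on which Lemma~\ref{LS_lemma2} applies.

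Next I would combine the {\L}ojasiewicz--Simon inequality~\eqref{LSineq2} with the Poincar\'e--Wirtinger bound $\|\mu-\langle\mu\rangle\|_H\le C\|\nabla\mu\|_H$ to get, for $t$ large and as long as $\|\varphi(t)-\varphi_\ast\|_H\le\varepsilon$,
\begin{equation*}
\left|\mathcal{E}(\varphi(t))-\mathcal{E}(\varphi_\ast)\right|^{1-\theta}\le C\|\nabla\mu(t)\|_H\le C\bigl(\|\nabla\mu(t)\|_H^2+\alpha\|\partial_t\varphi(t)\|_H^2\bigr)^{1/2},
\end{equation*}
where I have used $\|\nabla\mu(t)\|_H\le C$ for $t\ge\tau$ from Lemma~\ref{chem_lemma} to absorb the square root. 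Writing $\Theta(t):=\int_t^\infty(\|\nabla\mu(s)\|_H^2+\alpha\|\partial_t\varphi(s)\|_H^2)ds=\mathcal{E}(\varphi(t))-\mathcal{E}(\varphi_\ast)\ge 0$, this reads $-\Theta'(t)\ge c\,\Theta(t)^{2(1-\theta)}$ on the set $M$ where the smallness~\eqref{small} holds, exactly as in~\eqref{convls1}. The classical integration lemma (the same \cite[Lemma 7.1]{FS}, or \cite{FS} directly) then yields $\|\nabla\mu(\cdot)\|_H+\sqrt{\alpha}\|\partial_t\varphi(\cdot)\|_H\in L^1(M)$, hence by~\eqref{chemp} also $\partial_t\varphi\in L^1(M;H)$ (note: here we get $L^1$ integrability directly in the $H$-norm, not merely in $V'$, which is a simplification over the nonviscous case).

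The final step is the standard bootstrap showing $(\tau,\infty)\subset M$ for $\tau$ large. Since $\nabla\mu,\sqrt{\alpha}\partial_t\varphi\in L^2(0,\infty)$, for any $\delta>0$ there is $t_\ast(\delta)$ with $\|\partial_t\varphi\|_{L^1(M\cap(t_\ast,\infty);H)}\le\delta$ and $\|\partial_t\varphi\|_{L^2((t_\ast,\infty);H)}\le\delta$; then for $(t_0,t_2)\subset M$ with $t_0\ge t_\ast$ we estimate $\|\varphi(t_0)-\varphi(t_2)\|_H\le\|\partial_t\varphi\|_{L^1(t_0,t_2;H)}\le\delta$, which is even easier than the computation~\eqref{bls2b} because no $V$--$V'$ duality is needed. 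Choosing $\tau=t_\ast$ so large that also $\|\varphi(\tau)-\varphi_\ast\|_H<\varepsilon/3$ (possible since $\varphi_\ast\in\omega[\varphi]$) and running the usual continuity/contradiction argument with $\bar t=\inf\{t>\tau:\|\varphi(t)-\varphi_\ast\|_H\ge\varepsilon\}$ forces $\bar t=\infty$, so $(\tau,\infty)\subset M$ and $\partial_t\varphi\in L^1(\tau,\infty;H)$. Therefore $\varphi(t)$ is Cauchy in $H$, $\omega[\varphi]=\{\varphi_\ast\}$, and $\|\varphi(t)-\varphi_\ast\|_H\to 0$. I do not anticipate a genuine obstacle here since all the hard analysis (the $L^\infty$ separation in Lemma~\ref{sep}, the $H^2$ bound on $\mu$ in Lemma~\ref{chem_lemma}, and the {\L}ojasiewicz inequality in Lemma~\ref{LS_lemma2}) is already in place; the only point requiring a little care is that the constants $\theta,C,\varepsilon$ in~\eqref{LSineq2} may depend on the particular $\varphi_\ast$, which is handled exactly as in the proof of Theorem~\ref{conv_thm} by the connectedness/covering argument defining the open set $M$.
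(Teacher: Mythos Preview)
Your proposal is correct and follows essentially the same route as the paper. The paper's own proof is extremely terse, merely pointing out that one argues as in Theorem~\ref{conv_thm} but with the simplification that the viscous dissipation in~\eqref{idesing} yields directly $\int_M\|\partial_t\varphi(s)\|_H\,ds\le C_\alpha$ (in place of the $V'$-bound~\eqref{convls2tris}); you have identified and exploited exactly this simplification. Two minor remarks: the inequality $\|\nabla\mu(t)\|_H\le\bigl(\|\nabla\mu(t)\|_H^2+\alpha\|\partial_t\varphi(t)\|_H^2\bigr)^{1/2}$ is trivial and does not require Lemma~\ref{chem_lemma}; and once $\sqrt{\alpha}\,\|\partial_t\varphi(\cdot)\|_H\in L^1(M)$ is obtained from \cite[Lemma~7.1]{FS}, the conclusion $\partial_t\varphi\in L^1(M;H)$ follows immediately since $\alpha>0$ is fixed, so the appeal to~\eqref{chemp} there is superfluous.
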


\begin{proof}
The proof goes essentially along the lines of Theorem \ref{conv_thm2}.
Indeed, it is easier since by virtue of (\ref{LSineq2}) and the energy
identity (\ref{idesing}), one can establish instead of (\ref{convls2tris})
the bound:%
\begin{equation*}
\int_{M}\left\Vert \partial _{t}\varphi \left( s\right) \right\Vert
_{H}ds\leq C_{\alpha },
\end{equation*}%
which entails the integrability of $\partial _{t}\varphi $ in $L^{1}(\tau
,\infty ;H)$. We leave the details to the interested reader (see, also, \cite%
[Section 6]{FIP}).
\end{proof}

\section{Degenerate mobility and logarithmic potential}

\noindent In this section we consider the model proposed in \cite{GZ} (see
also \cite{GL1,GL2}). Thanks to the particular form of the mobility
coefficient, the separation property holds even in absence of viscosity (see
\cite{LP, LP2}). As a consequence, we can prove the existence of an
exponential attractor in this case as well.

Referring to \cite{GZ} for details, we consider the following boundary value
problem
\begin{equation}
\left\{
\begin{array}{ll}
\mu =F^{\prime}\left( \varphi \right) +w\text{,} & \text{in }Q, \\
w\left( x,t\right) =\int_{\Omega }J\left( x-y\right) \left( 1-2\varphi\left(
y,t\right) \right) dy\text{,} & \left( x,t\right) \in Q, \\
\partial _{t}\varphi =\nabla \cdot \left( \kappa \left(\cdot, \varphi
\right) \nabla \mu\right) \text{,} & \text{in }Q\text{,} \\
\kappa \left( \varphi \right) \partial _{\mathbf{n}}\mu =0\text{,} & \text{%
on }\Gamma \times \left( 0,T\right) ,%
\end{array}%
\right.  \label{degm}
\end{equation}%
subject to the initial condition%
\begin{equation}
\varphi _{\mid t=0}=\varphi _{0}, \quad \text{ in }\Omega .  \label{degmic}
\end{equation}%
On account of \cite[Section 2]{GZ} we assume the following hypotheses:

\begin{enumerate}
\item[(H12)] $F\left( \varphi \right) =\varphi \log \varphi +\left(
1-\varphi \right) \log \left( 1-\varphi \right) .$

\item[(H13)] The mobility $\kappa $ has the form%
\begin{equation}
\kappa \left(x, \varphi \right) =\frac{b\left( x,\left\vert \nabla \varphi
\right\vert \right) }{F^{\prime \prime}\left( \varphi \right) },  \label{mob}
\end{equation}%
where the Carath\'{e}odory function \thinspace $b\left( x,\left\vert
s\right\vert \right) :\Omega \times \mathbb{R}_{+}\rightarrow \mathbb{R}_{+}$
satisfies:%
\begin{equation}
\left( b\left( x,\left\vert s_{1}\right\vert \right) s_{1}-b\left(
x,\left\vert s_{2}\right\vert \right) s_{2}\right) \left( s_{1}-s_{2}\right)
\geq \alpha _{0}\left\vert s_{1}-s_{2}\right\vert ^{2},  \label{a1}
\end{equation}%
for all $s_{1},s_{2}\in \mathbb{R}^{d}$, a.e. in $\Omega ,$ for some $\alpha
_{0}>0,$
\begin{equation}
\left\vert b\left( x,\left\vert s_{1}\right\vert \right) s_{1}-b\left(
x,\left\vert s_{2}\right\vert \right) s_{2}\right\vert \leq \alpha
_{1}\left\vert s_{1}-s_{2}\right\vert ,  \label{a2}
\end{equation}%
for all $s_{1},s_{2}\in \mathbb{R}^{d}$, a.e. in $\Omega ,$ for some $\alpha
_{1}>0.$
\end{enumerate}

The notion of weak solution to problem (\ref{degm})-(\ref{degmic}) is given
by

\begin{definition}
\label{defdeg}Let $Q:=\Omega \times \left( 0,T\right) $. A function $\varphi
$ is called a solution of (\ref{degm})-(\ref{degmic}) if%
\begin{equation}
\varphi \in L^{\infty }\left( Q\right) \cap L^{2}\left( \left[ \delta ,T%
\right] ;V\right)  \label{reg1}
\end{equation}%
with
\begin{equation}
\partial _{t}\varphi \in L^{2}(\left[ \delta ,T\right] ;V^{\prime }),\quad
w\in L^{\infty }\left( \left[ \delta ,T\right] ;W^{1,\infty }\left( \Omega
\right) \right)  \label{reg2}
\end{equation}%
satisfy (\ref{degm}) for every $\delta >0$, the chemical potential obeys%
\begin{equation*}
\int_{0}^{T}\int_{\Omega }\kappa \left( x,\varphi \right) \left\vert \nabla
\mu \right\vert ^{2}dxds<\infty ,
\end{equation*}%
and the following identity holds
\begin{equation}
\left\langle \partial _{t}\varphi ,\psi \right\rangle +\left( \kappa \left(
\cdot ,\varphi \right) \nabla \mu ,\nabla \psi \right) =0,\quad \forall \psi
\in V,\;\text{ a.e. on }\left( 0,T\right) .  \label{degwf}
\end{equation}
\end{definition}

The following well-posedness result was proven in \cite[Theorem 3.5 and
Corollary 3.7]{GZ} (see also \cite{LP, LP2}).

\begin{theorem}
\label{degwell} Let assumptions (H1) and (H13)-(H14) be satisfied. Consider
\begin{align*}
\mathcal{Y}_{m_{1},m_{2}}:=\{\psi \in L^{\infty }(\Omega )\,:\,& 0\leq \psi
\leq 1,\text{ a.e. in }\Omega ,\;F(\psi )\in L^{1}(\Omega ), \\
& 0<m_{1}\leq \left\langle \psi \right\rangle \leq m_{2}<1\}
\end{align*}%
where $m_{1}$, $m_{2}$ are fixed and the metric is given by (\ref{metric}).
If $\varphi _{0}\in \mathcal{Y}_{m_{1},m_{2}}$, then there exists a unique
solution to problem (\ref{degm})-(\ref{degmic}) in the sense of Definition %
\ref{defdeg} such that $\left\langle \varphi \left( t\right) \right\rangle
=\left\langle \varphi _{0}\right\rangle $, for all $t\geq 0$. Moreover, we
have
\begin{equation}
F^{\prime }\left( \varphi \right) \in L^{\infty }\left( \left[ \delta ,T%
\right] ;L^{\infty }\left( \Omega \right) \right) ,\quad \mu \in L^{\infty
}\left( \left[ \delta ,T\right] ;L^{\infty }\left( \Omega \right) \right)
\cap L^{2}\left( \left[ \delta ,T\right] ;V\right) ,  \label{reg3}
\end{equation}%
for all $\delta >0.$
\end{theorem}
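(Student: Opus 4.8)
Since Theorem~\ref{degwell} is proved in \cite[Theorem 3.5 and Corollary 3.7]{GZ}, the plan is to retrace the main steps of that argument. The key structural observation is that the specific form \eqref{mob} of the mobility makes the degeneracy cancel inside the flux: by (H12) one has $F''(\varphi)=1/(\varphi(1-\varphi))$, hence $\kappa(\cdot,\varphi)=b(x,|\nabla\varphi|)\,\varphi(1-\varphi)$ and
\[
\kappa(\cdot,\varphi)\nabla\mu=\frac{b(x,|\nabla\varphi|)}{F''(\varphi)}\nabla\bigl(F'(\varphi)+w\bigr)=b(x,|\nabla\varphi|)\nabla\varphi+\kappa(\cdot,\varphi)\nabla w .
\]
Thus the evolution equation in \eqref{degm} can be recast as the quasilinear parabolic problem $\partial_t\varphi=\nabla\cdot(b(x,|\nabla\varphi|)\nabla\varphi)+\nabla\cdot(\kappa(\cdot,\varphi)\nabla w)$, whose principal part is uniformly elliptic and monotone by \eqref{a1}--\eqref{a2} (taking $s_2=0$ there yields $\alpha_0\le b(x,|s|)\le\alpha_1$, so $\kappa$ is bounded where $0\le\varphi\le1$). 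Since $J\in W^{1,1}(\R^d)$ and $0\le\varphi\le1$, the nonlocal term satisfies $w\in L^\infty(\R_+;W^{1,\infty}(\Omega))$ and enters only as a compact lower-order perturbation.

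First I would set up an approximation scheme in the spirit of Theorem~\ref{well_sing}: replace $F$ by a smooth potential $F_\epsilon$ of polynomial growth on $\R$ with $F_\epsilon'\to F'$ uniformly on compact subsets of $(0,1)$, replace $\kappa$ by $\kappa_\epsilon=b(x,|\nabla\varphi|)/F_\epsilon''(\varphi)$, mollify $J$, and solve the resulting non-degenerate problem by a Faedo--Galerkin scheme. Then I would derive a priori estimates uniform in $\epsilon$: (a) testing with $\mu_\epsilon$ and integrating yields the energy inequality, which controls $\int_0^T\!\!\int_\Omega\kappa_\epsilon|\nabla\mu_\epsilon|^2\,dx\,ds$ and, through the reformulation above, a bound on $\nabla\varphi_\epsilon$ in $L^2(Q)$, hence on $\varphi_\epsilon$ in $L^2(\delta,T;V)$; (b) conservation of mass by testing with $\psi=1$; (c) in the limit, $0\le\varphi\le1$ from the structure of $F_\epsilon$; (d) $\partial_t\varphi_\epsilon$ bounded in $L^2(\delta,T;V')$ by comparison.

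The core of the proof --- and the step I expect to be the main obstacle --- is the instantaneous $L^\infty$-regularity of $F'(\varphi)$ and $\mu$ giving \eqref{reg3} together with the separation property. Following \cite{LP,LP2}, I would test the regularized equation with a suitable power of $F_\epsilon'(\varphi_\epsilon)$, e.g. $|F_\epsilon'(\varphi_\epsilon)|^{p-2}F_\epsilon'(\varphi_\epsilon)$, and run an Alikakos--Moser iteration uniform in $\epsilon$, using that $\kappa_\epsilon F_\epsilon''=b$ is non-degenerate and that $w\in W^{1,\infty}$, so as to bound $\|F'(\varphi)\|_{L^\infty(\delta,T;L^\infty(\Omega))}$ by the data; this at once gives the separation $\delta'\le\varphi\le1-\delta'$ on $[\delta,T]$ and $\mu\in L^\infty(\delta,T;L^\infty(\Omega))$, while $\mu\in L^2(\delta,T;V)$ follows from the energy bound away from $t=0$. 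This is precisely where the interplay between the degenerate mobility and the singular logarithmic potential must be exploited quantitatively. The passage to the limit $\epsilon\to0$ is then routine: Aubin--Lions compactness gives $\varphi_\epsilon\to\varphi$ strongly in $L^2(\delta,T;H)$ and a.e. in $Q$, the monotonicity \eqref{a1} of $b$ identifies the term $\nabla\cdot(b\nabla\varphi)$ by a Minty-type argument, and weak convergence of the remaining terms produces a solution in the sense of Definition~\ref{defdeg}.

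Finally, uniqueness (Corollary 3.7 of \cite{GZ}): given two solutions $\varphi_1,\varphi_2$ with the same mass, set $\varphi=\varphi_1-\varphi_2$ and test the difference of the weak formulations \eqref{degwf} with an appropriate choice --- either $\varphi$ itself or $A_N^{-1}\varphi$ (admissible since $\langle\varphi\rangle=0$, using \eqref{rel}) --- and combine the strict monotonicity \eqref{a1}, the boundedness of $b$ and the $W^{1,\infty}$-control of $w$ with the Lipschitz bound for $F'$ on $[-1+\delta',1-\delta']$ supplied by the separation property (valid for $t\ge\delta$) in order to obtain a Gronwall-type differential inequality for $\|\varphi\|_H^2$ (or for $\|\varphi\|_{V'}^2+\|\varphi\|_H^2$, depending on the chosen test function); this forces $\varphi_1\equiv\varphi_2$, and continuity up to $t=0$ is handled by continuous dependence on the $L^\infty$-datum. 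The identity $\langle\varphi(t)\rangle=\langle\varphi_0\rangle$ is the conservation law from step (b), and \eqref{reg3} has already been obtained, so the argument is complete.
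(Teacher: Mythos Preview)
The paper does not give its own proof of Theorem~\ref{degwell}; it simply records the result and attributes it to \cite[Theorem~3.5 and Corollary~3.7]{GZ} (with the separation refinement in \cite{LP,LP2}). So there is no ``paper's proof'' to compare against beyond that citation. Your sketch of existence is a reasonable reconstruction: the structural rewriting $\kappa(\cdot,\varphi)\nabla\mu=b(x,|\nabla\varphi|)\nabla\varphi+\kappa(\cdot,\varphi)\nabla w$ is exactly the mechanism that makes the degenerate problem tractable, and the approximation/Moser-iteration route to \eqref{reg3} is in the right spirit (though note that the instantaneous $L^\infty$-bound on $F'(\varphi)$ is already in \cite{GZ}; \cite{LP,LP2} strengthen it to a bound uniform in the initial data after a waiting time, which is a separate statement, Theorem~\ref{sep_thm}).

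Your uniqueness argument, however, has a genuine gap. You propose to control the difference by invoking ``the Lipschitz bound for $F'$ on $[-1+\delta',1-\delta']$ supplied by the separation property (valid for $t\ge\delta$)''. First, the range is $[\delta',1-\delta']$ here, not $[-1+\delta',1-\delta']$. More importantly, this route forces you to work only on $[\delta,T]$ and then patch up near $t=0$ by an unspecified ``continuous dependence'' argument, which is circular. The correct observation (and the one the paper itself uses later, in Proposition~\ref{contdep}) is that the same structural rewriting eliminates $F'$ from the difference entirely: testing \eqref{degwf} for $\varphi_1-\varphi_2$ with $\psi=\varphi_1-\varphi_2$ gives the monotone term $(b(\cdot,|\nabla\varphi_1|)\nabla\varphi_1-b(\cdot,|\nabla\varphi_2|)\nabla\varphi_2,\nabla(\varphi_1-\varphi_2))\ge\alpha_0\|\nabla(\varphi_1-\varphi_2)\|_H^2$ by \eqref{a1}, while the remainder involves only $b\,\varphi_i(1-\varphi_i)\nabla w_i$, which is bounded because $0\le\varphi_i\le1$, $|b|\le\alpha_1$, and $\nabla w_i\in L^\infty$. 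No separation and no control of $F'$ are needed, and the resulting Gronwall inequality holds from $t=0$. Replace your uniqueness paragraph with this argument.
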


\begin{remark}
We can take $\delta =0$ in (\ref{reg1}), (\ref{reg2}) and (\ref{reg3})
provided that $\varphi _{0}\in \mathcal{Y}_{m_{1},m_{2}}$ and, \emph{more
importantly}, $F^{\prime }\left( \varphi _{0}\right) \in L^{\infty }\left(
\Omega \right) $ (see \cite[Corollary 3.7]{GZ}). Furthermore, by \cite[Lemma
2.1]{LP} the solution $\varphi \left( t\right) $ also belongs to $C\left( %
\left[ 0,T\right] ;H\right) $ (see Proposition \ref{contdep} below)$.$ Thus,
in view of (\ref{reg3}), $\varphi \in C\left( \left[ 0,T\right] ;L^{p}\left(
\Omega \right) \right) $ for every $p\in \lbrack 2,\infty )$, using
interpolation and the $L^{\infty }$-bound of $\varphi .$
\end{remark}

The following separation property for the (weak) solutions given by Theorem %
\ref{degwell} was proven in \cite[Theorem 2.1]{LP} (see also \cite{LP2}).

\begin{theorem}
\label{sep_thm} Let assumptions of Theorem \ref{degwell} and let $\varphi $
be a weak solution. Then there exist positive constants $T_{0},C,\delta $
(with $\delta ,C$ indepedent of the initial data) such that%
\begin{equation}
\delta \leq \varphi \left( x,t\right) \leq 1-\delta ,\quad \text{ a.e. on }%
\Omega \times \left( T_{0},\infty \right)  \label{sep_deg}
\end{equation}%
and%
\begin{equation}
\sup_{t\geq T_{0}}\left\Vert \mu \left( t\right) \right\Vert _{L^{\infty
}\left( \Omega \right) }\leq C.  \label{chem_b}
\end{equation}
\end{theorem}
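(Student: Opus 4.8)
The plan is to exploit the fact that the specific form \eqref{mob} of the mobility cancels the singularity of $F''$. Since the logarithmic potential (H12) satisfies $F''(\varphi)=\big(\varphi(1-\varphi)\big)^{-1}$, one has $\kappa(x,\varphi)\nabla\mu = b(x,|\nabla\varphi|)\,\nabla\varphi + b(x,|\nabla\varphi|)\,\varphi(1-\varphi)\,\nabla w$, so that \eqref{degm}--\eqref{degmic} can be rewritten as the quasilinear, \emph{uniformly} parabolic Neumann problem
\begin{equation*}
\partial_t\varphi=\nabla\cdot\big(b(x,|\nabla\varphi|)\nabla\varphi+b(x,|\nabla\varphi|)\varphi(1-\varphi)\nabla w\big)\ \ \text{in }\Omega\times(0,\infty),\qquad \big(b\nabla\varphi+b\varphi(1-\varphi)\nabla w\big)\cdot\mathbf{n}=0\ \ \text{on }\Gamma\times(0,\infty),
\end{equation*}
where $w=a-2\,J\ast\varphi$ and, by \eqref{a1}--\eqref{a2} with $s_2=0$, $\alpha_0\le b(x,|s|)\le\alpha_1$. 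Since $0\le\varphi\le1$, assumption (H1) gives $\sup_{t\ge0}\|w(t)\|_{W^{1,\infty}(\Omega)}\le C$ with $C=C(\|J\|_{W^{1,1}})$; hence the divergence-form term $b\varphi(1-\varphi)\nabla w$ is bounded in $L^\infty$ and, pointwise, $|b\varphi(1-\varphi)\nabla w|\le C_*\,\varphi$ and $\le C_*\,(1-\varphi)$ with $C_*:=\alpha_1\|\nabla w\|_{L^\infty}$.

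First I would combine a weak Harnack inequality with the conservation of mass. Set $z:=1-\varphi\ge0$; using $z(1-z)\le z$ one checks that $z$ solves $\partial_t z=\nabla\cdot(b\nabla z+\mathbf g)$ with Neumann data, $\alpha_0\le b\le\alpha_1$ and $|\mathbf g|\le C_*z$, and symmetrically $\varphi\ge0$ solves such an equation with $|\mathbf g|\le C_*\varphi$. For $t\ge\tau>0$ the solution has the regularity provided by Theorem \ref{degwell}, so the De Giorgi--Nash--Moser theory (interior and up to the Lipschitz boundary, in conormal form) applies on unit-height space--time cylinders contained in $\overline\Omega\times[\tau,\infty)$. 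Absorbing the first-order-type perturbation $\nabla\cdot\mathbf g$ --- which is harmless over unit time steps since $|\mathbf g|$ is controlled by the unknown --- one obtains the weak Harnack estimate
\begin{equation*}
\inf_{\overline\Omega\times[t+2,\,t+3]}z\ \ge\ c\Big(\frac{1}{|\Omega|}\int_{t}^{t+1}\!\!\int_\Omega z^{p_0}\,dx\,ds\Big)^{1/p_0},\qquad\forall\,t\ge\tau,
\end{equation*}
for some $p_0\in(0,1)$ and $c=c(\Omega,\alpha_0,\alpha_1,C_*)>0$ (a fixed finite cover of $\overline\Omega$ keeps $c$ independent of $t$), together with the analogous inequality for $\varphi$.

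Mass conservation then closes the argument: $\langle\varphi(t)\rangle=\langle\varphi_0\rangle$ and $\varphi_0\in\mathcal{Y}_{m_1,m_2}$ give $\langle\varphi(t)\rangle\ge m_1$ and $\langle z(t)\rangle\ge1-m_2$ for every $t$. Since $0\le z\le1$ we have $z^{p_0}\ge z$, so $\int_t^{t+1}\!\!\int_\Omega z^{p_0}\ge|\Omega|(1-m_2)$, whence $\inf_{\overline\Omega\times[t+2,t+3]}z\ge c\,(1-m_2)^{1/p_0}=:\delta'>0$; symmetrically $\inf\varphi\ge\delta''>0$ on the same cylinders. Taking $T_0:=\tau+2$ and $\delta:=\min\{\delta',\delta''\}$ yields \eqref{sep_deg}. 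Finally $F'$ is continuous on $[\delta,1-\delta]$, so $\sup_{t\ge T_0}\|F'(\varphi(t))\|_{L^\infty}\le C$, and as $\|w(t)\|_{L^\infty}\le C$ uniformly, $\mu=F'(\varphi)+w$ obeys \eqref{chem_b}. The main obstacle is to carry out the weak Harnack / local boundedness estimate with constants \emph{uniform in $t$}: one must verify both that the perturbation $\nabla\cdot\mathbf g$ (only bounded by the unknown itself) can be absorbed without destroying the time-uniformity over unit steps, and that the quasilinear coefficient $b(x,|\nabla\varphi|)$, which depends on the solution's gradient, still fits the framework --- which it does, since only the ellipticity constants $\alpha_0,\alpha_1$ enter. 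Everything else is a routine consequence of the cancellation identity above, the mass conservation, and the regularity of Theorem \ref{degwell}; the complete argument is given in \cite[Theorem 2.1]{LP} (see also \cite{LP2}).
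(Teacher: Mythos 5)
Your argument is correct in substance, but it follows a genuinely different route from the one the paper relies on: the paper does not reprove Theorem \ref{sep_thm} at all, it invokes \cite[Theorem 2.1]{LP} (cf.\ Remark \ref{r1}), where separation is obtained by an Alikakos--Moser iteration applied to suitable powers of $\log\varphi$ and $\log(1-\varphi)$, the structural identity $\kappa F''=b$ entering through the iteration estimates, and the case of nonconstant $b$ handled via $\alpha_0\le b\le\alpha_1$. You exploit the same cancellation but in a different way: you recast \eqref{degm} as a uniformly parabolic divergence-form equation with measurable diffusion $A(x,t)=b(x,|\nabla\varphi|)\in[\alpha_0,\alpha_1]$ and a divergence-form drift $\nabla\cdot(\mathbf{b}\,\varphi)$ (resp.\ $\nabla\cdot(\mathbf{b}\,z)$ for $z=1-\varphi$) with $|\mathbf{b}|\le\alpha_1\|\nabla w\|_{L^\infty}$, and then combine the parabolic weak Harnack inequality (Aronson--Serrin/Trudinger structure, conormal version up to the Lipschitz boundary) with conservation of mass, $\langle\varphi\rangle\in[m_1,m_2]$. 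What each approach buys: your route is conceptually shorter, its constants involve no norm of the solution, and it therefore yields $\delta$ depending only on $m_1,m_2,\alpha_0,\alpha_1,\|J\|_{W^{1,1}},\Omega$ and a separation time that can be taken independent of the initial data (formally stronger than what is recorded in Remark \ref{r2}); the iteration of \cite{LP,LP2} is self-contained at the level of energy/test-function estimates, avoids importing boundary Harnack theory for merely measurable coefficients on a Lipschitz domain, and produces directly the $L^\infty$ bound on $F'(\varphi)$, from which \eqref{chem_b} follows as in your last step. Two points in your write-up deserve more care than the parenthetical you give them: (i) passing from the local weak Harnack estimate to $\inf_{\overline\Omega\times[t+2,t+3]}z\ge c\bigl(\fint z^{p_0}\bigr)^{1/p_0}$ with the average taken over all of $\Omega\times[t,t+1]$ requires a Harnack chain (pigeonhole gives one patch of the finite cover where the local average dominates the global one, and the bound must then be propagated forward in time along overlapping cylinders), or equivalently a uniform positive lower bound for the conormal heat kernel at unit time --- a bare finite cover is not enough, since the mass could concentrate away from a given patch; (ii) the up-to-the-boundary weak Harnack for conormal problems with bounded measurable coefficients and bounded divergence-form drift on a Lipschitz domain is true but should be cited precisely. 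Both ingredients are standard, so these are presentational rather than mathematical gaps, and the remaining steps (the identity $\kappa\nabla\mu=b\nabla\varphi+b\varphi(1-\varphi)\nabla w$, the bound $\sup_t\|w(t)\|_{W^{1,\infty}}\le C(\|J\|_{W^{1,1}})$, $z^{p_0}\ge z$ on $[0,1]$, and the deduction of \eqref{chem_b} from \eqref{sep_deg}) are all correct.
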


\begin{remark}
\label{r1}The proof of Theorem \ref{sep_thm} is given in \cite[Lemma
3.1-Lemma 3.3]{LP} (see also \cite{LP2}, for a simplifying argument),
assuming that $b\left( x,\left\vert s\right\vert \right) \equiv const,$ by
exploiting an Alikakos-Moser iteration scheme for suitable powers of the
functions $\log \left( 1-\varphi \right) $ and $\log \left( \varphi \right) .
$ The special form of $\kappa $ (see \eqref{mob}) plays an \emph{essential}
role in the calculations. The proofs in \cite{LP, LP2} can be easily adapted
without too much difficulty to the case of nonconstant functions $b\left(
x,\left\vert s\right\vert \right) $. Indeed, from assumptions (\ref{a1})-(%
\ref{a2}) there holds $0<\alpha _{0}\leq b\left( x,\left\vert s\right\vert
\right) \leq \alpha _{1}$, for almost any $\left( x,s\right) \in \Omega
\times \mathbb{R}^{d}$. The same bound (\ref{sep_deg}) can be deduced for
more general functions, that is,
\begin{equation*}
f\in C^{2}\left( 0,1\right) :f\text{ strictly convex, Image}(f^{^{\prime
}})^{-1}=\left[ 0,1\right] ,\text{ }\frac{1}{f^{^{\prime \prime }}}\text{
strictly concave,}
\end{equation*}%
see \cite{LP2}.
\end{remark}

\begin{remark}
\label{r2}Note that the separation property (\ref{sep_deg}) implies that the
problem (\ref{degm})-(\ref{degmic}) is non-degenerate for all $t\geq T_{0}$.
Indeed, we have
\begin{equation}
\alpha _{0}\delta \left( 1-\delta \right) \leq \kappa \left( \cdot ,\varphi
\right) \leq \alpha _{1},\quad \text{ a.e. in }\Omega \times \left(
T_{0},\infty \right) .  \label{nondm}
\end{equation}%
It is also worth mentioning that (\ref{sep_deg}) and (\ref{nondm}) hold
almost everywhere in $\Omega \times \left( \tau ,\infty \right) ,$ uniformly
with respect to bounded sets of initial data in $\mathcal{Y}_{m_{1},m_{2}}$.
More precisely, for every ball of radius $R$, there exists $T_{0}=T_{0}(R)>0$
such that \eqref{sep_deg} and \eqref{chem_b} hold (see \cite{LP}, \cite{LP2}%
).
\end{remark}

\begin{theorem}
Let the assumptions of Theorem \ref{degwell} be satisfied. Then there holds%
\begin{equation*}
\sup_{t\geq T_{0}}\left\Vert \varphi \right\Vert _{C^{\alpha /2,\alpha
}\left( \left[ t,t+1\right] \times \overline{\Omega }\right) }\leq C,
\end{equation*}%
for some $\alpha \in \left( 0,1\right) .$
\end{theorem}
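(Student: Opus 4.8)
The plan is to rewrite \eqref{degm}--\eqref{degmic} as a single quasilinear parabolic equation in divergence form with a conormal boundary condition, whose structure is uniformly parabolic for $t\geq T_{0}$, and then to invoke the De Giorgi--Nash--Moser type estimate \cite[Corollary 4.2]{Du}, exactly as was done in Lemma \ref{holderb}. First I would eliminate $\mu$ and $w$: combining the first two equations of \eqref{degm} gives $\mu=F'(\varphi)+w$ with $w(x,t)=\int_{\Omega}J(x-y)(1-2\varphi(y,t))\,dy$, hence $\nabla\mu=F''(\varphi)\nabla\varphi+\nabla w$; since $F''(s)=\frac{1}{s(1-s)}$, the special form \eqref{mob} of the mobility yields
\begin{equation*}
\kappa(\cdot,\varphi)\nabla\mu=b(x,|\nabla\varphi|)\,\nabla\varphi+b(x,|\nabla\varphi|)\,\varphi(1-\varphi)\,\nabla w=:\mathbf{a}(x,\varphi,\nabla\varphi).
\end{equation*}
Thus the third equation of \eqref{degm} together with the boundary condition $\kappa(\varphi)\partial_{\mathbf{n}}\mu=0$ becomes $\partial_{t}\varphi=\text{div}\,\mathbf{a}(x,\varphi,\nabla\varphi)$ with $(\mathbf{a}(x,\varphi,\nabla\varphi)\cdot\mathbf{n})_{\mid\Gamma}=0$, precisely the form treated in Lemma \ref{holderb}. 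Note that the diffusion part of $\mathbf{a}$ is already non-degenerate: $F''$ has been cancelled, and $b$ is a Carath\'{e}odory function.

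Next I would verify the required structure conditions on $\mathbf{a}$. From \eqref{a1}--\eqref{a2} one has the two-sided bound $0<\alpha_{0}\leq b(x,|s|)\leq\alpha_{1}$ a.e., while $|\nabla w(x,t)|\leq\|\nabla J\|_{L^{1}}\|1-2\varphi\|_{L^{\infty}}\leq\|\nabla J\|_{L^{1}}$ since $0\leq\varphi\leq1$; using also $\varphi(1-\varphi)\leq\tfrac14$ I then expect the coercivity and growth bounds
\begin{align*}
\mathbf{a}(x,\varphi,\nabla\varphi)\cdot\nabla\varphi&\geq\alpha_{0}|\nabla\varphi|^{2}-\tfrac{\alpha_{1}}{4}\|\nabla J\|_{L^{1}}|\nabla\varphi|\geq\tfrac{\alpha_{0}}{2}|\nabla\varphi|^{2}-C_{1},\\
|\mathbf{a}(x,\varphi,\nabla\varphi)|&\leq\alpha_{1}|\nabla\varphi|+\tfrac{\alpha_{1}}{4}\|\nabla J\|_{L^{1}}=:C_{2}|\nabla\varphi|+C_{3},
\end{align*}
with $C_{1},C_{2},C_{3}$ depending only on $\alpha_{0},\alpha_{1}$ and $\|J\|_{W^{1,1}}$. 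Moreover $\varphi$ is bounded, and on account of Theorem \ref{sep_thm} and Remark \ref{r2}, for $t\geq T_{0}$ all the relevant quantities are controlled uniformly in the initial datum: in particular \eqref{nondm} holds and, combined with the global dissipation estimate behind Theorem \ref{degwell} and the identity $\nabla\varphi=\varphi(1-\varphi)(\nabla\mu-\nabla w)$, it yields the uniform local energy bound $\sup_{t\geq T_{0}}\|\varphi\|_{L^{2}([t,t+1];V)}\leq C$, with $C$ independent of $t$ and of $\varphi_{0}$ on bounded sets of $\mathcal{Y}_{m_{1},m_{2}}$. These are exactly the hypotheses of \cite[Corollary 4.2]{Du}; applying that result on each cylinder $[t,t+1]\times\overline{\Omega}$ for $t\geq T_{0}$ then delivers $\sup_{t\geq T_{0}}\|\varphi\|_{C^{\alpha/2,\alpha}([t,t+1]\times\overline{\Omega})}\leq C$ for some $\alpha\in(0,1)$.

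The genuinely delicate point is not the parabolic regularity input — which is identical in spirit to Lemma \ref{holderb} — but the \emph{uniformity} of every constant entering the structure conditions and the data for \cite{Du}, i.e. the local $V$-bound on $\varphi$ and the lower bound \eqref{nondm}, for all $t\geq T_{0}$ and independently of the initial datum. The bound on $\|\nabla w\|_{L^{\infty}}$ is immediate from $J\in W^{1,1}(\mathbb{R}^{d})$ and $0\leq\varphi\leq1$; the non-degeneracy and the uniform local energy bound are the content, for $t\geq T_{0}$, of the separation property together with the energy identity. Since $T_{0}$ itself can be taken uniform over bounded sets of initial data (Remark \ref{r2}), all the resulting H\"{o}lder constants are uniform, which is what the statement asserts.
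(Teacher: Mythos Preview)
Your proposal is correct and takes essentially the same approach as the paper: rewrite the equation in divergence form $\partial_t\varphi=\text{div}\,\mathbf{a}(x,\varphi,\nabla\varphi)$ with $\mathbf{a}=b(\cdot,|\nabla\varphi|)\nabla\varphi+\kappa(\cdot,\varphi)\nabla w$, verify the coercivity and growth structure conditions, and invoke \cite[Corollary 4.2]{Du}. The paper's proof is terser---it simply cites Remarks \ref{r1} and \ref{r2} for the structure bounds with constants $C_\delta$---while you spell out that the pointwise bounds already follow from $0\leq\varphi\leq1$ (via $\varphi(1-\varphi)\leq\tfrac14$ and $|\nabla w|\leq\|\nabla J\|_{L^1}$) and add a word about the local $L^2(V)$ energy input, but the argument is the same.
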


\begin{proof}
As in the proof of Lemma \ref{holderb}, we can rewrite (\ref{degm}) as (\ref%
{sysnew1}) on $\,t\in \left( T_{0},\infty \right) ,$ for the function%
\begin{equation*}
a\left( x,\varphi ,\nabla \varphi \right) :=b\left( x,\left\vert \nabla
\varphi \right\vert \right) \nabla \varphi +\kappa \left( x,\varphi \right)
\nabla w.
\end{equation*}%
Notice that, from Remarks \ref{r1} and \ref{r2}, we have $a\left( x,\varphi
,\nabla \varphi \right) \nabla \varphi \geq \frac{\alpha _{0}}{2}\left\vert
\nabla \varphi \right\vert ^{2}-C_{\delta }$ and $\left\vert a\left(
x,\varphi ,\nabla \varphi \right) \right\vert \leq \alpha _{1}\left\vert
\nabla \varphi \right\vert +C_{\delta };$ hence \cite[Corollary 4.2]{Du}
still applies and this entails the desired estimate.
\end{proof}

The main result of this section is contained in the following

\begin{theorem}
\label{expo_log}Let the assumptions of Theorem \ref{degwell} be satisfied.
There exists an exponential attractor $\mathcal{M}$ bounded in $C^{\alpha
}\left( \overline{\Omega }\right) ,\alpha \in \left( 0,1\right) $ and
compact in $H,$ for the dynamical system $\left( \mathcal{Y}%
_{m_{1},m_{2}},S\left( t\right) \right) $ associated with (\ref{degm})-(\ref%
{degmic}), satisfying the following properties:

(i) Semi-invariance: $S\left( t\right) \mathcal{M}\subset \mathcal{M}$, for
every $t\geq 0.$

(ii) The separation property (\ref{sep_deg}) holds for every $\varphi \in
\mathcal{M}$.

(iii) Exponential attraction:%
\begin{equation*}
dist_{L^{s}\left( \Omega \right) }\left( S\left( t\right) \mathcal{Y}%
_{m_1,m_2},\mathcal{M}\right) \leq Ce^{-\lambda t},
\end{equation*}%
for some positive constants $C$, $\lambda $ and any $s\in \left[ 2,\infty
\right) .$

(iv) Finite dimensionality:%
\begin{equation*}
\dim _{F}(\mathcal{M},V^{\prime})\leq C<\infty .
\end{equation*}
\end{theorem}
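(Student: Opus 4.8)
The plan is to mimic the strategy used for Theorem \ref{expo} and Theorem \ref{expo_2}, namely to verify the hypotheses of the abstract Proposition \ref{abstract} on a suitable semi-invariant absorbing set. By Theorem \ref{sep_thm} and Remark \ref{r2}, for every bounded set of initial data there is a time $T_{0}$ after which the separation property $\delta\leq\varphi\leq 1-\delta$ and the bound $\|\mu(t)\|_{L^{\infty}(\Omega)}\leq C$ hold, so that $\kappa(\cdot,\varphi)$ is bounded away from $0$ and from above by \eqref{nondm}. First I would combine this with the H\"older estimate proven just above (via \cite[Corollary 4.2]{Du}) to produce an absorbing set $\mathbb{B}$ which is bounded in $C^{\alpha}(\overline{\Omega})$, contained in $\mathcal{Y}_{m_{1},m_{2}}$, satisfies the separation property, and can be rendered semi-invariant by the now-standard device of taking the closure of $\cup_{t\geq 0}S(t)B_{0}$ in the $V^{\prime}$-topology and then applying $S(1)$. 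On $\mathbb{B}$ one has uniform bounds on $\|\varphi(t)\|_{C^{\alpha}(\overline{\Omega})}$, on $\|\mu(t)\|_{L^{\infty}(\Omega)\cap V}$ and, by comparison in \eqref{degm}, on $\|\partial_{t}\varphi(t)\|_{V^{\prime}}$.

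Next I would establish the three analytic ingredients. (1) A continuous dependence / contraction estimate: testing the difference equation for $\varphi=\varphi_{1}-\varphi_{2}$ with $A_{N}^{-1}(\varphi-\langle\varphi\rangle)$ and using the strong monotonicity \eqref{a1} of $s\mapsto b(x,|s|)s$ together with $F^{\prime\prime}>0$ and the separation property (so that $F^{\prime}(\varphi_{1})-F^{\prime}(\varphi_{2})$ is Lipschitz on $[\delta,1-\delta]$), one obtains, as in Lemma \ref{dec},
\begin{equation*}
\|\varphi_{1}(t)-\varphi_{2}(t)\|_{V^{\prime}}^{2}+C|M_{1}-M_{2}|\leq e^{-\kappa t}\big(\|\varphi_{1}(0)-\varphi_{2}(0)\|_{V^{\prime}}^{2}+C|M_{1}-M_{2}|\big)+C_{T_{0}}\int_{0}^{t}\big(\|\varphi_{1}(s)-\varphi_{2}(s)\|_{V^{\prime}}^{2}+|M_{1}-M_{2}|\big)ds,
\end{equation*}
which gives \eqref{gl2} with $\mathcal{H}=V_{m}^{\prime}$ and $\mathcal{V}=L^{2}([0,T];V_{m}^{\prime})$. (2) A Lipschitz estimate for the solving map $\mathbb{T}\varphi_{0}=\varphi$ into $\mathcal{V}_{1}=L^{2}([0,T];H)\cap H^{1}([0,T];D(A_{N})^{\prime})$: the $L^{2}(H)$ part follows by interpolating $[V,V^{\prime}]_{1/2,2}=H$ against the $L^{2}(V)$-bound and the above $V^{\prime}$-estimate (using the non-degeneracy to get a coercive $L^{2}(0,T;H)$ term for the difference), and the $\partial_{t}$ part follows by comparison exactly as in Lemma \ref{lipdif}; this yields \eqref{gl1}. (3) Uniform H\"older continuity in time of $t\mapsto S(t)\varphi_{0}$ in $V^{\prime}$ (and hence in $H$ after interpolation with the $C^{\alpha}$-bound), which comes immediately from the uniform $L^{\infty}([T_{0},\infty);V^{\prime})$ bound on $\partial_{t}\varphi$.

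With these in hand, Proposition \ref{abstract} applied to $\mathbb{S}=S(T)$, for $T$ large enough that $e^{-\kappa T}<1/2$, produces a discrete exponential attractor $\mathcal{M}_{d}\subset\mathbb{B}$ in the $V^{\prime}$-topology; the continuous-time attractor is then $\mathcal{M}=\cup_{t\in[0,T]}S(t)\mathcal{M}_{d}$, using the H\"older continuity of $(t,\varphi_{0})\mapsto S(t)\varphi_{0}$ on $[0,T]\times\mathbb{B}$. Since $\mathbb{B}$ is bounded in $C^{\alpha}(\overline{\Omega})$ and satisfies the separation property, the $L^{2}$–$C^{\alpha}$ smoothing property of the solution map upgrades the attraction from $V^{\prime}$ to $C^{\alpha}(\overline{\Omega})$ for the attractor itself, and the interpolation inequality $\|u\|_{L^{s}(\Omega)}\leq C_{s}\|u\|_{H}^{\nu_{s}}\|u\|_{L^{\infty}(\Omega)}^{1-\nu_{s}}$ together with the uniform $L^{\infty}$ bound gives the $L^{s}$-attraction stated in (iii); finite dimensionality in $V^{\prime}$ (equivalently in $H$, by smoothing) is automatic from Proposition \ref{abstract}, and (i)–(ii) are built into the construction. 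The main obstacle I anticipate is the contraction/Lipschitz estimates: unlike the viscous case, there is no $\alpha\|\varphi\|_{H}^{2}$ regularizing term, so one must genuinely use the quasilinear monotonicity \eqref{a1} of the degenerate diffusion and the fact that non-degeneracy \eqref{nondm} only holds after $T_{0}$ — hence all estimates must be carried out on $[T_{0},\infty)$ and the absorbing set must be chosen inside the separated region before the abstract machinery can be invoked.
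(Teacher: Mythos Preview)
Your overall architecture is right, but the choice of metric $\mathcal{H}=V'_{m}$ and the test function $A_{N}^{-1}(\varphi-\langle\varphi\rangle)$ for the contraction estimate is where the proposal breaks down. The monotonicity \eqref{a1} is a statement about $(b(x,|s_{1}|)s_{1}-b(x,|s_{2}|)s_{2})\cdot(s_{1}-s_{2})$ with $s_{i}=\nabla\varphi_{i}$; it only produces a coercive term if you pair against $\nabla(\varphi_{1}-\varphi_{2})$, i.e.\ if you test the difference equation with $\psi=\varphi_{1}-\varphi_{2}$ itself. Testing with $A_{N}^{-1}(\varphi-\langle\varphi\rangle)$ puts $\nabla A_{N}^{-1}\varphi$ in the second slot, and then \eqref{a1} gives you nothing: the principal term $(b(\cdot,|\nabla\varphi_{1}|)\nabla\varphi_{1}-b(\cdot,|\nabla\varphi_{2}|)\nabla\varphi_{2},\nabla A_{N}^{-1}\varphi)$ has no sign and can only be bounded by $\alpha_{1}\|\nabla\varphi\|_{H}\|\varphi\|_{V'}$, which is useless for a $V'$-contraction. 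The $V'$-approach of Sections~2 and~3 worked only because the mobility was constant, so that integrating by parts produced $(\overline{\mu},\varphi)$ and the monotonicity of $F'$ could be invoked pointwise; with the quasilinear flux $\kappa(\cdot,\varphi)\nabla\mu$ this collapses.

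The paper therefore takes $\mathcal{H}=H$, tests the difference directly with $\varphi_{1}-\varphi_{2}$ (Proposition~\ref{contdep} and Lemma~\ref{dec_deg}), and uses \eqref{a1} to get $\alpha_{0}\|\nabla(\varphi_{1}-\varphi_{2})\|_{H}^{2}$ on the left. This yields both the $H$-contraction \eqref{diff1} and the $L^{2}(0,T;V)$ Lipschitz bound for $\mathbb{T}$, so the compact pair is $\mathcal{V}_{1}=L^{2}(0,T;V)\cap H^{1}(0,T;V')\hookrightarrow\hookrightarrow\mathcal{V}=L^{2}(0,T;H)$, one level stronger than yours. The discrete attractor is then finite-dimensional in $H$; the passage to continuous time uses only the $V'$-H\"older estimate of Lemma~\ref{hctime_deg} (since a pointwise-in-time $\partial_{t}\varphi\in V'$ bound is not available here, only $L^{2}_{t}$), which is why the final fractal-dimension statement is in $V'$. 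Everything else in your plan---the absorbing set inside the separated region, the $C^{\alpha}$ boundedness, the $L^{s}$ attraction via interpolation---matches the paper.
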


Consequently, we also have the following

\begin{corollary}
Let the assumptions of Theorem \ref{expo_log} be satisfied. The problem (\ref%
{degm})-(\ref{degmic}) possesses a finite dimensional global attractor $%
\mathcal{A}$, $\dim _{F}(\mathcal{A},V^{\prime})<\infty $.
\end{corollary}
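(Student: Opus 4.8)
The plan is to follow the strategy already employed for Theorems~\ref{expo} and \ref{expo_2}, namely to verify the hypotheses of the abstract result Proposition~\ref{abstract} on a conveniently chosen semi-invariant absorbing set. The key point is that, by the separation property of Theorem~\ref{sep_thm} (and Remarks~\ref{r1}, \ref{r2}), the problem \eqref{degm}--\eqref{degmic} becomes uniformly \emph{non-degenerate} for $t\geq T_0$: indeed $\alpha_0\delta(1-\delta)\leq\kappa(\cdot,\varphi)\leq\alpha_1$ by \eqref{nondm}, while $F'$ and $F''$ are bounded and smooth on the separated interval $[\delta,1-\delta]$. Past $T_0$ the equation thus behaves like a quasilinear parabolic equation with good coefficients, so the machinery of Section~2.2 can be transplanted. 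As a first step, combining the $C^{\alpha/2,\alpha}$-regularity estimate proved just above the statement with the separation \eqref{sep_deg} and the chemical-potential bound \eqref{chem_b}, one obtains an absorbing ball bounded in $C^{\alpha}(\overline\Omega)$ consisting of functions valued in $[\delta,1-\delta]$; pushing it forward by the semiflow and taking closure in $V'_{m_1,m_2}$ (as in the construction of $\mathbb{B}$ in the proof of Theorem~\ref{expo}) produces a set $\mathbb{B}$ which is semi-invariant, compact in $V'$ and bounded in $C^{\alpha}(\overline\Omega)$, and along whose trajectories $\sup_{t\geq0}\big(\|\varphi(t)\|_{C^{\alpha}(\overline\Omega)}+\|\mu(t)\|_{L^{\infty}(\Omega)}+\|F'(\varphi(t))\|_{L^{\infty}(\Omega)}\big)\leq C$ and $\sup_{t\geq0}\|\partial_t\varphi\|_{L^2([t,t+1];V')}\leq C$ hold uniformly.

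On $\mathbb{B}$ I would then reproduce the four ingredients underlying Section~2.2. (i)~A continuous-dependence estimate for $\varphi_0\mapsto S(t)\varphi_0$ in the $V'$-metric, of the type of Lemma~\ref{unising} and Proposition~\ref{uniq}, obtained by testing the equation for $\varphi_1-\varphi_2$ with the inverse Neumann Laplacian of its mean-free part (or with $\varphi_1-\varphi_2$ itself on the non-degenerate regime) and controlling the flux difference $b(\cdot,|\nabla\varphi_1|)\nabla\varphi_1-b(\cdot,|\nabla\varphi_2|)\nabla\varphi_2+\kappa(\cdot,\varphi_1)\nabla w_1-\kappa(\cdot,\varphi_2)\nabla w_2$ by means of the strong monotonicity \eqref{a1}, the Lipschitz bound \eqref{a2}, the smoothing properties of $J\in W^{1,1}(\mathbb{R}^d)$, and the Lipschitz continuity of $F'$, $F''$ on $[\delta,1-\delta]$, as already indicated in Remark~\ref{r1}. (ii)~A ``contraction up to a lower-order term'' estimate of the form \eqref{decay1}, giving $\|S(T)\varphi_1-S(T)\varphi_2\|_{V'}^2\leq\gamma\|\varphi_1-\varphi_2\|_{V'}^2+C\|\varphi_1-\varphi_2\|_{L^2(0,T;V')}^2$ with $\gamma<1/2$ once $T$ is large enough, following Lemma~\ref{dec}. (iii)~A regularization estimate controlling $\partial_t(\varphi_1-\varphi_2)$ in $L^2(0,T;D(A_N)')$ by $\|\varphi_1(0)-\varphi_2(0)\|_{V'}$, as in Lemma~\ref{lipdif}, so that the solving operator $\mathbb{T}\varphi_0:=\varphi$ maps $\mathbb{B}$ Lipschitz-continuously into the space $\mathcal{V}_1$ of \eqref{fsp}. (iv)~Uniform $\tfrac12$-H\"older continuity in time of $t\mapsto S(t)\varphi_0$ in the $V'$-metric, which follows at once from the uniform bound on $\partial_t\varphi$ in $L^2(V')$; combined with (i) this yields joint H\"older continuity of $(t,\varphi_0)\mapsto S(t)\varphi_0$ on $[0,T]\times\mathbb{B}$.

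With these at hand, I would apply Proposition~\ref{abstract} with $\mathcal{H}=V'_{m_1,m_2}$, with $\mathcal{V}_1,\mathcal{V}$ as in \eqref{fsp}, $\mathbb{S}=S(T)$ for $T$ chosen so that $\gamma<1/2$ in (ii), and $\mathbb{T}$ as above; assumptions \eqref{gl1}--\eqref{gl2} are then precisely (iii) and (ii). This yields a discrete exponential attractor $\mathcal{M}_d\subset\mathbb{B}$ for $\{S(nT)\}$ in the $V'$-topology, and the continuous one is $\mathcal{M}=\bigcup_{t\in[0,T]}S(t)\mathcal{M}_d$ as in \eqref{st}, using (iv). Property~(i) of the theorem is the semi-invariance built into $\mathcal{M}$; property~(ii) holds since $\mathcal{M}\subset\mathbb{B}$ and every element of $\mathbb{B}$ obeys \eqref{sep_deg}; property~(iv) is part of the abstract conclusion; and property~(iii)---attraction in $L^s(\Omega)$ for $s\in[2,\infty)$---follows from the $V'$-attraction by first upgrading it to $H$ via a suitable interpolation inequality between $V'$ and $C^{\alpha}(\overline\Omega)$ (both $S(t)\mathcal{Y}_{m_1,m_2}$, eventually, and $\mathcal{M}$ being bounded in $C^{\alpha}(\overline\Omega)$), and then to $L^s$ via $\|u\|_{L^s(\Omega)}\leq C_s\|u\|_H^{\nu_s}\|u\|_{L^{\infty}(\Omega)}^{1-\nu_s}$. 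The Corollary is then immediate because $\mathcal{A}\subset\mathcal{M}$, so $\mathcal{A}$ inherits the $C^{\alpha}$-bound, the separation and $\dim_F(\mathcal{A},V')\leq\dim_F(\mathcal{M},V')<\infty$.

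I expect the main obstacle to be ingredient (i), the continuous-dependence/smoothing estimates: although the separation property removes the degeneracy of $\kappa$, the principal part $b(x,|\nabla\varphi|)\nabla\varphi$ of the flux is only \emph{monotone}---not linear---and $\kappa$ itself depends on $\nabla\varphi$ through $b$, so these estimates for the difference of two solutions are genuinely more delicate than their constant-mobility counterparts in Lemmas~\ref{dec}--\ref{lipdif}; this is exactly where the structural assumptions \eqref{a1}--\eqref{a2} must be used, together with the boundedness of $\mu$ and of $\varphi$ away from the pure states, and it is also the reason the abstract result has to be applied in the weak $V'$-metric rather than in $H$.
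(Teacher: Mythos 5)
Two separate things are going on in your proposal. The corollary itself requires only your closing deduction, and that part is fine and is exactly how the paper reads it: Theorem \ref{expo_log} provides a compact (in $H$), semi-invariant, exponentially attracting set $\mathcal{M}$ with $\dim_F(\mathcal{M},V^{\prime})<\infty$; a compact attracting set for a continuous semigroup yields a global attractor $\mathcal{A}\subset\mathcal{M}$, which then inherits the finite fractal dimension in $V^{\prime}$. Had you simply invoked the theorem (whose hypotheses are assumed in the corollary), you would be done in two lines.

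The bulk of your text, however, re-proves Theorem \ref{expo_log}, and there your route has a genuine flaw: you insist on applying Proposition \ref{abstract} with $\mathcal{H}$ a $V^{\prime}$-type space and the spaces of \eqref{fsp}, which would require a contraction estimate in the $V^{\prime}$-norm and a smoothing estimate bounding the difference of trajectories in $L^{2}(H)\cap H^{1}(D(A_N)^{\prime})$ by $\left\Vert \varphi_{1}(0)-\varphi_{2}(0)\right\Vert_{V^{\prime}}$. For the degenerate-mobility equation such estimates are not obtainable with the structure at hand: the flux $b(x,|\nabla\varphi|)\nabla\varphi+\kappa(x,\varphi)\nabla w$ is not the gradient of a function of $\varphi$, so testing the difference equation with $\mathcal{N}$ applied to the mean-free difference produces no coercive term, and the monotonicity \eqref{a1} (together with \eqref{a2} and \eqref{nondm}) pays off only when the test function is $\varphi_{1}-\varphi_{2}$ itself — whence all the available estimates come out in $H$. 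This is precisely why the paper proves Proposition \ref{contdep} and Lemma \ref{dec_deg} (estimates \eqref{diff1}--\eqref{diff2}) in the $H$-metric and applies Proposition \ref{abstract} with $\mathcal{H}=H$, $\mathcal{V}_{1}=L^{2}([0,T];V)\cap H^{1}([0,T];V^{\prime})$, $\mathcal{V}=L^{2}([0,T];H)$, so that \eqref{gl1}--\eqref{gl2} are exactly \eqref{diff2} and \eqref{diff1}. Your closing remark that the quasilinear principal part forces one into the weak $V^{\prime}$-metric is therefore backwards: the separation property \eqref{sep_deg} makes the $H$-theory work, and the $V^{\prime}$-metric enters only through the time-regularity (Lemma \ref{hctime_deg} gives H\"older continuity of $t\mapsto S(t)\varphi_{0}$ only in $V^{\prime}$), which is the sole reason the fractal dimension of $\mathcal{M}$ — and hence of $\mathcal{A}$ in the corollary — is measured in $V^{\prime}$ rather than in $H$.
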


In what follows, we derive as in Sections 2 and 3 some basic properties of $%
S\left( t\right) $ which will be useful in order to establish the existence
of an exponential attractor. The following proposition shows that the
semigroup $S\left( t\right) $ is Lipschitz continuous in the $H$-norm with
respect to the initial data.

\begin{proposition}
\label{contdep}Let $\varphi _{i},$ $i=1,2$, be a pair of weak solutions
corresponding to a pair of initial data $\varphi _{i}\left( 0\right) $,
satisfying the assumptions of Theorem \ref{degwell}. Then there holds%
\begin{equation}
\left\Vert \varphi _{1}\left( t\right) -\varphi _{2}\left( t\right)
\right\Vert _{H}^{2}+\int_{0}^{t}\left\Vert \varphi _{1}\left( s\right)
-\varphi _{2}\left( s\right) \right\Vert _{V}^{2}ds\leq \lambda e^{\lambda
t}\left\Vert \varphi _{1}\left( 0\right) -\varphi _{2}\left( 0\right)
\right\Vert _{H}^{2},  \label{Lip_deg}
\end{equation}%
for all $t\geq 0$, for some positive constant $\lambda $ independent of $t.$
\end{proposition}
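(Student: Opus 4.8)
The plan is to estimate the difference $\varphi:=\varphi_{1}-\varphi_{2}$ directly in the $H$-norm, using the strong monotonicity of the leading quasilinear term to generate the $V$-norm dissipation on the left-hand side of (\ref{Lip_deg}). Subtracting the two weak formulations (\ref{degwf}) and testing with $\psi=\varphi$ --- legitimate at the level of the regularized/Faedo--Galerkin approximations used to build the solutions, the resulting estimate being stable under passage to the limit; note that $\langle\varphi(t)\rangle=\langle\varphi_{1}(0)\rangle-\langle\varphi_{2}(0)\rangle$ is conserved, so no mean term appears here --- one obtains
\begin{equation*}
\frac{1}{2}\frac{d}{dt}\left\Vert \varphi\right\Vert _{H}^{2}+\left( \kappa(\cdot,\varphi_{1})\nabla\mu_{1}-\kappa(\cdot,\varphi_{2})\nabla\mu_{2},\nabla\varphi\right) =0 .
\end{equation*}

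Using $\mu_{i}=F^{\prime}(\varphi_{i})+w_{i}$ together with the special form (\ref{mob}) of the mobility, so that $\kappa(\cdot,\varphi_{i})F^{\prime\prime}(\varphi_{i})=b(\cdot,|\nabla\varphi_{i}|)$, the flux splits as $\kappa(\cdot,\varphi_{i})\nabla\mu_{i}=b(\cdot,|\nabla\varphi_{i}|)\nabla\varphi_{i}+\kappa(\cdot,\varphi_{i})\nabla w_{i}$. By the strong monotonicity hypothesis (\ref{a1}), applied pointwise with $s_{1}=\nabla\varphi_{1}(x)$ and $s_{2}=\nabla\varphi_{2}(x)$, the pairing of $b(\cdot,|\nabla\varphi_{1}|)\nabla\varphi_{1}-b(\cdot,|\nabla\varphi_{2}|)\nabla\varphi_{2}$ against $\nabla\varphi=\nabla\varphi_{1}-\nabla\varphi_{2}$ is bounded below by $\alpha_{0}\Vert\nabla\varphi\Vert_{H}^{2}$; this is the coercive term.

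It remains to estimate the nonlocal term $\left( \kappa(\cdot,\varphi_{1})\nabla w_{1}-\kappa(\cdot,\varphi_{2})\nabla w_{2},\nabla\varphi\right)$, for which I would use the following elementary facts (recall $0\leq\varphi_{i}\leq1$ a.e. and $0<\alpha_{0}\leq b\leq\alpha_{1}$, cf. Remark \ref{r1}): first, $F^{\prime\prime}(\varphi_{i})=[\varphi_{i}(1-\varphi_{i})]^{-1}\geq4$, hence $0\leq\kappa(\cdot,\varphi_{i})=b(\cdot,|\nabla\varphi_{i}|)\,\varphi_{i}(1-\varphi_{i})\leq\alpha_{1}/4$; second, from the definition of $w_{i}$ and $J\in W^{1,1}(\mathbb{R}^{d})$ one gets $\Vert\nabla w_{i}\Vert_{L^{\infty}(\Omega)}\leq\Vert\nabla J\Vert_{L^{1}(\mathbb{R}^{d})}$ and, by Young's convolution inequality, $\Vert\nabla w_{1}-\nabla w_{2}\Vert_{H}\leq2\Vert\nabla J\Vert_{L^{1}(\mathbb{R}^{d})}\Vert\varphi\Vert_{H}$; third, since $\varphi_{1}(1-\varphi_{1})-\varphi_{2}(1-\varphi_{2})=(\varphi_{1}-\varphi_{2})(1-\varphi_{1}-\varphi_{2})$ with $|1-\varphi_{1}-\varphi_{2}|\leq1$, one has the pointwise bound $|\kappa(\cdot,\varphi_{1})-\kappa(\cdot,\varphi_{2})|\leq\alpha_{1}|\varphi|$ when $b$ does not depend on $|\nabla\varphi|$. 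Writing $\kappa_{1}\nabla w_{1}-\kappa_{2}\nabla w_{2}=\kappa_{1}(\nabla w_{1}-\nabla w_{2})+(\kappa_{1}-\kappa_{2})\nabla w_{2}$ and inserting these bounds, the nonlocal term is $\leq C\Vert\varphi\Vert_{H}\Vert\nabla\varphi\Vert_{H}$, which by Young's inequality is absorbed as $\frac{\alpha_{0}}{2}\Vert\nabla\varphi\Vert_{H}^{2}+C\Vert\varphi\Vert_{H}^{2}$. Altogether this gives $\frac{d}{dt}\Vert\varphi\Vert_{H}^{2}+\alpha_{0}\Vert\nabla\varphi\Vert_{H}^{2}\leq C\Vert\varphi\Vert_{H}^{2}$, and Gronwall's inequality followed by integration in time yields (\ref{Lip_deg}).

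The step I expect to be the main obstacle is the treatment of $\kappa(\cdot,\varphi_{1})-\kappa(\cdot,\varphi_{2})$ when $b$ genuinely depends on $|\nabla\varphi|$: then this difference also contains $b(\cdot,|\nabla\varphi_{1}|)-b(\cdot,|\nabla\varphi_{2}|)$, which is only controlled by $\Vert\nabla\varphi\Vert_{H}$ --- with a constant not necessarily small relative to $\alpha_{0}$ --- rather than by $\Vert\varphi\Vert_{H}$, so the naive absorption fails. For $b$ depending only on $x$ (the case of the main references) the argument above is complete; in general I would argue exactly as in \cite[Lemma 2.1]{LP} (see also \cite{LP2}), exploiting (\ref{a1})--(\ref{a2}), the strict concavity of $1/F^{\prime\prime}$, and the separation property of Theorem \ref{sep_thm} to tame the mobility difference. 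Finally, the whole formal computation is made rigorous by performing it on the approximating problems and passing to the limit in the regularization parameters; this also delivers $\varphi\in C([0,T];H)$.
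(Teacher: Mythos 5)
Your argument is essentially the paper's own proof. The paper likewise tests the difference of the two weak formulations (\ref{degwf}) with $\psi=\varphi_{1}-\varphi_{2}$, uses the structure $\kappa(\cdot,\varphi_{i})F^{\prime\prime}(\varphi_{i})=b(\cdot,|\nabla\varphi_{i}|)$ to split the flux into $b(\cdot,|\nabla\varphi_{i}|)\nabla\varphi_{i}+b(\cdot,|\nabla\varphi_{i}|)\varphi_{i}(1-\varphi_{i})\nabla w_{i}$ (its identity (\ref{indeg1})), invokes (\ref{a1}) for the coercive term $\alpha_{0}\Vert\nabla(\varphi_{1}-\varphi_{2})\Vert_{H}^{2}$, bounds the nonlocal right-hand side using $0\leq\varphi_{i}\leq1$ and the uniform bound $\sup_{t\geq0}\Vert\nabla w_{i}(t)\Vert_{L^{\infty}(\Omega)}\leq C$ (its (\ref{bdeg1})), and concludes with Gronwall via (\ref{indeg2}); your explicit estimates ($\kappa(\cdot,\varphi_{i})\leq\alpha_{1}/4$, $\Vert\nabla w_{1}-\nabla w_{2}\Vert_{H}\leq2\Vert\nabla J\Vert_{L^{1}}\Vert\varphi_{1}-\varphi_{2}\Vert_{H}$, $|\kappa(\cdot,\varphi_{1})-\kappa(\cdot,\varphi_{2})|\leq\alpha_{1}|\varphi_{1}-\varphi_{2}|$) are exactly the computations needed to justify (\ref{indeg2}), and your Galerkin/approximation justification of the testing matches the paper's implicit procedure.

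Two remarks on the point you flag. First, your concern about the difference $b(\cdot,|\nabla\varphi_{1}|)-b(\cdot,|\nabla\varphi_{2}|)$ is legitimate, but note that the paper's proof does not treat it explicitly either: it simply asserts that (\ref{a1})--(\ref{a2}) together with (\ref{bdeg1}) yield (\ref{indeg2}) (cf.\ Remark \ref{r1}, where the adaptation from the constant-$b$ case of \cite{LP,LP2} is declared straightforward). So on this point you are not less complete than the paper, only more candid. Second, however, your proposed fallback for the gradient-dependent case is not viable as stated: the separation property of Theorem \ref{sep_thm} holds only for $t\geq T_{0}$, with $T_{0}$ depending on the data, whereas (\ref{Lip_deg}) is claimed for all $t\geq0$ (it is what gives uniqueness and continuity in $H$ down to $t=0$), and \cite[Lemma 2.1]{LP} itself concerns $b\equiv\mathrm{const}$; so deferring to those tools does not close that case. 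Within the paper's framework the term involving $b(\cdot,|\nabla\varphi_{1}|)-b(\cdot,|\nabla\varphi_{2}|)$ is meant to be handled directly from (\ref{a1})--(\ref{a2}) on all of $[0,t]$, which is precisely the step the paper leaves implicit; if you want to go beyond the case $b=b_{0}(x)$ you would have to supply that estimate rather than appeal to the separation property.
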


\begin{proof}
According to Definition \ref{defdeg}, we have that $\varphi :=\varphi
_{1}-\varphi _{2}$ satisfies the following weak formulation%
\begin{equation}
\left\langle \partial _{t}\varphi ,\psi \right\rangle +\left( \kappa \left(
\cdot ,\varphi _{1}\right) \nabla \mu _{1}-\kappa \left( \cdot ,\varphi
_{2}\right) \nabla \mu _{2},\nabla \psi \right) =0\text{, }\forall \psi \in V%
\text{, a.e. on }\left( 0,T\right) ,  \label{wfdiff}
\end{equation}%
where $\mu _{i}=F^{\prime }\left( \varphi _{i}\right) +w_{i}$. Testing
equation (\ref{wfdiff}) with $\psi =\varphi _{1}-\varphi _{2},$ we deduce%
\begin{align}
& \frac{1}{2}\frac{d}{dt}\left\Vert \varphi _{1}-\varphi _{2}\right\Vert
_{H}^{2} +\left( b\left( \cdot ,\left\vert \nabla \varphi _{1}\right\vert
\right) \nabla \varphi _{1}-b\left( \cdot ,\left\vert \nabla \varphi
_{2}\right\vert \right) \nabla \varphi _{2},\nabla \varphi _{1}-\nabla
\varphi _{2}\right)  \label{indeg1} \\
& =-\left( b\left( \cdot ,\left\vert \nabla \varphi _{1}\right\vert \right)
\varphi _{1}\left( 1-\varphi _{1}\right) \nabla w_{1}-b\left( \cdot
,\left\vert \nabla \varphi _{2}\right\vert \right) \varphi _{2}\left(
1-\varphi _{2}\right) \nabla w_{2},\nabla \varphi _{1}-\nabla \varphi
_{2}\right) ,  \notag
\end{align}%
for all $t\geq 0$. Recall that, on account of Theorem \ref{degwell}, $%
\varphi _{i}$ are bounded. Therefore, thanks to (H1), we have the following
a priori bounds:
\begin{equation}
\sup_{t\geq 0}\left\Vert \nabla w_{i}\left( t\right) \right\Vert _{L^{\infty
}\left( \Omega \right) }\leq C,\text{ }i=1,2,  \label{bdeg1}
\end{equation}%
for a suitable positive constant \thinspace $C$. Thus, in view of
assumptions (\ref{a1}) and (\ref{a2}) (see, also, Remark \ref{r1}), and
exploiting (\ref{bdeg1}), we obtain%
\begin{align}
& \frac{1}{2}\frac{d}{dt}\left\Vert \varphi _{1}\left( t\right) -\varphi
_{2}\left( t\right) \right\Vert _{H}^{2}+\alpha _{0}\left\Vert \nabla \left(
\varphi _{1}\left( t\right) -\varphi _{2}\left( t\right) \right) \right\Vert
_{H}^{2}  \label{indeg2} \\
& \leq C_{\alpha _{0}}\left( \left\Vert \nabla w_{1}\right\Vert _{L^{\infty
}\left( \Omega \right) }^{2}+\left\Vert \nabla w_{2}\right\Vert _{L^{\infty
}\left( \Omega \right) }^{2}\right) \left\Vert \varphi _{1}\left( t\right)
-\varphi _{2}\left( t\right) \right\Vert _{H}^{2}  \notag \\
& +\frac{\alpha _{0}}{2}\left\Vert \nabla \left( \varphi _{1}\left( t\right)
-\varphi _{2}\left( t\right) \right) \right\Vert _{H}^{2},  \notag
\end{align}%
which easily yields (\ref{Lip_deg}) on account of Gronwall's inequality.
\end{proof}

As already mentioned, the crucial step in order to establish the existence
of an exponential attractor is the validity of so-called smoothing property
for the difference of two solutions. This is given by

\begin{lemma}
\label{dec_deg} Let the assumptions of Proposition \ref{contdep} hold. Then,
for every $t\geq T_{0}$ the following estimates hold:%
\begin{align}
&\left\Vert \varphi _{1}\left( t\right) -\varphi _{2}\left( t\right)
\right\Vert _{H}^{2} \leq e^{-\lambda _{0}t}\left\Vert \varphi _{1}\left(
0\right) -\varphi _{2}\left( 0\right) \right\Vert _{H}^{2}  \label{diff1} \\
&+C_{\delta }\int_{0}^{t}\left\Vert \varphi _{1}\left( s\right) -\varphi
_{2}\left( s\right) \right\Vert _{H}^{2}ds,  \notag \\
&\left\Vert \partial _{t}\varphi _{1}-\partial _{t}\varphi _{2}\right\Vert
_{L^{2}(\left[ T_{0},t\right] ;V^{^{\prime }})}^{2}+\int_{0}^{t}\left\Vert
\varphi _{1}\left( s\right) -\varphi _{2}\left( s\right) \right\Vert
_{V}^{2}ds  \label{diff2} \\
&\leq C_{\delta }e^{\lambda t}\left\Vert \varphi _{1}\left( 0\right)
-\varphi _{2}\left( 0\right) \right\Vert _{H}^{2},  \notag
\end{align}%
for some positive constants $C_{\delta },\lambda $,$\lambda _{0}$ which
depend only on $\alpha _{0},$ $\alpha _{1},$ $\delta ,$ $\Omega $ and $J.$
\end{lemma}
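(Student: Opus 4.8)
The plan is to imitate the structure of Lemma~\ref{dec} and Lemma~\ref{lipdif} from Section~2.2, but working directly in the $H$-norm (rather than $V'$) since here the natural energy estimate is the one obtained in Proposition~\ref{contdep}. Thus for $t\geq T_0$ we are in the non-degenerate regime \eqref{nondm}, so all the bounds on $\kappa(\cdot,\varphi_i)$, on $\nabla w_i$ (cf.\ \eqref{bdeg1}), and on $F'(\varphi_i)$ (from the separation property \eqref{sep_deg}) are available uniformly.

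For the first estimate \eqref{diff1}, the plan is to revisit the differential inequality \eqref{indeg2} but to retain the dissipative term rather than absorbing everything. The key observation is that, after $t\geq T_0$, Poincar\'e--Wirtinger applies to $\varphi_1-\varphi_2$ up to its (fixed) mean $M_1-M_2$; in the exponential-attractor construction we will ultimately restrict to a set on which $M_1=M_2$ (or carry the $|M_1-M_2|$ term along exactly as in Lemma~\ref{dec}). So from $\tfrac{1}{2}\tfrac{d}{dt}\|\varphi_1-\varphi_2\|_H^2 + \tfrac{\alpha_0}{2}\|\nabla(\varphi_1-\varphi_2)\|_H^2 \leq C_\delta\|\varphi_1-\varphi_2\|_H^2$, I would use $\|\nabla(\varphi_1-\varphi_2)\|_H^2 \geq c_\Omega\|\varphi_1-\varphi_2\|_H^2 - C|M_1-M_2|$ to produce a term $-\lambda_0\|\varphi_1-\varphi_2\|_H^2$ on the left, then apply Gronwall on $[T_0,t]$ keeping the ``bad'' $+C_\delta\int_0^t\|\varphi_1-\varphi_2\|_H^2$ term on the right (it will be controlled later by the compactness estimate \eqref{diff2}). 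This gives exactly \eqref{diff1}, the contraction-up-to-compact-remainder property needed for Proposition~\ref{abstract}.

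For the second estimate \eqref{diff2}, the $\int_0^t\|\varphi_1-\varphi_2\|_V^2\,ds$ term is already controlled: integrate \eqref{indeg2} in time and use \eqref{Lip_deg} for the $H$-norm on the right-hand side, which yields the $V$-norm integral bounded by $C_\delta e^{\lambda t}\|\varphi_1(0)-\varphi_2(0)\|_H^2$. The remaining work is the bound on $\partial_t\varphi_1-\partial_t\varphi_2$ in $L^2([T_0,t];V')$. For this I would take an arbitrary $\psi\in V$ in the weak formulation \eqref{wfdiff}, write $\langle\partial_t(\varphi_1-\varphi_2),\psi\rangle = -(\kappa(\cdot,\varphi_1)\nabla\mu_1 - \kappa(\cdot,\varphi_2)\nabla\mu_2,\nabla\psi)$, split this difference as $b(\cdot,|\nabla\varphi_1|)\nabla\varphi_1 - b(\cdot,|\nabla\varphi_2|)\nabla\varphi_2$ plus the flux terms involving $\varphi_i(1-\varphi_i)\nabla w_i$ exactly as in \eqref{indeg1}, and estimate each piece by $\|\nabla\psi\|_H$ times ($C\|\nabla(\varphi_1-\varphi_2)\|_H + C_\delta\|\varphi_1-\varphi_2\|_H$), using \eqref{a2}, \eqref{bdeg1}, the separation bounds and the $W^{1,\infty}$-regularity of $w_i$. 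Squaring, integrating over $[T_0,t]$ and using the already-established $V$-norm integral bound then closes \eqref{diff2}.

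The main obstacle I anticipate is the time-derivative estimate: controlling $b(\cdot,|\nabla\varphi_1|)\nabla\varphi_1 - b(\cdot,|\nabla\varphi_2|)\nabla\varphi_2$ in terms of $\nabla(\varphi_1-\varphi_2)$ requires precisely the Lipschitz-type monotonicity \eqref{a2}, which is why assumptions (H13) are phrased for the \emph{product} $b(x,|s|)s$ rather than for $b$ alone; one must be careful not to differentiate $b$ itself. A secondary technical point is making rigorous the manipulations (testing with $\psi$ that may not have enough regularity, and the Poincar\'e step for the mean-free part); as elsewhere in the paper, this is handled by working on the Galerkin/regularized level and passing to the limit, so I would simply remark that the estimates are derived formally and pass to the limit, referring to the proof of Lemma~\ref{lipdif} and Proposition~\ref{contdep}.
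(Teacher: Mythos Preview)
Your proposal is correct and follows essentially the same route as the paper: revisit the differential inequality \eqref{indeg2}, use Poincar\'e (with the conserved mean absorbed into the $C_\delta$ right-hand side) to extract the decay term and obtain \eqref{diff1} via Gronwall, then read off the $L^2(0,t;V)$ bound from \eqref{Lip_deg} and estimate $\partial_t(\varphi_1-\varphi_2)$ in $V'$ by duality from \eqref{wfdiff} using \eqref{a2}, \eqref{bdeg1}, and the separation/non-degeneracy bounds. Your concern about $|M_1-M_2|$ is harmless here since $\langle\varphi_1-\varphi_2\rangle^2\leq |\Omega|^{-1}\|\varphi_1-\varphi_2\|_H^2$, so that contribution is absorbed into the $C_\delta$ term without restricting the means.
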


\begin{proof}
First, we observe that, due to the inequality (\ref{indeg2}), the separation
property (\ref{sep_deg}) and the fact that $w_{i}\in L^{\infty }\left(
T_{0},\infty ;W^{1,\infty }(\Omega )\right) $ uniformly with respect to
time, it holds%
\begin{equation}
\frac{d}{dt}\left\Vert \varphi _{1}\left( t\right) -\varphi _{2}\left(
t\right) \right\Vert _{H}^{2}+\alpha _{0}\left\Vert \nabla \left( \varphi
_{1}\left( t\right) -\varphi _{2}\left( t\right) \right) \right\Vert
_{H}^{2}\leq C_{\delta }\left\Vert \varphi _{1}\left( t\right) -\varphi
_{2}\left( t\right) \right\Vert _{H}^{2},  \label{diff3}
\end{equation}%
for every $t\geq T_{0}$. Thus, combining (\ref{diff3}) together with Poincar%
\'{e}'s inequality
\begin{equation*}
\left\Vert \varphi \right\Vert _{H}^{2}\leq C_{\Omega }\left( \left\Vert
\nabla \varphi \right\Vert _{H}^{2}+\left\langle \varphi \right\rangle
^{2}\right) ,
\end{equation*}%
and recalling the fact that $\left\langle \partial _{t}\left( \varphi
_{1}-\varphi _{2}\right) \right\rangle =0$, we deduce from (\ref{diff3}),
the following inequality:%
\begin{equation*}
\frac{d}{dt}\left\Vert \varphi _{1}\left( t\right) -\varphi _{2}\left(
t\right) \right\Vert _{H}^{2}+C_{\Omega ,\alpha _{0}}\left\Vert \varphi
_{1}\left( t\right) -\varphi _{2}\left( t\right) \right\Vert _{H}^{2}\leq
C_{\delta }\left\Vert \varphi _{1}\left( t\right) -\varphi _{2}\left(
t\right) \right\Vert _{H}^{2}.
\end{equation*}%
Thus, Gronwall's inequality entails the desired estimate (\ref{diff1}). The
second term on the left-hand side of (\ref{diff2}) is estimated in (\ref%
{Lip_deg}). To estimate the time derivative in (\ref{diff2}), recall that $%
\varphi $ satisfies (\ref{wfdiff}). Thus, arguing as in the proof of
Proposition (\ref{contdep}) (note that the mobility $\kappa \left( \varphi
_{i}\right) $ also satisfies (\ref{nondm})) there holds
\begin{align}
\left\langle \partial _{t}\left( \varphi _{1}\left( t\right) -\varphi
_{2}\left( t\right) \right) ,\psi \right\rangle & =\left( \kappa \left(
\cdot ,\varphi _{1}\right) \nabla \mu _{1}-\kappa \left( \cdot ,\varphi
_{2}\right) \nabla \mu _{2},\nabla \psi \right) \\
& \leq C_{\delta }\left\Vert \nabla \psi \right\Vert _{H}\left\Vert \nabla
\varphi _{1}\left( t\right) -\nabla \varphi _{2}\left( t\right) \right\Vert
_{H},  \notag
\end{align}%
for any test function $\psi \in V$, for all $t\geq T_{0}$. This estimate
together with (\ref{Lip_deg}) gives the desired estimate on the time
derivative in (\ref{diff2}).
\end{proof}

The last ingredient we need is the H\"{o}lder continuity of $t\mapsto
S(t)\varphi _{0}$ in the $V^{\prime}$-norm, namely,

\begin{lemma}
\label{hctime_deg} Let the assumptions of Proposition \ref{contdep} be
satisfied. Consider $\varphi \left( t\right) =S\left( t\right) \varphi _{0}$
with $\varphi _{0}\in \mathcal{Y}_{m_1,m_2}$. Then, there holds
\begin{equation}
\left\Vert \varphi \left( t\right) -\varphi \left( s\right) \right\Vert
_{V^{^{\prime }}}\leq C\left\vert t-s\right\vert ^{1/2},\quad \forall t,s\in
\left[ T_{0},T\right] ,  \label{hclinf2_deg}
\end{equation}%
where the constant $C=C_{\delta ,T,T_{0}}>0$ is independent of initial data,
$\varphi $ and $t,s$.
\end{lemma}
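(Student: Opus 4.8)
The plan is to reduce the statement to an $L^{2}$-in-time bound on $\partial _{t}\varphi $ and then invoke Cauchy--Schwarz. By Definition \ref{defdeg} we have $\partial _{t}\varphi \in L^{2}([\delta ,T];V^{\prime })$ for every $\delta >0$, so for $T_{0}\leq s\leq t\leq T$ one may write $\varphi (t)-\varphi (s)=\int_{s}^{t}\partial _{t}\varphi (z)\,dz$ as an identity in $V^{\prime }$; consequently
\begin{equation*}
\Vert \varphi (t)-\varphi (s)\Vert _{V^{\prime }}\leq \int_{s}^{t}\Vert \partial _{t}\varphi (z)\Vert _{V^{\prime }}\,dz\leq \vert t-s\vert ^{1/2}\,\Vert \partial _{t}\varphi \Vert _{L^{2}([T_{0},T];V^{\prime })}.
\end{equation*}
Thus (\ref{hclinf2_deg}) follows once we show that $\Vert \partial _{t}\varphi \Vert _{L^{2}([T_{0},T];V^{\prime })}$ is bounded by a constant depending only on $\delta $, $T$, $T_{0}$ and the structural parameters, but not on $\varphi _{0}\in \mathcal{Y}_{m_{1},m_{2}}$.

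To control $\partial _{t}\varphi $ I would use the weak formulation (\ref{degwf}) together with the non-degeneracy (\ref{nondm}). Indeed, for $t\geq T_{0}$ the separation property (\ref{sep_deg}) holds (here $T_{0}$ may be chosen uniformly over $\mathcal{Y}_{m_{1},m_{2}}$, by Theorem \ref{sep_thm} and Remark \ref{r2}, since $\mathcal{Y}_{m_{1},m_{2}}$ is bounded in $L^{\infty }(\Omega )$), hence $0<\kappa (\cdot ,\varphi (t))\leq \alpha _{1}$ a.e.\ in $\Omega $; testing (\ref{degwf}) with an arbitrary $\psi \in V$ gives $\vert \langle \partial _{t}\varphi (t),\psi \rangle \vert =\vert (\kappa (\cdot ,\varphi (t))\nabla \mu (t),\nabla \psi )\vert \leq \alpha _{1}\Vert \nabla \mu (t)\Vert _{H}\Vert \psi \Vert _{V}$. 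Since moreover $\langle \partial _{t}\varphi (t),1\rangle =0$, we conclude $\Vert \partial _{t}\varphi (t)\Vert _{V^{\prime }}\leq \alpha _{1}\Vert \nabla \mu (t)\Vert _{H}$ for a.e.\ $t\geq T_{0}$. It therefore remains to prove that $\nabla \mu \in L^{2}([T_{0},T];H)$ with a bound uniform over $\mathcal{Y}_{m_{1},m_{2}}$.

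This last bound comes from the basic energy estimate for (\ref{degm})--(\ref{degmic}) (available from \cite{GZ}): integrating the energy identity in time yields $\int_{0}^{T}\!\int_{\Omega }\kappa (\cdot ,\varphi )\vert \nabla \mu \vert ^{2}\,dx\,ds\leq C_{0}$, where $C_{0}$ depends only on the free energy of the initial datum. By (H12) the function $F$ is bounded on $[0,1]$, and since $0\leq \varphi _{0}\leq 1$ a.e.\ with $J\in L^{1}(\mathbb{R}^{d})$, that free energy is bounded uniformly over $\mathcal{Y}_{m_{1},m_{2}}$, so $C_{0}$ is independent of $\varphi _{0}$; in particular no dissipative estimate is needed here. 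On the other hand (\ref{nondm}) gives $\kappa (\cdot ,\varphi )\geq \alpha _{0}\delta (1-\delta )>0$ a.e.\ on $\Omega \times (T_{0},\infty )$, whence
\begin{equation*}
\int_{T_{0}}^{T}\Vert \nabla \mu (s)\Vert _{H}^{2}\,ds\leq \frac{1}{\alpha _{0}\delta (1-\delta )}\int_{T_{0}}^{T}\!\int_{\Omega }\kappa (\cdot ,\varphi (s))\vert \nabla \mu (s)\vert ^{2}\,dx\,ds\leq \frac{C_{0}}{\alpha _{0}\delta (1-\delta )}.
\end{equation*}
Combining the three displays gives (\ref{hclinf2_deg}) with $C=C_{\delta ,T,T_{0}}=\alpha _{1}\big(C_{0}/(\alpha _{0}\delta (1-\delta ))\big)^{1/2}$.

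The only genuinely delicate point is the uniformity with respect to the initial datum, and it is already resolved by two facts at our disposal: the boundedness of $F$ on $[0,1]$ from (H12), which makes the free energy uniformly bounded on the whole phase space (so the $L^{2}$-in-time bound on $\kappa ^{1/2}\nabla \mu $ is uniform without any absorbing-set argument), and the uniform-in-initial-data choice of the separation time $T_{0}$ from Theorem \ref{sep_thm}/Remark \ref{r2}. Granting the energy identity of \cite{GZ}, the rest of the argument is elementary.
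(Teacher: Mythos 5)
Your proposal is correct, and its skeleton is the same as the paper's: reduce \eqref{hclinf2_deg} by Cauchy--Schwarz to an $L^{2}([T_{0},T];V')$ bound on $\partial_{t}\varphi$, obtain $\Vert\partial_{t}\varphi\Vert_{V'}\le \alpha_{1}\Vert\nabla\mu\Vert_{H}$ from \eqref{degwf} together with \eqref{nondm}, and then bound $\nabla\mu$ in $L^{2}([T_{0},T];H)$. The only real divergence is in this last step. The paper stays self-contained: it tests \eqref{degwf} with $\mu$ and pairs $\mu=F'(\varphi)+w$ with $\partial_{t}\varphi$, so the interaction term appears as the forcing $-(w,\partial_{t}\varphi)$, which is estimated via the uniform $W^{1,\infty}$ bound on $w$ and absorbed, giving $\int_{t}^{T}\Vert\nabla\mu\Vert_{H}^{2}\,ds\le C_{\delta}(1+(T-T_{0}))$. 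You instead invoke the full free-energy (Lyapunov) identity of \cite{GZ}, in which the convolution term is part of the energy, and observe that since $F$ in (H12) is bounded on $[0,1]$, $0\le\varphi\le 1$ a.e.\ and $J\in L^{1}$, the energy is uniformly bounded above and below on all of $\mathcal{Y}_{m_{1},m_{2}}$; this yields a bound on $\int\int\kappa|\nabla\mu|^{2}$ with no absorption and a constant independent even of $T$, which is slightly cleaner. The one point to be careful about is invoking that identity from $t=0$: Definition \ref{defdeg} only guarantees $\mu\in L^{2}([\delta,T];V)$ for $\delta>0$, so strictly speaking the chain-rule computation is justified only on $[T_{0},T]$ (or $[\delta,T]$); but integrating the same identity over $(T_{0},T)$ and using the uniform two-sided bound on the energy gives exactly the estimate you need, so this is a cosmetic repair rather than a gap, and it brings your argument essentially back to the computation the paper performs.
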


\begin{proof}
Testing equation (\ref{degwf}) with $\mu $, then taking the inner product in
$H$ of $\mu =F^{\prime }\left( \varphi \right) +w$ with $\partial
_{t}\varphi $, and adding the resulting relations we obtain%
\begin{equation*}
\frac{1}{2}\frac{d}{dt}\left( F\left( \varphi \left( t\right) \right)
,1\right) +\int_{\Omega }\kappa \left( x,\varphi \right) \left\vert \nabla
\mu \left( t\right) \right\vert ^{2}dx=-\left( w,\partial _{t}\varphi
\right) ,\text{ }\forall t\geq T_{0}.
\end{equation*}%
By virtue of (\ref{sep_deg}) and (\ref{nondm}), we can integrate this
relation over $\left( t,T\right) $, exploit the basic interpolation
inequality $[V,V^{\prime }]_{2,1/2}=H$, and deduce the following inequality:%
\begin{align}
C_{\delta }\int_{t}^{T}\left\Vert \nabla \mu \left( s\right) \right\Vert
_{H}^{2}ds& \leq C\int_{t}^{T}\left\Vert w\left( s\right) \right\Vert
_{V}\left\Vert \partial _{t}\varphi \left( s\right) \right\Vert _{V^{\prime
}}ds+C_{\delta }  \label{est30} \\
& \leq C\int_{t}^{T}\left\Vert w\left( s\right) \right\Vert _{V}\left\Vert
\nabla \mu \left( s\right) \right\Vert _{H}ds+C_{\delta },  \notag
\end{align}%
for all $t\geq T_{0}$ (here, we have used (\ref{degwf}) again to estimate
the time derivative). Note that (H1) and (\ref{sep_deg}) also give the
estimate
\begin{equation*}
\sup_{t\geq T_{0}}\left\Vert w\left( t\right) \right\Vert _{W^{1,\infty
}\left( \Omega \right) }\leq C_{\delta }.
\end{equation*}%
Consequently, from (\ref{est30}) we deduce that
\begin{equation*}
\sup_{t\geq T_{0}}\int_{t}^{T}\left\Vert \nabla \mu \left( s\right)
\right\Vert _{H}^{2}ds\leq C_{\delta }\left( 1+\left( T-T_{0}\right) \right)
,
\end{equation*}%
which entails%
\begin{equation}
\sup_{t\geq T_{0}}\int_{t}^{T}\left\Vert \partial _{t}\varphi \left(
s\right) \right\Vert _{V^{^{\prime }}}^{2}ds\leq C_{\delta }\left( 1+\left(
T-T_{0}\right) \right) .  \label{hcb3bis}
\end{equation}%
Estimate (\ref{hclinf2_deg}) now follows from (\ref{hcb3bis}).
\end{proof}

\noindent \textit{Proof of Theorem \ref{expo_log}}. We shall essentially
argue as in Section 2.2 by applying Proposition \ref{abstract}. We briefly
mention the details. In light of the separation property in Theorem \ref%
{sep_thm}, it is not difficult to realize that there exists a
(semi-invariant) absorbing set of the following form%
\begin{equation*}
\mathbb{B}_{\delta }:=\left\{ \varphi \in \mathcal{Y}_{0}\cap C^{\alpha
}\left( \overline{\Omega }\right) :\,\delta \leq \varphi \leq 1-\delta ,\;%
\text{ a.e. in }\Omega \right\} .
\end{equation*}%
Therefore, it is sufficient to verify the existence of an exponential
attractor for $S(t)_{\mid \mathbb{B}_{\delta }}.$ Note that due to the above
results, we also have%
\begin{equation*}
\sup_{t\geq 0}\left( \left\Vert \varphi \left( t\right) \right\Vert
_{C^{\alpha }\left( \overline{\Omega }\right) }+\left\Vert \mu \left(
t\right) \right\Vert _{L^{\infty }\left( \Omega \right) }+\left\Vert w\left(
t\right) \right\Vert _{W^{1,\infty }\left( \Omega \right) }\right) \leq
C_{\delta },
\end{equation*}%
for every trajectory $\varphi $ originating from $\varphi _{0}\in \mathbb{B}%
_{\delta }$, for some positive constant $C_{\delta }$ which is independent
of the choice of $\varphi _{0}\in \mathbb{B}_{\delta }$. We can now apply
the abstract result above to the map $\mathbb{S}=S\left( T\right) $ and $%
\mathcal{H}=H$, for a fixed $T\geq T_{0}$ such that $e^{-\lambda _{0}T}<%
\frac{1}{2}$, where $\lambda _{0}>0$ is the same as in Lemma \ref{dec_deg}.
To this end, we introduce the functional spaces%
\begin{equation*}
\mathcal{V}_{1}:=L^{2}\left( \left[ 0,T\right] ;V\right) \cap H^{1}\left( %
\left[ 0,T\right] ;V^{\prime }\right) ,\quad \mathcal{V}:=L^{2}\left( \left[
0,T\right] ;H\right) ,
\end{equation*}%
and note that $\mathcal{V}_{1}$ is compactly embedded into $\mathcal{V}$.
Finally, we introduce the operator $\mathbb{T}:\mathbb{B}_{\delta
}\rightarrow \mathcal{V}_{1}$, by $\mathbb{T}\varphi _{0}:=\varphi \in
\mathcal{V}_{1},$ where $\varphi $ solves (\ref{degm})-(\ref{degmic}) with $%
\varphi \left( 0\right) =\varphi _{0}\in \mathbb{B}_{\delta }$. The maps $%
\mathbb{S}$, $\mathbb{T}$, the spaces $\mathcal{H}$,$\mathcal{V}$,$\mathcal{V%
}_{1}$ thus defined satisfy all the assumptions of Proposition \ref{abstract}
on account of Lemma \ref{dec_deg} (see (\ref{diff1})-(\ref{diff2})).
Therefore, the semigroup $\mathbb{S}(n)=S\left( nT\right) $ generated by the
iterations of the operator $\mathbb{S}:\mathbb{B}_{\delta }\mathbb{%
\rightarrow B}_{\delta }$ possesses a (discrete) exponential attractor $%
\mathcal{M}_{d}$ in $\mathbb{B}_{\delta }$ endowed by the topology of $H$.
In order to construct the exponential attractor $\mathcal{M}$ for the
semigroup $S(t)$ with continuous time, we note that, due to Lemma \ref%
{hctime_deg} and Proposition \ref{contdep}, this semigroup is Lipschitz
continuous with respect to the initial data in the topology of $H$. Besides,
the map $\left( t,\varphi _{0}\right) \mapsto S\left( t\right) \varphi _{0}$
is H\"{o}lder continuous on $\left[ 0,T\right] \times \mathbb{B}_{\delta }$,
where $\mathbb{B}_{\delta }$ is endowed with the metric topology of $%
V^{\prime }$. Hence, the desired exponential attractor $\mathcal{M}$ for the
continuous semigroup $S(t)$ can be obtained by the same standard formula in (%
\ref{st}). Theorem \ref{expo_log} is now proved.

Unfortunately, it does not seem possible to establish the finite
dimensionality of $\mathcal{M}$ in Theorem \ref{expo_log} with respect to
the stronger $H$-metric. This issue is ultimately connected to deriving the
same uniform boundedness of $\varphi(t)$ in $V$-norm (cf. also Remark \ref%
{ACsimilar}). However, in a special case at least, the following
regularizing property holds
\begin{equation}
\sup_{t\geq T_{0}+1}\left( \left\Vert \varphi \left( t\right) \right\Vert
_{V}+\left\Vert \partial _{t}\varphi \right\Vert _{L^{2}\left( \left[ t,t+1%
\right] ;H\right) }\right) \leq C_{\delta },  \label{regh1}
\end{equation}%
provided we assume in addition that
\begin{equation}
b\left( \cdot,\left\vert s\right\vert \right) \equiv b_{0}\left(
\cdot\right) \in L^{\infty }\left( \Omega \right), \quad J\in W^{2,1}(
\mathbb{R}^{d}) .  \label{addit_assum}
\end{equation}%
In this case, the exponential attraction (iii)\ and finite dimensionality
(iv) of $\mathcal{M}$ also holds with respect to the $L^{s}\cap H^{1-\nu}$%
-metric for any $s\geq 2$ and $\nu\in \left( 0,1\right) $, on account of (%
\ref{sep_deg}) and (\ref{regh1}).

Let us briefly explain how to get (\ref{regh1}).

\begin{proposition}
Let the assumptions of Theorem \ref{degwell} hold. In addition, suppose (\ref%
{addit_assum}). Then every weak solution $\varphi $\ of problem (\ref{degm}%
)-(\ref{degmic})\ also satisfies estimate (\ref{regh1}).
\end{proposition}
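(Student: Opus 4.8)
The plan is to exploit the special structure forced by (\ref{addit_assum}): when $b(\cdot,|s|)\equiv b_{0}(\cdot)$ and $F$ is as in (H12) one has $1/F''(\varphi)=\varphi(1-\varphi)$, so by (\ref{mob})
\[
\kappa(\cdot,\varphi)\nabla\mu=\frac{b_{0}}{F''(\varphi)}\bigl(F''(\varphi)\nabla\varphi+\nabla w\bigr)=b_{0}\nabla\varphi+b_{0}\,\varphi(1-\varphi)\,\nabla w=:b_{0}\nabla\varphi+G .
\]
For $t\ge T_{0}$ the separation property (\ref{sep_deg}) gives $\delta\le\varphi\le1-\delta$, so $\alpha_{0}\le b_{0}\le\alpha_{1}$ (Remark \ref{r1}), the lower-order coefficient $b_{0}\varphi(1-\varphi)$ is bounded above and below, and, since $|\varphi|\le1$ and $J\in W^{1,1}(\mathbb{R}^{d})$, $\nabla w=\nabla J*(1-2\varphi)$ obeys $\sup_{t\ge T_{0}}\|\nabla w(t)\|_{L^{\infty}(\Omega)}\le C_{\delta}$, while from $\partial_{t}w=-2\,J*\partial_{t}\varphi$ one gets $\|\nabla\partial_{t}w(t)\|_{H}=2\|\nabla J*\partial_{t}\varphi(t)\|_{H}\le 2\|\nabla J\|_{L^{1}}\|\partial_{t}\varphi(t)\|_{H}$; the stronger assumption $J\in W^{2,1}$ additionally yields $\sup_{t\ge T_{0}}\|w(t)\|_{W^{2,\infty}(\Omega)}\le C_{\delta}$. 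Crucially, because $b$ does not depend on $|\nabla\varphi|$, the principal part $b_{0}\nabla\varphi$ is \emph{linear} in $\varphi$. As in Lemma \ref{chem_lemma}, I would carry out all the estimates below on the non-degenerate regularization / Galerkin approximations of (\ref{degm})--(\ref{degmic}) used in the proof of Theorem \ref{degwell} — where $\partial_{t}\varphi$ has enough regularity — and then pass to the limit.

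The key step is an $H^{1}$-estimate for $\varphi$ run as a uniform Gronwall argument. Testing (\ref{degwf}) with $\psi=\partial_{t}\varphi$ (admissible at the approximate level, which is also why no boundary term appears) gives
\[
\|\partial_{t}\varphi\|_{H}^{2}+\bigl(b_{0}\nabla\varphi,\nabla\partial_{t}\varphi\bigr)+\bigl(G,\nabla\partial_{t}\varphi\bigr)=0 .
\]
Since $b_{0}$ is time-independent, $(b_{0}\nabla\varphi,\nabla\partial_{t}\varphi)=\tfrac12\tfrac{d}{dt}\int_{\Omega}b_{0}|\nabla\varphi|^{2}$, and $(G,\nabla\partial_{t}\varphi)=\tfrac{d}{dt}(G,\nabla\varphi)-(\partial_{t}G,\nabla\varphi)$. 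Using $\partial_{t}G=b_{0}(1-2\varphi)(\partial_{t}\varphi)\nabla w-2\,b_{0}\varphi(1-\varphi)\,(\nabla J*\partial_{t}\varphi)$ together with the bounds above, one has $\|\partial_{t}G\|_{H}\le C_{\delta}\|\partial_{t}\varphi\|_{H}$, whence $|(\partial_{t}G,\nabla\varphi)|\le\tfrac12\|\partial_{t}\varphi\|_{H}^{2}+C_{\delta}\|\nabla\varphi\|_{H}^{2}$. This yields
\[
\tfrac12\|\partial_{t}\varphi\|_{H}^{2}+\frac{d}{dt}\Bigl[\tfrac12\!\int_{\Omega}b_{0}|\nabla\varphi|^{2}+(G,\nabla\varphi)\Bigr]\le C_{\delta}\|\nabla\varphi\|_{H}^{2},\qquad t\ge T_{0}.
\]
Putting $Y(t):=\tfrac12\int_{\Omega}b_{0}|\nabla\varphi|^{2}+(G,\nabla\varphi)+K_{\delta}$, with $K_{\delta}$ large enough that $\tfrac{\alpha_{0}}{4}\|\nabla\varphi\|_{H}^{2}\le Y(t)\le C_{\delta}(1+\|\nabla\varphi\|_{H}^{2})$ (using $\|G\|_{H}\le C_{\delta}$), this reads $Y'\le c_{\delta}Y$, with the companion estimate $\tfrac12\|\partial_{t}\varphi\|_{H}^{2}\le -Y'+C_{\delta}\|\nabla\varphi\|_{H}^{2}$.

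To close the Gronwall argument I need the averaged bound $\sup_{t\ge T_{0}}\int_{t}^{t+1}\|\nabla\varphi(s)\|_{H}^{2}\,ds\le C_{\delta}$. This comes from $\sup_{t\ge T_{0}}\int_{t}^{t+1}\|\nabla\mu(s)\|_{H}^{2}\,ds\le C_{\delta}$ — established in the proof of Lemma \ref{hctime_deg} by integrating the energy identity and using (\ref{sep_deg}) — via $\nabla\varphi=\varphi(1-\varphi)(\nabla\mu-\nabla w)$ and the bounds on $\varphi$ and $\nabla w$; the same facts give $\sup_{t\ge T_{0}}\int_{t}^{t+1}Y(s)\,ds\le C_{\delta}$. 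The uniform Gronwall lemma then gives $\sup_{t\ge T_{0}+1}Y(t)\le C_{\delta}$, that is $\sup_{t\ge T_{0}+1}\|\varphi(t)\|_{V}\le C_{\delta}$ (the $L^{2}$-part being controlled by $|\varphi|\le1$); integrating the differential inequality over $[t,t+1]$ then gives $\int_{t}^{t+1}\|\partial_{t}\varphi(s)\|_{H}^{2}\,ds\le Y(t)-Y(t+1)+C_{\delta}\le C_{\delta}$ for $t\ge T_{0}+1$, which is precisely (\ref{regh1}). The main obstacle, as usual, is making the computation rigorous: $\psi=\partial_{t}\varphi$ is only a formal test function, and the boundary contribution $\int_{\Gamma}(b_{0}\partial_{\mathbf{n}}\varphi+G\cdot\mathbf{n})\partial_{t}\varphi$ must be shown harmless — it vanishes because $b_{0}\nabla\varphi+G=\kappa(\cdot,\varphi)\nabla\mu$ has zero normal trace by the boundary condition in (\ref{degm}). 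As in Theorem \ref{well_sing} and Lemma \ref{chem_lemma}, both points are handled by working on the non-degenerate regularization of (\ref{degm})--(\ref{degmic}) and letting the regularization parameter tend to $0$.
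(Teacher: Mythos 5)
Your proposal is correct, and its skeleton coincides with the paper's: after the separation time, test the weak formulation with $\partial_{t}\varphi$ (justified at the Galerkin/non-degenerate level), use that $\kappa(\cdot,\varphi)F''(\varphi)=b_{0}$ so the principal term gives $\tfrac12\tfrac{d}{dt}\int_{\Omega}b_{0}|\nabla\varphi|^{2}$, obtain a uniform-in-time averaged bound on $\|\nabla\varphi\|_{H}^{2}$ over unit intervals, and close with the uniform Gronwall lemma. The difference is in the two auxiliary steps. For the coupling term containing $\nabla\partial_{t}\varphi$, the paper keeps it as $(\kappa\nabla w,\nabla\partial_{t}\varphi)$ and estimates it by a $V$--$V'$ duality, $\le C_{\delta}\|\nabla w\|_{V}\|\nabla\partial_{t}\varphi\|_{V'}\le\tfrac12\|\partial_{t}\varphi\|_{H}^{2}+C_{\delta}'\|\nabla a-2\nabla J\ast\varphi\|_{V}^{2}$, and this is precisely where $J\in W^{2,1}(\mathbb{R}^{d})$ enters (relaxed to Calder\'on--Zygmund kernels in a subsequent remark); you instead integrate by parts in time, absorbing $(G,\nabla\varphi)$ into the energy functional $Y$ and bounding $\|\partial_{t}G\|_{H}\le C_{\delta}\|\partial_{t}\varphi\|_{H}$, which uses only $\nabla J\in L^{1}$ plus the $L^{\infty}$-bound on $\varphi$ — so your argument in fact never needs the $W^{2,1}$ part of (\ref{addit_assum}), a genuine (small) sharpening, at the harmless cost of a perturbed, still coercive, functional $Y$. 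For the averaged $H^{1}$-bound, the paper tests with $h(\varphi)=F'(\varphi)+a$ to get $\int_{t}^{t+1}\|\nabla h\|_{H}^{2}\le C_{\delta}$ and then uses $F''\ge 4$, whereas you pass through the averaged $\|\nabla\mu\|_{H}^{2}$ bound from the proof of Lemma \ref{hctime_deg} and the identity $\nabla\varphi=\varphi(1-\varphi)(\nabla\mu-\nabla w)$; the two are equivalent in substance, both resting on (\ref{sep_deg}) and the boundedness of $w$ in $W^{1,\infty}$. Your handling of the rigor issue (testing with $\partial_{t}\varphi$ only at the approximate level) matches the paper's.
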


\begin{proof}
Let $h\left( \varphi\right) :=F^{\prime}\left( \varphi\right) +a,$ where $%
a(x)=(1*J)(x)$. According to (\ref{degwf}), every weak solution $\varphi $
satisfies
\begin{equation}
\left\langle \partial _{t}\varphi ,\psi \right\rangle +\left( \kappa
\left(\cdot, \varphi \right) \nabla h,\nabla \psi \right) =2\left( \kappa
\left(\cdot, \varphi \right) \nabla J\ast \varphi ,\nabla \psi \right),
\label{wf2}
\end{equation}
for all $\psi \in V$ and almost everywhere in $\left( T_{0},\infty \right)$.

Testing this identity with $h\left( \varphi \right) $ yields%
\begin{eqnarray}
&&\frac{d}{dt}\left[ \left( F\left( \varphi \left( t\right) \right)
,1\right) +\left( a,\varphi \left( t\right) \right) \right] +\int_{\Omega
}\kappa \left( x,\varphi \left( t\right) \right) \left\vert \nabla h\left(
\varphi \left( t\right) \right) \right\vert ^{2}dx  \label{4.4} \\
&=&2\left( \kappa \left( \cdot ,\varphi \right) \nabla J\ast \varphi \left(
t\right) ,\nabla h\left( t\right) \right) ,  \notag
\end{eqnarray}%
for all $t\geq T_{0}$. Recalling once again that \eqref{sep_deg} and %
\eqref{chem_b} hold uniformly in $\left( T_{0},\infty \right) $, we can
integrate (\ref{4.4}) over $\left( t,t+1\right) $ to deduce the following
bound:%
\begin{equation}
\int_{t}^{t+1}\left\Vert \nabla h\left( \varphi \left( s\right) \right)
\right\Vert _{H}^{2}ds\leq C_{\delta },\text{ }\forall t\geq T_{0}.
\label{4.5}
\end{equation}%
This gives, on account of the separation property (\ref{sep_deg}) and (H1),
that
\begin{equation}
\sup_{t\geq T_{0}}\int_{t}^{t+1}\left\Vert \nabla \varphi \left( s\right)
\right\Vert _{H}^{2}ds\leq C_{\delta }.  \label{4.6}
\end{equation}%
Finally, testing equation (\ref{degwf}) with $\partial _{t}\varphi $ (this
can be easily justified within an appropriate Galerkin scheme), we find%
\begin{equation}
\left\Vert \partial _{t}\varphi \right\Vert _{H}^{2}+\int_{\Omega }\kappa
\left( x,\varphi \right) F^{\prime \prime }\left( \varphi \right) \nabla
\varphi \cdot \nabla \partial _{t}\varphi dx=\int_{\Omega }\kappa \left(
x,\varphi \right) \nabla w\cdot \nabla \partial _{t}\varphi dx.  \label{4.7}
\end{equation}%
By observing that $\kappa \left( \cdot ,\varphi \right) F^{\prime \prime
}\left( \varphi \right) =b_{0}(\cdot )>0$, by virtue of \eqref{sep_deg} and %
\eqref{chem_b} we can further estimate
\begin{align}
& \left\Vert \partial _{t}\varphi \left( t\right) \right\Vert _{H}^{2}+\frac{%
1}{2}\frac{d}{dt}\int_{\Omega }b_{0}\left( x\right) \left\vert \nabla
\varphi \left( t\right) \right\vert ^{2}dx  \label{4.8} \\
& \leq C_{\delta }\left\Vert \nabla w\left( t\right) \right\Vert
_{V}\left\Vert \nabla \partial _{t}\varphi \left( t\right) \right\Vert
_{V^{^{\prime }}}  \notag \\
& \leq \frac{1}{2}\left\Vert \partial _{t}\varphi \left( t\right)
\right\Vert _{H}^{2}+C_{\delta }^{\prime }\left\Vert \overrightarrow{%
\mathcal{V}}\right\Vert _{V}^{2},  \notag
\end{align}%
for all $t\geq T_{0},$ where%
\begin{equation*}
\overrightarrow{\mathcal{V}}\left( \cdot \right) :=\nabla a\left( \cdot
\right) -2\nabla J\ast \varphi .
\end{equation*}%
Recalling that $J\in W^{2,1}(\mathbb{R}^{d}) $ the second term on the
right-hand side of (\ref{4.8}) is also uniformly (in time) bounded by some
positive constant $C_{\delta ,J}$ (i.e., $\nabla \overrightarrow{\mathcal{V}}%
\in L^{2}\left( \mathbb{R}^{d}\times \mathbb{R}^{d}\right) $). Therefore, we
may integrate (\ref{4.8}) over $\left( t,t+1\right) $ and exploit (\ref{4.6}%
) to deduce (\ref{regh1}) from an application of the uniform Gronwall
inequality.
\end{proof}

\begin{remark}
Exploiting a suitable version of the {\L }ojasiewicz-Simon inequality (see,
e.g., Lemma \ref{LS_lemma2}), it was proven in \cite[Theorem 2.2]{LP} that
every weak solution $\varphi $ of (\ref{degm})-(\ref{degmic}) converges in
the $H$-metric as time goes to infinity to a single equilibrium%
\begin{equation*}
\begin{array}{ll}
\varphi _{\ast }=1/\left( e^{w_{\ast }-\mu _{\ast }}+1\right) , &
\left\langle \varphi _{\ast }\right\rangle =\left\langle \varphi
_{0}\right\rangle , \\
w_{\ast }\left( x\right) =a\left( x\right) -2J\ast \varphi _{\ast }, & \mu
_{\ast }=\text{constant.}%
\end{array}%
\end{equation*}%
In view of (\ref{regh1}), this can be improved to a convergence rate in the $%
L^{s}\cap H^{1-\nu }$-metric for any $s\geq 2$ and $\nu \in \left(
0,1\right) $, i.e.,
\begin{equation*}
\left\Vert \varphi \left( t\right) -\varphi _{\ast }\right\Vert _{L^{p}\cap
H^{1-\nu }}\sim \left( 1+t\right) ^{-\frac{1}{\rho }},\text{ as }%
t\rightarrow \infty ,
\end{equation*}%
thanks to (\ref{sep_deg}), and the additional assumption (\ref{addit_assum})
(cf. also Remark \ref{rem_regp}).
\end{remark}

\begin{remark}
The assumption on $J$ in (\ref{addit_assum}) can be actually relaxed to also
include Newtonian and Bessel-like potentials. In general, the second
derivatives of such potentials are not locally integrable, but one may still
properly define $\nabla ^{2}J\ast \varphi $ as a bounded distribution on $%
L^{p}\left( \Omega \right) ,$ $1<p<\infty $, using the Calder\'{o}n--Zygmund
theory. In particular, for such distributions there holds $\nabla
\overrightarrow{\mathcal{V}}\in L^{p}$ for every $1<p<\infty $ (see, e.g.,
\cite[Lemma 2]{BRB}, \cite[Lemma 2.1]{G1}), and thus we can also conclude (%
\ref{regh1}) for such potentials.
\end{remark}

\begin{remark}
It was proved in \cite{LP2} that solutions of (\ref{degm})-(\ref{degmic})
also satisfy%
\begin{equation*}
\varphi \left( t\right) \in L^{\infty }\left( [T_{0},\infty );W^{2,2}\left(
\Omega \right) \right) \cap L^{\infty }\left( [T_{0},\infty );C^{\beta
}\left( \overline{\Omega }\right) \right) ,\text{ }\beta <\frac{1}{2},
\end{equation*}%
after the separation time $T_{0}>0$ provided that $\left\Vert \partial
_{t}\varphi \left( T_{0}\right) \right\Vert _{\left( H^{1}\right) ^{\ast }}$
is finite and, in addition,%
\begin{equation*}
\left\Vert J\ast u\right\Vert _{W^{2,2}\left( \Omega \right) }\leq
C_{J}\left\Vert u\right\Vert _{W^{1,2}\left( \Omega \right) }.
\end{equation*}%
This is a \emph{conditional} result which requires \emph{stronger}
assumptions on the kernel $J$ and on $\Omega $. For instance, to prove the
condition $\left\Vert \partial _{t}\varphi \left( T_{0}\right) \right\Vert
_{\left( H^{1}\right) ^{\ast }}<\infty $\ one needs to show that $\left\Vert
\varphi \left( T_{0}\right) \right\Vert _{V}<\infty .$
\end{remark}

\bigskip \noindent \textbf{Acknowledgments.} The authors thank the anonymous
referees for their careful reading of the manuscript.

\end{document}